\setlist{leftmargin=9mm}
\numberwithin{equation}{section}
\newcommand{\N}{\mathbb{N}}
\newcommand{\R}{\mathbb{R}}
\newcommand{\pnorm}[2]{\lVert #1\rVert_{#2}}
\newcommand{\bigpnorm}[2]{\big\lVert#1\big\rVert_{#2}}
\newcommand{\biggpnorm}[2]{\bigg\lVert#1\bigg\rVert_{#2}}
\newcommand{\ang}[2]{\langle#1 \rangle_{#2}}
\newcommand{\bigang}[2]{\big\langle#1 \big\rangle_{#2}}
\newcommand{\biggang}[2]{\bigg\langle#1 \bigg\rangle_{#2}}
\newcommand{\abs}[1]{\lvert#1\rvert}
\newcommand{\bigabs}[1]{\big\lvert#1\big\rvert}
\newcommand{\biggabs}[1]{\bigg\lvert#1\bigg\rvert}
\newcommand{\iprod}[2]{\left\langle#1,#2\right\rangle}
\renewcommand{\epsilon}{\varepsilon}
\renewcommand{\d}[1]{\mathrm{d}#1}
\newcommand{\ceil}[1]{\left\lceil #1 \right\rceil}
\newcommand{\bigop}{\mathcal{O}_{\mathbf{P}}}
\newcommand{\smallo}{\mathfrak{o}}
\newcommand{\bigo}{\mathcal{O}}
\newcommand{\equald}{\stackrel{d}{=}}
\renewcommand{\hat}{\widehat}
\DeclareMathOperator{\E}{\mathbb{E}}
\DeclareMathOperator{\Prob}{\mathbb{P}}
\DeclareMathOperator{\Av}{\mathsf{Av}}
\DeclareMathOperator{\tr}{tr}
\DeclareMathOperator{\var}{Var}
\DeclareMathOperator{\op}{op}
\DeclareMathOperator{\lip}{Lip}
\DeclareMathOperator{\plim}{plim}
\DeclareMathOperator{\prox}{\mathsf{prox}}
\DeclareMathOperator{\env}{\mathsf{e}}
\DeclareMathOperator{\lasso}{\mathsf{L}}
\DeclareMathOperator{\delasso}{\mathsf{dL}}
\DeclareMathOperator{\ridge}{\mathsf{R}}
\DeclareMathOperator{\rob}{\mathsf{M}}
\DeclareMathOperator{\lse}{\mathsf{LSE}}
\DeclareMathOperator{\errg}{\textrm{g}}
\DeclareMathOperator*{\argmax}{arg\,max\,}
\DeclareMathOperator*{\argmin}{arg\,min\,}
\DeclareMathOperator*{\esssup}{ess\,sup}
\newcommand{\beq}{\begin{equation}}
\newcommand{\eeq}{\end{equation}}
\newcommand{\beqa}{\begin{equation} \begin{aligned}}
\newcommand{\eeqa}{\end{aligned} \end{equation}}
\newcommand{\beqas}{\begin{equation*} \begin{aligned}}
\newcommand{\eeqas}{\end{aligned} \end{equation*}}
\newcommand{\bit}{\begin{itemize}}
	\newcommand{\eit}{\end{itemize}}
\newcommand{\bmat}{\begin{bmatrix}}
	\newcommand{\emat}{\end{bmatrix}}
\theoremstyle{definition}\newtheorem{problem}{Problem}[section]
\theoremstyle{definition}
\theoremstyle{remark}\newtheorem{assumption}{Assumption}
\theoremstyle{remark}\newtheorem{remark}[problem]{Remark}
\theoremstyle{definition}\newtheorem{example}[problem]{Example}
\theoremstyle{plain}\newtheorem{theorem}[problem]{Theorem}
\theoremstyle{plain}\newtheorem{question}{Question}
\theoremstyle{plain}\newtheorem{lemma}[problem]{Lemma}
\theoremstyle{plain}\newtheorem{proposition}[problem]{Proposition}
\theoremstyle{plain}\newtheorem{corollary}[problem]{Corollary}
\theoremstyle{plain}
	\def\MR#1{}
\begin{document}

\title[Universality in high dimensions]{Universality of regularized regression estimators in high dimensions}
\thanks{The research of Q. Han is partially supported by NSF grants DMS-1916221 and DMS-2143468.}

\author[Q. Han]{Qiyang Han}

\address[Q. Han]{
	Department of Statistics, Rutgers University, Piscataway, NJ 08854, USA.
}
\email{qh85@stat.rutgers.edu}

\author[Y. Shen]{Yandi Shen}

\address[Y. Shen]{
	Department of Statistics, University of Chicago, Chicago, IL 60615, USA.
}
\email{ydshen@uchicago.edu}

\date{\today}

\keywords{Gaussian comparison inequalities, high dimensional asymptotics, Lasso, Lindeberg's principle, random matrix theory, robust regression, ridge regression, universality}
\subjclass[2000]{60F17, 62E17}

\begin{abstract}
The Convex Gaussian Min-Max Theorem (CGMT) has emerged as a prominent theoretical tool for analyzing the precise stochastic behavior of various statistical estimators in the so-called high dimensional proportional regime, where the sample size and the signal dimension are of the same order. However, a well recognized limitation of the existing CGMT machinery rests in its stringent requirement on the exact Gaussianity of the design matrix, therefore rendering the obtained precise high dimensional asymptotics largely a specific Gaussian theory in various important statistical models.

This paper provides a structural universality framework for a broad class of regularized regression estimators that is particularly compatible with the CGMT machinery. Here universality means that if a `structure' is satisfied by the regression estimator $\hat{\mu}_G$ for a standard Gaussian design $G$, then it will also be satisfied by $\hat{\mu}_A$ for a general non-Gaussian design $A$ with independent entries. In particular, we show that with a good enough $\ell_\infty$ bound for the regression estimator $\hat{\mu}_A$, any `structural property' that can be detected via the CGMT for $\hat{\mu}_G$ also holds for $\hat{\mu}_A$ under a general design $A$ with independent entries. 

As a proof of concept, we demonstrate our new universality framework in three key examples of regularized regression estimators: the Ridge, Lasso and regularized robust regression estimators, where new universality properties of risk asymptotics and/or distributions of regression estimators and other related quantities are proved. As a major statistical implication of the Lasso universality results, we validate inference procedures using the degrees-of-freedom adjusted debiased Lasso under general design and error distributions. We also provide a counterexample, showing that universality properties for regularized regression estimators do not extend to general isotropic designs. 

The proof of our universality results relies on new comparison inequalities for the optimum of a broad class of cost functions and Gordon's max-min (or min-max) costs, over arbitrary structure sets subject to $\ell_\infty$ constraints. These results may be of independent interest and broader applicability. 
\end{abstract}

\maketitle

\setcounter{tocdepth}{1}
\tableofcontents


\sloppy

\section{Introduction}
\subsection{Overview}
Consider the standard linear model
\begin{align}\label{eqn:reg_model}
Y = A \mu_0+\xi,
\end{align}
where $\mu_0 \in \R^n$ is the signal of interest, $A \in \R^{m\times n}$ is the design matrix, $\xi \in \R^m$ is the error vector, and $Y \in \R^m$ stands for the response vector. Here and below, we reserve the notation $n$ for signal dimension, and $m$ for sample size. We will be interested in understanding the precise stochastic behavior of a broad class of regularized estimators (of $\mu_0$) taking the following generic form
\begin{align}\label{def:estimator_generic}
\hat{\mu}_A\in \argmin_{\mu \in \R^n} \bigg\{\frac{1}{m}\sum_{i=1}^m \psi_0\big((A\mu)_i-Y_i\big)+ \mathsf{f}(\mu)\bigg\}.
\end{align}
Here $\psi_0: \R \to \R_{\geq 0}$ is a loss function, and $\mathsf{f}:\R^n \to \R_{\geq 0}$ is a structure-promoting regularizer for $\mu_0$.

As a canonical example of the regularized regression estimators in (\ref{def:estimator_generic}), the Lasso estimator $\hat{\mu}_A^{\lasso}$ (cf. \cite{tibshirani1996regression}) can be realized by taking $\psi_0(x)=x^2/2$ and $\mathsf{f}(\mu)=\lambda \pnorm{\mu}{1}/m$ with a tuning parameter $\lambda>0$. A notable recent line of Lasso theory attempts to characterize its exact behavior under certain specific settings. This line (i) postulates an exact distributional assumption on the design matrix, where
\begin{align}\label{intro:A}
\hbox{$A$ is a standard Gaussian design $G$ with i.i.d. $\mathcal{N}(0,1/m)$ entries},
\end{align}
and, (ii) works in the so-called `proportional regime', where
\begin{align}\label{intro:proportional_regime}
\hbox{the sample size $m$ is proportional to the signal dimension $n$}.
\end{align}
In particular, \cite{miolane2021distribution} showed that under (\ref{intro:A})-(\ref{intro:proportional_regime}), among with other conditions, with the (Gaussian) error $\xi$ possessing a noise level $\sigma>0$ and a tuning parameter $\lambda$, there exist some $\sigma_\ast,\lambda_\ast>0$ such that the distribution of the Lasso estimator $\hat{\mu}_G^{\lasso}$ can be identified as $\eta_1\big(\mu_0+\sigma_\ast Z_n;\lambda_\ast\big)$ in the following sense\footnote{Precisely, the formulation (\ref{intro:lasso_dist}) is taken from \cite{celentano2020lasso}; however the proofs in \cite{miolane2021distribution} also lead to (\ref{intro:lasso_dist}) with appropriate modifications.}: for any $1$-Lipschitz function $\mathsf{g}:\R^n \to \R$, it holds with high probability that
\begin{align}\label{intro:lasso_dist}
\mathsf{g}\big(\hat{\mu}_G^{\lasso}/\sqrt{n}\big)\approx \E \mathsf{g}\big( \eta_1\big(\mu_0+\sigma_\ast Z_n;\lambda_\ast\big)/\sqrt{n}\big).
\end{align}
Here $\eta_1$ is the soft-thresholding function (formally defined in (\ref{def:eta_1})), and $Z_n \stackrel{d}{=} \mathcal{N}(0,I_n)$ is a standard Gaussian vector in $\R^n$.

The method of proof for (\ref{intro:lasso_dist}) in \cite{miolane2021distribution} is based on a two-sided version of Gordon's Gaussian min-max theorem (cf. \cite{gordon1988milman}), now known as the Convex Gaussian Min-Max Theorem (CGMT) (cf. \cite{stojnic2013framework,thrampoulidis2018precise}); see Theorem \ref{thm:CGMT} for a formal statement. The CGMT approach is a flexible theoretical framework that reduces a given, complicated `primal min-max optimization problem' involving a standard Gaussian design matrix, to a much simpler `Gordon's min-max optimization problem' involving Gaussian vectors only. For the Lasso estimator, the CGMT machinery executed by \cite{miolane2021distribution,celentano2020lasso} substantially improves a weaker version of (\ref{intro:lasso_dist}) obtained in \cite{bayati2012lasso}\footnote{The proof of a weaker version of (\ref{intro:lasso_dist}) in \cite{bayati2012lasso} is based on the so-called state evolution analysis (cf. \cite{bayati2011dynamics}) of an approximate message passing (AMP) algorithm for Lasso that also relies crucially on the exact Gaussianity of the design matrix as in (\ref{intro:A}).}, by providing precise non-asymptotic descriptions of (\ref{intro:lasso_dist}) and the distributions of other quantities associated with the Lasso. These results are not only theoretically interesting in their own rights, they also provide important foundation for statistical inference using the Lasso estimator in the proportional regime (\ref{intro:proportional_regime}).

The flexible and principled nature of the CGMT method has led to systematic progress in understanding the precise risk/distributional behavior of canonical statistical estimators across a wide array of important statistical models, see e.g. \cite{thrampoulidis2015regularized,thrampoulidis2018precise,deng2019model,hu2019asymptotics,salehi2019impact,celentano2020lasso,montanari2019generalization,loureiro2021learning,han2022noisy,liang2020precise,wang2022does,zhang2022modern} for some samples. The power of the CGMT method is further demonstrated in some of the above cited works that deal with either general correlated Gaussian designs, cf. \cite{montanari2019generalization,celentano2020lasso,loureiro2021learning,liang2020precise}, or the `maximal' problem aspect ratio beyond the proportional regime (\ref{intro:proportional_regime}), cf. \cite{han2022noisy}.

Unfortunately, while being a powerful theoretical tool, the CGMT machinery relies on the Gaussianity of the design in an essential way via the use of Gaussian comparison inequalities, and therefore precise high-dimensional asymptotics results derived from the CGMT remain largely a specific Gaussian theory.

The main goal of this paper is to provide a general universality framework for `structural properties' of regularized regression estimators (\ref{def:estimator_generic}) that is compatible with the CGMT machinery. Here universality means that if a `structure' is satisfied by $\hat{\mu}_G$ for a standard Gaussian design $G$, then it will also be satisfied by $\hat{\mu}_A$ for a general non-Gaussian design $A$ with independent entries. A more concrete example for the prescribed structural universality, is to establish the validity of the distribution (\ref{intro:lasso_dist}) of the Lasso estimator for general non-Gaussian designs.

As already hinted above, a major theoretical advantage of our universality framework, lies in its compatibility with the CGMT method. Roughly speaking, we show that, with a good enough $\ell_\infty$ bound for $\hat{\mu}_A$ in (\ref{def:estimator_generic}), any structural property that can be detected via the CGMT for $\hat{\mu}_G$ also holds for $\hat{\mu}_A$ under a general design $A$ with independent entries. Due to the widespread use of the CGMT approach as mentioned above, we expect our universality framework to be of much broader applicability beyond the examples worked out in the current paper.

\subsection{Structural universality framework}\label{subsection:intro_universality_reg}

In the sequel, we will work with $\hat{w}_A\equiv \hat{\mu}_A-\mu_0$ instead of $\hat{\mu}_A$ for consistent presentation with the main results in Sections \ref{section:theory} and \ref{section:examples}. Clearly
\begin{align}\label{def:w_generic}
\hat{w}_A&\in \argmin_{w \in \R^n} H_{\psi_0,\mathsf{f}}(w,A,\xi) \equiv \argmin_{w \in \R^n} \bigg\{\frac{1}{m}\sum_{i=1}^m \psi_0\big((Aw)_i-\xi_i\big)+\mathsf{f}(\mu_0+w)\bigg\}.
\end{align}
Now we may formulate the universality problem precisely. 

\begin{question}\label{ques:universality}
Take any `structural property' $\mathcal{T}_n\subset \R^n$ such that $\Prob(\hat{w}_G \in \mathcal{T}_n)\approx 1$. Then is it true that $\Prob(\hat{w}_A \in \mathcal{T}_n)\approx 1$, when the design matrix $A$ has independent entries with matching first two moments as those of $G$?
\end{question}

Our main abstract universality framework for the regularized regression estimator $\hat{w}_A$, Theorem \ref{thm:universality_reg}, answers the above question in the affirmative in the proportional regime (\ref{intro:proportional_regime}), provided the following hold:
\begin{enumerate}
	\item[(U0)] The entries of $A$ and $\xi$ have `enough' moments, the loss function $\psi_0$ is `self-similar', and the regularizer $\mathsf{f}$ possesses `enough' continuity. 
	\item[(U1)] With high probability $\pnorm{\hat{w}_A}{\infty}\vee \pnorm{\hat{w}_G}{\infty}\leq L_n$ for some $L_n>0$ that grows mildly, say, $L_n=n^\epsilon$ for small enough $\epsilon>0$. 
	\item[(U2)] $\Prob(\hat{w}_G \in \mathcal{T}_n)\approx 1$ holds at the level of the cost function $H_{\psi_0,\mathsf{f}}$: for some non-random $z>0$ and small $\rho_0>0$, with high probability
	\begin{align*}
	\min_{w \in \mathcal{T}_n^c}H_{\psi_0,\mathsf{f}}(w,G,\xi)\geq z+2\rho_0>z+\rho_0\geq \min_{w \in \R^n}H_{\psi_0,\mathsf{f}}(w,G,\xi).
	\end{align*}
\end{enumerate}
Here (U0) should be viewed as regularity conditions. In particular, Theorem \ref{thm:universality_reg} is established for the square loss case and the  (possibly non-differentiable) robust loss case, but as will be clear below, other loss functions $\psi_0$ whose derivatives are `similar' to itself would also work.  Furthermore, the precise number of moments needed for $A$ and $\xi$ depends on the choice of the loss function $\psi_0$, and the moduli of continuity needed for $\mathsf{f}$ is almost minimal. Consequently, the essential conditions to apply the machinery of Theorem \ref{thm:universality_reg} are (U1) and (U2):
\begin{itemize}
	\item (U1) requires $\ell_\infty$ bounds for the regression estimator under both a standard Gaussian design $G$ and the targeted design $A$. While verification of (U1) can be performed in a case-by-case manner, a particular useful general method for obtaining $\ell_\infty$ bounds for $\hat{w}_A$ is to study perturbations of $\hat{w}_A$ by its column and row leave-one-out versions (cf. \cite{elkaroui2013asymptotic,elkaroui2018impact}). In essence, these leave-one-out perturbations are both close enough to $\hat{w}_A$ while creating sufficient independence to guarantee coordinate-wise controls for $\hat{w}_A$. 
	\item (U2) requires high probability detection of the structural property $\mathcal{T}_n$ for $\hat{w}_G$ via the cost function $H_{\psi_0,\mathsf{f}}$. A particularly appealing feature of (U2) lies in its compatibility with the CGMT approach, as one then only needs to verify for the simpler Gordon's problem a constant order gap ($=\rho_0$) between its cost optimum over $\mathcal{T}_n^c$ and the global cost optimum ($=z$). 
\end{itemize}
In summary, for a given structural universality problem of $\hat{w}_A$, once a good enough $\ell_\infty$ bound is verified, the problem is almost completely reduced to the standard Gaussian design in which the powerful CGMT can be directly applied. 

\subsection{Examples}
As a proof of concept, we apply the aforementioned universality framework to three canonical examples of regularized regression estimators (\ref{def:estimator_generic}) in the linear model (\ref{eqn:reg_model}), namely:
\begin{enumerate}
	\item[(E1)] the Ridge estimator,
	\item[(E2)] the Lasso estimator, and
	\item[(E3)] regularized robust regression estimators. 
\end{enumerate}
In particular, we prove that the $\ell_\infty$ bounds required in (U1) hold for all the above three examples (under appropriate moment conditions on $A$ and $\xi$), and therefore universality holds for any structural properties $\mathcal{T}_n$ of these estimators that can be verified under a standard Gaussian design in the sense of (U2).

For the Lasso estimator, our distributional universality of $\hat{\mu}_A^{\lasso}$ shows that (\ref{intro:lasso_dist}) is valid with $\mathsf{g}(\hat{\mu}_G^{\lasso}/\sqrt{n})$ replaced by $\mathsf{g}(\hat{\mu}_A^{\lasso}/\sqrt{n})$ for any $1$-Lipschitz function $\mathsf{g}:\R^n \to \R$. Using the same formulation as (\ref{intro:lasso_dist}), universality is also confirmed for the distributions of the Lasso residual $\hat{r}_A^{\lasso}\equiv Y-A\hat{\mu}_A^{\lasso}$ as a scaled convolution of $\xi$ and an extra Gaussian noise, of the subgradient $\hat{v}_A^{\lasso}\equiv \lambda^{-1}A^\top(Y-A\hat{\mu}_A^{\lasso})$ as a random variable taking value in the hypercube $[-1,1]^n$, and of the sparsity $\hat{s}_A^{\lasso}\equiv \pnorm{\hat{\mu}_A^{\lasso}}{0}/n$ as a discrete random variable; see Theorem \ref{thm:lasso_dist} for precise statements. Similar distributional universality properties are proved for the Ridge estimator and its residual; see Theorem \ref{thm:ridge_dist} for details. Using these Lasso universality results, we further verify asymptotic normality of the so-called degrees-of-freedom (dof) adjusted debiased Lasso (cf. \cite{javanmard2014hypothesis,miolane2021distribution,celentano2020lasso,bellec2021debias,bellec2022biasing}) under general design and error distributions; see Theorem \ref{thm:debiased_lasso} for details. This universality result validates statistical inference procedures based on dof adjusted debiased Lasso methodologies in the proportional regime (\ref{intro:proportional_regime}), beyond the exclusive focus on Gaussian designs in previous works (cited above).

It is worth mentioning that using the CGMT machinery (or AMP techniques), the emergence of the Gaussian component in (\ref{intro:lasso_dist}) for $\hat{\mu}_A^{\lasso}$ (or other quantities above) is crucially tied to the Gaussianity of the design matrix. As such, an interesting conceptual consequence of our universality results is to retrieve---in the challenging proportional regime (\ref{intro:proportional_regime})---the `traditional wisdom' that the Gaussianity in  $\hat{\mu}_A^{\lasso}$ origins from aggregation effects of the errors (or the design entries for some of the other quantities) rather than the specificity of design distributions.

For robust regression estimators, our universality results in Theorems \ref{thm:robust_universality_generic} and \ref{thm:robust_risk_asymp}, although proved using the general-purpose universality framework,  compare favorably to previous attempts by \cite{elkaroui2013asymptotic,elkaroui2018impact} using problem-specific techniques. In particular, \cite{elkaroui2013asymptotic,elkaroui2018impact} require strong regularity conditions on the loss function that exclude the canonical Huber/absolute losses, along with a strong exponential moment condition on the design. In contrast, our results hold under a wide range of non-smooth robust loss functions (including the canonical Huber/absolute losses), a much weaker $6+\epsilon$ moment assumption on the design matrix $A$, and no moment assumption on the error $\xi$.

\subsection{Universality of general cost optimum}

The proof of our universality framework relies on comparison inequalities for the optimum of the cost function $w \mapsto H_{\psi_0,\mathsf{f}}(w,A,\xi)$ over a generic structure set $\mathcal{S}_n \subset \R^n$. In particular, we show that for any structure set $\mathcal{S}_n \subset [-L_n,L_n]^n$ with $L_n\geq 1$ growing mildly,
\begin{align}\label{intro:comparison_ineq}
\E \mathsf{g}\Big(\min_{w \in \mathcal{S}_n } H_{\psi_0,\mathsf{f}}(w,A,\xi)\Big) \approx \E  \mathsf{g}\Big(\min_{w \in \mathcal{S}_n } H_{\psi_0,\mathsf{f}}(w,B,\xi)\Big) \,\,\, \hbox{for all } \mathsf{g} \in C^3(\R), 
\end{align}
whenever (i) the design matrices $A,B$ possess independent entries with matching first two moments to the standard Gaussian design $G$ in (\ref{intro:A}), (ii) the loss function $\psi_0$ grows mildly at $\infty$ and its derivatives satisfy certain `self-similarity' properties, and (iii) $\mathsf{f}$ enjoys certain degree of continuity. See Theorem \ref{thm:universality_smooth} for a formal statement that holds for a more general class of cost functions. 

The proof of the comparison inequality (\ref{intro:comparison_ineq}) is based on the quantitative Lindeberg's method (cf. \cite{chatterjee2006generalization}), coupled with an almost dimension-free third derivative bound for every $\mathcal{S}_n$ with the prescribed $\ell_\infty$ constraint. The $\ell_\infty$ constraint plays a crucial role in circumventing the undesirable yet unavoidable logarithmic dependence on the `effective size' in the minimum that scales exponentially in $n$, previously obtained in the high dimensional central limit theorem literature (see e.g., \cite{chernozhukov2022high} for a recent review). These techniques are further generalized to a class of Gordon's max-min (or min-max) cost optimum. Let $
X_n(u,w;A)\equiv m^{-1} u^\top A w+ Q_n(u,w)$. We show that for any pair of structure sets $\mathcal{S}_u \subset [-L_u,L_u]^m$ and $\mathcal{S}_w \subset [-L_w,L_w]^n$  with $L_u,L_w\geq 1$ growing mildly, 
\begin{align}\label{intro:comparison_gordon}
\E \mathsf{g}\bigg(\max_{u \in \mathcal{S}_u} \min_{w \in \mathcal{S}_w} X_n(u,w;A)\bigg)\approx\E \mathsf{g}\bigg(\max_{u \in \mathcal{S}_u} \min_{w \in \mathcal{S}_w} X_n(u,w;B)\bigg) \,\,\,\hbox{for all } \mathsf{g}\in C^3(\R),
\end{align}
again whenever (i) the design matrices $A,B$ possess independent entries with matching first two moments to the standard Gaussian design $G$ in (\ref{intro:A}), and (ii) $Q_n$ enjoys certain degree of continuity. See Theorem \ref{thm:min_max_universality} and Corollary \ref{cor:min_max_universality} for formal statements. In the regression examples we study here, we use the comparison inequality (\ref{intro:comparison_gordon}) to derive universality properties beyond the regression estimator itself, but we also expect it to be of broader applicability in view of its intimate resemblance to the `primal optimization problem' in the CGMT machinery (cf. Theorem \ref{thm:CGMT}).

\subsection{Related literature and non-universality for general isotropic designs}
A number of universality results are obtained for design matrices consisting of independent entries in the proportional regime (\ref{intro:proportional_regime}). \cite{korada2011applications} obtained, among other results, asymptotic universality of box-constrained Lasso cost optimum. \cite{montanari2017universality} obtained asymptotic universality for the elastic net. \cite{panahi2017universal} obtained asymptotic universality for certain special test functions applied to the least squares regression coefficients with strongly convex penalties, along with some results on Lasso; see Section \ref{section:ex_lasso} for a more detailed comparison. Universality results for various quantities of interest in noiseless random linear inverse problems are obtained in \cite{bayati2015universality,oymak2018universality,abbasi2019universality}. As mentioned above, \cite{elkaroui2013asymptotic,elkaroui2018impact} obtained universality of precise risk asymptotics and residual distributions in the context of robust regression. To the best of our knowledge, none of these methods are generally compatible with the CGMT, and nor are applicable for studying universality properties of the broad class of regularized regression estimators (\ref{def:estimator_generic}).

Going beyond independent components, universality results are also obtained in several interesting models, under designs (features) whose rows have  matching first two moments. For instance, \cite{hu2020universality} obtained asymptotic universality results concerning training/generalization errors in the random feature model, between non-Gaussian features (non-linear transforms of underlying Gaussian feature matrix and input vector) and `linearized' Gaussian features, thus verifying a so-called Gaussian equivalence conjecture (cf. \cite{gerace2020generalisation,loureiro2021learning,goldt2022gaussian}).  \cite{montanari2022universality} obtained further asymptotic universality results for these errors, under an `asymptotic Gaussian' assumption on the feature vectors (see Assumption 6 therein), that apply to other significant models including the two-layer neural tangent model. \cite{gerace2022gaussian} obtained universality for the training loss of ridge regularized generalized linear classification with random labels, under a similar asymptotic Gaussian assumption (see Assumption 4 therein). These results motivate the natural question:
\begin{question}
Do the forgoing structural universality properties for regularized regression estimators (\ref{def:estimator_generic}) proved for entrywise independent designs, extend to isotropic designs or more general row independent designs with matching first two moments?
\end{question}
In Section \ref{section:non_universality}, we answer the above question in the negative, by showing that risk universality for the simple ordinary least squares estimator in the basic linear model (\ref{eqn:reg_model}) already fails to hold under an explicitly constructed row independent isotropic design. As will be clear therein, the failure of risk universality is intrinsically due to the non-universality of the spectrum of the sample covariance for general i.i.d. samples of isotropic random vectors. Simulation results in Section \ref{section:simulation} further confirm this risk non-universality phenomenon for the Ridge and Lasso estimators under the same isotropic design used in the construction of the counterexample.

Universality results of a different nature, for instance under rotational invariance assumptions on the design matrix, are obtained in \cite{gerbelot2020asymptotic} for a class of regularized least squares problems with convex penalties (depending on the universality target, strong convexity may be required), and in \cite{gerbelot2020asymptotic2} for a broader class of generalized linear estimation problems.

\subsection{Organization}

The rest of the paper is organized as follows. Section \ref{section:theory} presents comparison inequalities for general cost optimum in (\ref{intro:comparison_ineq}) and Gordon's min-max cost optimum in (\ref{intro:comparison_gordon}). As an application of these comparison inequalities, we establish the structural universality framework in Section \ref{subsection:universality_framework}. Examples on the Ridge, Lasso and regularized robust regression estimators are detailed in Sections \ref{section:ex_ridge}-\ref{section:ex_robust}. The non-universality counterexample is given in Section \ref{section:non_universality}. Simulation results that confirm both universality and non-universality results are provided in Section \ref{section:simulation}. Most proofs are collected in Sections \ref{section:proof_theory}-\ref{section:proof_robust} and the appendices. 

\subsection{Notation}\label{section:notation}

For any positive integer $n$, let $[n]=[1:n]$ denote the set $\{1,\ldots,n\}$. For $a,b \in \R$, $a\vee b\equiv \max\{a,b\}$ and $a\wedge b\equiv\min\{a,b\}$. For $a \in \R$, let $a_\pm \equiv (\pm a)\vee 0$. For $a>0$, let $\log_+(a)\equiv 1\vee \log(a)$. For $x \in \R^n$, let $\pnorm{x}{p}$ denote its $p$-norm $(0\leq p\leq \infty)$, and $B_{n;p}(R)\equiv \{x \in \R^n: \pnorm{x}{p}\leq R\}$. We simply write $\pnorm{x}{}\equiv\pnorm{x}{2}$ and $B_n(R)\equiv B_{n;2}(R)$. For a matrix $M \in \R^{m\times n}$, let $\pnorm{M}{\op}$ denote the spectral norm of $M$. For a measurable map $f:\R^n \to \R$, let $\pnorm{f}{\lip}\equiv \sup_{x\neq y} \abs{f(x)-f(y)}/\pnorm{x-y}{}$. $f$ is called \emph{$L$-Lipschitz} iff $\pnorm{f}{\lip}\leq L$.

We use $C_{x}$ to denote a generic constant that depends only on $x$, whose numeric value may change from line to line unless otherwise specified. $a\lesssim_{x} b$ and $a\gtrsim_x b$ mean $a\leq C_x b$ and $a\geq C_x b$ respectively, and $a\asymp_x b$ means $a\lesssim_{x} b$ and $a\gtrsim_x b$ ($a\lesssim b$ means $a\leq Cb$ for some absolute constant $C$). For two nonnegative sequences $\{a_n\}$ and $\{b_n\}$, we write $a_n\ll b_n$ (respectively~$a_n\gg b_n$) if $\lim_{n\rightarrow\infty} (a_n/b_n) = 0$ (respectively~$\lim_{n\rightarrow\infty} (a_n/b_n) = \infty$). We follow the convention that $0/0 = 0$. $\bigo$ and $\smallo$ (resp. $\mathcal{O}_{\mathbf{P}}$ and $\mathfrak{o}_{\mathbf{P}}$) denote the usual big and small O notation (resp. in probability). 

For a proper, closed convex function $f$ defined on $\R$, its \emph{Moreau envelope} $\mathsf{e}_f(\cdot;\tau)$ and \emph{proximal operator} $\prox_f(\cdot;\tau)$ for any $\tau>0$ are defined by 
\begin{align*}
\mathsf{e}_f(x;\tau)\equiv \min_{z\in \R}\bigg\{\frac{1}{2\tau}(x-z)^2+f(z)\bigg\},\, \prox_f(x;\tau)\equiv \argmin_{z \in \R} \bigg\{\frac{1}{2\tau}(x-z)^2+f(z)\bigg\}.
\end{align*}
Finally, let for $p>0,z\in \R, \lambda\geq 0$
\begin{align}\label{def:eta_1}
\eta_p(z;\lambda)\equiv \argmin_{x\in \R^n}\bigg\{\frac{1}{2}\pnorm{z-x}{}^2+\lambda\cdot \frac{\pnorm{x}{p}^p}{p}\bigg\}=\prox_{\pnorm{\cdot}{p}^p/p}(z;\lambda).
\end{align}
We will only use $p=1,2$ in this paper.

\section{Universality of general cost optimum}\label{section:theory}

\subsection{Basic setup and assumptions}
Let $A \in \R^{m\times n}$ be a $m\times n$ matrix, $\psi_i: \R \to \R_{\geq 0}, \mathsf{f}:\R^n\to \R$ be measurable functions, and
\begin{align}\label{def:H_psi}
H_\psi(w,A)\equiv \frac{1}{m}\sum_{i=1}^m \psi_i\big((Aw)_i\big) + \mathsf{f}(w).
\end{align}
Let $\bar{H}_{\psi}\equiv m\cdot H_\psi$ be the un-normalized version of $H_\psi$. We will be interested in the universality properties related to the optimum and optimizers of $H_\psi$ with respect to the law of the random matrix $A$ (with independent entries).

First we formalize the precise meaning of the `proportional regime' in (\ref{intro:proportional_regime}).

\begin{assumption}[Proportional regime]\label{assump:setting}
$\tau\leq m/n\leq 1/\tau$ holds for some $\tau \in (0,1)$.
\end{assumption}

Next we state the assumptions on the loss functions $\{\psi_i\}$.

\begin{assumption}[Loss function]\label{assump:loss}
There exist reals $q_\ell\geq 0$ ($\ell=0,1,2,3$), constants $\bar{\rho} \in (0,1], \{L_{\psi_i}\geq 1:i\in[m]\}$ and two measurable functions $\mathscr{D}_{\psi}, \mathscr{M}_\psi: (0,\bar{\rho})\to \R_{\geq 0}$, $\mathscr{M}_\psi(\rho)\leq 1\leq \mathscr{D}_\psi(\rho)$, with the following properties:
\begin{enumerate}
	\item $\psi_i$'s grow at mostly polynomially in the sense that for $i \in [m]$,
	\begin{align*}
	\sup_{x \in \R}\frac{\abs{\psi_i(x)}}{1+\abs{x}^{q_0}}\leq L_{\psi_i}.
	\end{align*}
	\item Smooth approximations $\{\psi_{i;\rho}: \R \to \R_{\geq 0}\}_{\rho \in (0,\bar{\rho})}$ of $\psi_i$ exist so that (i) $\psi_{i;\rho} \in C^3(\R)$, (ii) $\max_{i \in [m]}\pnorm{\psi_{i;\rho}-\psi_i}{\infty}\leq \mathscr{M}_\psi(\rho)$, and (iii)  derivatives of $\{\psi_{i;\rho}\}$ satisfy the following self-bounding property:
	\begin{align*}
	\max_{\ell=1,2,3} \max_{i \in [m]} \sup_{x \in \R} \frac{\abs{\partial^\ell \psi_{i;\rho}(x)}}{1+\abs{\psi_{i;\rho}(x)}^{q_\ell}}&\leq \mathscr{D}_{\psi}(\rho).
	\end{align*}
\end{enumerate}
\end{assumption}

The first requirement (1) says that $\psi_i$ cannot grow too fast at $\infty$. The constants $L_{\psi_i}$ will be important as well; in applications to regression problems in Section \ref{section:examples}, these constants are typically related to the moment of the `errors'. The thrust of the second requirement (2) is that the form of the derivatives of (smoothed versions of) $\psi_i$ should be `similar' to itself. Its statement appears however slightly involved; the purpose of this is to include non-smooth loss functions that occur frequently in robust regression problems. 

For later purposes, we define
\begin{align}\label{def:q_bar}
\mathsf{q}\equiv \max\{q_3,q_1+q_2, 3q_1\},\quad \bar{\mathsf{q}}\equiv q_0 \mathsf{q}+3.
\end{align}
We now give two examples of loss functions $\{\psi_i\}$ that satisfy Assumption \ref{assump:loss} above. 

\begin{example}[Square-type loss]\label{ex:square_loss}
Let $\psi_i(x)\equiv (x-\xi_i)^{2s}$ for some real $\xi_i \in \R$ and $s \in \N$. It is easy to verify Assumption \ref{assump:loss} with $\psi_{i;\rho}\equiv \psi_i$, $q_0=2s$, $L_{\psi_i}=C(1+\xi_i^{2s})$, $q_\ell=(2s-\ell)_+/(2s)$ for $\ell=1,2,3$, and $\mathscr{D}_{\psi}(\cdot) \equiv C, \mathscr{M}_\psi(\cdot)\equiv 0$ for some constant $C>1$ depending on $s$ only. As no smoothing is required, the choice of $\bar{\rho}$ is arbitrary. In the most common square loss case ($s=1$),  $\mathsf{q}=3/2$ and $\bar{\mathsf{q}}=6$. 
\end{example}

\begin{example}[Robust loss]\label{ex:robust_loss}
Let  $\psi_i(x)\equiv \psi_0(x-\xi_i)$ for some absolute continuous function $\psi_0: \R \to \R_{\geq 0}$ with $\abs{\psi_0(0)}\vee \esssup\,\abs{\psi_0'}\leq L_0$, and real $\xi_i \in \R$. Under this condition, Assumption \ref{assump:loss}-(1) is satisfied with $q_0=1$ and $L_{\psi_i}= C L_0(1+\abs{\xi_i})$ for some absolute constant $C>1$. Two concrete examples:
\begin{itemize}
	\item (Least absolute loss) $\psi_0(x)=\abs{x}$, so $q_0=1, L_0=1, L_{\psi_i}=C (1+\abs{\xi_i})$. 
	\item (Huber loss) For any $\eta>0$, let $
	\psi_0(x;\eta)\equiv (x^2/2)\bm{1}_{\abs{x}\leq \eta} + \big(\eta\abs{x}-(\eta^2/2)\big)\bm{1}_{\abs{x}>\eta}$, 
	so $q_0=1, L_0=\eta, L_{\psi_i}=C\eta(1+\abs{\xi_i})$. 
\end{itemize}
Consider the smooth approximation $\psi_{i;\rho}(x)\equiv \E \psi_0(x-\xi_i+\rho Z)$, where $Z\sim \mathcal{N}(0,1)$ and $\rho \in (0,1)$. Lemma \ref{lem:smooth_approx_l1} entails that Assumption \ref{assump:loss} is satisfied with $q_1=q_2=q_3=0$, $\mathscr{D}_{\psi}(\rho)=C\cdot L_0/\rho^2$, $\mathscr{M}_\psi(\rho)=C\cdot L_0\rho$, and $\bar{\rho} = 1/(CL_0)$ for some absolute constant $C>1$. Consequently, $\mathsf{q}=0$ and $\bar{\mathsf{q}}=3$. 
\end{example}

Finally we state the assumption on the random design matrix $A$. 

\begin{assumption}[Design matrix]\label{assump:random}
	Let $A\equiv A_0/\sqrt{m}$, where $A_0\in \R^{m\times n}$ is a random matrix with independent entries $\{A_{0;ij}\}$ such that $\E A_{0;ij}=0$, $\E A_{0;ij}^2=1$ for all $i\in[m], j\in[n]$, and $
	M\equiv  \max_{i \in [m],j\in[n]}\E \abs{A_{0;ij}}^{\bar{\mathsf{q}}}<\infty$ ($\bar{\mathsf{q}}$ defined in (\ref{def:q_bar})). 
\end{assumption}

Here $A_0$ is the standardized version of $A$ with entry-wise variance $1$. The variance scaling $1/m$ (or equivalently $1/n$ under Assumption \ref{assump:setting}) is quite common in the high dimensional asymptotics literature, see e.g. \cite{bayati2012lasso,thrampoulidis2015regularized,donoho2016high,elkaroui2018impact,montanari2018mean,thrampoulidis2018precise,barbier2019optimal,deng2019model,lelarge2019fundamental,salehi2019impact,sur2019modern,montanari2019generalization,miolane2021distribution,bellec2021debias,loureiro2021learning,celentano2022fundamental,han2022noisy,liang2020precise,mei2022generalization,wang2022does,zhang2022modern} for an incomplete list of recent statistical papers on this topic. 

\subsection{Universality of the optimum}

First we establish universality of the cost optimum $\min_{w \in \mathcal{S}_n} H_\psi(w,A)$ with respect to $A$, over an arbitrary structure set $\mathcal{S}_n$ that is `compact' in an $\ell_\infty$ sense. Its proof can be found in Section \ref{section:proof_universality_smooth}.

\begin{theorem}\label{thm:universality_smooth}
	Suppose Assumptions \ref{assump:setting} and \ref{assump:loss} hold. Let $A_0,B_0 \in \R^{m\times n}$ be two random matrices with independent components, such that $\E A_{0;ij}=\E B_{0;ij}=0$ and $\E A_{0;ij}^2 = \E B_{0;ij}^2$ for all $i \in [m], j \in [n]$. Further assume that 
	\begin{align*}
	M\equiv  \max_{i\in[m],j\in[n]}\big(\E \abs{A_{0;ij}}^{\bar{\mathsf{q}}}+\E \abs{B_{0;ij}}^{\bar{\mathsf{q}}}\big)<\infty.
	\end{align*}
	Let $A\equiv A_0/\sqrt{m}$ and $B\equiv B_0/\sqrt{m}$. Then there exists some $C_0=C_0(\tau,q,M)>0$ such that	the following hold\footnote{Here we abbreviate `$C_0$ depends on $\{q_\ell: \ell \in [0:3]\}$ in Assumption \ref{assump:loss}' as `$C_0$ depends on $q$', and simply write `$C_0=C_0(\{q_\ell: \ell \in [0:3]\})$' as `$C_0=C_0(q)$'. The same convention will be adopted in the statements of other results. }: For any $\mathcal{S}_n \subset [-L_n,L_n]^n$ with $L_n\geq 1$, and any $\mathsf{g} \in C^3(\R)$, we have
	\begin{align*}
	&\biggabs{\E \mathsf{g}\Big(\min_{w \in \mathcal{S}_n } H_\psi(w,A)\Big) - \E  \mathsf{g}\Big(\min_{w \in \mathcal{S}_n } H_\psi(w,B)\Big)  }\leq C_0\cdot K_{\mathsf{g}}\cdot \mathsf{r}_{\mathsf{f}}(L_n).
	\end{align*}
	Here $K_{\mathsf{g}}\equiv 1+\max_{\ell \in [0:3]} \pnorm{\mathsf{g}^{(\ell)}}{\infty}$, and $\mathsf{r}_{\mathsf{f}}(L_n)$ is defined by
	\begin{align*}
	\mathsf{r}_{\mathsf{f}}(L_n)&\equiv \inf_{\rho \in (0,\bar{\rho})} \bigg\{\mathscr{M}_\psi(\rho) + \mathscr{D}_{\psi}^3(\rho)\inf_{\delta \in (0,\omega_n)}\bigg[ \mathscr{M}_{\mathsf{f}}(L_n,\delta) +\Av^{1/3}\big(\{L_{\psi_i}^{\mathsf{q}}\}\big)\cdot \frac{L_n^{\bar{\mathsf{q}}/3}  \log_+^{2/3}(L_n/\delta)}{n^{1/6} }\bigg]\bigg\},
	\end{align*}
	where $\omega_n\equiv n^{-(q_0q_1+4)/2}$, $\Av\big(\{L_{\psi_i}^{\mathsf{q}}\}\big)\equiv m^{-1}\sum_{i=1}^m L_{\psi_i}^{\mathsf{q}}$, and
	\begin{align*}
	\mathscr{M}_{\mathsf{f}}(L_n,\delta)\equiv \sup\, \abs{\mathsf{f}(w)-\mathsf{f}(w') }
	\end{align*}
	with the supremum taken over all $w,w' \in [-L_n,L_n]^n$ such that $\pnorm{w-w'}{\infty}\leq \delta$. Consequently, for any $z\in \R,\epsilon>0$,
	\begin{align*}
	&\Prob\Big(\min_{w \in \mathcal{S}_n } H_\psi(w,A)>z+3\epsilon \Big)\leq \Prob\Big(\min_{w \in \mathcal{S}_n } H_\psi(w,B)>z+\epsilon \Big)+ C_1(1\vee \epsilon^{-3}) \mathsf{r}_{\mathsf{f}}(L_n).
	\end{align*}
	Here $C_1>0$ is an absolute multiple of $C_0$. 
\end{theorem}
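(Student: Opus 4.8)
The plan is to prove the comparison inequality via a quantitative Lindeberg swapping argument applied coordinate-by-coordinate to the entries of the design matrix, combined with a careful control of the third derivative of the relevant function of the matrix entries that exploits the $\ell_\infty$ constraint on $\mathcal{S}_n$.

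First I would reduce to smooth loss functions. By Assumption \ref{assump:loss}(2), replacing each $\psi_i$ with its smooth approximation $\psi_{i;\rho}$ changes the cost $H_\psi(w,A)$ uniformly over $w$ by at most $\mathscr{M}_\psi(\rho)$, hence changes $\min_{w\in\mathcal{S}_n}H_\psi(w,A)$ by at most $\mathscr{M}_\psi(\rho)$, and therefore changes $\E\mathsf{g}(\min_{w\in\mathcal{S}_n}H_\psi(w,A))$ by at most $\pnorm{\mathsf{g}'}{\infty}\mathscr{M}_\psi(\rho)$ — this accounts for the $\mathscr{M}_\psi(\rho)$ term in $\mathsf{r}_{\mathsf{f}}(L_n)$. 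So it suffices to treat $\psi_i=\psi_{i;\rho}\in C^3$ with the self-bounding derivative bounds governed by $\mathscr{D}_\psi(\rho)$. Next I would discretize: the min over $\mathcal{S}_n\subset[-L_n,L_n]^n$ can be replaced by a min over a $\delta$-net (in $\ell_\infty$) of $\mathcal{S}_n$ of cardinality at most $(3L_n/\delta)^n$, at the cost of $\mathscr{M}_{\mathsf{f}}(L_n,\delta)$ from the regularizer plus a continuity error from the loss term (controlled using the derivative bounds and a high-probability bound on $\pnorm{A}{\op}$ and row norms of $A$), which after optimizing contributes the remaining pieces of $\mathsf{r}_{\mathsf{f}}(L_n)$; the restriction $\delta\in(0,\omega_n)$ with $\omega_n=n^{-(q_0q_1+4)/2}$ is what makes this discretization error negligible relative to the main term. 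Define $F(x)\equiv\mathsf{g}(\min_{w\in\mathcal{N}}H_\psi(w,x))$ as a function of the $mn$ matrix entries $x=(x_{ij})$; the goal is to bound $|\E F(A)-\E F(B)|$.

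The core step is the Lindeberg telescoping: order the $mn$ coordinates, and swap them one at a time from $A_{ij}/\sqrt m$ to $B_{ij}/\sqrt m$. Since the first two moments match and the entries are independent, a third-order Taylor expansion gives that each swap contributes at most $\tfrac{1}{6}\sup|\partial^3_{ij}F|\cdot(\E|A_{0;ij}|^3+\E|B_{0;ij}|^3)m^{-3/2}$, so the total error is $O(m^{-3/2}\cdot m^{1/2}\cdot(\text{something})\cdot\sup_{ij}\sup|\partial^3_{ij}F|)$ after summing over $i$ (the $j$-sum is absorbed because $\partial^3_{ij}F$ only feels the $i$-th row) — more precisely one organizes the sum so that the $n$ coordinates in a fixed row $i$ collectively contribute a bound scaling like $m^{-3/2}$ times a row-wise quantity, and summing over $i\in[m]$ gives $m^{-1/2}$, matching the $n^{-1/6}$ after using Assumption \ref{assump:setting}. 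Computing $\partial^3_{ij}F$ requires differentiating through the $\min$; since $\mathsf{g}\in C^3$ and $H_\psi$ is smooth in the entries, I would use the standard fact (e.g.\ via Danskin-type reasoning or by an approximation replacing $\min$ with a smooth soft-min, then taking limits) that $\min_{w\in\mathcal{N}}H_\psi(w,x)$ has controlled first three derivatives in $x$ when $\mathcal{N}$ is finite: the first derivative is the derivative at the (a.e.\ unique) minimizer, and higher derivatives pick up additional terms but all are bounded using the self-bounding property of $\psi$ together with $\pnorm{w}{\infty}\le L_n$ on $\mathcal{N}$ and bounds on the relevant row of $A$. The crucial point, and the reason the $\ell_\infty$ constraint matters, is that the bound on $\partial^3_{ij}F$ that comes out is \emph{almost dimension-free} — it scales polynomially in $L_n$ and only poly-logarithmically in $|\mathcal{N}|$ (hence in $n$) — rather than picking up a factor of $\log|\mathcal{N}|\asymp n\log L_n$ as naive high-dimensional CLT bounds would; this logarithmic-in-cardinality dependence enters through the $\log_+^{2/3}(L_n/\delta)$ factor and is the mechanism that keeps $\mathsf{r}_{\mathsf{f}}(L_n)=o(1)$ for $L_n=n^\epsilon$.

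The main obstacle I anticipate is exactly this third-derivative bound for $x\mapsto\mathsf{g}(\min_{w\in\mathcal{N}}H_\psi(w,x))$: one must differentiate through a non-smooth $\min$, handle the fact that the minimizer is a function of $x$, and track the self-bounding derivative structure of $\psi$ so that the final bound depends on the entries of $A$ only through quantities (row norms, a few inner products) whose moments are controlled by the $\bar{\mathsf{q}}$-moment hypothesis and whose magnitude is controlled on a high-probability event — all while keeping the dependence on $|\mathcal{N}|$ merely logarithmic. A secondary technical point is making the high-probability truncation (restricting to the event where $\pnorm{A}{\op}$ and all row norms are $O(\sqrt n)$, say) compatible with the expectation-level Lindeberg bound, which one handles by a standard smooth cutoff together with the moment bound on the discarded event, contributing lower-order terms. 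Once the third-derivative bound is in hand, assembling the smoothing error $\mathscr{M}_\psi(\rho)$, the discretization error $\mathscr{M}_{\mathsf{f}}(L_n,\delta)$ plus loss-continuity error, and the Lindeberg error $\Av^{1/3}(\{L_{\psi_i}^{\mathsf{q}}\})L_n^{\bar{\mathsf{q}}/3}\log_+^{2/3}(L_n/\delta)/n^{1/6}$, then taking infima over $\rho$ and $\delta$, yields the stated bound; the final probabilistic corollary follows by the usual three-$\epsilon$ argument, choosing $\mathsf{g}$ to be a smooth approximation of $\bm{1}_{(z+2\epsilon,\infty)}$ with $\pnorm{\mathsf{g}^{(\ell)}}{\infty}\lesssim\epsilon^{-\ell}$.
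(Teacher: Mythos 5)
Your three-step outline (smooth the loss via $\psi_{i;\rho}$, discretize $\mathcal{S}_n$ to a $\delta$-net, run Lindeberg on the discretized problem and then optimize in $\rho,\delta$) does match the paper's structure, and you correctly identify that the $\ell_\infty$ constraint together with a sub-polynomial dependence on the net cardinality is what makes the final rate close. However, the heart of the argument is missing. The ``Danskin-type reasoning'' you lead with cannot produce the required third-derivative bound: for a finite net $\mathcal{N}$, the map $x\mapsto\min_{w\in\mathcal{N}}H_\psi(w,x)$ is a pointwise minimum of finitely many $C^3$ functions, hence only piecewise $C^3$, and its second and third derivatives fail to exist on the (dense along the interpolation path) boundary where the active branch switches. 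You cannot run a third-order Taylor/Lindeberg expansion on the hard minimum, and ``higher derivatives pick up additional terms but all are bounded'' is not available as a fact.

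The soft-min that you mention as a throwaway alternative is in fact the load-bearing device, used at a \emph{finite} temperature $\beta$ that is optimized at the end (giving the $\log_+^{2/3}(L_n/\delta)$ exponent), not taken to a limit. The paper replaces $\min_{w\in W}\bar H_{\psi_\rho}(w,\cdot)$ with $F_\beta=-\beta^{-1}\log\sum_{w\in W}e^{-\beta\bar H_{\psi_\rho}(w,\cdot)}$ and differentiates via the Gibbs identity $\partial\ang{f}{H_0}=\ang{\partial f}{H_0}-\ang{f\,\partial H_0}{H_0}+\ang{f}{H_0}\ang{\partial H_0}{H_0}$; the $\ell_\infty$ constraint then enters through $\partial^\ell_{ij}\bar H(w,A)=w_j^\ell\,\partial^\ell\psi_{i;\rho}((Aw)_i)$ and $\abs{w_j}\le L_n$. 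But the step you flag as ``the main obstacle'' is exactly where your proposal stops: one must bound the Gibbs moment $\E\bigang{\bar H^{(i)}(\cdot,C)^{\mathsf{q}}}{\bar H_\beta}$ \emph{uniformly in} $\abs{W}$, otherwise the net cardinality (of size $e^{\Theta(n)}$) leaks into the derivative bound and destroys the $n^{-1/6}$ rate. The paper achieves this in two moves that your proposal does not contain: Chebyshev's association inequality strips off the $e^{-\beta\bar H^{(i)}}$ Gibbs weight, reducing to a leave-one-row-out Gibbs measure $\bar H_{\beta;-i}$; then independence of the row $C_{i\cdot}$ from $\ang{\cdot}{\bar H_{\beta;-i}}$ closes the moment bound at $L_{\psi_i}^{\mathsf{q}}L_n^{q_0\mathsf{q}}$ with no dependence on $\abs{W}$. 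This decoupling is the mechanism behind the ``almost dimension-free'' claim, and without it the argument does not go through. A smaller inaccuracy: the paper does not truncate on a high-probability event inside the Lindeberg step; the crude bound $\E\pnorm{A}{\infty}^{q_0q_1+1}\lesssim n^{(q_0q_1+3)/2}$ from the discretization step is instead absorbed by the constraint $\delta\le\omega_n=n^{-(q_0q_1+4)/2}$.
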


The strength of Theorem \ref{thm:universality_smooth} rests in allowing arbitrary structure sets $\mathcal{S}_n \subset [-L_n,L_n]^n$, where $L_n$ grows slowly enough in the high dimensional limit $n\to \infty$. Of course, there is no apriori reason to believe that the exponent $1/6$ in the rate $\mathsf{r}_{\mathsf{f}}(L_n)$ is optimal. In fact, we have made no efforts to optimize this exponent, as it appears of little importance in the applications in Section \ref{section:examples}. 

To put this result in a broader context, Theorem \ref{thm:universality_smooth} is closely related to recent developments on the high-dimensional central limit theorems (see e.g., the review article \cite{chernozhukov2022high} for many references). This line of works considers universality of the maxima $S_m(X)\equiv \max_{1\leq j\leq p}m^{-1/2}\sum_{i=1}^m X_{ij}$ (or its variants), where $X\equiv \{X_i \in \R^p\}$ contains $m$ independent $p$-dimensional random vectors $X_i$'s. A crucial step therein is to exploit the $\ell_\infty$-like structure, i.e., the maximum over $j \in [p]$, so that the resulting Berry-Esseen bounds scale as a multiple of $(\log^a p/m)^b$, with the optimal choice $a= 3, b= 1/2$ (e.g. \cite{fang2021high,chernozhukov2020nearly}). The situation in Theorem \ref{thm:universality_smooth} is more subtle: in the proportional regime $m\asymp n$, the typical `effective dimension $p$' of $\mathcal{S}_n$ in the minimum scales as $\log p \asymp n\asymp m$, so existing techniques in the above references do not (cannot) yield meaningful bounds. Inspired by the derivative calculations in \cite{korada2011applications,oymak2018universality} embedded in the quantitative Lindeberg's method (cf. \cite{lindeberg22eine,chatterjee2006generalization}), here we get around this issue by providing an almost dimension-free third derivative bound along the entire Lindeberg path, using essentially the $\ell_\infty$ constraint on $\mathcal{S}_n$; see Section \ref{section:proof_universality_smooth} for details.

\subsection{Universality of the optimizer}

Next we will establish universality properties for the optimizer of the cost function $w\mapsto H_\psi(w,A)$, defined as any minimizer
\begin{align}
\hat{w}_A \in \argmin_{w \in \R^n} H_\psi(w,A).
\end{align}
Recall the standard Gaussian design $G$ in (\ref{intro:A}). The following result is proved in Section \ref{section:proof_universality_optimizer}.

\begin{theorem}\label{thm:universality_optimizer}
Suppose Assumptions \ref{assump:setting}, \ref{assump:loss} and \ref{assump:random} hold. Fix a measurable subset $\mathcal{S}_n \subset \R^n$. Suppose there exist $z \in \R$, $\rho_0>0$, $L_n\geq 1$ and $\epsilon_n \in [0,1/4)$ such that the following hold:
\begin{enumerate}
	\item[(O1)] Both $\pnorm{\hat{w}_G}{\infty}$ and $\pnorm{\hat{w}_A}{\infty}$ grow mildly in the sense that
	\begin{align*}
	\Prob\big(\pnorm{\hat{w}_G}{\infty}>L_n\big)\vee \Prob\big(\pnorm{\hat{w}_A}{\infty}>L_n\big)\leq \epsilon_n.
	\end{align*}
	\item[(O2)] $\hat{w}_G$ violates the structural property $\mathcal{S}_n$ in the sense that
	\begin{align*}
	\Prob\bigg(\min_{w \in \R^n} H_\psi(w,G)\geq z+\rho_0 \bigg)\vee 	\Prob\bigg(\min_{w \in \mathcal{S}_n} H_\psi(w,G)\leq z+2\rho_0 \bigg) \leq \epsilon_n. 
	\end{align*}
	
\end{enumerate}
Then $\hat{w}_A$ also violates  $\mathcal{S}_n$ with high probability: 
\begin{align*}
\Prob\big(\hat{w}_A \in \mathcal{S}_n\big)\leq 4\epsilon_n + C_0 (1\vee \rho_0^{-3}) \mathsf{r}_{\mathsf{f}}(L_n).
\end{align*}
Here $\mathsf{r}_{\mathsf{f}}(L_n)$ is defined in Theorem \ref{thm:universality_smooth}, and $C_0>0$ depends on $\tau,q,M$ only.
\end{theorem}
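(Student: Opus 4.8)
The plan is to reduce the statement to two applications of the ``Consequently'' part of Theorem~\ref{thm:universality_smooth}: one transferring from the standard Gaussian design $G$ to $A$ an \emph{upper} bound on the global cost, and one transferring from $G$ to $A$ a \emph{lower} bound on the cost restricted to $\mathcal{S}_n$. Fix a slack parameter $\epsilon\equiv\rho_0/5$ and put $\mathcal{S}_n^L\equiv\mathcal{S}_n\cap[-L_n,L_n]^n$; we may assume $\mathcal{S}_n^L\neq\emptyset$, since otherwise $\{\hat{w}_A\in\mathcal{S}_n\}\cap\{\pnorm{\hat{w}_A}{\infty}\le L_n\}$ is already empty. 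On the event $\{\pnorm{\hat{w}_A}{\infty}\le L_n\}$, which by (O1) has probability at least $1-\epsilon_n$, the global minimizer lies in the box, so $\min_{w\in\R^n}H_\psi(w,A)=\min_{w\in[-L_n,L_n]^n}H_\psi(w,A)$, and the inclusion $\hat{w}_A\in\mathcal{S}_n$ forces $\hat{w}_A\in\mathcal{S}_n^L$, whence $\min_{w\in\mathcal{S}_n^L}H_\psi(w,A)\le\min_{w\in[-L_n,L_n]^n}H_\psi(w,A)$. Thus it suffices to show that, outside an event of the asserted probability, the constrained cost over $\mathcal{S}_n^L$ strictly exceeds the global cost over the box.

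For the global cost I would argue as follows. By (O1) and the first inequality in (O2), on $\{\pnorm{\hat{w}_G}{\infty}\le L_n\}$ the passage to the box is harmless, so $\Prob(\min_{w\in[-L_n,L_n]^n}H_\psi(w,G)>z+\rho_0)\le 2\epsilon_n$. Feeding this into the ``Consequently'' estimate of Theorem~\ref{thm:universality_smooth} for $\mathcal{S}_n=[-L_n,L_n]^n$ and $B=G$, with the threshold parameter there taken to be $z+\rho_0-\epsilon$, gives $\Prob(\min_{w\in[-L_n,L_n]^n}H_\psi(w,A)>z+\rho_0+2\epsilon)\le 2\epsilon_n+C_1(1\vee\epsilon^{-3})\mathsf{r}_{\mathsf{f}}(L_n)$. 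For the constrained cost, the second inequality in (O2) together with $\mathcal{S}_n^L\subseteq\mathcal{S}_n$ gives $\Prob(\min_{w\in\mathcal{S}_n^L}H_\psi(w,G)\le z+2\rho_0)\le\epsilon_n$; applying the ``Consequently'' estimate to $\mathcal{S}_n^L$ with the roles of $A$ and $G$ exchanged — legitimate since $G$ also satisfies the hypotheses of Theorem~\ref{thm:universality_smooth} and has matching first two moments — with threshold parameter $z+2\rho_0-3\epsilon$, and then passing to complements, gives $\Prob(\min_{w\in\mathcal{S}_n^L}H_\psi(w,A)\le z+2\rho_0-2\epsilon)\le\epsilon_n+C_1(1\vee\epsilon^{-3})\mathsf{r}_{\mathsf{f}}(L_n)$.

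Finally I would combine: since $\epsilon=\rho_0/5<\rho_0/4$ we have $z+\rho_0+2\epsilon<z+2\rho_0-2\epsilon$, so on the intersection of $\{\pnorm{\hat{w}_A}{\infty}\le L_n\}$ with the complements of the two bad events just bounded, the inequality $\min_{w\in\mathcal{S}_n^L}H_\psi(w,A)\le\min_{w\in[-L_n,L_n]^n}H_\psi(w,A)$ cannot hold, hence $\hat{w}_A\notin\mathcal{S}_n$ there. A union bound then yields $\Prob(\hat{w}_A\in\mathcal{S}_n)\le 4\epsilon_n+2C_1(1\vee(5/\rho_0)^3)\mathsf{r}_{\mathsf{f}}(L_n)$, which is of the asserted form after absorbing the factor $5^3$ into $C_0$ (and noting $C_1$ depends only on $\tau,q,M$).

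The step I expect to require the most care is the bookkeeping of thresholds in the two transfers: the ``Consequently'' part of Theorem~\ref{thm:universality_smooth} loses a multiplicative slack of $3\epsilon$ in the threshold and blows up the error term by $\epsilon^{-3}$, so the constant-order gap $\rho_0$ supplied by (O2) must be split between its two inequalities in a way that keeps the post-transfer thresholds for the global cost and the constrained cost strictly separated. This is exactly what forces $\epsilon$ to be a fixed small fraction of $\rho_0$ and is the source of the $\rho_0^{-3}$ factor in the final estimate; everything else (the box truncation via (O1), the reductions $\mathcal{S}_n^L\subseteq\mathcal{S}_n$, the union bounds) is routine.
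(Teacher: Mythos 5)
Your proof is correct and uses the same strategy as the paper's own proof: restrict attention to the box $[-L_n,L_n]^n$ via (O1), compare $\min_{w\in[-L_n,L_n]^n}H_\psi(w,A)$ against $\min_{w\in\mathcal{S}_n\cap[-L_n,L_n]^n}H_\psi(w,A)$, transfer both bounds between $G$ and $A$ via the ``Consequently'' estimate of Theorem~\ref{thm:universality_smooth}, and then feed in (O2) on the Gaussian side. The one place where your bookkeeping diverges from what is displayed in the paper's proof is the threshold choices, and this is worth flagging: the ``Consequently'' estimate, in either $A\leftrightarrow B$ orientation and whether applied to $\{>\}$ or its complement $\{\le\}$, always controls the rarer event (smaller threshold for $\{\le\}$, larger for $\{>\}$) on one design by the less rare event on the other, at the cost of the $C_1(1\vee\epsilon^{-3})\mathsf{r}_{\mathsf{f}}(L_n)$ remainder. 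Your slack $\epsilon=\rho_0/5<\rho_0/4$ makes the two post-transfer thresholds $z+\rho_0+2\epsilon$ and $z+2\rho_0-2\epsilon$ strictly separated and sitting on the correct sides of the gap $[z+\rho_0,\,z+2\rho_0]$ supplied by (O2), so both transfers run in the allowed direction. By contrast, the thresholds $z+3\rho_0$ and $z+6\rho_0$ in the paper's displayed event inclusion would require bounding $\Prob\big(\min_{w\in\mathcal{S}_n\cap[-L_n,L_n]^n}H_\psi(w,A)\le z+6\rho_0\big)$ in terms of $\Prob\big(\min_{w\in\mathcal{S}_n\cap[-L_n,L_n]^n}H_\psi(w,G)\le z+2\rho_0\big)$, which goes the wrong way for the ``Consequently'' estimate; your more careful version with a small fixed fraction of $\rho_0$ is the one that actually closes. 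The final absorption of the factor $5^3$ from $1\vee\epsilon^{-3}$ into $C_0$ is fine, as is the observation that $\mathcal{S}_n\cap[-L_n,L_n]^n$ may be assumed nonempty.
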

Here $\mathcal{S}_n$ is regarded as the `exceptional set' into which the optimizer $\hat{w}_A$ is unlikely to fall. As mentioned in the Introduction, while condition (O1) typically requires application-specific techniques, this will be verified in a unified manner via `leave-one-out' techniques in the regression examples in Section \ref{section:examples}. To verify condition (O2), $z$ is usually regarded as the `high-dimensional limit' of the global minimum $\min_{w \in \R^n} H_\psi(w,G)$, while $\rho_0>0$ is a small enough constant (typically of constant order) that guarantees the cost function admits a strict gap between the global optimum and the optimum over the exceptional set $\mathcal{S}_n$. See the discussion after Theorem \ref{thm:universality_reg} for more details in applications to regression problems.

\subsection{Universality of the Gordon's max-min (min-max) cost optimum}

The techniques in proving Theorem \ref{thm:universality_smooth} can be generalized to establish universality for Gordon's max-min (min-max) cost optimum, defined as below. Let for $u \in \R^m, w \in \R^n, A \in \R^{m\times n}$ and a measurable function $Q: \R^m\times \R^n \to \R$
\begin{align}
X(u,w;A)\equiv  u^\top A w + Q(u,w).
\end{align}
\begin{theorem}\label{thm:min_max_universality}
	Let $A,B \in \R^{m\times n}$ be two random matrices with independent entries and matching first two moments, i.e., $\E A_{ij}^\ell = \E B_{ij}^\ell$ for all $i \in [m], j\in [n],\ell=1,2$. There exists a universal constant $C_0>0$ such that the following hold. For any measurable subsets $\mathcal{S}_u \subset [-L_u,L_u]^m$, $\mathcal{S}_w \subset [-L_w,L_w]^n$ with $L_u,L_w\geq 1$, and any $\mathsf{g} \in C^3(\R)$, we have
	\begin{align*}
	&\biggabs{\E \mathsf{g}\bigg(\max_{u \in \mathcal{S}_u} \min_{w \in \mathcal{S}_w} X(u,w;A)\bigg)-\E \mathsf{g}\bigg(\max_{u \in \mathcal{S}_u} \min_{w \in \mathcal{S}_w} X(u,w;B)\bigg)}\\
	&\leq C_0\cdot K_{\mathsf{g}}\cdot \inf_{\delta \in (0,1)} \bigg\{ M_1 L\delta+ \mathscr{M}_Q(L,\delta)+ \log_+^{2/3}(L/\delta)\cdot (m+n)^{2/3} M_3^{1/3}L^2\bigg\}.
	\end{align*}
	Here $K_{\mathsf{g}}\equiv 1+\max_{\ell \in [0:3]} \pnorm{\mathsf{g}^{(\ell)}}{\infty}$, $L\equiv L_u+L_w$, $M_\ell\equiv \sum_{i \in [m], j \in [n]} \big(\E \abs{A_{ij}}^\ell+\E\abs{B_{ij}}^\ell\big)$ and
	\begin{align*}
	\mathscr{M}_Q(L,\delta)&\equiv \sup\,\abs{Q(u,w)-Q(u',w')}
	\end{align*}
	with the supremum taken over all $u,u' \in [-L,L]^m, w,w' \in [-L,L]^n$ such that $\pnorm{u-u'}{\infty}\vee \pnorm{w-w'}{\infty}\leq \delta$.
	The conclusion continues to hold when max-min is flipped to min-max. 
\end{theorem}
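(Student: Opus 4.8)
The plan is to deploy the smoothing-then-Lindeberg machinery that underlies Theorem~\ref{thm:universality_smooth}, exploiting crucially that $X(u,w;A)$ is \emph{affine} in every entry of $A$, which makes the governing derivative estimates substantially cleaner than in the general loss-function setting. \textbf{Discretization.} First I would replace $\mathcal{S}_u$ and $\mathcal{S}_w$ by maximal $\delta$-separated subsets $\mathcal{S}_u^\delta\subset\mathcal{S}_u$, $\mathcal{S}_w^\delta\subset\mathcal{S}_w$ in the $\pnorm{\cdot}{\infty}$-metric, so that every point of $\mathcal{S}_u$ (resp.\ $\mathcal{S}_w$) lies within $\delta$ of the net while $\log\lvert\mathcal{S}_u^\delta\rvert\lesssim m\log_+(L/\delta)$ and $\log\lvert\mathcal{S}_w^\delta\rvert\lesssim n\log_+(L/\delta)$. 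Writing $X(u,w;A)-X(u',w';A)=(u-u')^\top Aw+u'^\top A(w-w')+Q(u,w)-Q(u',w')$, using the triangle bound $\E\lvert\sum_{i,j}c_{ij}A_{ij}\rvert\le\sum_{i,j}\lvert c_{ij}\rvert\,\E\lvert A_{ij}\rvert$ and majorizing the relevant net maxima by suprema over $[-L,L]^{m}\times[-L,L]^{n}$, one finds that the max-min over the nets differs from the max-min over $\mathcal{S}_u,\mathcal{S}_w$ by at most a constant multiple of $M_1 L\delta+\mathscr{M}_Q(L,\delta)$ in expectation, for both $A$ and $B$; composing with $\mathsf{g}$ (at cost $\pnorm{\mathsf{g}'}{\infty}\le K_{\mathsf{g}}$) yields the first two terms of the claimed bound.

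\textbf{Smoothing and Lindeberg swap.} On the finite nets I would smooth the nested extrema by log-sum-exp at inverse temperature $\beta\ge1$: set $\phi_u(A)\equiv-\beta^{-1}\log\sum_{w\in\mathcal{S}_w^\delta}e^{-\beta X(u,w;A)}$ and $\Phi_\beta(A)\equiv\beta^{-1}\log\sum_{u\in\mathcal{S}_u^\delta}e^{\beta\phi_u(A)}$, which is $C^\infty$ in the entries of $A$ and satisfies, pathwise, $\lvert\Phi_\beta(A)-\max_{\mathcal{S}_u^\delta}\min_{\mathcal{S}_w^\delta}X(\cdot;A)\rvert\le\beta^{-1}(\log\lvert\mathcal{S}_u^\delta\rvert+\log\lvert\mathcal{S}_w^\delta\rvert)\lesssim\beta^{-1}(m+n)\log_+(L/\delta)$. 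It then remains to bound $\lvert\E\mathsf{g}(\Phi_\beta(A))-\E\mathsf{g}(\Phi_\beta(B))\rvert$ via the standard quantitative Lindeberg method (cf.\ \cite{chatterjee2006generalization}): swap the $mn$ entries one at a time and Taylor-expand $t\mapsto\mathsf{g}(\Phi_\beta)$ to second order in the single variable $t=A_{ij}$; since $\E A_{ij}^\ell=\E B_{ij}^\ell$ for $\ell=1,2$, the zeroth, first and second order contributions cancel, leaving a total remainder $\lesssim M_3\cdot\sup_t\lvert\partial_t^3(\mathsf{g}\circ\Phi_\beta)\rvert$.

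\textbf{The crux: an almost cardinality-free third derivative bound.} Here one uses that $\partial_{A_{ij}}X(u,w;\cdot)=u_iw_j$ and $\partial_{A_{ij}}^2 X=\partial_{A_{ij}}^3 X=0$ (the penalty $Q$ does not depend on $A$). Differentiating the Gibbs weights $p_u(w)\propto e^{-\beta X(u,w;\cdot)}$ and $q_u\propto e^{\beta\phi_u(\cdot)}$, one obtains that $\partial_{A_{ij}}^\ell\phi_u$ and $\partial_{A_{ij}}^\ell\Phi_\beta$ for $\ell=1,2,3$ are polynomials in $\beta$ of degree $\ell-1$ whose coefficients are the first three central moments of $u_iw_j$ under $p_u$, respectively of $\partial_{A_{ij}}\phi_u$ under $q_u$. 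Since $u\in[-L_u,L_u]^m$ and $w\in[-L_w,L_w]^n$, each such central moment is $\lesssim L^{2\ell}$, whence $\lvert\partial_{A_{ij}}^\ell\Phi_\beta\rvert\lesssim\beta^{\ell-1}L^{2\ell}$, and this holds \emph{uniformly in the value of the swapped entry along the entire Lindeberg path and with no dependence on the exponentially large net cardinalities $\lvert\mathcal{S}_u^\delta\rvert,\lvert\mathcal{S}_w^\delta\rvert$}. The chain rule then yields $\sup_t\lvert\partial_t^3(\mathsf{g}\circ\Phi_\beta)\rvert\lesssim K_{\mathsf{g}}\beta^2L^6$ (using $\beta\ge1$), so the Lindeberg contribution is $\lesssim K_{\mathsf{g}}\beta^2L^6M_3$.

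\textbf{Assembling, optimizing, and the main obstacle.} Summing the three error sources gives, up to $K_{\mathsf{g}}$, the bound $M_1L\delta+\mathscr{M}_Q(L,\delta)+\beta^{-1}(m+n)\log_+(L/\delta)+\beta^2L^6M_3$; choosing $\beta\asymp\bigl((m+n)\log_+(L/\delta)/(L^6M_3)\bigr)^{1/3}\vee1$ balances the last two terms at order $(m+n)^{2/3}\log_+^{2/3}(L/\delta)\,L^2M_3^{1/3}$ (and when the truncation $\beta=1$ is active, $L^6M_3$ is itself already no larger than this), and taking $\inf_{\delta\in(0,1)}$ produces the asserted inequality; the min-max case is obtained verbatim by interchanging the roles of the inner soft-min and the outer soft-max. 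The one genuinely delicate point is the crux above --- ensuring that the third derivative of the smoothed nested extremum, tracked along the Lindeberg path, is controlled purely through the $\pnorm{\cdot}{\infty}$-radii $L_u,L_w$ and the temperature $\beta$, with no residual $\log\lvert\mathcal{S}_u^\delta\rvert$ or $\log\lvert\mathcal{S}_w^\delta\rvert$. This cardinality-freeness is exactly what permits $\beta$ to be taken only polynomially large, and is the bilinear-Gordon analogue of the almost dimension-free derivative estimate behind Theorem~\ref{thm:universality_smooth}.
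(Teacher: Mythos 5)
Your proposal is correct and follows essentially the same route as the paper's proof: the same three-term decomposition (an $\ell_\infty$-net discretization costing $M_1L\delta+\mathscr{M}_Q(L,\delta)$, a nested log-sum-exp smoothing costing $\beta^{-1}(m+n)\log_+(L/\delta)$, and a Lindeberg swap controlled by the cardinality-free bound $\lvert\partial_{A_{ij}}^\ell\Phi_\beta\rvert\lesssim\beta^{\ell-1}L^{2\ell}$ via Gibbs-measure moment estimates), followed by the identical optimization over $\beta$. The only cosmetic differences are that the paper carries two temperatures $\beta_1,\beta_2$ through the argument and only sets $\beta_1=\beta_2$ at the end, and that it writes out the Gibbs-derivative formulas for $\partial_{ij}F$, $\partial_{ij}^2F$, $\partial_{ij}^3F$ explicitly rather than appealing to the ``central moments'' description; neither changes the substance.
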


The proof of the above theorem can be found in Section \ref{section:proof_universality_min_max}. Due to widespread applications of the CGMT in the theoretical analysis of high-dimensional/over-parametrized statistical models (as mentioned in the Introduction), we expect Theorem \ref{thm:min_max_universality} to be useful in establishing universality properties for statistical estimators in other high dimensional problems beyond the ones considered in this paper. Pertinent to this paper, this result will also be useful in some of the applications in Section \ref{section:examples} below.

To facilitate easy applications of Theorem \ref{thm:min_max_universality}, below we work out a particularly useful version where the design matrices have centered entries with variance $1/m$. Its proof is contained in Section \ref{section:proof_universality_min_max_cor}.

\begin{corollary}\label{cor:min_max_universality}
Suppose Assumption \ref{assump:setting} holds. Let $A_0,B_0 \in \R^{m\times n}$ be two random matrices with independent entries, $\E A_{0,ij}^\ell = \E B_{0,ij}^\ell = \bm{1}_{\ell=2}$ for all $i \in [m], j\in [n],\ell=1,2$, and $M_0\equiv  \max_{i \in [m], j \in [n]} (\E \abs{A_{0,ij}}^3+ \E \abs{B_{0,ij}}^3)<\infty$. Let $A\equiv A_0/\sqrt{m}$, $B\equiv B_0/\sqrt{m}$, and recall
\begin{align*}
X_n(u,w;A)= \frac{1}{m} u^\top A w+ Q_n(u,w).
\end{align*}
Then there exists $C_0=C_0(\tau,M_0)>0$ such that for any measurable subsets $\mathcal{S}_u \subset [-L_u,L_u]^m$, $\mathcal{S}_w \subset [-L_w,L_w]^n$ with $L_u,L_w\geq 1$, and any $\mathsf{g} \in C^3(\R)$, we have
\begin{align*}
&\biggabs{\E \mathsf{g}\bigg(\max_{u \in \mathcal{S}_u} \min_{w \in \mathcal{S}_w} X_n(u,w;A)\bigg)-\E \mathsf{g}\bigg(\max_{u \in \mathcal{S}_u} \min_{w \in \mathcal{S}_w} X_n(u,w;B)\bigg)}\\
&\leq C_0\cdot K_{\mathsf{g}}\cdot \mathsf{r}_n,  \text{ with } \mathsf{r}_n \equiv C_0\cdot K_{\mathsf{g}}\cdot \inf_{0<\delta\leq n^{-1}} \bigg\{\mathscr{M}_{Q_n}(L,\delta)+ \frac{L^2 \log^{2/3}(L/\delta)}{n^{1/6}}\bigg\}.
\end{align*}
Here $K_{\mathsf{g}}\equiv 1+\max_{\ell \in [0:3]} \pnorm{\mathsf{g}^{(\ell)}}{\infty}$ and $L\equiv L_u+L_w$. Consequently,
\begin{align*}
&\Prob\Big(\max_{u \in \mathcal{S}_u} \min_{w \in \mathcal{S}_w} X_n(u,w;A)>z+3\epsilon\Big)\\
&\leq \Prob\Big(\max_{u \in \mathcal{S}_u} \min_{w \in \mathcal{S}_w} X_n(u,w;B)>z+\epsilon\Big)+ C_1 (1\vee\epsilon^{-3}) \mathsf{r}_n
\end{align*}
holds for any $z\in \R$ and $\epsilon>0$. Here $C_1>0$ is an absolute multiple of $C_0$. The conclusion continues to hold both when (i) max-min is flipped to min-max, and (ii) there exists some set $S\subset[m]\times [n]$ such that $A_{ij} = B_{ij} = 0$ for $(i,j)\in S$.
\end{corollary}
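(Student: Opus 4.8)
The plan is to deduce Corollary~\ref{cor:min_max_universality} from Theorem~\ref{thm:min_max_universality} by a trivial rescaling of the design matrix, followed by bookkeeping of the moment quantities under Assumption~\ref{assump:setting}. Since
$X_n(u,w;A)=\frac1m u^\top A w+Q_n(u,w)=u^\top\tilde{A}w+Q_n(u,w)$ with $\tilde{A}\equiv A_0/m^{3/2}$ (and likewise $\tilde{B}\equiv B_0/m^{3/2}$), the Gordon max-min cost in the corollary is exactly the one in Theorem~\ref{thm:min_max_universality} for the pair $\tilde{A},\tilde{B}$ with $Q\leftarrow Q_n$. These matrices have independent entries with $\E\tilde{A}_{ij}^\ell=\E\tilde{B}_{ij}^\ell$ for $\ell=1,2$ (inherited from $A_0,B_0$), so Theorem~\ref{thm:min_max_universality} applies directly, and $\mathscr{M}_Q(L,\delta)=\mathscr{M}_{Q_n}(L,\delta)$. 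I would note at this point that both announced extensions then come for free: the min-max version follows from the corresponding clause of Theorem~\ref{thm:min_max_universality}, and the case $A_{ij}=B_{ij}=0$ on a set $S$ is covered because a deterministic zero is a degenerate independent random variable whose first two moments match trivially and which contributes nothing to the moment sums $M_\ell$ in that theorem.

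The bulk of the work is to translate the bound of Theorem~\ref{thm:min_max_universality} into the stated form. First I would estimate the moments of $\tilde{A},\tilde{B}$: by Cauchy--Schwarz $\E\abs{\tilde{A}_{ij}}\le (\E A_{0,ij}^2)^{1/2}m^{-3/2}=m^{-3/2}$, hence $M_1\le 2mn\,m^{-3/2}\lesssim_\tau m^{1/2}$; and $\E\abs{\tilde{A}_{ij}}^3+\E\abs{\tilde{B}_{ij}}^3\le M_0\,m^{-9/2}$, hence $M_3\le M_0\,nm^{-7/2}\lesssim_\tau M_0\,m^{-5/2}$; also $(m+n)^{2/3}\lesssim_\tau m^{2/3}$. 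Substituting into the bound of Theorem~\ref{thm:min_max_universality} and restricting the infimum to $\delta\in(0,n^{-1}]$ (which can only increase it), the cubic-moment term becomes $\lesssim_\tau \log^{2/3}(L/\delta)\,m^{2/3}(M_0 m^{-5/2})^{1/3}L^2\asymp_\tau M_0^{1/3}\log^{2/3}(L/\delta)\,L^2 n^{-1/6}$, while the discretization term obeys $M_1 L\delta\lesssim_\tau m^{1/2}L n^{-1}\lesssim_\tau L^2 n^{-1/6}$ and is therefore dominated. (For small $n$ the asserted inequality is trivial since $\abs{\mathsf{g}(\cdot)-\mathsf{g}(\cdot)}\le 2K_{\mathsf{g}}$, and for the remaining $n$ one has $\log_+=\log$ on the relevant range.) Absorbing the $M_0^{1/3}$ and the $\tau$-dependent factors into a single constant $C_0=C_0(\tau,M_0)$ yields the claimed comparison estimate $\le C_0 K_{\mathsf{g}}\,\mathsf{r}_n$.

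Finally, for the probability tail bound I would use the standard smooth-indicator device: choose $\mathsf{g}\in C^3(\R)$ with $\bm{1}_{\{t>z+3\epsilon\}}\le\mathsf{g}(t)\le\bm{1}_{\{t>z+\epsilon\}}$ and $\pnorm{\mathsf{g}^{(\ell)}}{\infty}\lesssim\epsilon^{-\ell}$ for $\ell=1,2,3$, so that $K_{\mathsf{g}}\lesssim 1\vee\epsilon^{-3}$, then sandwich the two probabilities between expectations of $\mathsf{g}$ and invoke the comparison estimate just proved --- exactly the argument used to derive the ``Consequently'' clause of Theorem~\ref{thm:universality_smooth}. I do not expect a genuine obstacle here: all the substantive analysis is already contained in Theorem~\ref{thm:min_max_universality}, and the only step that requires care is the moment bookkeeping in the second paragraph, in particular confirming that once $\delta$ is forced below $n^{-1}$ the discretization error $M_1 L\delta$ is swallowed by the $n^{-1/6}$ term.
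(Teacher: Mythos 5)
Your proposal is correct and follows essentially the same route as the paper's own proof: both identify that $X_n(u,w;A)=u^\top(A_0/m^{3/2})w+Q_n(u,w)$ so that Theorem~\ref{thm:min_max_universality} applies to $\tilde A=A_0/m^{3/2}$ and $\tilde B=B_0/m^{3/2}$, compute $M_1\lesssim_\tau n^{1/2}$ and $M_3\lesssim_\tau M_0 n^{-5/2}$, observe that $M_1L\delta$ is absorbed by the $L^2\log_+^{2/3}(L/\delta)/n^{1/6}$ term once $\delta\le n^{-1}$, and derive the tail bound via the smooth-indicator argument of Theorem~\ref{thm:universality_smooth}. Your additional remarks on the min-max flip and the zero-entry extension are likewise consistent with the paper's (terser) justification that these cases are immediate.
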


The extension to scenario (ii) will be useful in situations where the drift function $Q_n$ contains certain extra variable, say, $v$ over which the maximum is also taken. This will be used in the proof of some applications (in particular, the distribution of Lasso subgradient) in Section \ref{section:examples}.

\section{Applications to high dimensional regression}\label{section:examples}

\subsection{General regression setting}\label{subsection:universality_framework}
In the linear regression model (\ref{eqn:reg_model}), recall $A=A_0/\sqrt{m}$ and we also write $\xi=\sigma \xi_0$, where the variance of the entries of $A_0$ and $\xi_0$ are standardized to be $1$. Further recall that $\hat{\mu}_A\equiv \mu_0+\hat{w}_A$, where the estimator of interest $\hat{\mu}_A$ is defined in (\ref{def:estimator_generic}) and $\hat{w}_A$ is defined in (\ref{def:w_generic}).

Below we will work out Theorem \ref{thm:universality_optimizer} in the above regression setting for the square loss $\psi_0(x)=x^2/2$ (Example \ref{ex:square_loss}) and the robust loss $\psi_0$ (Example \ref{ex:robust_loss}). While we do not pursue the most general possible form here, adaptation to other loss functions is straightforward. 

\begin{theorem}\label{thm:universality_reg}
Consider the above regression setting with either (i) square loss $\psi_0(x)=x^2/2$ or (ii) robust loss $\psi_0$ satisfying $\abs{\psi_0(0)}\vee \esssup\, \abs{\psi_0'}\leq L_0$ for some $L_0>0$. 
Suppose Assumption \ref{assump:setting} holds, Assumption \ref{assump:random} holds with
\begin{align*}
M\equiv 
\begin{cases}
 \max_{i,j}\E \abs{A_{0;ij}}^6<\infty,& \hbox{square loss case};\\
  \max_{i,j}\E \abs{A_{0;ij}}^3<\infty,& \hbox{robust loss case},
\end{cases}
\end{align*}
and $\xi_0$ has independent components that are also independent of $A_0$. Further assume that there exists some $K_{\mathsf{f}}>1$ such that
\begin{align}\label{cond:f_moduli}
\log \mathscr{M}_{\mathsf{f}}(L,\delta)\leq K_{\mathsf{f}} \big(\log L+\log n\big) - \log(1/\delta)/K_{\mathsf{f}},\quad \forall L\geq 1,\delta \in (0,1).
\end{align}
Fix a measurable subset $\mathcal{S}_n \subset \R^n$. Suppose there exist $z \in \R$, $\rho_0>0$, $1\leq L_n\leq n$ and $\epsilon_n \in [0,1/4)$ such that (O1) in Theorem \ref{thm:universality_optimizer} is fulfilled under the joint probability of $(A,\xi)$ and $(G,\xi)$, and (O2) fulfilled for $H_{\psi_0,\mathsf{f}}(\cdot,G,\xi)$ under the joint probability of $(G,\xi)$. Then under the joint probability of $(A,\xi)$,
	\begin{align*}
	\Prob\big(\hat{w}_A \in \mathcal{S}_n\big)
	&\leq 4\epsilon_n + C_0 (1\vee \rho_0^{-3})
	\begin{cases}
	 M_\xi^{1/3} \frac{ L_n^2 \log^{2/3} n}{n^{1/6} },& \hbox{square loss case};\\
     \big(\frac{ L_n \log^{2/3} n}{n^{1/6}}\big)^{1/7},& \hbox{robust loss case}.
	\end{cases}
	\end{align*}
	Here $M_\xi \equiv 1+m^{-1}\sum_{i=1}^m \E \abs{\sigma\xi_{0,i}}^{3}$. The constant $C_0>0$ depends on $\tau,M,K_{\mathsf{f}}$ only in the square loss case and depends further on $L_0$ in the robust loss case. 
\end{theorem}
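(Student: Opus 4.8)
The plan is to obtain the statement as a specialization of Theorem~\ref{thm:universality_optimizer} (applied with $B$ equal to the Gaussian design $G$), so the real work splits into (a) matching the regression setup to the abstract framework of Section~\ref{section:theory} and (b) evaluating the rate $\mathsf{r}_{\mathsf{f}}(L_n)$ for the two loss types. For (a), I would set $\psi_i(x)\equiv\psi_0(x-\xi_i)$ and absorb the shift by $\mu_0$ into the regularizer, so that $H_{\psi_0,\mathsf{f}}(\cdot,A,\xi)$ coincides with $H_\psi(\cdot,A)$ from (\ref{def:H_psi}) (the modulus $\mathscr{M}_{\mathsf{f}}$ is unchanged up to replacing $L_n$ by $L_n+\pnorm{\mu_0}{\infty}\asymp L_n$). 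Conditionally on $\xi$, Assumption~\ref{assump:loss} holds by Example~\ref{ex:square_loss} in the square case (with $q_0=2$, $\mathsf{q}=3/2$, $\bar{\mathsf{q}}=6$, $\mathscr{M}_\psi\equiv 0$, $\mathscr{D}_\psi\equiv C$, $L_{\psi_i}=C(1+\xi_i^2)$, $\omega_n=n^{-5/2}$) and by Example~\ref{ex:robust_loss} in the robust case (with $q_0=1$, $\mathsf{q}=0$, $\bar{\mathsf{q}}=3$, $\mathscr{M}_\psi(\rho)=CL_0\rho$, $\mathscr{D}_\psi(\rho)=CL_0/\rho^2$, $\bar\rho=1/(CL_0)$, $\omega_n=n^{-2}$); in each case the exponent $\bar{\mathsf{q}}$ matches the assumed number of design moments, so Assumption~\ref{assump:random} is in force. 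Since $\xi$ is independent of $A_0$, the comparison inequality of Theorem~\ref{thm:universality_smooth} holds conditionally on $\xi$, and integrating out $\xi$ it holds with $\mathsf{r}_{\mathsf{f}}(L_n)$ replaced by $\E_\xi\mathsf{r}_{\mathsf{f}}(L_n)$ (where the $\xi$-dependence enters only through $\Av(\{L_{\psi_i}^{\mathsf{q}}\})$); running the proof of Theorem~\ref{thm:universality_optimizer} verbatim with this integrated inequality in place of its deterministic counterpart — the hypotheses (O1)--(O2) in play being the unconditional ones — would then give $\Prob(\hat w_A\in\mathcal{S}_n)\le 4\epsilon_n+C_0(1\vee\rho_0^{-3})\,\E_\xi\mathsf{r}_{\mathsf{f}}(L_n)$.

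It then remains to bound $\E_\xi\mathsf{r}_{\mathsf{f}}(L_n)$. In the square case the outer $\inf_\rho$ is vacuous, and I would take $\delta=n^{-c}$ with $c=c(K_{\mathsf{f}})$ large enough (in particular $c>5/2$, so $\delta<\omega_n$): condition (\ref{cond:f_moduli}) together with $1\le L_n\le n$ then forces $\mathscr{M}_{\mathsf{f}}(L_n,\delta)\le n^{-1}$ and $\log_+(L_n/\delta)\lesssim_{K_{\mathsf{f}}}\log n$. Using $L_{\psi_i}^{\mathsf{q}}=(C(1+\xi_i^2))^{3/2}\asymp 1+\abs{\sigma\xi_{0,i}}^3$ and Jensen for $t\mapsto t^{1/3}$ one gets $\E_\xi\Av^{1/3}(\{L_{\psi_i}^{\mathsf{q}}\})\le(\E_\xi\Av(\{L_{\psi_i}^{\mathsf{q}}\}))^{1/3}\lesssim M_\xi^{1/3}$, so that (with $\bar{\mathsf{q}}/3=2$) $\E_\xi\mathsf{r}_{\mathsf{f}}(L_n)\lesssim_{K_{\mathsf{f}}} n^{-1}+M_\xi^{1/3}L_n^2\log^{2/3} n/n^{1/6}\lesssim M_\xi^{1/3}L_n^2\log^{2/3} n/n^{1/6}$, using $L_n,M_\xi\ge 1$. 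In the robust case $\mathsf{q}=0$, so $\Av^{1/3}(\{L_{\psi_i}^0\})=1$ and no moment of $\xi$ ever enters; with $\delta=n^{-c}$ ($c>2$) and $\bar{\mathsf{q}}/3=1$, the inner $\inf_\delta$ collapses to $\mathsf{a}_n\equiv C_{K_{\mathsf{f}}}L_n\log^{2/3} n/n^{1/6}$, leaving $\mathsf{r}_{\mathsf{f}}(L_n)\lesssim_{L_0}\inf_{0<\rho<\bar\rho}\{\rho+\mathsf{a}_n/\rho^6\}$. Balancing the two terms at $\rho\asymp\mathsf{a}_n^{1/7}$ yields $\mathsf{r}_{\mathsf{f}}(L_n)\lesssim_{L_0}(L_n\log^{2/3} n/n^{1/6})^{1/7}$ whenever $\mathsf{a}_n^{1/7}<\bar\rho$; if instead $\mathsf{a}_n^{1/7}\ge\bar\rho$ then $\mathsf{a}_n\gtrsim_{L_0}1$ and the claimed bound already exceeds $1$, so there is nothing to prove. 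Combining with the display from the previous paragraph gives the theorem.

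I expect two points to carry the real difficulty. First, legitimizing the conditional-on-$\xi$ use of the abstract machinery and, in the square case, correctly extracting the factor $M_\xi$ through the $1/3$-power inside $\mathsf{r}_{\mathsf{f}}$ via Jensen — the robust case sidesteps this entirely only because $\mathsf{q}=0$ makes the loss constants $\xi$-free, which is also why no error moments are needed there. Second, the smoothing trade-off in the robust case: the non-smooth losses force a smoothing at scale $\rho$ with $\mathscr{M}_\psi(\rho)\asymp\rho$ while the third-derivative budget scales as $\mathscr{D}_\psi^3(\rho)\asymp\rho^{-6}$, and it is precisely the resulting $\rho+\rho^{-6}\mathsf{a}_n$ balance that degrades the rate by the extra exponent $1/7$ relative to the square case. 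Everything else — the choice of $\delta$, checking $\delta<\omega_n$, and the $n^{-1}$-versus-$n^{-1/6}$ absorption — is routine bookkeeping layered on Theorems~\ref{thm:universality_smooth} and~\ref{thm:universality_optimizer}.
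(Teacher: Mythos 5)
Your proposal is correct and follows essentially the same route as the paper: apply Theorem~\ref{thm:universality_optimizer} conditionally on $\xi$ (using the independence of $A_0$ and $\xi_0$) and then integrate, read off the parameters $(\mathsf{q},\bar{\mathsf{q}},\mathscr{D}_\psi,\mathscr{M}_\psi,L_{\psi_i},\omega_n)$ from Examples~\ref{ex:square_loss}--\ref{ex:robust_loss}, choose $\delta$ polynomially small so (\ref{cond:f_moduli}) makes the $\mathscr{M}_{\mathsf{f}}$ term negligible, extract $M_\xi^{1/3}$ via Jensen in the square case, and in the robust case optimize the $\rho+\rho^{-6}\mathsf{a}_n$ trade-off to get the $1/7$ exponent. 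The only cosmetic difference is that the paper takes $\delta=(L_n n)^{-K_{\mathsf{f}}-1}$ where you take $\delta=n^{-c}$ with $c=c(K_{\mathsf{f}})$; both are valid since $1\le L_n\le n$ makes $\log(nL_n)\asymp\log n$.
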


The condition (\ref{cond:f_moduli}) on the penalty function $\mathsf{f}$ is imposed here to simplify the final bound, and is easily verified as long as $\mathsf{f}$ has some degree of global moduli of continuity. The major non-trivial condition is the $\ell_\infty$ bounds for $\hat{w}_A$ and $\hat{w}_G$ required in (O1). In the examples to be detailed below, we will use the so-called `leave-one-out' method to establish the desired $\ell_\infty$ bounds. Formally, we study perturbation of $\hat{w}_A$ by (i) its column leave-one-out version
\begin{align*}
\hat{w}^{(s)}_A\equiv \argmin_{w\in\R^n: w_s = 0} \bigg\{\frac{1}{m}\sum_{i=1}^m \psi_0\big((Aw)_i-\xi_i\big)+\mathsf{f}(\mu_0+w)\bigg\},\quad s \in [n], 
\end{align*}
and (ii) its row leave-one-out version
\begin{align*}
\hat{w}^{[t]}_A\equiv  \argmin_{w\in\R^n} \bigg\{\frac{1}{m}\sum_{i \in [m], i\neq t} \psi_0\big((Aw)_i-\xi_i\big)+\mathsf{f}(\mu_0+w)\bigg\},\quad t \in [m].
\end{align*}
Intuitively, both $\hat{w}^{(s)}_A$ and $\hat{w}^{[t]}_A$ should be very close to $\hat{w}_A$ for designs $A$ with independent entries. We will show that indeed in many examples the orders of $\pnorm{\hat{w}^{(s)}_A-\hat{w}_A}{}, \pnorm{\hat{w}^{[t]}_A-\hat{w}_A}{}$ are almost $\bigop(1)$, while the typical order of $\pnorm{\hat{w}_A}{}$ is $\bigop(n^{1/2})$. The independence of $\hat{w}^{(s)}_A$ (resp. $\hat{w}^{[t]}_A$) with respect to the $s$-th column (resp. $t$-th row) of $A$, will then play a crucial role in establishing element-wise bounds for $\hat{w}_A$. 

The method described above is closely related to the one used in  \cite{elkaroui2018impact} under the name `leave-one-observation/predictor out approximations'; see also \cite{montanari2017universality,javanmard2018debiasing} for related techniques. 

Once the $\ell_\infty$ bound condition (O1) is verified, we then only need to study the behavior of $\hat{w}_G$ for the standard Gaussian design $G$ (\ref{intro:A}) with i.i.d. $\mathcal{N}(0,1/m)$ entries, by creating an $\bigo(1)$ gap between $\min_{w \in \mathcal{S}_n} H_{\psi_0,\mathsf{f}}(w,G,\xi)$ over the `exceptional set' $\mathcal{S}_n$ and the global optimum $\min_{w \in \R^n} H_{\psi_0,\mathsf{f}}(w,G,\xi)$. Such a goal is particularly amenable to analysis via the CGMT, as it reduces the analysis of $H_{\psi_0,\mathsf{f}}(w,G,\xi)$ that involves a standard Gaussian design matrix $G$ to a Gordon problem that involves two Gaussian vectors only. 

\begin{proof}[Proof of Theorem \ref{thm:universality_reg}]
	By the assumed moduli of continuity in $\mathsf{f}$, we may take $\delta=(L_n n)^{-K_{\mathsf{f}}-1}$ in the definition of $\mathsf{r}_{\mathsf{f}}(L_n)$. Now we apply Theorem \ref{thm:universality_optimizer} first conditionally on $\xi$ and then take expectation. There in the square loss case, $L_{\psi_i}=C(1+\xi_i^2)$, $\mathsf{q}=3/2$, $\bar{\mathsf{q}}=6$, $\mathscr{D}_\psi(\rho)\equiv C$ and $\mathscr{M}_\psi(\rho)=0$, so
	\begin{align*}
	\mathsf{r}_{\mathsf{f}}(L_n)\lesssim M_\xi^{1/3} \cdot \frac{ L_n^2 \log^{2/3} (nL_n)}{n^{1/6}}.
	\end{align*}
	In the robust loss case, $L_{\psi_i}=C L_0 (1+\abs{\xi_i})$, $\mathsf{q}=0$, $\bar{\mathsf{q}}=3$, $\mathscr{D}_\psi(\rho)\equiv C L_0/\rho^2$, $\mathscr{M}_\psi(\rho)=C L_0\rho$ and $\bar{\rho}=1/(C L_0)$, so
	\begin{align*}
	\mathsf{r}_{\mathsf{f}}(L_n)\lesssim_{L_0} \inf_{\rho \in (0,c')}\bigg\{\rho+\rho^{-6}\frac{ L_n \log^{2/3} (nL_n)}{n^{1/6}}\bigg\}\asymp_{L_0} \bigg(\frac{ L_n \log^{2/3} (nL_n)}{n^{1/6}}\bigg)^{1/7}.
	\end{align*}	
	In both displays, the term $\log^{2/3}(nL_n)\asymp \log^{2/3} n$  thanks to $1\leq L_n\leq n$.
\end{proof}

\subsection{Example I: Ridge regression}\label{section:ex_ridge}

In this section, we consider universality properties for the Ridge estimator \cite{hoerl1970ridge}. Formally, let the Ridge cost function be
\begin{align}\label{def:ridge_cost}
\bar{H}^{\ridge}(w,A,\xi)\equiv \frac{1}{2}\pnorm{Aw - \xi}{}^2 + \frac{\lambda}{2}\pnorm{w + \mu_0}{}^2,
\end{align}
and its normalized version $H^{\ridge}\equiv \bar{H}^{\ridge}/m$. The Ridge solution is given by $\hat{\mu}^{\ridge}_A=\hat{w}^{\ridge}_A+\mu_0$ with
\begin{align}\label{eqn:ridge}
\hat{w}_A^{\ridge}\equiv \argmin_{w\in\R^n} H^{\ridge}(w,A,\xi)=(A^\top A+\lambda I)^{-1}(A^\top \xi-\lambda \mu_0).
\end{align}
We will work with the following conditions instead of referring back to the assumptions listed in Section \ref{section:theory}:
\begin{enumerate}
	\item[(R1)] $\tau\leq m/n\leq 1/\tau$ holds for some $\tau \in (0,1)$, and $\lambda>0$.
	\item[(R2)] $\pnorm{\mu_0}{}^2/n\leq M_2$ for some $M_2 > 0$.
	\item[(R3)] $A_0=\sqrt{m}A$ and $\xi_0 =\xi/\sigma$ are independent, and their entries are all independent, mean $0$, variance $1$ and uniformly sub-Gaussian variables.
\end{enumerate}
The precise mathematical meaning of uniform sub-Gaussianity in (R3) is that $\sup_n \max_{i,j} (\pnorm{A_{0,ij}}{\psi_2}+\pnorm{\xi_{0,i}}{\psi_2})<\infty$, where $\pnorm{\cdot}{\psi_2}$ is the Orcliz-2 norm or the sub-Gaussian norm (definition see \cite[Section 2.1]{van1996weak}). 

The following theorem establishes the generic universality of $\hat{w}_A^{\ridge}$ with respect to the design matrix $A$. All proofs in this section can be found in Section \ref{section:proof_ridge}.

\begin{theorem}\label{thm:ridge_universality_generic}
Suppose (R1)-(R3) hold. Fix $\mathcal{S}_n \subset \R^n$. Suppose there exist $z \in \R, \rho_0>0$ and $\epsilon_n \in [0,1/4)$ such that 
\begin{align*}
\Prob\bigg(\min_{w \in \R^n} H^{\ridge}(w,G,\xi)\geq z+\rho_0 \bigg)\vee 	\Prob\bigg(\min_{w \in \mathcal{S}_n} H^{\ridge}(w,G,\xi)\leq z+2\rho_0 \bigg) \leq \epsilon_n. 
\end{align*}
Then there exists some $K=K(\sigma,\lambda,\tau,M_2)>0$ such that
\begin{align*}
\Prob\big(\hat{w}_A^{\ridge} \in \mathcal{S}_n\big)\leq 4\epsilon_n+K  \cdot (1\vee \rho_0^{-3}) (1\vee\pnorm{\mu_0}{\infty}^2)\cdot  n^{-1/6}\log^2 n .
\end{align*}
\end{theorem}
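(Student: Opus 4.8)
The plan is to deduce Theorem \ref{thm:ridge_universality_generic} from the general regression universality result, Theorem \ref{thm:universality_reg}, specialized to the square loss $\psi_0(x)=x^2/2$ (Example \ref{ex:square_loss}) with regularizer $\mathsf{f}(\mu)=\lambda\pnorm{\mu}{}^2/(2m)$, so that $H_{\psi_0,\mathsf{f}}(\cdot,A,\xi)=H^{\ridge}(\cdot,A,\xi)$. First I would check the hypotheses of Theorem \ref{thm:universality_reg} not already present in Theorem \ref{thm:ridge_universality_generic}: Assumption \ref{assump:setting} is (R1); Assumption \ref{assump:random} with $\max_{i,j}\E\abs{A_{0;ij}}^6<\infty$ and the independence/moment requirements on $\xi_0$ follow from the uniform sub-Gaussianity in (R3); and the modulus condition (\ref{cond:f_moduli}) is immediate, since for $w,w'\in[-L,L]^n$ with $\pnorm{w-w'}{\infty}\le\delta$ one has $\abs{\mathsf{f}(w)-\mathsf{f}(w')}=\frac{\lambda}{2m}\abs{\pnorm{w}{}^2-\pnorm{w'}{}^2}\le\frac{\lambda}{2m}\pnorm{w-w'}{}\pnorm{w+w'}{}\le\frac{\lambda n}{m}L\delta$, whence $\log\mathscr{M}_{\mathsf{f}}(L,\delta)\lesssim_{\lambda,\tau}\log L+\log n-\log(1/\delta)$. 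Condition (O2) of Theorem \ref{thm:universality_reg} is exactly the displayed hypothesis here. Thus everything reduces to verifying the $\ell_\infty$ control (O1): that $\pnorm{\hat w_A^{\ridge}}{\infty}\vee\pnorm{\hat w_G^{\ridge}}{\infty}\le L_n$ off an event of probability $\le n^{-c}$, for $L_n$ growing mildly.

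This $\ell_\infty$ bound is the crux, and I would obtain it directly from the closed form $\hat w_A^{\ridge}=(A^\top A+\lambda I)^{-1}(A^\top\xi-\lambda\mu_0)$ via a column leave-one-out (Schur complement) identity. Fix $s\in[n]$; writing $A=[A_{-s}\ a_s]$ with $a_s$ the $s$-th column, $P_s\equiv A_{-s}^\top A_{-s}+\lambda I$, $M_s\equiv I_m-A_{-s}P_s^{-1}A_{-s}^\top$ (symmetric with $0\preceq M_s\preceq I_m$), and $\gamma_s\equiv\pnorm{a_s}{}^2+\lambda-a_s^\top A_{-s}P_s^{-1}A_{-s}^\top a_s=\lambda+a_s^\top M_s a_s\ge\lambda>0$ deterministically, the $s$-th row of $(A^\top A+\lambda I)^{-1}$ gives
\[
\hat w^{\ridge}_{A,s}=\frac{1}{\gamma_s}\,a_s^\top M_s\xi-\frac{\lambda}{\gamma_s}\big(\mu_{0,s}-a_s^\top v_{-s}\big),\qquad v_{-s}\equiv A_{-s}P_s^{-1}\mu_{0,-s}.
\]
By (R3), $a_s$ and $\xi$ are independent of each other and of $A_{-s}$, so conditionally on $A_{-s}$: (i) $a_s^\top M_s\xi$ is a decoupled bilinear form, and the Hanson--Wright inequality, with variance proxy $\lesssim m^{-1}\sigma^2\pnorm{M_s}{F}^2\le\sigma^2$ and operator scale $\lesssim m^{-1/2}\sigma$, yields $\abs{a_s^\top M_s\xi}\lesssim_\sigma\sqrt{\log n}$; and (ii) since $\pnorm{v_{-s}}{}\le\pnorm{A_{-s}P_s^{-1}}{\op}\pnorm{\mu_0}{}\le(2\sqrt\lambda)^{-1}\sqrt{nM_2}$ deterministically (using (R2)), $a_s^\top v_{-s}$ is a sub-Gaussian linear form with variance proxy $\lesssim m^{-1}\pnorm{v_{-s}}{}^2\lesssim_{\lambda,\tau}M_2$, hence $\abs{a_s^\top v_{-s}}\lesssim_{\lambda,\tau,M_2}\sqrt{\log n}$. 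Each holds off an event of probability $\le n^{-c-1}$ for the implicit constants chosen large enough; a union bound over $s\in[n]$ together with $\gamma_s\ge\lambda$ gives
\[
\pnorm{\hat w_A^{\ridge}}{\infty}\lesssim_{\lambda,\sigma,\tau,M_2}\big(1\vee\pnorm{\mu_0}{\infty}\big)\sqrt{\log n}=:L_n
\]
with probability $\ge1-n^{-c}$, and the identical argument (the Gaussian case being only easier) gives the same for $\hat w_G^{\ridge}$.

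Finally I would feed $L_n$ and $\epsilon_n'\equiv\epsilon_n\vee n^{-c}$ into the square-loss branch of Theorem \ref{thm:universality_reg} (note $1\le L_n\le n$ for $n$ large, the claim being trivial otherwise): since $M_\xi=1+m^{-1}\sum_i\E\abs{\sigma\xi_{0,i}}^3\lesssim\sigma^3$ is bounded, the error term is $\lesssim(1\vee\rho_0^{-3})M_\xi^{1/3}L_n^2\log^{2/3}n\cdot n^{-1/6}\lesssim_{\lambda,\sigma,\tau,M_2}(1\vee\rho_0^{-3})(1\vee\pnorm{\mu_0}{\infty}^2)\log^{5/3}n\cdot n^{-1/6}\le(1\vee\rho_0^{-3})(1\vee\pnorm{\mu_0}{\infty}^2)n^{-1/6}\log^2 n$, and the extra $4n^{-c}$ from $\epsilon_n'$ is absorbed into this term for $c>1/6$; this is exactly the asserted bound with $K=K(\sigma,\lambda,\tau,M_2)$. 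The main obstacle is the $\ell_\infty$ bound, and inside it the control of the term $-\lambda(A^\top A+\lambda I)^{-1}\mu_0$: its naive size in $\ell_\infty$ is of order $\pnorm{\mu_0}{}\asymp\sqrt n$, far too large, and it is precisely the decoupling produced by the Schur-complement identity that reduces it to the $O(\sqrt{\log n})$ fluctuation $a_s^\top v_{-s}$ around the benign deterministic term $\lambda\gamma_s^{-1}\mu_{0,s}$ — which also explains the unavoidable $\pnorm{\mu_0}{\infty}^2$ factor in the final rate.
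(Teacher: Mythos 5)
Your proposal is correct, and it reduces the theorem to Theorem \ref{thm:universality_reg} in exactly the same way the paper does — the hypotheses of Theorem \ref{thm:universality_reg} are checked, (O2) is the assumed display, and the rate $L_n^2\log^{2/3}n/n^{1/6}$ from the square-loss branch is absorbed into the claimed bound. Where you genuinely diverge from the paper is in the verification of (O1), the $\ell_\infty$ bound for $\hat w^{\ridge}_A$ and $\hat w^{\ridge}_G$. The paper does not use the closed form at all: Proposition \ref{prop:ridge_risk}-(2) goes through Lemma \ref{lem:ridge_loo}-(1), a deterministic inequality obtained purely from cost optimality of $\hat w$ against the column leave-one-out minimizer $\hat w^{(s)}=\argmin_{w:w_s=0}H(w)$ and the KKT condition of the latter, namely $\lambda\hat w_s^2\lesssim\lambda^{-1}\big(|A_s^\top(A_{-s}\hat w^{(s)}_{-s}-\xi)|+\lambda|\mu_{0,s}|\big)^2$, after which independence of $A_s$ from $(A_{-s},\hat w^{(s)}_{-s},\xi)$ does the probabilistic work. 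Your Schur-complement identity is the exact algebraic counterpart of this decoupling, available only because Ridge has a closed form; your route is shorter and in fact gives a marginally sharper exponent on $\log n$ (it avoids the $\pnorm{\mu_0}{\infty}\sqrt{\log n}$ cross term appearing in your $L_n$; note the paper's $L_n=K(\sqrt{\log n}+\pnorm{\mu_0}{\infty})$ is of the same order, so neither choice changes the final stated rate). The trade-off is that the paper's leave-one-out cost-optimality argument is the one that generalizes verbatim to settings without closed forms — it is reused almost unchanged for the Lasso (Lemma \ref{lem:loo_perturb}) and regularized robust regression (Proposition \ref{prop:robust_risk}), which is why the paper develops it even though it is overkill for Ridge. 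Both are valid proofs of the statement.

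One small cosmetic remark: your $L_n=(1\vee\pnorm{\mu_0}{\infty})\sqrt{\log n}$ is a multiplicative bound whereas your own display yields an additive bound $K\sqrt{\log n}+K\pnorm{\mu_0}{\infty}$; either choice satisfies $1\le L_n\le n$ for $n$ large under (R2) and produces $L_n^2\lesssim(1\vee\pnorm{\mu_0}{\infty}^2)\log n$, so the slack does not affect anything, but an additive $L_n$ matches your computation more literally.
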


The sub-Gaussian moments in (R3) are assumed for simplicity; easy modifications of the proofs allow for weaker conditions, for example the existence of high enough moments, at the cost of a possible worsened probability bound. We do not pursue these non-essential refinements here for clarity of presentation and proofs.

The key to the proof of Theorem \ref{thm:ridge_universality_generic} is the following $\ell_\infty$ bound (and other risk bounds) for $\hat{w}_A^{\ridge}$, which may be of independent interest.

\begin{proposition}\label{prop:ridge_risk}
	Assume the same conditions as in Theorem \ref{thm:ridge_universality_generic}. Then the following hold with probability at least $1 -Cn^{-100}$ with respect to the randomness of $(A,\xi)$:
	\begin{enumerate}
		\item (Prediction risk) $\pnorm{A\hat{w}^{\ridge}_A}{}^2 \leq K\cdot n$.
		\item ($\ell_\infty$ risk)  $\pnorm{\hat{w}^{\ridge}_A}{\infty} \leq K\sqrt{\log n}+2\pnorm{\mu_0}{\infty}$.
		\item (Prediction $\ell_\infty$ risk) $\pnorm{A\hat{w}^{\ridge}_A}{\infty} \leq K\sqrt{\log n}$.
	\end{enumerate}
	Here $C,K>0$ depend on $\sigma,\lambda,\tau,M_2$ only. 
\end{proposition}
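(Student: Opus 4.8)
The plan is to exploit the closed form $\hat{w}_A^{\ridge} = (A^\top A + \lambda I)^{-1}(A^\top\xi - \lambda\mu_0)$ together with standard non-asymptotic random matrix estimates under the sub-Gaussian assumption (R3). First I would record the deterministic consequences of spectral control: on the event $\mathcal{E}_{\op} \equiv \{\pnorm{A}{\op}\leq C_\tau\}$, which holds with probability at least $1-Cn^{-100}$ by a union bound over a net / Bernstein-type bound (e.g. \cite{van1996weak}), we have $\pnorm{(A^\top A+\lambda I)^{-1}}{\op}\leq 1/\lambda$ and $\pnorm{(A^\top A+\lambda I)^{-1}A^\top}{\op}\leq \pnorm{A}{\op}/\lambda \leq C_\tau/\lambda$. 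This immediately gives the Euclidean bound $\pnorm{\hat{w}_A^{\ridge}}{}\leq (C_\tau/\lambda)\pnorm{\xi}{} + \pnorm{\mu_0}{}\lesssim \sqrt{n}$ with high probability, using $\pnorm{\xi}{}^2 = \sigma^2\pnorm{\xi_0}{}^2 \lesssim \sigma^2 m$ on a further high-probability event (concentration of the norm of a sub-Gaussian vector) and (R2). The prediction risk part (1) then follows from $\pnorm{A\hat{w}_A^{\ridge}}{}\leq \pnorm{A}{\op}\pnorm{\hat{w}_A^{\ridge}}{}\lesssim \sqrt{n}$.

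For the $\ell_\infty$ bounds (2) and (3), the idea is to write the $s$-th coordinate (or the $t$-th coordinate of the fitted vector) as a quadratic-in-Gaussian-type functional and apply a leave-one-out decoupling. Concretely, decompose $A = [A_{-s}\,|\,a_s]$ into all-but-the-$s$-th column and the $s$-th column $a_s$ (independent, entries mean $0$ variance $1/m$, sub-Gaussian). Using the block-inverse (Sherman--Morrison) formula for $(A^\top A+\lambda I)^{-1}$, the coordinate $\hat{w}_{A,s}^{\ridge}$ becomes a ratio whose numerator is a bilinear form $a_s^\top M_{-s}(\xi, \mu_{0,-s}, \ldots)$ with $M_{-s}$ a matrix independent of $a_s$ and operator norm $\lesssim 1$ on $\mathcal{E}_{\op}$, and whose denominator is bounded below by $\lambda$. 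Conditionally on everything but $a_s$, the numerator is a sub-Gaussian (Hanson--Wright controllable) scalar with variance proxy $\lesssim m^{-1}\pnorm{M_{-s}(\cdots)}{}^2 \lesssim m^{-1}\cdot n \asymp 1$, so it is $\bigop(\sqrt{\log n})$ after a union bound over $s\in[n]$; the extra additive $2\pnorm{\mu_0}{\infty}$ absorbs the deterministic $-\lambda\mu_0$ contribution (which, after inversion, is $\mu_0$ up to a term already counted). The prediction $\ell_\infty$ bound (3) is handled the same way using the row leave-one-out split: the $t$-th entry of $A\hat{w}_A^{\ridge}$ is $a^{(t)\top}\hat{w}_A^{\ridge}$ with $a^{(t)}$ the $t$-th row, and replacing $\hat{w}_A^{\ridge}$ by the row-leave-one-out estimator $\hat{w}_A^{[t]}$ (independent of $a^{(t)}$) up to an $\bigop(1)$ error controlled again by Sherman--Morrison, one gets a sub-Gaussian scalar of variance proxy $\lesssim m^{-1}\pnorm{\hat{w}_A^{[t]}}{}^2 \lesssim 1$, hence $\bigop(\sqrt{\log n})$ uniformly over $t \in [m]$; here no $\pnorm{\mu_0}{\infty}$ term appears because $A\hat{w}_A^{\ridge}$ has no direct $\mu_0$ offset.

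The main obstacle I anticipate is bookkeeping the leave-one-out perturbation bounds carefully enough to show that the difference $\hat{w}_A^{\ridge}-\hat{w}_A^{[t]}$ (and the analogous column quantity) is genuinely $\bigop(1)$ in the relevant norm, uniformly over the $n$ (resp. $m$) indices, so that the union bound over coordinates costs only a $\sqrt{\log n}$ factor rather than a polynomial one. This requires a lower bound on the denominator in Sherman--Morrison that is uniform over $s$ (or $t$) — i.e.\ that the leverage-type scalars $a_s^\top(A_{-s}^\top A_{-s}+\lambda I)^{-1}a_s$ concentrate around their mean, bounded away from making $1 + \text{(that scalar)}$ degenerate — which follows from Hanson--Wright plus the operator-norm event, but needs the uniform sub-Gaussian moment assumption (R3) in an essential way. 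Once these perturbation estimates are in place, everything reduces to one-dimensional sub-Gaussian tail bounds and a union bound, and the stated probability $1-Cn^{-100}$ follows by choosing the constants in the net/Bernstein estimates appropriately.
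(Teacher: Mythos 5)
Your proposal is correct in substance but takes a genuinely different route from the paper. You lean on the closed form $\hat{w}_A^{\ridge} = (A^\top A+\lambda I)^{-1}(A^\top\xi-\lambda\mu_0)$, an operator-norm event $\pnorm{A}{\op}\lesssim 1$, and Sherman--Morrison/block-inverse decompositions of $(A^\top A+\lambda I)^{-1}$ to isolate the coordinate (or fitted-value) dependence on the held-out column/row, then finish with Hanson--Wright and a union bound. The paper instead never touches the closed form: (1) follows from the one-line basic inequality $\bar{H}^{\ridge}(\hat{w})\le\bar{H}^{\ridge}(0)$, giving $\pnorm{A\hat{w}}{}^2\lesssim\pnorm{\xi}{}^2+\lambda\pnorm{\mu_0}{}^2$ without any spectral control of $A$; (2) and (3) use the cost-optimality comparison $\bar{H}^{\ridge}(\hat{w})\le\bar{H}^{\ridge}(\hat{w}^{(s)})$ (resp.\ the row analogue), which produces the clean deterministic perturbation bounds of Lemma \ref{lem:ridge_loo}: $\pnorm{A(\hat{w}^{(s)}-\hat{w})}{}^2+\lambda\hat{w}_s^2 \le 4\lambda^{-1}\bigl(\abs{A_s^\top(A_{-s}\hat{w}^{(s)}_{-s}-\xi)}+\lambda\abs{\mu_{0,s}}\bigr)^2$ and $\pnorm{A(\hat{w}^{[t]}-\hat{w})}{}\le 2(\abs{\xi_t}+\abs{a_t^\top\hat{w}^{[t]}})$, after which the $\ell_\infty$ bounds are immediate sub-Gaussian tail/union-bound statements with no Sherman--Morrison bookkeeping, no denominator lower bound, and no Hanson--Wright. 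The trade-off is clear: your approach is direct but intrinsically tied to the quadratic penalty and a closed-form solution, whereas the paper's leave-one-out-via-cost-optimality argument is designed precisely so that the \emph{same} template transfers verbatim to Lasso (Lemma \ref{lem:loo_perturb}) and to regularized robust regression (Proposition \ref{prop:robust_risk}), where no closed form exists. If you only wanted the Ridge result, your plan works and is arguably more elementary; the obstacles you flag (uniform control of the Sherman--Morrison denominator and the leave-one-out perturbation, both over $n$ indices) are real but manageable under (R3), and they are exactly the kind of bookkeeping the paper's cost-comparison lemma is engineered to avoid.
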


Thanks to the closed form formula for the Ridge estimator in (\ref{eqn:ridge}), some of its properties that can be related to the spectrum of $A^\top A$ including estimation error/prediction error (in the Euclidean norm), can be established directly via existing random matrix theory (RMT) (cf. \cite{dicker2016ridge,dobriban2018high,hastie2022surprises,mei2022generalization}). Here we will illustrate the power of Theorem \ref{thm:ridge_universality_generic} and its compatibility with the CGMT, by establishing non-asymptotic distributional approximations of $\hat{w}_A^{\ridge}$ and its residual given by $\hat{r}^{\ridge}_A \equiv Y-A\hat{\mu}_A^{\ridge}$. These results appear to be less amenable to direct applications of existing RMT techniques. More important, the formulation of these results suggests natural generalizations to the Lasso case in which no closed form formulas are available (cf. Section \ref{section:ex_lasso}). 

Recall $\eta_2(\cdot;\cdot)$ defined in (\ref{def:eta_1}). By Proposition \ref{prop:ridge_empirical_char}-(2), the system of equations
\begin{align}\label{eqn:ridge_fpe}
(\gamma_\ast^{\ridge})^2& = \sigma^2 + \frac{1}{m/n}\cdot \E\bigg[\eta_2\bigg(\Pi_{\mu_0}+\gamma_\ast^{\ridge} Z; \frac{\gamma_\ast^{\ridge} \lambda }{\beta_\ast^{\ridge}  }\bigg)-\Pi_{\mu_0}\bigg]^2,\nonumber\\
\beta_\ast^{\ridge}& = \gamma_\ast^{\ridge}\bigg[1-\frac{1}{m/n} \cdot \E \eta_2'\bigg(\Pi_{\mu_0}+\gamma_\ast^{\ridge} Z; \frac{\gamma_\ast^{\ridge} \lambda }{\beta_\ast^{\ridge} }\bigg)\bigg],
\end{align}
where $\Pi_{\mu_0}\otimes Z \equiv \big(n^{-1}\sum_{j=1}^n \delta_{\mu_{0,j}}\big)\otimes \mathcal{N}(0,1)$, 
admits a unique solution $(\beta_\ast^{\ridge},\gamma_\ast^{\ridge})$ within compacta of $[0,\infty)^2$ provided (R1)-(R2) are satisfied. Again, one may give explicit formulae for $(\beta_\ast^{\ridge},\gamma_\ast^{\ridge})$ defined using (\ref{eqn:ridge_fpe}), but we stick to the above fixed point equation formulation for transparent comparison to the Lasso case in (\ref{eqn:lasso_fpe}) below. 

Now we define the `population version' of $\hat{w}_A^{\ridge}$ and $\hat{r}_A^{\ridge}$ via $(\beta_\ast^{\ridge},\gamma_\ast^{\ridge})$ by
\begin{align}\label{def:w_ridge}
w_\ast^{\ridge}\equiv  \eta_2\bigg(\mu_0+\gamma_\ast^{\ridge} g; \frac{\gamma_\ast^{\ridge} \lambda}{\beta_\ast^{\ridge}}\bigg)-\mu_0,\quad r_\ast^{\ridge} \equiv \frac{\beta_\ast^{\ridge}}{\gamma_\ast^{\ridge}} \bigg(\sigma\cdot \xi_0+\sqrt{ (\gamma_\ast^{\ridge})^2-\sigma^2}\cdot h\bigg),
\end{align}
where $g \sim \mathcal{N}(0,I_n)$ and $h\sim \mathcal{N}(0,I_m)$ are independent standard Gaussian vectors that are also independent of the noise vector $\xi=\sigma \xi_0$. 

\begin{theorem}\label{thm:ridge_dist}
	Assume the same conditions as in Theorem \ref{thm:ridge_universality_generic}. Then there exists some $K=K(\sigma,\lambda,\tau,M_2)>0$ such that for all $1$-Lipschitz functions $\mathsf{g}:\R^n \to \R$, $\mathsf{h}: \R^m \to \R$ and $\epsilon>0$,
	\begin{align*}
	& \Prob\bigg( \bigabs{ \mathsf{g}\big(\hat{w}_A^{\ridge}/\sqrt{n}\big)- \E \mathsf{g}\big(w_\ast^{\ridge}/\sqrt{n}\big) }\geq \epsilon \bigg)\vee \Prob\bigg( \bigabs{ \mathsf{h}\big(\hat{r}_A^{\ridge}/\sqrt{m}\big)- \E_h \mathsf{h}\big(r_\ast^{\ridge}/\sqrt{m}\big) }\geq \epsilon \bigg)\\
	&\leq K\cdot  (1\vee \epsilon^{-6})(1\vee\pnorm{\mu_0}{\infty}^2)\cdot   n^{-1/6}\log^2 n.
	\end{align*}
	Here $\E_h$ indicates that the expectation is taken with respect to $h$ only. 
\end{theorem}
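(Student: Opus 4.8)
\emph{Strategy and the Gaussian core.} The plan is to establish the distributional description first for the standard Gaussian design $G$ (which satisfies (R1)--(R3)) via the CGMT, and then transfer it to a general $A$: for $\hat w_A^{\ridge}$ through the optimizer universality of Theorem \ref{thm:ridge_universality_generic}, and for the residual $\hat r_A^{\ridge}$ through the Gordon min-max universality of Corollary \ref{cor:min_max_universality}. Everything is carried out conditionally on $\xi$ (legitimate since $\xi_0$ is independent of $A_0$) and then averaged, and the $\ell_\infty$/prediction bounds of Proposition \ref{prop:ridge_risk} (valid for $G$ too) supply the error probability and the box radii $L_n,L_u,L_w\asymp\sqrt{\log n}+\pnorm{\mu_0}{\infty}$. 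For the Gaussian core, write $H^{\ridge}(w,G,\xi)=\min_{w}\max_{u}\{\frac1m u^\top Gw-\frac1m u^\top\xi-\frac1{2m}\pnorm{u}{}^2+\frac\lambda{2m}\pnorm{w+\mu_0}{}^2\}$, localize $w,u$ to $\ell_2$-balls of radius $O(\sqrt n)$ using Proposition \ref{prop:ridge_risk}, and apply the CGMT (Theorem \ref{thm:CGMT}) to pass to the auxiliary Gordon problem driven by two independent Gaussian vectors. Optimizing out the directional degrees of freedom collapses that problem to a scalar min-max in $(\beta,\gamma)$ whose stationarity equations are the system (\ref{eqn:ridge_fpe}) (uniquely solvable by Proposition \ref{prop:ridge_empirical_char}-(2)), and pins the auxiliary primal iterate to $\eta_2(\mu_0+\gamma_\ast^{\ridge}g;\gamma_\ast^{\ridge}\lambda/\beta_\ast^{\ridge})-\mu_0=w_\ast^{\ridge}$ and, by Sion's lemma, the dual iterate to $-\hat r_G^{\ridge}$, which scalarizes to $-r_\ast^{\ridge}$. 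Combining the CGMT deviation bound with Gaussian Lipschitz concentration of $g\mapsto\mathsf g(w_\ast^{\ridge}/\sqrt n)$ and $h\mapsto\mathsf h(r_\ast^{\ridge}/\sqrt m)$ then gives, with probability $\geq 1-Cn^{-c}$, that $\mathsf g(\hat w_G^{\ridge}/\sqrt n)$ lies within $\epsilon/2$ of $\E\mathsf g(w_\ast^{\ridge}/\sqrt n)$ and $\mathsf h(\hat r_G^{\ridge}/\sqrt m)$ within $\epsilon/2$ of $\E_h\mathsf h(r_\ast^{\ridge}/\sqrt m)$.

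\emph{Transfer for $\hat w_A^{\ridge}$.} Let $\mathcal S_n\equiv\{w:\abs{\mathsf g(w/\sqrt n)-\E\mathsf g(w_\ast^{\ridge}/\sqrt n)}\geq\epsilon\}$. Since $H^{\ridge}(\cdot,G,\xi)$ is $\lambda/m$-strongly convex, $H^{\ridge}(w,G,\xi)\geq\min_{\R^n}H^{\ridge}(\cdot,G,\xi)+\frac\lambda{2m}\pnorm{w-\hat w_G^{\ridge}}{}^2$ for every $w$; on the Gaussian-core event, the $1$-Lipschitz property of $\mathsf g$ forces $\pnorm{w-\hat w_G^{\ridge}}{}\geq\sqrt n\,\epsilon/2$ for $w\in\mathcal S_n$, whence $\min_{\mathcal S_n}H^{\ridge}(\cdot,G,\xi)\geq\min_{\R^n}H^{\ridge}(\cdot,G,\xi)+\lambda\tau\epsilon^2/8$. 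Taking $z$ to be the concentration value of $\min_{\R^n}H^{\ridge}(\cdot,G,\xi)$ and $\rho_0\asymp_{\lambda,\tau}\epsilon^2$ small enough verifies hypothesis (O2) of Theorem \ref{thm:ridge_universality_generic} with $\epsilon_n\lesssim n^{-c}$, and its conclusion gives the first half of the claim with the stated rate $(1\vee\epsilon^{-6})(1\vee\pnorm{\mu_0}{\infty}^2)n^{-1/6}\log^2 n$ (the $\epsilon^{-6}$ from $\rho_0^{-3}$, the $\pnorm{\mu_0}{\infty}^2$ from $L_n^2$).

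\emph{Transfer for $\hat r_A^{\ridge}$.} Now the functional $w\mapsto\mathsf h((\xi-Aw)/\sqrt m)$ depends on $A$, so Theorem \ref{thm:ridge_universality_generic} does not apply directly; instead work with the dual variable. With $X_n(u,w;A)=\frac1m u^\top Aw+Q_n(u,w)$, $Q_n(u,w)=-\frac1m u^\top\xi-\frac1{2m}\pnorm{u}{}^2+\frac\lambda{2m}\pnorm{w+\mu_0}{}^2$, one has $\max_u X_n(u,w;A)=H^{\ridge}(w,A,\xi)$, and after localizing the saddle to $[-L_u,L_u]^m\times[-L_w,L_w]^n$ (via Proposition \ref{prop:ridge_risk} and strong convexity/concavity) the min-max value equals $\min_{\R^n}H^{\ridge}(\cdot,A,\xi)$ with dual optimizer $-\hat r_A^{\ridge}$. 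On the bad dual set $\mathcal T_u\equiv\{u\in[-L_u,L_u]^m:\abs{\mathsf h(-u/\sqrt m)-\E_h\mathsf h(r_\ast^{\ridge}/\sqrt m)}\geq\epsilon\}$, the $1/m$-strong concavity of $X_n$ in $u$ gives, on the Gaussian-core event, $\max_{u\in\mathcal T_u}\min_w X_n(u,w;G)\leq\min_{\R^n}H^{\ridge}(\cdot,G,\xi)-\epsilon^2/8$; whereas the event $\{\hat r_A^{\ridge}\text{ deviates}\}$ forces $-\hat r_A^{\ridge}\in\mathcal T_u$, so that $\max_{u\in\mathcal T_u}\min_w X_n(u,w;A)=\min_{\R^n}H^{\ridge}(\cdot,A,\xi)$. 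Feeding this gap into the one-sided comparison of Corollary \ref{cor:min_max_universality} (here $\mathscr M_{Q_n}(L,n^{-1})\lesssim_{\sigma,\lambda,\tau}L/n$, so $\mathsf r_n\asymp L^2\log^{2/3}n/n^{1/6}$), together with the bracketing $\min_{\R^n}H^{\ridge}(\cdot,A,\xi)\approx\min_{\R^n}H^{\ridge}(\cdot,G,\xi)$ from Theorem \ref{thm:universality_smooth}, yields $\Prob(\hat r_A^{\ridge}\text{ deviates})\lesssim n^{-c}+C(1\vee\epsilon^{-6})\mathsf r_n$, which is the second half of the claim.

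\emph{Main obstacle.} The two transfer steps are essentially bookkeeping once the framework theorems are available; the substantive work is the Gaussian core --- running the quantitative CGMT reduction for Ridge all the way down to the scalar fixed-point system with explicit polynomial-in-$n$ error control. In particular one must (i) localize both the primal and dual problems to the $\ell_\infty$/$\ell_2$ boxes without perturbing the saddle, (ii) track the direction/sign variables through the scalarization so the auxiliary iterates are identified with $w_\ast^{\ridge}$ and $-r_\ast^{\ridge}$, and (iii) propagate the probability-$\epsilon$ CGMT deviation together with Gaussian Lipschitz concentration into the $1$-Lipschitz functionals $\mathsf g,\mathsf h$. A secondary subtlety is that Corollary \ref{cor:min_max_universality} provides only a one-sided comparison, so the two-sided control of $\min_{\R^n}H^{\ridge}(\cdot,A,\xi)$ used in the residual step must be obtained separately, e.g.\ from Theorem \ref{thm:universality_smooth} or by applying the corollary to both $\pm$ perturbations.
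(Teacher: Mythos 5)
Your proposal is correct and follows essentially the same route as the paper: establish the Gaussian case via CGMT/Gordon reduction, identify $w_\ast^{\ridge}$ and $u_\ast = -r_\ast^{\ridge}$ as the scalarized auxiliary iterates, and then transfer via Theorem \ref{thm:ridge_universality_generic} (for $\hat w_A$) and Corollary \ref{cor:min_max_universality} together with the $\mathfrak p_1+\mathfrak p_2$ style two-sided decomposition (for $\hat r_A$). The one notable organizational variant is that you run the gap argument at the primal/dual level, invoking $\lambda/m$-strong convexity of $H^{\ridge}(\cdot,G,\xi)$ around $\hat w_G$ and strong concavity of $u\mapsto\min_w X_n(u,w;G)$ around $\hat u_G$, which requires the Gaussian-case distributional result for $\hat w_G,\hat r_G$ as a prior step; the paper instead runs the gap directly at the Gordon level around the explicit minimizers $w_{\ast,n}$, $u_{\ast,n}$ (Propositions \ref{prop:ridge_W2_gap} and \ref{prop:ridge_u_map}), which avoids that prerequisite and gives a probability bound $1-Cn^{-100}$ uniformly in $\epsilon$ rather than the $\epsilon$-dependent bound your Gaussian-core event carries --- a cleaner bookkeeping but the same idea.
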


The first claim on $\hat{w}_A^{\ridge}$ in the above theorem is proved via an application of Theorem \ref{thm:ridge_universality_generic}, by analyzing the corresponding Gaussian design problem via the CGMT. The proof for the second claim on $\hat{r}_A^{\ridge}$ in the above theorem is more involved, and requires an application of the universality result for Gordon's max-min (min-max) cost in Theorem \ref{thm:min_max_universality}.

As a quick application of Theorem \ref{thm:ridge_dist} above, we may obtain universality of the distribution of $\hat{w}_A^{\ridge}$ and $\hat{r}_A^{\ridge}$ in an average sense as follows:
\begin{itemize}
	\item Let $\mathsf{g}(v)\equiv n^{-1}\sum_{j=1}^n \phi(\sqrt{n} v_j+\mu_{0,j},\mu_{0,j})$ for some $1$-Lipschitz function $\phi: \R^2\to \R$ in the first probability, we have with high probability
	\begin{align*}
	\frac{1}{n}\sum_{j=1}^n \phi\big(\hat{\mu}^{\ridge}_{A,j}, \mu_{0,j}\big)\approx  \E \phi\Big( \eta_2\big(\Pi_{\mu_0}+\gamma_\ast^{\ridge} Z; {\lambda\gamma_\ast^{\ridge} }\big/{\beta_\ast^{\ridge}}\big), \Pi_{\mu_0} \Big).
	\end{align*}
	Here the expectation is taken over $\Pi_{\mu_0}\otimes Z= (n^{-1}\sum_{j=1}^n \delta_{\mu_{0,j}})\otimes\mathcal{N}(0,1)$. 
	\item Let $\mathsf{h}(\nu)= m^{-1}\sum_{i=1}^m \phi(\sqrt{m}\nu_i)$ for some $1$-Lipschitz function $\phi: \R\to \R$ in the second probability, we have with high (unconditional) probability
	\begin{align*}
	\frac{1}{m}\sum_{i=1}^m \phi\big(\hat{r}_{A,i}^{\ridge}\big)\approx  \E \phi\bigg[\frac{\beta_\ast^{\ridge}}{\gamma_\ast^{\ridge}} \bigg(\sigma\cdot \Pi_{\xi_0}+\sqrt{ (\gamma_\ast^{\ridge})^2-\sigma^2}\cdot Z\bigg)\bigg].
	\end{align*}
	Here the expectation is taken over $\Pi_{\xi_0}\otimes Z=(m^{-1}\sum_{i=1}^m \delta_{\xi_{0,i}})\otimes \mathcal{N}(0,1)$. 
\end{itemize}

\begin{remark}\label{rmk:ridge_comparison}
We compare Theorem \ref{thm:ridge_dist} to several results in the literature:
\begin{itemize}
	\item For the distribution of $\hat{w}_A^{\ridge}$, \cite{panahi2017universal} obtained the following special version of universality for design matrices consisting of i.i.d. entries with vanishing third/fifth moments (almost symmetry): for convex $\mathsf{g}_0:\R \to \R$ with bounded second and third derivatives, or $\mathsf{g}_0=\bm{1}_{\cdot \geq x}$ for any $x \in \R$, $n^{-1}\sum_{j=1}^n \mathsf{g}_0(\hat{w}_{A,j}^{\ridge})$ and $n^{-1}\sum_{j=1}^n \mathsf{g}_0(\hat{w}_{G,j}^{\ridge})$ converge to the same limit. Our results are non-asymptotic allowing for arbitrary non-separable Lipschitz test functions, and do not require prior distributions on $\mu_0$ and vanishing third/fifth moments (almost symmetry) of the design entries.
	\item For the distribution of $\hat{r}_A^{\ridge}$, \cite[Theorem 3.1]{bellec2021derivatives} obtained stochastic representation of $\hat{r}_G^{\ridge}$ under (correlated) Gaussian designs in a broader class of problems. The results in \cite{bellec2021derivatives} depend on the Gaussian design assumption crucially via repeated applications of Gaussian integration by parts (e.g., the Stein's identity and the second-order Stein formula in \cite{stein1981estimation,bellec2021second}).
\end{itemize}

\end{remark}

\subsection{Example II: Lasso}\label{section:ex_lasso}

In this section, we consider universality properties for the Lasso estimator \cite{tibshirani1996regression}. Formally, let the Lasso cost function be
\begin{align}\label{def:lasso_cost}
\bar{H}^{\lasso}(w,A,\xi)\equiv \frac{1}{2}\pnorm{Aw - \xi}{}^2 + \lambda\pnorm{w + \mu_0}{1},
\end{align}
and its normalized version $H^{\lasso}\equiv \bar{H}^{\lasso}/m$. The Lasso solution is $\hat{\mu}^{\lasso}_A\equiv\hat{w}^{\lasso}_A+\mu_0$ with 
\begin{align*}
\hat{w}^{\lasso}_A\equiv  \argmin_{w\in\R^n} H^{\lasso}(w,A,\xi).
\end{align*}
We continue working with the conditions (R1)-(R3) in Section \ref{section:ex_ridge}. The following theorem establishes the generic universality of $\hat{w}_A^{\lasso}$ with respect to the design matrix $A$. All proofs in this section can be found in Section \ref{section:proof_lasso}.

\begin{theorem}\label{thm:lasso_universality_generic}
	Suppose (R1)-(R3) hold.	Suppose further that $\lambda \geq K_0 (1\vee \sigma)$ for some $K_0=K_0(M_2,\tau)>0$. Fix $\mathcal{S}_n \subset \R^n$. Suppose there exist $z \in \R, \rho_0>0$ and $\epsilon_n \in [0,1/4)$ such that 
	\begin{align*}
	\Prob\bigg(\min_{w \in \R^n} H^{\lasso}(w,G,\xi)\geq z+\rho_0 \bigg)\vee 	\Prob\bigg(\min_{w \in \mathcal{S}_n} H^{\lasso}(w,G,\xi)\leq z+2\rho_0 \bigg) \leq \epsilon_n. 
	\end{align*}
	Then there exists some $K=K(\sigma,\lambda,\tau,M_2)>0$ such that
	\begin{align*}
	\Prob\big(\hat{w}_A^{\ridge} \in \mathcal{S}_n\big)\leq 4\epsilon_n+K \cdot (1\vee \rho_0^{-3}) \cdot n^{-1/6}\log^2 n .
	\end{align*}
	The lower bound on $\lambda$ can be eliminated when $m/n\geq  1+\epsilon$ for some $\epsilon>0$ at the cost of possibly enlarged constants $K$ depending further on $\epsilon$.
\end{theorem}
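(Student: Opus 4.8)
The plan is to invoke the abstract regression universality result, Theorem~\ref{thm:universality_reg}, in its square-loss incarnation $\psi_0(x)=x^2/2$ (Example~\ref{ex:square_loss}, so $\mathsf{q}=3/2$, $\bar{\mathsf{q}}=6$). Most of its hypotheses are cheap under (R1)--(R3): the proportional regime is (R1); the six-moment requirement in Assumption~\ref{assump:random} follows from the uniform sub-Gaussianity of the entries of $A_0$ (and $\xi_0$); and the regularizer $\mathsf{f}(w)=(\lambda/m)\pnorm{w+\mu_0}{1}$ satisfies $\mathscr{M}_{\mathsf{f}}(L,\delta)\leq (\lambda n/m)\delta\leq (\lambda/\tau)\delta$, which verifies the moduli condition (\ref{cond:f_moduli}) with some $K_{\mathsf{f}}>1$ depending only on $\lambda,\tau$. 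Condition (O2) is assumed in the statement. Hence the whole problem reduces to verifying the $\ell_\infty$ growth condition (O1): I must exhibit $L_n\lesssim \sqrt{\log n}$ with $\Prob(\pnorm{\hat{w}_A^{\lasso}}{\infty}>L_n)\vee\Prob(\pnorm{\hat{w}_G^{\lasso}}{\infty}>L_n)\lesssim n^{-100}$. Feeding this and $M_\xi\lesssim 1+\sigma^3$ into the square-loss bound of Theorem~\ref{thm:universality_reg} gives $\mathsf{r}_{\mathsf{f}}(L_n)\lesssim M_\xi^{1/3}L_n^2\log^{2/3} n/n^{1/6}\lesssim n^{-1/6}\log^2 n$, which is exactly the asserted rate.

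The $\ell_\infty$ estimate --- a Lasso analogue of Proposition~\ref{prop:ridge_risk} --- I would prove in two stages via the leave-one-out device sketched after Theorem~\ref{thm:universality_reg}. First, collect the crude global controls, all holding with probability $\geq 1-Cn^{-100}$ and, by a union bound, simultaneously for the column leave-one-out solutions $\hat{w}_A^{(s)}$ (minimizers of $H^{\lasso}$ over $\{w:w_s=0\}$, $s\in[n]$) and the row leave-one-out solutions $\hat{w}_A^{[t]}$ ($t\in[m]$): $\pnorm{A}{\op}\lesssim 1$ and $\max_{s\in[n]}\bigabs{\pnorm{a_s}{}^2-1}=o(1)$ from standard sub-Gaussian random matrix bounds in the proportional regime; $\pnorm{\hat{\mu}_A^{\lasso}}{1}\lesssim n$, hence $\pnorm{\hat{w}_A^{\lasso}}{1}\lesssim n$, by comparing the Lasso cost at the minimizer with its value at $w=0$ --- and here the lower bound $\lambda\geq K_0(1\vee\sigma)$ enters decisively, as it is the mechanism that prevents $\pnorm{\hat{\mu}}{1}$ from exploding when $m<n$; and then $\pnorm{A\hat{w}_A^{\lasso}}{}^2\lesssim n$ and $\pnorm{\hat{r}_A^{\lasso}}{}\lesssim\sqrt n$, with $\hat{r}_A^{\lasso}\equiv\xi-A\hat{w}_A^{\lasso}$, again from the basic inequality.

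Second, bound $\pnorm{\hat{w}_A^{\lasso}}{\infty}$ coordinate by coordinate. Fix $s$ and compare $\hat{w}_A^{\lasso}$ with $\hat{w}_A^{(s)}$, which, having a vanishing $s$-th coordinate, is independent of the $s$-th column $a_s$. The Lasso stationarity condition on coordinate $s$ reads $\pnorm{a_s}{}^2\hat{w}_{A,s}^{\lasso}=a_s^\top\hat{r}^{(s)}-\lambda\hat{v}_s-a_s^\top A_{-s}(\hat{w}_{A,-s}^{\lasso}-\hat{w}_{A,-s}^{(s)})$, where $\hat{r}^{(s)}\equiv\xi-A\hat{w}_A^{(s)}$ and $\hat{v}\in\partial\pnorm{\hat{\mu}_A^{\lasso}}{1}$. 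The first term is $\lesssim\pnorm{\hat{r}^{(s)}}{}\sqrt{\log n}/\sqrt m\lesssim\sqrt{\log n}$ uniformly in $s$ by sub-Gaussian concentration (as $\hat{r}^{(s)}\perp a_s$); the second is $\leq\lambda$; the third, the coupling term, is the delicate one. For it I would bound $\pnorm{A_{-s}(\hat{w}_{A,-s}^{\lasso}-\hat{w}_{A,-s}^{(s)})}{}$ by adding the two one-sided ``strong convexity in the $A$-seminorm'' inequalities for the minimizers $\hat{w}_A^{\lasso}$ (over $\R^n$) and $\hat{w}_A^{(s)}$ (over $\{w_s=0\}$) --- the quadratic part of $H^{\lasso}$ being $m^{-1}A^\top A$-strongly convex --- which bounds it by a constant multiple of $\lambda|\hat{w}_{A,s}^{\lasso}|+(\hat{w}_{A,s}^{\lasso})^2$, and combine this with the one-step soft-threshold identity $\hat{\mu}_{A,s}^{\lasso}=\eta_1(\hat{\mu}_{A,s}^{\lasso}+a_s^\top\hat{r}_A^{\lasso};\lambda)$ and its leave-one-out surrogate $\eta_1(\mu_{0,s}+a_s^\top\hat{r}^{(s)};\lambda)$; a short self-bounding (bootstrap) argument then closes the loop and yields $|\hat{w}_{A,s}^{\lasso}|\lesssim_{\sigma,\lambda,\tau,M_2}\sqrt{\log n}$. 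A union bound over $s\in[n]$ gives the $\ell_\infty$ bound, and the identical argument applies to $G$, a special sub-Gaussian case.

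I expect the coupling estimate in the last paragraph to be the real difficulty: in the proportional underdetermined regime $A$ is far from injective, so the strong-convexity comparison degenerates on the null space of $A$, and one must genuinely exploit the soft-thresholding structure of the Lasso (together with the lower bound on $\lambda$, which is precisely why that hypothesis cannot be dropped unless $m/n\geq 1+\epsilon$) to keep $\hat{w}_A^{\lasso}$ and its leave-one-out surrogate close. For the final assertion of the theorem, when $m/n\geq 1+\epsilon$ one has $\sigma_{\min}(A)\gtrsim 1$ with high probability, so $H^{\lasso}$ is globally strongly convex and the global controls of the second paragraph --- hence the whole $\ell_\infty$ bound --- go through with no condition on $\lambda$; the rest of the argument is unchanged, with constants depending additionally on $\epsilon$.
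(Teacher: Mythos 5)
Your high-level reduction is exactly the paper's: instantiate Theorem~\ref{thm:universality_reg} with the square loss (so $\mathsf{q}=3/2$, $\bar{\mathsf{q}}=6$, hence six moments on $A_0$, supplied by sub-Gaussianity), let the displayed hypothesis on the Gaussian cost serve as (O2), verify (\ref{cond:f_moduli}) for the $\ell_1$ penalty, and reduce (O1) to the $\ell_\infty$ bound $\pnorm{\hat{w}_A^{\lasso}}{\infty}\lesssim\sqrt{\log n}$ with probability $\geq 1-Cn^{-100}$; the rate $M_\xi^{1/3}L_n^2\log^{2/3}n/n^{1/6}\lesssim n^{-1/6}\log^2 n$ then drops out. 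That is the content of the paper's one-line proof together with Proposition~\ref{prop:lasso_risk}. But the leave-one-out $\ell_\infty$ argument you sketch has a genuine gap, precisely at the spot you yourself flagged as the real difficulty, and I do not think the mechanism you propose can repair it.

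Tracking constants in your coupling step: adding the two one-sided strong-convexity inequalities in the $A$-seminorm gives $\pnorm{A_{-s}(\hat w_{-s}-\hat w^{(s)}_{-s})}{}^2\lesssim\lambda|\hat w_s|+\pnorm{A_s}{}^2\hat w_s^2$. Plugging this back into the coordinate-$s$ stationarity identity (with $\pnorm{A_s}{}^2\approx 1$ w.h.p.) produces
\begin{align*}
|\hat w_s|\ \leq\ C\sqrt{\log n}+C\lambda+C\sqrt{\lambda|\hat w_s|}\ +\ \pnorm{A_s}{}^2|\hat w_s|\cdot\big(1+o(1)\big),
\end{align*}
where the last term has prefactor $\geq 1$ because the $\hat w_s^2$ inside the square root carries an exact coefficient $\pnorm{A_s}{}^2$ and then picks up another $\pnorm{A_s}{}$ from Cauchy--Schwarz. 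This self-bounding inequality does not close from any starting point. The soft-threshold route fares no better: $\eta_1$ is $1$-Lipschitz but not a contraction, so comparing $\hat\mu_s=\eta_1(\hat\mu_s+A_s^\top\hat r;\lambda)$ with $\eta_1(\mu_{0,s}+A_s^\top\hat r^{(s)};\lambda)$ again returns a term of size $\geq|\hat w_s|$ on the right. The root cause is what you suspected: in the underdetermined regime the $A$-seminorm cannot be converted to an $\ell_2$ norm on a generic vector, and no amount of bootstrapping on $\hat w_s$ alone repairs that.

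The paper's Lemma~\ref{lem:loo_perturb} avoids bounding $\pnorm{A_{-s}(\hat w_{-s}-\hat w^{(s)}_{-s})}{}$ by a function of $\hat w_s$ at all. After projecting out the $s$-th column and invoking the Huber comparison Lemma~\ref{lem:compare_quad_form}, it lands on
\begin{align*}
\tfrac{1}{2}\pnorm{P_{A_s}^\perp A_{-s}(\hat w_{-s}-\hat w^{(s)}_{-s})}{}^2\ \lesssim\ \sqrt{\log n}\cdot\pnorm{\hat w_{-s}-\hat w^{(s)}_{-s}}{},
\end{align*}
with the same unknown on both sides, and then observes that the left side equals $\pnorm{A\bar w}{}^2$ for some $\bar w$ whose off-$s$ part is $\hat\mu_{-s}-\hat\mu^{(s)}_{-s}$, a difference of two Lasso solutions and hence $O(m)$-sparse. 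This is exactly where the hypothesis $\lambda\geq K_0(1\vee\sigma)$ enters: not to control $\pnorm{\hat\mu}{1}$ (any fixed $\lambda>0$ does that), but to guarantee $(c_0 m)$-sparsity of the Lasso solution via Lemma~\ref{lem:lasso_sparsity}, so that the sparse-eigenvalue Lemma~\ref{lem:sparse_eigenvalue} applies and gives $\pnorm{A\bar w}{}^2\geq\pnorm{\bar w}{}^2/2\geq\pnorm{\hat w_{-s}-\hat w^{(s)}_{-s}}{}^2/2$. Then $\pnorm{\hat w_{-s}-\hat w^{(s)}_{-s}}{}^2\lesssim\sqrt{\log n}\cdot\pnorm{\hat w_{-s}-\hat w^{(s)}_{-s}}{}$ closes at once. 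Your sparsity-free coupling is missing exactly this ingredient. Your remark about $m/n\geq 1+\epsilon$ is fine: there $\sigma_{\min}(A)\gtrsim 1$ w.h.p., the $A$-to-$\ell_2$ conversion is free, and the lower bound on $\lambda$ can indeed be dropped, as the paper notes at the end of Lemma~\ref{lem:loo_perturb}.
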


Note that a lower bound on the tuning parameter $\lambda$ is imposed only in the regime $m/n<1$; a precise value for this lower bound can be found in the statement of Lemma \ref{lem:lasso_sparsity}. Such a condition renders sufficient linear-order sparsity of the regression estimator in the proportional regime $m\asymp n$, and is quite common in the literature; see e.g. \cite{bellec2021debias,bellec2022biasing} for related results in the Gaussian design case.

The key to the proof of Theorem \ref{thm:lasso_universality_generic} is the following $\ell_\infty$ bound (and other risk bounds) for $\hat{w}_A^{\lasso}$, which may be of independent interest.

\begin{proposition}[Lasso risk bounds]\label{prop:lasso_risk}
	Assume the same conditions as in Theorem \ref{thm:lasso_universality_generic}. Suppose $\lambda \geq K_0 (1\vee \sigma)$  for some $K_0=K_0(M_2,\tau)>0$. Then the following holds with probability at least $1 -Cn^{-100}$ with respect to the randomness of $(A,\xi)$:
	\begin{enumerate}
		\item (Prediction risk) $\pnorm{A\hat{w}^{\lasso}_A}{}^2 \leq K\cdot  n$.
		\item ($\ell_\infty$ risk)  $\pnorm{\hat{w}^{\lasso}_A}{\infty} \leq K\sqrt{\log n}$.
		\item (Prediction $\ell_\infty$ risk) $\pnorm{A\hat{w}^{\lasso}_A}{\infty} \leq K \sqrt{\log n}$.
	\end{enumerate}
	Here $C,K>0$ depend on $\sigma,\lambda,\tau,M_2$ only. The lower bound on $\lambda$ can be eliminated when $m/n\geq  1+\epsilon$ for some $\epsilon>0$ at the cost of possibly enlarged constants $C,K$ depending further on $\epsilon$.
\end{proposition}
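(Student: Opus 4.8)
The plan is to handle the prediction risk first (it is essentially elementary), and then derive the two $\ell_\infty$ bounds by leave-one-out comparisons, with the linear sparsity of the Lasso solution together with restricted-eigenvalue estimates for the random design serving as the common infrastructure.

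\emph{Prediction risk.} I would start from the basic inequality $H^{\lasso}(\hat{w}_A^{\lasso},A,\xi)\le H^{\lasso}(0,A,\xi)$. Discarding the nonnegative term $\tfrac{\lambda}{m}\pnorm{\hat{\mu}_A^{\lasso}}{1}$ gives $\tfrac1{2m}\pnorm{A\hat{w}_A^{\lasso}-\xi}{}^2\le\tfrac1{2m}\pnorm{\xi}{}^2+\tfrac{\lambda}{m}\pnorm{\mu_0}{1}$; under (R1)--(R3), with probability $\ge 1-Cn^{-100}$ one has $\pnorm{\xi}{}^2\lesssim_\sigma m$ (sub-Gaussian concentration) and $\pnorm{\mu_0}{1}\le\sqrt n\pnorm{\mu_0}{}\le\sqrt{M_2}\,n\lesssim m$, so the right-hand side is $\lesssim m$ and claim (1) follows from $\pnorm{A\hat{w}_A^{\lasso}}{}\le\pnorm{A\hat{w}_A^{\lasso}-\xi}{}+\pnorm{\xi}{}$. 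Keeping the penalty term instead yields $\pnorm{\hat{\mu}_A^{\lasso}}{1}\le\pnorm{\xi}{}^2/(2\lambda)+\pnorm{\mu_0}{1}\lesssim n$ (here $\lambda\ge K_0$ is used), hence $\pnorm{\hat{w}_A^{\lasso}}{1}\lesssim n$.

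\emph{Sparsity, restricted isometry, and the $\ell_2$ bound.} Next I would invoke the linear-sparsity estimate (cf.\ Lemma~\ref{lem:lasso_sparsity}): for $\lambda\ge K_0(1\vee\sigma)$, $\pnorm{\hat{\mu}_A^{\lasso}}{0}\le\gamma n$ with $\gamma=\gamma(\lambda,\tau,M_2)$ that can be made as small as wanted by enlarging $K_0$. Standard sub-Gaussian matrix-deviation bounds then give, w.h.p.\ and for $\gamma$ small, $\pnorm{A}{\op}\lesssim 1$ together with a restricted isometry property, $\pnorm{Av}{}\asymp\pnorm{v}{}$ uniformly over $v$ supported on index sets of size $\lesssim\gamma n$. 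Splitting $\hat{w}_A^{\lasso}=(\hat{\mu}_A^{\lasso}-(\mu_0)_{\hat S})-(\mu_0)_{\hat S^c}$ with $\hat S=\mathrm{supp}(\hat{\mu}_A^{\lasso})$ (first piece $\gamma n$-sparse, second of $\ell_2$-norm $\le\pnorm{\mu_0}{}\lesssim\sqrt n$) and combining with (1) and $\pnorm{A}{\op}\lesssim1$, the restricted isometry gives $\pnorm{\hat{w}_A^{\lasso}}{}\lesssim\sqrt n$. When $m/n\ge1+\epsilon$, $\sigma_{\min}(A)\gtrsim_\epsilon 1$ w.h.p., so this and everything below goes through without any sparsity input, which is why the lower bound on $\lambda$ can then be dropped.

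\emph{$\ell_\infty$ bounds via leave-one-out.} For (2), fix $s\in[n]$, let $\hat{w}_A^{(s)}$ be the column leave-one-out minimizer (independent of the $s$-th column $a_s$ of $A$), and set $\hat r^{(s)}\equiv Y-A(\mu_0+\hat{w}_A^{(s)})$. Subtracting the two KKT systems and using monotonicity of the $\ell_1$-subdifferential gives $\pnorm{A_{-s}(\hat{w}_A^{\lasso}-\hat{w}_A^{(s)})_{-s}}{}\lesssim|\hat{w}_{A,s}^{\lasso}|$; since the perturbation $\hat{w}_A^{\lasso}-\hat{w}_A^{(s)}$ is $\lesssim\gamma n$-sparse, the restricted isometry then yields $\pnorm{\hat{w}_A^{\lasso}-\hat{w}_A^{(s)}}{}\lesssim|\hat{w}_{A,s}^{\lasso}|$, and a further leave-one-out to decouple the cross term $a_s^\top A_{-s}(\hat{w}_A^{\lasso}-\hat{w}_A^{(s)})_{-s}$ upgrades this to $\pnorm{\hat{w}_A^{\lasso}-\hat{w}_A^{(s)}}{}=O(\mathrm{polylog}\,n)$, hence $\pnorm{\hat r^{(s)}}{}\lesssim\sqrt n$, w.h.p. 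Coordinatewise stationarity of $\hat{\mu}_A^{\lasso}$ then reads $\hat{\mu}_{A,s}^{\lasso}=\eta_1(\mu_{0,s}+a_s^\top\hat r^{(s)}/\pnorm{a_s}{}^2;\,\lambda/\pnorm{a_s}{}^2)+O(\mathrm{polylog}\,n)$; since $\pnorm{a_s}{}^2\asymp1$ w.h.p.\ and, conditionally on $\hat r^{(s)}$, $a_s^\top\hat r^{(s)}$ is sub-Gaussian with variance $\pnorm{\hat r^{(s)}}{}^2/m\lesssim1$, a union bound over $s\in[n]$ gives $|a_s^\top\hat r^{(s)}|\lesssim\sqrt{\log n}$, and then $1$-Lipschitzness of $\eta_1(\cdot;\tau)$ with $|\eta_1(x;\tau)-x|\le\tau$ and $\tau\asymp\lambda$ yields $|\hat{w}_{A,s}^{\lasso}|=|\hat{\mu}_{A,s}^{\lasso}-\mu_{0,s}|\lesssim\sqrt{\log n}$, i.e.\ (2). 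Claim (3) is the same argument with the row leave-one-out $\hat{w}_A^{[t]}$ (independent of the $t$-th row $a^{(t)}$): write $(A\hat{w}_A^{\lasso})_t=(a^{(t)})^\top\hat{w}_A^{[t]}+(a^{(t)})^\top(\hat{w}_A^{\lasso}-\hat{w}_A^{[t]})$, where the first term is sub-Gaussian with variance $\pnorm{\hat{w}_A^{[t]}}{}^2/m\lesssim1$ (hence $\lesssim\sqrt{\log n}$ after a union bound over $t\in[m]$) and the second is $\le\pnorm{a^{(t)}}{}\pnorm{\hat{w}_A^{\lasso}-\hat{w}_A^{[t]}}{}=O(\mathrm{polylog}\,n)$.

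\emph{Main obstacle.} The main obstacle, carrying most of the work, will be the pair of leave-one-out perturbation bounds $\pnorm{\hat{w}_A^{\lasso}-\hat{w}_A^{(s)}}{},\,\pnorm{\hat{w}_A^{\lasso}-\hat{w}_A^{[t]}}{}=O(\mathrm{polylog}\,n)$ in the regime $m<n$, where the quadratic loss is not globally strongly convex: one must (i) localise the perturbation to an $\ell_1$-cone of effective dimension $\lesssim\gamma n$ via the sparsity from Lemma~\ref{lem:lasso_sparsity} (the precise reason $\lambda\ge K_0(1\vee\sigma)$ is imposed), (ii) transfer the restricted-eigenvalue / restricted-isometry estimates from the Gaussian template to a general sub-Gaussian design, and (iii) break the apparent circularity between $|\hat{w}_{A,s}^{\lasso}|$ and the perturbation error through a self-improving estimate for the cross term $a_s^\top A_{-s}(\hat{w}_A^{\lasso}-\hat{w}_A^{(s)})_{-s}$, exploiting that this perturbation is, to leading order, a function of $(A_{-s},\hat{w}_A^{(s)})$ alone and hence nearly independent of $a_s$. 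Finally, all failure events must be controlled at the $n^{-100}$ level uniformly over the $n+m$ leave-one-out subproblems, which is what dictates the (simplifying) uniform sub-Gaussian moment assumption in (R3).
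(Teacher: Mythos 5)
Your overall architecture — prediction risk from a basic cost comparison, then $\ell_\infty$ bounds from column and row leave-one-out perturbations, with linear sparsity (Lemma~\ref{lem:lasso_sparsity}) and restricted isometry / sparse eigenvalue estimates (Lemma~\ref{lem:sparse_eigenvalue}) as the infrastructure — matches the paper exactly, and your treatment of (1) and (3) is essentially the paper's. The genuine gap is precisely where you flag the ``main obstacle'': your route to the column leave-one-out perturbation bound does not close.

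Specifically, subtracting the two KKT systems and using monotonicity of the $\ell_1$-subdifferential only gives
\begin{equation*}
\bigpnorm{A_{-s}\big(\hat{w}-\hat{w}^{(s)}\big)_{-s}}{}^2 \;\le\; -\hat{w}_s\cdot A_s^\top A_{-s}\big(\hat{w}-\hat{w}^{(s)}\big)_{-s},
\end{equation*}
and hence, after the sparse-eigenvalue step, $\pnorm{\hat{w}-\hat{w}^{(s)}}{}\lesssim|\hat{w}_s|$. This is circular: $|\hat{w}_s|$ is exactly what you are trying to bound, and the cross term $A_s^\top A_{-s}(\hat{w}-\hat{w}^{(s)})_{-s}$ cannot be decoupled from $A_s$ by ``a further leave-one-out,'' because $\hat{w}-\hat{w}^{(s)}$ itself depends on $A_s$ through $\hat{w}$. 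The self-improving loop you gesture at (insert the perturbation bound into the coordinatewise stationarity and iterate) would need, as input, a bound on the cross term $A_s^\top A_{-s}(\hat{w}-\hat{w}^{(s)})_{-s}$ that is already of order $\sqrt{\log n}$, which you do not have. The paper circumvents this by a different mechanism in Lemma~\ref{lem:loo_perturb}: it first uses cost optimality and convexity of $\pnorm{\cdot}{1}$ to get $\tfrac12\pnorm{A_{-s}(\hat{w}-\hat{w}^{(s)})_{-s}}{}^2\le\bar{H}((0,\hat{w}_{-s}))-\bar{H}(\hat{w}^{(s)})$, then bounds the right side via a one-dimensional quadratic-form comparison (Lemma~\ref{lem:compare_quad_form}) that produces a term $\big(|\iprod{A_s}{\xi-A_{-s}\hat{w}^{(s)}_{-s}}|+\lambda\big)\pnorm{A_{-s}}{\op}\pnorm{\hat{w}_{-s}-\hat{w}^{(s)}_{-s}}{}/\pnorm{A_s}{}$. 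After moving the $\iprod{A_s}{A_{-s}\delta}^2/\pnorm{A_s}{}^2$ piece to the left and recognising it as $\pnorm{P_{A_s}^\perp A_{-s}\delta}{}^2$, the only $A_s$-dependent quantity left on the right is $|\iprod{A_s}{\xi-A_{-s}\hat{w}^{(s)}_{-s}}|$, which is $\lesssim\sqrt{\log n}$ because $\xi$ and $\hat{w}^{(s)}$ are independent of $A_s$. This is the step that actually breaks the circularity, and it is the ingredient your proposal is missing. You should replace the KKT-subtraction argument by this cost-comparison + projection + quadratic-form-comparison argument; the rest of your plan then goes through.
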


To the best of our knowledge, $\ell_\infty$ bounds for Lasso in the proportional regime $m\asymp n$ without exact sparsity conditions on $\mu_0$ (or in the linear order sparsity regime) are available only in the Gaussian design case, under a similar lower bound requirement on $\lambda$ when $m/n<1$; see \cite[Theorem 5.1]{bellec2022biasing} for a precise statement. The `interpolation' proof techniques used therein are specific to the Gaussianity of the design matrix, so cannot be extended easily to non-Gaussian design matrices. Here we use leave-one-out methods (as mentioned after Theorem \ref{thm:universality_reg}) to establish $\ell_\infty$ bounds for Lasso for general design matrices in the proportional regime.

Now we give an application of Theorem \ref{thm:lasso_universality_generic}, coupled with the CGMT method and the comparison inequalities in Theorem \ref{thm:min_max_universality} or Corollary \ref{cor:min_max_universality}, that establishes the universality of the distributions of
\begin{itemize}
	\item the error $\hat{w}_A^{\lasso}=\hat{\mu}_A^{\lasso}-\mu_0$, 
	\item the residual $\hat{r}_A^{\lasso}\equiv Y-A\hat{\mu}^{\lasso}_A$, 
	\item the subgradient $\hat{v}^{\lasso}_A\equiv\lambda^{-1}A^\top(Y-A \hat{\mu}_A^{\lasso})$, and
	\item the sparsity $\hat{s}_A^{\lasso} \equiv \pnorm{\hat{\mu}_A^{\lasso}}{0}/n$.
\end{itemize}
Recall $\eta_1(\cdot;\cdot)$ defined in (\ref{def:eta_1}) and $\Pi_{\mu_0}\otimes Z \equiv \big(n^{-1}\sum_{j=1}^n \delta_{\mu_{0,j}}\big)\otimes \mathcal{N}(0,1)$. By \cite{miolane2021distribution}, the system of equations
\begin{align}\label{eqn:lasso_fpe}
(\gamma_\ast^{\lasso})^2& = \sigma^2 + \frac{1}{m/n}\cdot \E\bigg[\eta_1\bigg(\Pi_{\mu_0}+\gamma_\ast^{\lasso} Z; \frac{\gamma_\ast^{\lasso} \lambda }{\beta_\ast^{\lasso}  }\bigg)-\Pi_{\mu_0}\bigg]^2,\nonumber\\
\beta_\ast^{\lasso}& = \gamma_\ast^{\lasso}\bigg[1-\frac{1}{m/n} \cdot \E \eta_1'\bigg(\Pi_{\mu_0}+\gamma_\ast^{\lasso} Z; \frac{\gamma_\ast^{\lasso} \lambda }{\beta_\ast^{\lasso} }\bigg)\bigg]
\end{align} 
admits a unique solution $(\beta_\ast^{\lasso},\gamma_\ast^{\lasso})$ within compacta of $[0,\infty)^2$ provided (R1)-(R2) are satisfied. Let the `population version' of $\hat{w}_A^{\lasso},\hat{r}_A^{\lasso},\hat{v}_A^{\lasso},\hat{s}_A^{\lasso}$ be
\begin{align}
w_\ast^{\lasso}&\equiv  \eta_1\bigg(\mu_0+\gamma_\ast^{\lasso} g; \frac{\gamma_\ast^{\lasso} \lambda}{\beta_\ast^{\lasso}}\bigg)-\mu_0,\quad r_\ast^{\lasso} \equiv \frac{\beta_\ast^{\lasso}}{\gamma_\ast^{\lasso}} \bigg(\sigma\cdot \xi_0+\sqrt{ (\gamma_\ast^{\lasso})^2-\sigma^2}\cdot h\bigg),\label{def:w_lasso}\\
v_\ast^{\lasso} &\equiv  -\frac{\beta_\ast^{\lasso}}{\gamma_\ast^{\lasso}\lambda}\bigg[\eta_1\bigg(\mu_0 + \gamma_\ast^{\lasso} g;\frac{\gamma_\ast^{\lasso}\lambda}{\beta_\ast^{\lasso}}\bigg) - \big(\mu_0 + \gamma_\ast^{\lasso} g\big)\bigg] = -\frac{\beta_\ast^{\lasso}}{\gamma_\ast^{\lasso}\lambda}\big(w_\ast^{\lasso}-\gamma_\ast^{\lasso} g\big),\label{def:v_lasso}\\
s_\ast^{\lasso}& = \E \eta_1'\bigg(\Pi_{\mu_0} + \gamma_\ast^{\lasso} Z;\frac{\gamma_\ast^{\lasso}\lambda}{\beta_\ast^{\lasso}}\bigg) = \Prob\bigg(\abs{ \Pi_{\mu_0} + \gamma_\ast^{\lasso} Z }\geq \frac{\gamma_\ast^{\lasso}\lambda}{\beta_\ast^{\lasso}}\bigg),\label{def:s_lasso}
\end{align}
where $g \sim \mathcal{N}(0,I_n)$ and $h \sim \mathcal{N}(0,I_m)$ are independent standard Gaussian vectors that are also independent of the noise vector $\xi$. Clearly the fixed point equation (\ref{eqn:lasso_fpe}) and the `population' quantities $w_\ast^{\lasso}, r_\ast^{\lasso}$ in (\ref{def:w_lasso}) for the Lasso estimator are in complete analogue to those for the Ridge estimator defined in (\ref{eqn:ridge_fpe}) and (\ref{def:w_ridge}). The `population' quantities $v_\ast^{\lasso},s_\ast^{\lasso}$ are of special interest to the Lasso.

\begin{theorem}\label{thm:lasso_dist}
	Assume the same conditions as in Theorem \ref{thm:lasso_universality_generic}. Then there exists some $K=K(\sigma,\lambda,\tau,M_2)>0$ such that for all $1$-Lipschitz functions $\mathsf{g}:\R^n \to \R, \mathsf{h}: \R^m \to \R$ and $\epsilon>0$, all the following probabilities
	\begin{itemize}
		\item $\Prob\big( \abs{ \mathsf{g}\big(\hat{w}_A^{\lasso}/\sqrt{n}\big)- \E \mathsf{g}\big(w_\ast^{\lasso}/\sqrt{n}\big) }\geq \epsilon \big)$,
		\item $\Prob\big( \abs{ \mathsf{h}\big(\hat{r}_A^{\lasso}/\sqrt{m}\big)- \E_h \mathsf{h}\big(r_\ast^{\lasso}/\sqrt{m}\big) }\geq \epsilon \big)$,
		\item $\Prob\big( \abs{ \mathsf{g}\big(\hat{v}_A^{\lasso}/\sqrt{n}\big)- \E \mathsf{g}\big(v_\ast^{\lasso}/\sqrt{n}\big) }\geq \epsilon \big)$,
		\item $\Prob\big(\abs{\hat{s}_A^{\lasso}-s_\ast^{\lasso}}\geq \epsilon^{1/2}\big)$
	\end{itemize}
	are bounded by $K\cdot(1\vee \epsilon^{-6}) \cdot n^{-1/6}\log^3 n$.
\end{theorem}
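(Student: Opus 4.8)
The four quantities split into two groups according to how they depend on the design. The error $\hat w_A^{\lasso}$ is a genuine minimizer of the fixed cost $w\mapsto H^{\lasso}(w,A,\xi)$, so it is handled directly by the estimator universality result Theorem \ref{thm:lasso_universality_generic}. The residual $\hat r_A^{\lasso}=\xi-A\hat w_A^{\lasso}$, the subgradient $\hat v_A^{\lasso}=\lambda^{-1}A^\top(\xi-A\hat w_A^{\lasso})$, and the sparsity $\hat s_A^{\lasso}$ are \emph{not} Lipschitz functionals of $\hat w_A^{\lasso}$ alone---they involve $A$ (and $A^\top$) explicitly---so for these we route through the Gordon min--max cost universality of Theorem \ref{thm:min_max_universality} and Corollary \ref{cor:min_max_universality}, exactly as hinted after the latter. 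In all cases the non-Gaussian-to-Gaussian transfer is immediate once a \emph{constant-order gap} ($\asymp\epsilon^2$) is established on the standard Gaussian side via the CGMT; that gap analysis rests on the scalarizations of \cite{miolane2021distribution,celentano2020lasso}, recast in the ``cost-gap'' form required by our framework. Throughout we condition on $\xi$ (so the data randomness is only $A$ or $G$) and localize to $\ell_\infty$-boxes using Proposition \ref{prop:lasso_risk}, which is what makes the CGMT applicable and keeps the universality error at order $n^{-1/6}\mathrm{polylog}(n)$.

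\textbf{The error.} Fix $1$-Lipschitz $\mathsf g$ and $\epsilon>0$ and set $\mathcal S_n\equiv\{w\in\R^n:\bigabs{\mathsf g(w/\sqrt n)-\E\mathsf g(w_\ast^{\lasso}/\sqrt n)}\geq\epsilon\}$. Proposition \ref{prop:lasso_risk}-(2) gives $\pnorm{\hat w_A^{\lasso}}{\infty}\vee\pnorm{\hat w_G^{\lasso}}{\infty}\leq L_n\equiv K\sqrt{\log n}$ with probability $1-Cn^{-100}$, verifying (O1). For (O2): the CGMT identifies a deterministic $z$ with $\min_{w\in\R^n}H^{\lasso}(w,G,\xi)\to z$, while restricting to $\mathcal S_n$ translates, under the CGMT reduction to the scalar Gordon problem, into a separation of the empirical profile of $w$ from that of the unique scalar minimizer associated with $(\beta_\ast^{\lasso},\gamma_\ast^{\lasso})$ through (\ref{eqn:lasso_fpe}); local strong convexity then forces $\min_{w\in\mathcal S_n}H^{\lasso}(w,G,\xi)\geq z+2\rho_0$ w.h.p.\ with $\rho_0\asymp\epsilon^2$. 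Theorem \ref{thm:lasso_universality_generic} yields $\Prob(\hat w_A^{\lasso}\in\mathcal S_n)\lesssim n^{-100}+(1\vee\rho_0^{-3})n^{-1/6}\log^2 n$, and $\rho_0^{-3}\asymp\epsilon^{-6}$ gives the stated bound.

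\textbf{Residual, subgradient, sparsity.} Dualizing the quadratic loss, $H^{\lasso}(w,A,\xi)=\max_{u\in\R^m}X_n(u,w;A)$ with $X_n(u,w;A)=m^{-1}u^\top A w+Q_n(u,w)$ and $Q_n(u,w)=-m^{-1}u^\top\xi-(2m)^{-1}\pnorm{u}{}^2+(\lambda/m)\pnorm{w+\mu_0}{1}$; since $X_n$ is convex in $w$ and strongly concave in $u$, Sion's theorem gives $\min_w\max_u X_n=\max_u\min_w X_n$, with common saddle-maximizer $\hat u_A=A\hat w_A^{\lasso}-\xi=-\hat r_A^{\lasso}$. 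For $1$-Lipschitz $\mathsf h$ and $\epsilon>0$ set $\mathcal S_u\equiv\{u:\bigabs{\mathsf h(-u/\sqrt m)-\E_h\mathsf h(r_\ast^{\lasso}/\sqrt m)}\geq\epsilon\}$ (a function of $\xi$ only); since $u\mapsto\min_w X_n(u,w;A)$ is $m^{-1}$-strongly concave, $\{\hat r_A^{\lasso}\text{ deviates}\}=\{\hat u_A\in\mathcal S_u\}\subseteq\{\max_{u\in\mathcal S_u}\min_w X_n(u,w;A)\geq\max_u\min_w X_n(u,w;A)\}$. We restrict $w$ to $[-L_w,L_w]^n$ and $u$ to $[-L_u,L_u]^m$ with $L_w,L_u\asymp\sqrt{\log n}$ (Proposition \ref{prop:lasso_risk}-(2),(3), after truncating the sub-Gaussian noise so $\pnorm{\xi}{\infty}\lesssim\sigma\sqrt{\log n}$); the moduli $\mathscr M_{Q_n}(L,\delta)\lesssim\mathrm{poly}(L,\log n)\,\delta$ so Corollary \ref{cor:min_max_universality} has error $n^{-1/6}\mathrm{polylog}(n)$. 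On the Gaussian side, the CGMT yields $\max_u\min_w X_n(u,w;G)\to z'$ and, by uniqueness/strong concavity at the scalar residual profile $r_\ast^{\lasso}$ (the $h$-component of $r_\ast^{\lasso}$ arising from the CGMT aggregation), $\max_{u\in\mathcal S_u}\min_w X_n(u,w;G)\leq z'-2\rho_0$ w.h.p.\ with $\rho_0\asymp\epsilon^2$; Corollary \ref{cor:min_max_universality} transfers the gap to $A$, giving the residual bound. For the subgradient we run the same scheme with an \emph{augmented} inner variable $v$ (the zeroed-block extension (ii) of Corollary \ref{cor:min_max_universality}, as flagged in the remark following it) encoding the stationarity identity $\lambda\hat v_A^{\lasso}=A^\top(\xi-A\hat w_A^{\lasso})$, the Gaussian-side gap being supplied by the CGMT characterization $\hat v_G^{\lasso}\approx v_\ast^{\lasso}$ of \cite{miolane2021distribution,celentano2020lasso}. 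Finally the sparsity follows by complementary slackness: $\hat s_A^{\lasso}=n^{-1}\#\{j:\abs{\hat v_{A,j}^{\lasso}}=1\}$ up to generic tie-breaking, and converting the Lipschitz-level control of $\hat v_A^{\lasso}$ into control of this boundary count costs one square root (few coordinates of $v_\ast^{\lasso}$ lie within $\sqrt\epsilon$ of $\pm1$ by anti-concentration), explaining the $\epsilon^{1/2}$ threshold; combined with the subgradient bound this gives $K(1\vee\epsilon^{-6})n^{-1/6}\log^3 n$.

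\textbf{Main obstacle.} The crux is the Gaussian-side gap analysis for each functional: (i) reduce the relevant (min-)max-min problem to its scalar Gordon counterpart via the CGMT (which requires the $\ell_\infty$ localization of Proposition \ref{prop:lasso_risk}), (ii) identify the \emph{unique} scalar optimizer and pin down its exact correspondence with $(\beta_\ast^{\lasso},\gamma_\ast^{\lasso})$ and the population objects $w_\ast^{\lasso},r_\ast^{\lasso},v_\ast^{\lasso},s_\ast^{\lasso}$, and (iii) quantify the local strong convexity/concavity so that the $\epsilon$-deviation set inflates the cost by $\rho_0\gtrsim\epsilon^2$. Steps (i)--(ii) can largely be imported from \cite{miolane2021distribution,celentano2020lasso}, but they are stated there in ``distribution of the optimizer'' language and must be re-derived in the ``cost gap'' language demanded by Theorems \ref{thm:lasso_universality_generic} and \ref{thm:min_max_universality}; for the residual and subgradient the additional burden is choosing the correct saddle (and, for the subgradient, the correct augmented variable and zeroed block) together with box constraints tight enough---via Proposition \ref{prop:lasso_risk}---to keep the universality error at order $n^{-1/6}\mathrm{polylog}(n)$, with the sub-Gaussian truncation of $\xi$ and the anti-concentration step for $\hat s_A^{\lasso}$ being minor additional points.
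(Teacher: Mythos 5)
Your architecture matches the paper's: Theorem \ref{thm:lasso_universality_generic} for the error, the max--min universality (Theorem \ref{thm:min_max_universality}/Corollary \ref{cor:min_max_universality}) for the residual and subgradient (including the zeroed-block augmentation), and smoothing plus anti-concentration with $\delta=\epsilon^{1/2}$ for the sparsity; the localization via Proposition \ref{prop:lasso_risk} and the quadratic gap $\rho_0\asymp\epsilon^2$ are also correct. Two steps, however, are genuine gaps.

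First, on the Gaussian-design side the error $\xi$ is still the original (sub-Gaussian, not Gaussian) vector, but the scalarizations in \cite{miolane2021distribution} that you want to import are proved for the pair (Gaussian design, Gaussian error). Conditioning on $\xi$ handles the $A\to G$ transfer but does nothing to convert the Gordon cost with arbitrary $\xi$ into the one with Gaussian error. The paper fills this by writing two Gordon costs $L$ and $L^{\errg}$ (see (\ref{def:h_l_lasso})--(\ref{def:h_l_lasso_errg})) and proving Proposition \ref{prop:lasso_gordon_cost_com}, a uniform $\bigo(R^2 r_n)$ comparison on $\pnorm{w}{}\le\sqrt{m}R$, which is then used every time the deterministic value $\psi(\beta_\ast,\gamma_\ast)$ is extracted. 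Without an analogue of this comparison, step (O2) on the Gaussian side does not follow from the cited results.

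Second, the sparsity claim ``$\hat s_A^{\lasso}=n^{-1}\#\{j:\abs{\hat v_{A,j}^{\lasso}}=1\}$ up to generic tie-breaking'' is not available here: complementary slackness only gives $\hat\mu_j\neq 0\Rightarrow\abs{\hat v_j}=1$, i.e.\ $\hat s_A^{\lasso}\le n^{-1}\#\{j:\abs{\hat v_j}=1\}$, and the reverse cannot be dismissed as a zero-measure event for the general (possibly discrete) designs and errors allowed by (R3). The paper is explicit that the subgradient gives only the \emph{upper} bound on $\hat s_A^{\lasso}$, while the matching \emph{lower} bound is obtained directly from $\hat\mu_A^{\lasso}$ via $\hat s_A^{\lasso}=n^{-1}\sum_j\bm{1}(\hat\mu_j\neq 0)$, smoothing $\bm{1}(\cdot\neq 0)$ from below, and using anti-concentration of $w_\ast^{\lasso}+\Pi_{\mu_0}$ near $0$. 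You need both one-sided arguments, each feeding off a different distributional characterization (of $\hat v$ and of $\hat\mu$ respectively), rather than a single equality from $\hat v$.
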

The proofs of the results in Theorem \ref{thm:lasso_dist} are  fairly involved, even given Theorem \ref{thm:lasso_universality_generic}, Proposition \ref{prop:lasso_risk} and the results in \cite{miolane2021distribution}---one needs to pay special attention to (suitable versions of) $\ell_\infty$ constrained `Gordon problems' over exception sets. We also note that the result for $\hat{s}_A^{\lasso}$ does not follow directly from $\hat{w}_A^{\lasso}$---in fact, similar to \cite{miolane2021distribution}, the distributional characterization for $\hat{w}_A^{\lasso}$ only provides a lower bound for $\hat{s}_A^{\lasso}$, while a matching upper bound is provided by the control of the subgradient $\hat{v}_A^{\lasso}$.

To put Theorem \ref{thm:lasso_dist} in the literature, \cite{panahi2017universal} obtained universality for $\hat{w}_A^{\lasso}$ in a quite restrictive sense under several strong conditions on the design distributions (details see Remark \ref{rmk:ridge_comparison}). \cite{miolane2021distribution} obtained distributional characterizations in the isotropic Gaussian design and Gaussian error case; our results here extend those of \cite{miolane2021distribution} to general designs and errors. 

As an immediate application of Theorem \ref{thm:lasso_dist}, we may use the observable quantities $\pnorm{\hat{r}_A^{\lasso}}{}, \pnorm{\hat{v}_A^{\lasso}}{},\hat{s}_A^{\lasso}$ to form consistent estimators for the estimation error $\pnorm{\hat{w}_A^{\lasso}}{}^2/n$, the prediction error $\pnorm{A\hat{w}_A^{\lasso}}{}^2/m$, the original noise level $\sigma$ and the effective noise level $\gamma_\ast^{\lasso}$ under general designs $A$ and errors $\xi$. For instance, we may use
\begin{align}\label{def:gamma_est_lasso}
\hat{\gamma}_A^{\lasso}\equiv \frac{\pnorm{\hat{r}_A^{\lasso}}{}/\sqrt{m}}{1-\frac{1}{m/n}\hat{s}_A^{\lasso}} = \frac{  \sqrt{m}\pnorm{Y-A\hat{\mu}_A^{\lasso}}{} }{m-\pnorm{\hat{\mu}_A^{\lasso}}{0}}
\end{align}
as a consistent estimator for $\gamma_\ast^{\lasso}$. See \cite[Section 4.1]{miolane2021distribution} for precise formulae of estimators for other quantities mentioned above.

As another important outlet of the proofs of Theorem \ref{thm:lasso_dist}, we consider the distribution of the degrees-of-freedom (dof) adjusted debiased Lasso $\hat{\mu}_A^{\delasso}$ (cf. \cite{zhang2014confidence,van2014asymptotically,javanmard2014confidence,javanmard2014hypothesis,javanmard2018debiasing,bellec2021debias,bellec2022biasing}), defined by
\begin{align}\label{def:debias_lasso}
\hat{\mu}_A^{\delasso}\equiv \hat{\mu}_A^{\lasso}+ \frac{A^\top (Y-A\hat{\mu}_A^{\lasso})}{1-\pnorm{\hat{\mu}_A^{\lasso}}{0}/m},
\end{align}
and the validity of the following $(1-\alpha)$ confidence intervals for $\{\mu_{0,j}\}$:
\begin{align}\label{def:debias_lasso_CI}
\mathsf{CI}^{\delasso}_j\equiv \big[\hat{\mu}_{A,j}^{\delasso}-z_{\alpha/2}\cdot \hat{\gamma}_A^{\lasso}, \hat{\mu}_{A,j}^{\delasso}+z_{\alpha/2}\cdot \hat{\gamma}_A^{\lasso}\big],\quad j \in [n].
\end{align}
Here  $z_\alpha$ is the normal upper $\alpha$-quantile defined via $\Prob(\mathcal{N}(0,1)>z_\alpha)=\alpha$.
\begin{theorem}\label{thm:debiased_lasso}
	Assume the same conditions as in Theorem \ref{thm:lasso_universality_generic}. Then there exists some $K=K(\sigma,\lambda,\tau,M_2)>0$ such that for any  $\mathsf{g}:\R^2\to \R$ and $\epsilon \in (0,1)$,
	\begin{align*}
	&\Prob\Big(\bigabs{ \E^\circ\mathsf{g}\big(\Pi_{\hat{\mu}_A^{\delasso}},\Pi_{\mu_0}\big)-\E \mathsf{g}\big(\Pi_{\mu_0}+\gamma_\ast^{\lasso} Z,\Pi_{\mu_0}\big) } \geq ( \pnorm{\mathsf{g}}{\lip}\vee\pnorm{\mathsf{g}}{\infty})\cdot\epsilon \Big)\\
	&\leq K \cdot \epsilon^{-12} \cdot n^{-1/6}\log^3 n.
	\end{align*}
	Here we write $\E^\circ[\cdot]=\E[\cdot|A,\xi]$, and $\big(\Pi_{\hat{\mu}_A^{\delasso}},\Pi_{\mu_0}\big)= n^{-1}\sum_{j=1}^n \delta_{(\hat{\mu}_{A,j}^{\delasso},\mu_{0,j})}$, $\Pi_{\mu_0}\otimes Z=(n^{-1}\sum_{j=1}^n \delta_{\mu_{0,j}})\otimes\mathcal{N}(0,1)$. Consequently, with the averaged empirical coverage for $\{\mathsf{CI}^{\delasso}_j\}$  defined as $\hat{\mathscr{C}}_A^{\delasso}\equiv n^{-1}\sum_{j=1}^n \bm{1}\big(\mu_{0,j} \in \mathsf{CI}^{\delasso}_j\big)$, for any $\epsilon \in (0,1)$,
	\begin{align*}
	\Prob\big(\abs{\hat{\mathscr{C}}_A^{\delasso}-(1-\alpha)}>\epsilon\big)\leq K\cdot \epsilon^{-24}\cdot n^{-1/6}\log^3 n.
	\end{align*}
\end{theorem}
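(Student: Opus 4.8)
The plan is to reduce the debiased Lasso to the triple $(\hat{w}_A^{\lasso},\hat{v}_A^{\lasso},\hat{s}_A^{\lasso})$ and then feed in a \emph{joint} distributional characterization of these quantities, of the kind produced by the $\ell_\infty$-constrained Gordon machinery behind Theorem~\ref{thm:lasso_dist}. From (\ref{def:debias_lasso}), $\lambda\hat{v}_A^{\lasso}=A^\top(Y-A\hat{\mu}_A^{\lasso})$ and $\pnorm{\hat{\mu}_A^{\lasso}}{0}/m=(n/m)\hat{s}_A^{\lasso}$, one has
\begin{align*}
\hat{\mu}_A^{\delasso}-\mu_0=\hat{w}_A^{\lasso}+\frac{\lambda\hat{v}_A^{\lasso}}{1-(n/m)\hat{s}_A^{\lasso}}.
\end{align*}
On the population side, the fixed-point equations (\ref{eqn:lasso_fpe}) give $1-(n/m)s_\ast^{\lasso}=\beta_\ast^{\lasso}/\gamma_\ast^{\lasso}$, and inserting this into (\ref{def:w_lasso})--(\ref{def:v_lasso}) yields the exact identity $w_\ast^{\lasso}+\lambda v_\ast^{\lasso}/(1-(n/m)s_\ast^{\lasso})=\gamma_\ast^{\lasso}g$. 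Hence the population counterpart of $\hat{\mu}_A^{\delasso}-\mu_0$ is precisely $\gamma_\ast^{\lasso}g$ with $g\sim\mathcal N(0,I_n)$, whose coordinatewise empirical law paired with $\mu_0$ equals $\E\,\mathsf{g}(\Pi_{\mu_0}+\gamma_\ast^{\lasso}Z,\Pi_{\mu_0})$; this is the target.

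Next I would discharge the denominator. Since $\pnorm{\hat{v}_A^{\lasso}}{\infty}\le1$ by the Lasso KKT conditions, and $1-(n/m)s_\ast^{\lasso}=\beta_\ast^{\lasso}/\gamma_\ast^{\lasso}$ is bounded away from $0$ under the standing hypotheses (the linear-order sparsity of Lemma~\ref{lem:lasso_sparsity}/the lower bound on $\lambda$ also forces $\hat{s}_A^{\lasso}<m/n$ with high probability), the sparsity bound $\Prob(\abs{\hat{s}_A^{\lasso}-s_\ast^{\lasso}}\ge\delta)\lesssim\delta^{-12}n^{-1/6}\log^3 n$ of Theorem~\ref{thm:lasso_dist} lets us replace $\hat{s}_A^{\lasso}$ by $s_\ast^{\lasso}$ in the denominator at the cost of an $\ell_\infty$ perturbation of size $\lesssim\delta$ in $\hat{\mu}_A^{\delasso}-\mu_0$. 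It then remains to compare $n^{-1}\sum_j\mathsf{g}\big(\mu_{0,j}+\hat{w}^{\lasso}_{A,j}+(\gamma_\ast^{\lasso}/\beta_\ast^{\lasso})\lambda\hat{v}^{\lasso}_{A,j},\mu_{0,j}\big)$ with its population version. For $1$-Lipschitz $\mathsf{g}$ this is a test function of the \emph{pair} $(\hat{w}_A^{\lasso},\hat{v}_A^{\lasso})$ that is Lipschitz with a constant depending only on $\lambda,\gamma_\ast^{\lasso},\beta_\ast^{\lasso}$ — hence on $(\sigma,\lambda,\tau,M_2)$ — which is where the $\pnorm{\mathsf{g}}{\lip}$ prefactor comes from; the residual error when passing from the empirical average over $\gamma_\ast^{\lasso}g_j$ to $\E\,\mathsf{g}(\Pi_{\mu_0}+\gamma_\ast^{\lasso}Z,\Pi_{\mu_0})$ is a Hoeffding bound over the independent coordinates $\gamma_\ast^{\lasso}g_j$, carrying the $\pnorm{\mathsf{g}}{\infty}$ prefactor and being of lower order.

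\textbf{The main obstacle is the joint universality of $(\hat{w}_A^{\lasso},\hat{v}_A^{\lasso})$}: the marginal characterizations in Theorem~\ref{thm:lasso_dist} do not suffice, since the combination $\hat{w}_A^{\lasso}+(\gamma_\ast^{\lasso}/\beta_\ast^{\lasso})\lambda\hat{v}_A^{\lasso}$ depends on the joint law, and $\hat{v}_A^{\lasso}=\lambda^{-1}A^\top(\xi-A\hat{w}_A^{\lasso})$ is \emph{not} a structural function of $\hat{w}_A^{\lasso}$ alone. I would handle this as in the proof of Theorem~\ref{thm:lasso_dist}: dualize the square loss and introduce $u$ with $\lambda\hat{v}^{\lasso}=-A^\top u$, so that the Gaussian-design problem becomes a Gordon max-min over $(u,w)$ subject to $\pnorm{w}{\infty}\lesssim\sqrt{\log n}$ (Proposition~\ref{prop:lasso_risk}) and $\pnorm{\lambda^{-1}A^\top u}{\infty}\le1$; the scalarized Gordon optimum pins $(\hat{w}_G^{\lasso},\hat{v}_G^{\lasso})$ jointly to $(w_\ast^{\lasso},v_\ast^{\lasso})$ coupled through the single auxiliary Gaussian $g$, i.e.\ creates a constant-order gap on the joint-deviation ``exception set''. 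Transferring to a general $A$ combines Theorem~\ref{thm:lasso_universality_generic} for the $\hat{w}$-component with the Gordon min-max comparison of Corollary~\ref{cor:min_max_universality} — specifically its scenario (ii) with zeroed entries, which is exactly what is needed to carry the linear-in-$A$ subgradient term over the $\ell_\infty$-constrained structure sets. This yields joint concentration of $(\hat{w}_A^{\lasso},\hat{v}_A^{\lasso})$ around $(w_\ast^{\lasso},v_\ast^{\lasso})$ at rate $n^{-1/6}\log^3 n$, and the first claim follows; the exponent $\epsilon^{-12}$ is inherited from the $\hat{s}_A^{\lasso}\to s_\ast^{\lasso}$ reduction (the $\delta^{-12}$ rate above, evaluated at $\delta\asymp\epsilon$), which dominates the $\epsilon^{-6}$-type rates from the $\hat{w}$- and $\hat{v}$-pieces.

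For the coverage statement, write $\hat{\mathscr{C}}_A^{\delasso}=n^{-1}\sum_j\bm{1}\{\abs{\hat{\mu}_{A,j}^{\delasso}-\mu_{0,j}}\le z_{\alpha/2}\hat{\gamma}_A^{\lasso}\}$. First, Theorem~\ref{thm:lasso_dist} applied to $\mathsf{h}(\nu)=\pnorm{\nu}{}$ and to $\hat{s}_A^{\lasso}$, together with (\ref{def:gamma_est_lasso}) and the lower bound on $1-(n/m)s_\ast^{\lasso}$, gives $\abs{\hat{\gamma}_A^{\lasso}-\gamma_\ast^{\lasso}}\le\epsilon$ outside an event of probability $\lesssim\epsilon^{-12}n^{-1/6}\log^3 n$. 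On that event, $\hat{\mathscr{C}}_A^{\delasso}$ differs from $n^{-1}\sum_j\bm{1}\{\abs{\hat{\mu}_{A,j}^{\delasso}-\mu_{0,j}}\le z_{\alpha/2}\gamma_\ast^{\lasso}\}$ by at most the empirical mass of $\{\abs{\hat{\mu}_{A,j}^{\delasso}-\mu_{0,j}}\in[z_{\alpha/2}(\gamma_\ast^{\lasso}-\epsilon),z_{\alpha/2}(\gamma_\ast^{\lasso}+\epsilon)]\}$. Both this thin-shell mass and the main term are estimated by the first claim with $\mathsf{g}$ a Lipschitz approximation of the corresponding indicator at smoothing scale $\eta\asymp\epsilon$: the smoothing error is $\lesssim\eta$ because the limiting law $\mathcal N(0,(\gamma_\ast^{\lasso})^2)$ has bounded density (Gaussian anti-concentration likewise bounds the thin shell by $\lesssim\epsilon$), while the first-claim error picks up $\pnorm{\mathsf{g}}{\lip}\asymp\eta^{-1}$ against an $\eta^{-12}n^{-1/6}\log^3 n$ budget, forcing accuracy $\epsilon'\asymp\eta^2$ and hence an $\eta^{-24}$ — this nested smoothing is the origin of the $\epsilon^{-24}$ exponent. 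Combining with the exact identity $\Prob(\abs{\gamma_\ast^{\lasso}Z}\le z_{\alpha/2}\gamma_\ast^{\lasso})=1-\alpha$ finishes the proof.
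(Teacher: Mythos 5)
Your algebraic reduction and the population identity $w_\ast^{\lasso}+\lambda v_\ast^{\lasso}/(1-(n/m)s_\ast^{\lasso})=\gamma_\ast^{\lasso}g$ are correct, and you correctly identify the \emph{joint} law of $(\hat{w}_A^{\lasso},\hat{v}_A^{\lasso})$ as the crux. The gap is in how you propose to get it. Combining Theorem \ref{thm:lasso_universality_generic} for $\hat w$ with Corollary \ref{cor:min_max_universality} for $\hat v$ gives two \emph{marginal} characterizations, $\Pi_{(\hat w,\mu_0)}\approx\Pi_W$ and $\Pi_{(\hat v,\mu_0)}\approx\Pi_V$, not a joint one: those two Gordon reductions are separate optimization problems with no a priori coupling of their auxiliary Gaussians, so the claim that ``the scalarized Gordon optimum pins $(\hat{w}_G^{\lasso},\hat{v}_G^{\lasso})$ jointly \ldots coupled through the single auxiliary Gaussian $g$'' is exactly what needs proof and is left unestablished. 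Your specific dualization $\lambda\hat{v}^{\lasso}=-A^\top u$ also fails at the Gordon step, since the auxiliary problem no longer contains $A$ and hence $\hat v$ cannot be read off the Gordon optimizer; the paper's subgradient analysis instead treats $v$ as an explicit primal variable via $k(v,w;A)$ and $k_0(u,v,w;A)$, and even so only produces a \emph{marginal} statement for $\hat v$. Marginals alone do not determine the coordinatewise correlation of $\hat w_j$ and $\hat v_j$, which is what the combination $\hat w_j+(\gamma_\ast^{\lasso}/\beta_\ast^{\lasso})\lambda\hat v_j$ needs.

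The paper supplies the missing joint information by a different mechanism, following Miolane--Montanari. Proposition \ref{prop:lasso_w2} upgrades the marginal universality of Theorem \ref{thm:lasso_dist} to Wasserstein-2 bounds on $\Pi_{(\hat w,\mu_0)}$ and $\Pi_{(\hat v,\mu_0)}$, producing two separate couplings $(\Pi_{\mu_0}^w,Z^w)$ and $(\Pi_{\mu_0}^v,Z^v)$. These are then \emph{aligned} not via Gordon but via the deterministic Lasso KKT structure $\hat\mu_j\neq0\iff|\hat v_j|=1$, together with $\hat s\approx s_\ast$ and $n^{-1}\#\{j:|\hat v_j|=1\}\approx s_\ast$, which force the threshold equivalence (\ref{ineq:lasso_debias_1}) on a high-conditional-probability event $E$: $|\Pi_{\mu_0}^v+\gamma_\ast Z^v|\geq\alpha_\ast\Leftrightarrow|\Pi_{\mu_0}^w+\gamma_\ast Z^w|\geq\alpha_\ast$. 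That single alignment relation lets the mixture variable $X^{\delasso}$ collapse to the law of $\Pi_{\mu_0}+\gamma_\ast Z$ (the two $\Pi_{\mu_0}^{\cdot}+\gamma_\ast Z^{\cdot}$ have the same conditional law given the threshold indicator), which is exactly the empirical-side transfer of your population cancellation; without it the cancellation is not justified. Your sketch of the coverage claim from the first (smooth the indicator, replace $\hat\gamma$ by $\gamma_\ast$ via Lemma \ref{lem:lasso_gamma_est}, Gaussian anti-concentration, choose the smoothing scale $\asymp\epsilon^{1/2}$) does match the paper's argument, so that part is fine once the first claim is in place.
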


Note that the above theorem does not directly follow from Theorem \ref{thm:lasso_dist} due to the lack of the joint distributional characterizations for $(\hat{w}_A^{\lasso},\hat{r}_A^{\lasso})$. Inspired by \cite{miolane2021distribution}, this technical issue is overcome by establishing distributional characterizations of $(\hat{w}_A^{\lasso},\hat{r}_A^{\lasso})$ in Wasserstein-2 distance that provide couplings to relate the joint distribution of $(\hat{w}_A^{\lasso},\hat{r}_A^{\lasso})$; see Proposition \ref{prop:lasso_w2} for details.

To put Theorem \ref{thm:debiased_lasso} in the literature, for the dof adjusted debiased Lasso (\ref{def:debias_lasso}), \cite{javanmard2014hypothesis} obtained 
an asymptotic version and \cite{miolane2021distribution} obtained an improved non-asymptotic version, of the above theorem in the isotropic Gaussian design and Gaussian error case. Distributional characterizations for dof adjusted debiased Lasso under general correlated Gaussian designs and Gaussian errors are obtained in \cite{celentano2020lasso,bellec2021debias,bellec2022biasing}. These works rely crucially on the Gaussianity of the design via either the CGMT (cf. \cite{miolane2021distribution,celentano2020lasso}) or Gaussian integration by parts techniques (cf. \cite{bellec2021debias,bellec2022biasing}). To the best of our knowledge, Theorem \ref{thm:debiased_lasso} provides the first theoretical justification for the dof adjusted debiased Lasso beyond Gaussian designs.

A limitation of the coverage guarantee for $\{\mathsf{CI}^{\delasso}_j\}$ in Theorem \ref{thm:debiased_lasso} above is its average nature. In the (general correlated) Gaussian design and Gaussian error case, \cite[Theorem 3.10]{bellec2021debias} obtained stronger coverage guarantees for $\{\mathsf{CI}^{\delasso}_j\}$  that hold for individual coordinates; see also the discussion after \cite[Theorem 4.1]{bellec2022observable}. Whether such stronger guarantees also hold for general designs and errors remains an interesting open question.

\subsection{Example III: Regularized robust regression}\label{section:ex_robust}
In this section, we consider universality properties for robust regression estimators \cite{huber1964robust,huber1973robust}. Let the robust cost function be
\begin{align*}
\bar{H}^{\rob}(w,A,\xi) \equiv \sum_{i=1}^m \psi_0\big((Aw)_i-\xi_i\big) + \frac{\lambda}{2}\pnorm{w + \mu_0}{}^2,
\end{align*}
and its normalized version $H^{\rob} \equiv \bar{H}^{\rob}/m$. The robust regression solution is given by $\hat{\mu}^{\rob}_A = \hat{w}^{\rob}_A + \mu_0$ with
\begin{align*}
\hat{w}^{\rob}_A \equiv \argmin_{w\in\R^n} \bar{H}^{\rob}(w,A,\xi).
\end{align*}
Instead of the conditions (R1)-(R3), we work with the following alternative set of conditions:
\begin{enumerate}
	\item[(M1)] $\tau\leq m/n\leq 1/\tau$ holds for some $\tau \in (0,1)$, and $\lambda>0$. 
	\item[(M2)] $\psi_0: \R \to \R$ is convex with weak derivative $\psi_0'$ satisfying $\abs{\psi_0(0)}\vee \esssup\, \abs{\psi_0'} \leq L_0$ for some $L_0> 0$.
	\item[(M3)] $A_0 = \sqrt{m}A$ and $\xi_0=\xi$ are independent. The entries of $A_0$ are independent, mean $0$, variance $1$ with $M_{6+\delta;A}\equiv \max_{i \in [m], j\in [n]} \E \abs{A_{0;ij}}^{6+\delta}<\infty$ for some $\delta \in (0,1)$. The entries of $\xi_0$ are independent.
\end{enumerate}
Note that under the above assumption on $\psi_0(\cdot)$, the Ridge penalty guarantees the existence and uniqueness of $\hat{w}^{\rob}_A$. 

The following theorem establishes the generic universality of $\hat{w}_A^{\rob}$ with respect to the design matrix $A$. All proofs in this section can be found in Section \ref{section:proof_robust}. 

\begin{theorem}\label{thm:robust_universality_generic}
	Suppose (M1)-(M3) hold. Fix $\mathcal{S}_n \subset \R^n$. Suppose there exist $z \in \R, \rho_0>0$ and $\epsilon_n \in [0,1/4)$ such that 
	\begin{align*}
	\Prob\bigg(\min_{w \in \R^n} H^{\rob}(w,G,\xi)\geq z+\rho_0 \bigg)\vee 	\Prob\bigg(\min_{w \in \mathcal{S}_n} H^{\rob}(w,G,\xi)\leq z+2\rho_0 \bigg) \leq \epsilon_n. 
	\end{align*}
	Then there exists some $K=K(\lambda,\tau,M_{6+\delta;A},\delta,L_0)>0$ such that
	\begin{align*}
	&\Prob\big(\hat{w}_A^{\rob} \in \mathcal{S}_n\big)\leq 4\epsilon_n+K \big(1+\pnorm{\mu_0}{\infty}^{6+\delta}+ \rho_0^{-3})\cdot  n^{-(1\wedge \delta)/500}.
	\end{align*}
\end{theorem}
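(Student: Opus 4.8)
\emph{Reduction.} The plan is to deduce the statement from the general regression universality result, Theorem~\ref{thm:universality_reg}, in its robust-loss branch, so that the only substantive new input is the $\ell_\infty$ bound (O1) for $\hat w_A^{\rob}$ and $\hat w_G^{\rob}$. Conditions (M1)--(M3) are calibrated precisely for this: (M1) is Assumption~\ref{assump:setting}; (M2) places $\psi_0$ into the robust-loss template of Example~\ref{ex:robust_loss}, so Assumption~\ref{assump:loss} holds with $\mathsf q=0$, $\bar{\mathsf q}=3$ and the Gaussian mollifications $\psi_{i;\rho}$ (with $\mathscr D_\psi(\rho)\asymp L_0/\rho^2$, $\mathscr M_\psi(\rho)\asymp L_0\rho$, $\bar\rho\asymp 1/L_0$); (M3) supplies Assumption~\ref{assump:random}, with $6+\delta>3$ moments to spare; and the Ridge regularizer $\mathsf f(w)=(\lambda/2m)\pnorm{w+\mu_0}{}^2$ obeys the modulus bound (\ref{cond:f_moduli}) with some $K_{\mathsf f}=K_{\mathsf f}(\lambda,\tau)$. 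Granting an $\ell_\infty$ bound $\pnorm{\hat w_A^{\rob}}{\infty}\vee\pnorm{\hat w_G^{\rob}}{\infty}\le L_n$ off an event of probability $\le\epsilon_n$, Theorem~\ref{thm:universality_reg} bounds $\Prob(\hat w_A^{\rob}\in\mathcal S_n)$ by $4\epsilon_n+C_0(1\vee\rho_0^{-3})(L_n\log^{2/3}n/n^{1/6})^{1/7}$; it then remains to establish such a bound with $L_n$ growing at most polylogarithmically (plus $\pnorm{\mu_0}{\infty}$) and $\epsilon_n\lesssim n^{-c\delta}$, after which a routine balancing of exponents---treating a possibly large $\mu_0$ by a crude case split, which is what the $\pnorm{\mu_0}{\infty}^{6+\delta}$ factor records---yields the stated rate $n^{-(1\wedge\delta)/500}$.

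\emph{The $\ell_\infty$ bound.} This is the heart of the proof; I would obtain it (analogously to Propositions~\ref{prop:ridge_risk} and \ref{prop:lasso_risk}, and in the spirit of \cite{elkaroui2013asymptotic,elkaroui2018impact}) by a leave-one-out analysis resting on the $\lambda$-strong convexity contributed by the Ridge penalty and the boundedness $\esssup|\psi_0'|\le L_0$. From the stationarity relation $A^\top\psi_0'(A\hat w_A^{\rob}-\xi)+\lambda(\hat w_A^{\rob}+\mu_0)=0$ together with a high-probability operator-norm bound $\pnorm{A}{\op}\lesssim 1$ (valid under a few finite moments of $A_0$, via truncation), one first gets the a priori estimates $\pnorm{\hat w_A^{\rob}+\mu_0}{}\lesssim_\lambda L_0\sqrt m$ and $\pnorm{A\hat w_A^{\rob}}{}\lesssim_\lambda L_0\sqrt n$, using only $|\psi_0'|\le L_0$ and, notably, \emph{no} moment assumption on $\xi$. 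Next, let $\hat w_A^{\rob,(s)}$ be the column leave-one-out estimator with $s$-th coordinate frozen at $0$, hence independent of the $s$-th column $A_{\cdot s}$; a second-order/convexity comparison of $H^{\rob}$ at $\hat w_A^{\rob}$ and at its $s$-th-coordinate-zeroing gives the closeness estimate $\pnorm{\hat w_A^{\rob}-\hat w_A^{\rob,(s)}}{}\lesssim_\lambda|\hat w_{A,s}^{\rob}|$. Finally, in the coordinate identity $\lambda\hat w_{A,s}^{\rob}=-A_{\cdot s}^\top\psi_0'(A\hat w_A^{\rob}-\xi)-\lambda\mu_{0,s}$ one replaces $A\hat w_A^{\rob}$ by its leave-one-out surrogate: the leading term $A_{\cdot s}^\top\psi_0'(A\hat w_A^{\rob,(s)}-\xi)$ is, conditionally on all but $A_{\cdot s}$, a normalized sum of independent mean-zero summands of conditional variance $\lesssim L_0^2$, so after truncating the entries of $A_0$ at a level $n^\beta$ with $\beta$ slightly below $1/2$, a Bernstein estimate plus a union bound give $\max_s|A_{\cdot s}^\top\psi_0'(A\hat w_A^{\rob,(s)}-\xi)|\lesssim L_0\log^{1/2}n$ off an event of probability $\lesssim n^{2-\beta(6+\delta)}\lesssim n^{-c\delta}$---this is where the $6+\delta$ moments and the $\delta$ in the final exponent enter---while the remainder is bounded by a small multiple of $|\hat w_{A,s}^{\rob}|$ through the closeness estimate and absorbed into the left-hand side. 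Collecting, $\pnorm{\hat w_A^{\rob}}{\infty}\lesssim_\lambda L_0\log^{1/2}n+\pnorm{\mu_0}{\infty}$ with probability $\ge 1-Cn^{-c\delta}$, and $\pnorm{\hat w_G^{\rob}}{\infty}$ obeys the same bound by the same argument with sharper sub-Gaussian tails.

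\emph{Main obstacles.} The chief difficulty is running this leave-one-out analysis when $\psi_0$ is non-smooth (e.g.\ the least-absolute loss), for then $\psi_0'$ is only monotone and the second-order/discrepancy steps above are not literally valid. I would handle this by exploiting the monotonicity of $\psi_0'$ (so that $\psi_0'(A\hat w_A^{\rob}-\xi)-\psi_0'(A\hat w_A^{\rob,(s)}-\xi)$ is correlated in sign with $A(\hat w_A^{\rob}-\hat w_A^{\rob,(s)})$, which reinstates enough of the closeness estimate) and, where a quantitative Lipschitz-type control is unavoidable, by passing to the mollified loss $\psi_{0;\rho}$ of Example~\ref{ex:robust_loss}, tracking the $\mathscr M_\psi(\rho)\asymp L_0\rho$ error through the optimality conditions, and optimizing over $\rho$; it is this mollification, together with the interplay of its Lipschitz constant $\mathscr D_\psi(\rho)\asymp L_0/\rho^2$ with the strong-convexity modulus $\lambda$ (delicate when $\lambda$ is small), that accounts for the otherwise opaque loss of exponents (the $1/7$ power already appearing in Theorem~\ref{thm:universality_reg} reflects it, and the crude constant $500$ simply absorbs the remaining polynomial slack). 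A secondary point is purely quantitative: with only $6+\delta$ moments of $A_0$, every concentration step can be pushed only to a polynomially small failure probability $n^{-c\delta}$, so $L_n$, $\epsilon_n$ and the universality remainder must be balanced simultaneously, and I would make no attempt to optimize the resulting exponent.
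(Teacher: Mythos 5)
Your high-level plan---reduce to Theorem~\ref{thm:universality_reg} in its robust-loss branch and supply the $\ell_\infty$ input (O1) through a column leave-one-out argument---is the same as the paper's. The divergence is entirely inside the leave-one-out step, and it is substantive: your version needs a smoothness property of $\psi_0$ that the paper's argument deliberately avoids, which is the whole point of the claimed improvement over \cite{elkaroui2013asymptotic,elkaroui2018impact}.

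You start from the stationarity condition for the \emph{full} estimator, $\lambda\hat w_{A,s}^{\rob}=-A_{\cdot s}^\top\psi_0'(A\hat w_A^{\rob}-\xi)-\lambda\mu_{0,s}$, and swap $\hat w_A^{\rob}$ for $\hat w_A^{\rob,(s)}$ to restore independence. This swap creates the remainder $A_{\cdot s}^\top\bigl[\psi_0'(A\hat w_A^{\rob}-\xi)-\psi_0'(A\hat w_A^{\rob,(s)}-\xi)\bigr]$, which you propose to absorb via the closeness bound $\pnorm{\hat w_A^{\rob}-\hat w_A^{\rob,(s)}}{}\lesssim_\lambda|\hat w_{A,s}^{\rob}|$. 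That bound does hold when $\psi_0''$ is bounded (it is a second-order Taylor expansion of $\bar H^{\rob}$ at $\hat w_A^{\rob}$ exploiting $\nabla\bar H^{\rob}(\hat w_A^{\rob})=0$), but it is false for the absolute or Huber loss: the first-order (strong-convexity) comparison only gives $\pnorm{\hat w^{(s)}-\hat w}{}^2\lesssim(L_0/\lambda)|\hat w_s|\sqrt m$, which is far weaker. You flag this and reach for mollification, thereby inheriting exactly the smoothness restriction of El Karoui et al.\ and a delicate $\mathscr D_\psi(\rho)\asymp L_0/\rho^2$ vs.\ $\lambda$ tradeoff. The paper bypasses the issue entirely: Proposition~\ref{prop:robust_risk} expands $\bar H^{\rob}(\hat w^{(s)})-\bar H^{\rob}(\hat w)\geq 0$, applies the convexity subgradient inequality \emph{at} $\hat w^{(s)}$ together with the KKT condition \emph{of} $\hat w^{(s)}$, and lands directly on
\[
\tfrac{\lambda}{2}\hat w_s^2+\tfrac{\lambda}{2}\pnorm{\hat w^{(s)}_{-s}-\hat w_{-s}}{}^2\leq-\hat w_s\Bigl[\textstyle\sum_i a_{i,s}\,\psi_0'\bigl(a_{i,-s}^\top\hat w^{(s)}_{-s}-\xi_i\bigr)+\lambda\mu_{0,s}\Bigr],
\]
giving $|\hat w_s|\lesssim\lambda^{-1}\bigl|\sum_i a_{i,s}\psi_0'(a_{i,-s}^\top\hat w^{(s)}_{-s}-\xi_i)\bigr|+\pnorm{\mu_0}{\infty}$ with no remainder term, no closeness estimate, and no differentiability assumption beyond convexity and $\abs{\psi_0'}\leq L_0$. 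Using the KKT of the leave-one-out estimator rather than that of $\hat w$ is the key trick you are missing.

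A secondary difference is how the resulting term $\bigl|\sum_i a_{i,s}\psi_0'(\cdots)\bigr|$ is controlled. You propose truncation plus Bernstein, aiming for $L_n\asymp\sqrt{\log n}+\pnorm{\mu_0}{\infty}$ off an $n^{-c\delta}$-probability event; the paper instead bounds the $p$-th moment $\E|\hat w_s|^p$ ($p=6+\delta$) via contraction ($\abs{\psi_0'}\leq L_0$), symmetrization, and Khintchine, then applies Markov with $L_n$ left free and balanced at $L_n=n^{1/6-\epsilon_\delta}$. Your high-probability route, if executed carefully, would give a sharper exponent; the paper's moment route is shorter. Also, your attribution of the $1/7$ and $500$ to mollification in the $\ell_\infty$ step is off: the $1/7$ comes from the Lindeberg smoothing inside Theorem~\ref{thm:universality_reg} (already fixed before the robust example), and the $500$ comes from balancing the Markov tail $nL_n^{-(6+\delta)}$ against $(L_n/n^{1/6})^{1/7}$, not from any mollification in Proposition~\ref{prop:robust_risk}.
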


A significant feature of Theorem \ref{thm:robust_universality_generic} above is that no apriori moment conditions on the error vector $\xi$ are required. This is particularly appealing from the perspective of robust regression \cite{huber1964robust,huber1973robust}. 

The next proposition establishes an element-wise bound for $\hat{w}_A^{\rob}$ that serves as the key to the proof of Theorem \ref{thm:robust_universality_generic}.

\begin{proposition}\label{prop:robust_risk}
	Suppose (M1)-(M3) hold. Then for any $p\geq 2$, there exists some $K=K(p)>0$ such that
	\begin{align*}
	\max_{j\in[n]} \E \abs{\hat{w}_{A,j}^{\rob}}^p\leq K\cdot \big\{(L_0/\lambda)^p M_{p;A}+ \pnorm{\mu_0}{\infty}^p\big\}. 
	\end{align*}
	Here $M_{p;A}\equiv \max_{i \in [m], j\in[n]} \E \abs{A_{0;ij}}^p$.	
\end{proposition}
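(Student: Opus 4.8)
The plan is to control $\hat w_{A,j}^{\rob}$ one coordinate at a time, using the first-order optimality (KKT) conditions together with the column leave-one-out device $\hat w_A^{(j)}$, i.e. the minimizer of the same objective restricted to $\{w:w_j=0\}$. By convexity of $\psi_0$ there is a score vector $z=z(\hat w_A^{\rob})\in\R^m$ with $z_i\in\partial\psi_0\big((A\hat w_A^{\rob})_i-\xi_i\big)$ — hence $\pnorm{z}{\infty}\le L_0$ by (M2) — such that $A^\top z+\lambda(\hat w_A^{\rob}+\mu_0)=0$; and the reduced (unconstrained in the remaining $n-1$ coordinates) problem for $\hat w_A^{(j)}$ produces a score $z^{(j)}$, obtained via a measurable selection of the set-valued map $x\mapsto\partial\psi_0(x)$, with $(A^\top z^{(j)})_k+\lambda(\hat w_{A,k}^{(j)}+\mu_{0,k})=0$ for all $k\neq j$. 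Since $\hat w_A^{(j)}$, and hence $z^{(j)}$, depends only on the columns of $A$ indexed by $[n]\setminus\{j\}$ and on $\xi$, the vector $z^{(j)}$ is independent of the $j$-th column $\{A_{ij}\}_{i\in[m]}$; this is the source of the needed independence.

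Next I would subtract the two optimality systems. Writing $\Delta w\equiv\hat w_A^{\rob}-\hat w_A^{(j)}$ (so that $\Delta w_j=\hat w_{A,j}^{\rob}$) and $\Delta z\equiv z-z^{(j)}$, one gets the clean identity $A^\top\Delta z+\lambda\Delta w=-c_j\,e_j$ with $c_j\equiv\lambda\mu_{0,j}+(A^\top z^{(j)})_j$. Pairing with $\Delta w$ and noting that $\iprod{A^\top\Delta z}{\Delta w}=\sum_{i=1}^m(z_i-z_i^{(j)})\,(A\Delta w)_i\ge 0$ by monotonicity of the subdifferential of the convex $\psi_0$ (the residual increments are exactly $(A\Delta w)_i$), one obtains $\lambda\pnorm{\Delta w}{}^2\le -c_j\Delta w_j\le|c_j|\,\pnorm{\Delta w}{}$, hence $\abs{\hat w_{A,j}^{\rob}}\le\pnorm{\Delta w}{}\le|c_j|/\lambda\le\abs{\mu_{0,j}}+\lambda^{-1}\bigabs{(A^\top z^{(j)})_j}$. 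This deterministic reduction is the heart of the argument, and it is precisely here that one avoids any Lipschitz/smoothness requirement on $\psi_0'$ and any need for an a priori Euclidean risk bound or a control of $\pnorm{A}{\op}$.

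It then remains to bound $\E\bigabs{(A^\top z^{(j)})_j}^p=\E\bigabs{\sum_{i=1}^m z_i^{(j)}A_{ij}}^p$. Conditionally on $\sigma\big(\{A_{ik}\}_{k\neq j},\xi\big)$ the summands $z_i^{(j)}A_{ij}$ are independent over $i$, mean zero, with conditional second moment $(z_i^{(j)})^2/m\le L_0^2/m$ and $\E\abs{z_i^{(j)}A_{ij}}^p\le L_0^p m^{-p/2}M_{p;A}$. Rosenthal's inequality then gives, for $p\ge 2$, $\E\bigabs{\sum_i z_i^{(j)}A_{ij}}^p\le C_p\big[L_0^p+m^{1-p/2}L_0^p M_{p;A}\big]\lesssim_p L_0^p M_{p;A}$, using $M_{p;A}\ge1$ (Jensen) and $m\ge1$. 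Combining this with the previous display and $\abs{\mu_{0,j}}\le\pnorm{\mu_0}{\infty}$ yields $\E\abs{\hat w_{A,j}^{\rob}}^p\lesssim_p\pnorm{\mu_0}{\infty}^p+(L_0/\lambda)^p M_{p;A}$ for every $j\in[n]$, which is the claim.

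The steps I expect to require the most care are the bookkeeping for non-differentiable $\psi_0$: writing down the constrained and unconstrained KKT conditions with \emph{compatible, measurable} subgradient selections $z$ and $z^{(j)}$, and then observing that the single monotonicity inequality $\iprod{\Delta z}{A\Delta w}\ge0$ is exactly what is needed to close the bound. By contrast, the probabilistic part (the moment estimate for the independent sum $\sum_i z_i^{(j)}A_{ij}$) and the measurable-selection appeal are routine.
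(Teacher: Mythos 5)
Your proposal is correct, and at the strategic level it follows the same route as the paper: introduce the column leave-one-out $\hat w^{(j)}$, use convexity to obtain the deterministic coordinate bound $\abs{\hat w_{A,j}^{\rob}} \lesssim \abs{\mu_{0,j}} + \lambda^{-1}\abs{\sum_i A_{ij} z_i^{(j)}}$ with $z^{(j)}$ independent of the $j$-th column, and then exploit that independence to control the $p$-th moment. The implementation differs in two places that are worth noting. In the deterministic step the paper compares cost values $\bar H(\hat w^{(s)}) \ge \bar H(\hat w)$, applies the one-sided subgradient inequality at $\hat w^{(s)}$, and substitutes the constrained KKT system to reach $\abs{\hat w_s} \le 2\lambda^{-1}\abs{\sum_i a_{i,s}\,\psi_0'(\cdot)} + 2\abs{\mu_{0,s}}$; you instead subtract the two KKT systems, pair with $\Delta w$, and close by monotonicity of $\partial\psi_0$. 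This is the ``sum of two one-sided inequalities'' version of the same convexity fact, and packaging the ridge bookkeeping into a single Cauchy--Schwarz makes your derivation a bit cleaner (and sharpens the constant to $1$). In the probabilistic step the paper chains symmetrization, the contraction principle (via $\abs{\psi_0'}\le L_0$), and Khintchine, whereas you apply Rosenthal conditionally on $\sigma(\{A_{ik}\}_{k\neq j},\xi)$, invoking $M_{p;A}\ge 1$ (Jensen from unit variance) and $m^{1-p/2}\le 1$; both yield the same $\lesssim_p L_0^p M_{p;A}$ bound and neither requires more than the stated $p$-th moment on $A_0$. The two proofs are essentially interchangeable; yours is marginally more direct but rests on identical ideas.
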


Below we give a quick demonstration of the power of the above results, by establishing risk universality for $\hat{\mu}^{\rob}_A$ with the help of essentially existing Gaussian design results in \cite{thrampoulidis2018precise} proved via the CGMT method.

\begin{theorem}\label{thm:robust_risk_asymp}
Suppose the following hold.
\begin{enumerate}
	\item $m/n\to\tau_0 \in (0,\infty)$ and $\lambda>0$ is fixed.
	\item $\psi_0$ satisfies (M2) and either (i) $\psi_0$ is not differentiable at certain point, or (ii) $\psi_0$ contains an interval on which $\psi_0$ is differentiable with strictly increasing derivative.
	\item The entries of $A_0$ are independent, mean $0$, variance $1$ with $\sup_n\max_{i\in [m], j\in[n]}\E \abs{A_{0;ij}}^{6+\delta}<\infty$ for some $\delta \in (0,1)$. 
	\item $\xi=(\xi_i)$ contains i.i.d. components with a continuous Lebesgue density. 
	\item $\mu_0$ contains i.i.d. components whose law $\Pi_0$ possesses moment of any order. 
\end{enumerate}
Then with $Z\sim \mathcal{N}(0,1)$, the system of equations
\begin{align}\label{eqn:robust_fpn}
(\gamma_\ast^{\rob})^2 /\tau_0 &=  \E \Big[\gamma_\ast^{\rob} Z+\xi_1- \prox_{\psi_0}\Big(\gamma_\ast^{\rob} Z+\xi_1;\beta_\ast^{\rob}\Big)\Big]^2 +\lambda^2 (\beta_\ast^{\rob})^2 \cdot \E \Pi_0^2,\nonumber\\
1-\tau_0^{-1}+ \lambda \beta_\ast^{\rob}&=\E \prox_{\psi_0}'\Big({\gamma}_\ast^{\rob} Z+\xi_1;{\beta}_\ast^{\rob}\Big)
\end{align}
admits a unique non-trivial solution $(\beta_\ast^{\rob},\gamma_\ast^{\rob}) \in (0,\infty)^2$ such that
\begin{align*}
 \frac{\pnorm{\hat{\mu}_A^{\rob}-\mu_0}{}^2}{n} \to \tau_0(\gamma_\ast^{\rob})^2\quad \hbox{in probability}.
\end{align*}
\end{theorem}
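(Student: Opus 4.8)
The plan is to obtain the conclusion by running the general-purpose transfer of Theorem~\ref{thm:robust_universality_generic} on top of a CGMT analysis of the Gaussian-design problem, applied to the single scalar functional $w\mapsto\pnorm{w}{}^2/n$. Fix $\epsilon>0$ and take as exceptional set
\[
\mathcal{S}_n\equiv\Big\{w\in\R^n:\ \bigabs{\pnorm{w}{}^2/n-\tau_0(\gamma_\ast^{\rob})^2}\geq\epsilon\Big\}.
\]
Since $\hat{w}_A^{\rob}=\hat{\mu}_A^{\rob}-\mu_0$, showing $\Prob(\hat{w}_A^{\rob}\in\mathcal{S}_n)\to0$ for every $\epsilon>0$ is exactly the claimed convergence in probability. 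By Theorem~\ref{thm:robust_universality_generic}, it therefore suffices to verify hypotheses (O1) and (O2) for the standard Gaussian design $G$ with $\epsilon_n\to0$ and $1\le L_n\le n$ growing like a small power of $n$, because the bound there then tends to $0$. A preliminary step, needed to make sense of the statement, is existence and uniqueness of a non-trivial solution $(\beta_\ast^{\rob},\gamma_\ast^{\rob})\in(0,\infty)^2$ of the system (\ref{eqn:robust_fpn}); following the analysis of \cite{thrampoulidis2018precise}, this holds precisely because assumption~(2) on $\psi_0$ (a kink, or a strictly convex piece) rules out the degenerate branch $\gamma_\ast^{\rob}=0$ and pins down a unique positive solution.

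For (O1) I would invoke Proposition~\ref{prop:robust_risk}: for any fixed large $p\ge2$, $\max_{j\in[n]}\E\abs{\hat{w}_{A,j}^{\rob}}^p\lesssim_p (L_0/\lambda)^pM_{p;A}+\pnorm{\mu_0}{\infty}^p$, and the identical bound holds for $\hat{w}_G^{\rob}$ since Gaussian entries have all moments. Markov's inequality and a union bound over $j\in[n]$ then give $\Prob(\pnorm{\hat{w}_A^{\rob}}{\infty}>L_n)\vee\Prob(\pnorm{\hat{w}_G^{\rob}}{\infty}>L_n)\le\epsilon_n$ with $L_n=n^{1/p'}$ for suitable $p'$ and $\epsilon_n\to0$. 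Note this requires no moment assumption on $\xi$, which is why none appears in the statement.

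The substantive step is (O2). I would dualize the robust loss through its Legendre conjugate, $\sum_i\psi_0((Gw)_i-\xi_i)=\max_{u\in\R^m}\big[u^\top Gw-u^\top\xi-\sum_i\psi_0^*(u_i)\big]$, which turns $H^{\rob}(w,G,\xi)$ into a convex--concave min--max problem that is bilinear in $G$, and then apply the CGMT (Theorem~\ref{thm:CGMT}). After the standard scalarization of the resulting Gordon problem (optimizing over the directions of $u$ and $w$ and over the auxiliary radius), the Gordon objective reduces to a fixed low-dimensional optimization whose stationarity conditions are exactly (\ref{eqn:robust_fpn}) and whose optimal value concentrates around a deterministic $z$; this yields $\min_{w\in\R^n}H^{\rob}(w,G,\xi)\le z+\rho_0$ with probability $1-\epsilon_n$. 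For the lower bound over $\mathcal{S}_n$, split it into the convex piece $\{\pnorm{w}{}^2/n\le\tau_0(\gamma_\ast^{\rob})^2-\epsilon\}$, handled directly by the CGMT, and the non-convex piece $\{\pnorm{w}{}^2/n\ge\tau_0(\gamma_\ast^{\rob})^2+\epsilon\}$, handled by fixing $\pnorm{w}{}$ on a sphere, solving the reduced Gordon problem as a function of the radius, and using that it is strictly increasing beyond $r_\ast=\sqrt{n\tau_0}\,\gamma_\ast^{\rob}$; both arguments mirror the Lasso/Ridge treatments in Sections~\ref{section:ex_lasso}--\ref{section:ex_ridge}. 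This produces a constant $\rho_0=\rho_0(\epsilon)>0$ with $\min_{w\in\mathcal{S}_n}H^{\rob}(w,G,\xi)\ge z+2\rho_0$ with probability $1-\epsilon_n$, which is (O2).

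Feeding (O1) and (O2) into Theorem~\ref{thm:robust_universality_generic} gives $\Prob(\hat{w}_A^{\rob}\in\mathcal{S}_n)\le 4\epsilon_n+K(1+\pnorm{\mu_0}{\infty}^{6+\delta}+\rho_0^{-3})\,n^{-(1\wedge\delta)/500}\to0$, completing the proof. I expect the main obstacle to be Step (O2), and specifically the non-convex part of $\mathcal{S}_n$: extracting a genuinely positive gap $\rho_0(\epsilon)$ from the scalar Gordon objective requires that objective to have a strict minimizer with positive curvature rather than a flat direction, and this is exactly what the non-degeneracy hypothesis~(2) on $\psi_0$ is there to guarantee.
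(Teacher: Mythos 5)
Your proposal is correct and follows essentially the same route as the paper: verify (O1) via Proposition~\ref{prop:robust_risk}, verify (O2) via a CGMT analysis of the Gaussian-design cost, and transfer with Theorem~\ref{thm:robust_universality_generic}. The only substantive difference is that the paper outsources the entire Gaussian-design CGMT step---saddle-point existence and uniqueness, concentration of the Gordon optimum, and the (O2)-type gap on the exceptional set---to \cite[Theorem~4.1, Eqns.~(23), (85)--(86), Lemma~A.5]{thrampoulidis2018precise} after a reparametrization $G\mapsto\sqrt{\tau_0}\,G$, $\mu_0\mapsto\mu_0/\sqrt{\tau_0}$ so as to match that reference verbatim, and then rewrites their fixed-point equations into the stated form (\ref{eqn:robust_fpn}) using the Moreau-envelope identity $\mathsf{e}_{\psi_0}'(x;\tau)=\tau^{-1}\big(x-\prox_{\psi_0}(x;\tau)\big)$ together with Stein's identity (justified by the $1$-Lipschitz property of the proximal map, Lemma~\ref{lem:prox_lipschitz}), whereas you propose to redo the dualization and scalarization from scratch, which is workable but would essentially replicate that reference at nontrivial length.
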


In the second equation of (\ref{eqn:robust_fpn}), $\prox_{\psi_0}'(x;\tau)=(\d/\d x) \prox_{\psi_0}(x;\tau)$ is interpreted as the weak derivative thanks to the $1$-Lipschitz property of the proximal map $x\mapsto \prox_{\psi_0}(x;\tau)$ for any $\tau>0$ (cf. Lemma \ref{lem:prox_lipschitz}). 

We now compare Theorem \ref{thm:robust_risk_asymp} to the risk results in \cite{elkaroui2018impact}. The most significant advantage of Theorem \ref{thm:robust_risk_asymp} rests in its much weaker condition on the loss function $\psi_0$. In particular, \cite{elkaroui2018impact} requires strong regularity assumptions on $\psi_0$ (e.g., $\psi_0''$ is required to be Lipschitz), which exclude the two canonical examples in Example \ref{ex:robust_loss} in robust regression that are covered by our theory. In addition, the $6+\delta$ moment assumption on the design matrix is also much weaker in Theorem \ref{thm:robust_risk_asymp} compared to the exponential moments required in \cite{elkaroui2018impact}. 

An interesting question is whether the universality results in Theorem \ref{thm:robust_risk_asymp} hold for the unregularized case $\lambda=0$ when $\tau_0>1$. The only result in this direction appears to be \cite[Section 6]{elkaroui2013asymptotic} (some heuristics are presented in \cite{elkaroui2013robust}), where strong convexity of $\psi_0$ and local Lipschitzness of $\psi_0''$ are required; see also related results in \cite{donoho2016high} under Gaussian designs. Under these strong assumptions on $\psi_0$, it is possible to establish $\ell_\infty$ bounds for $\hat{w}_A^{\rob}$ using similar techniques as in Proposition \ref{prop:robust_risk}, and therefore the risk universality in Theorem \ref{thm:robust_risk_asymp}. It however remains an open question to establish (risk) universality under the weak conditions on $\psi_0$ as in Theorem \ref{thm:robust_risk_asymp}.

\subsection{Non-universality for general isotropic designs}\label{section:non_universality}
Consider the regression model (\ref{eqn:reg_model}) with $m/n>1$ and the ordinary least squares estimator (LSE):
\begin{align*}
\hat{\mu}^{\lse}_A&\equiv \argmin_{\mu \in \R^n} \pnorm{Y-A\mu}{}^2 = (A^\top A)^{-1} A^\top Y.
\end{align*}
Suppose that the error vector satisfies $\xi \sim \mathcal{N}(0,I_m)$ for simplicity. It is easy to prove risk universality of $\hat{\mu}^{\lse}_A$ for design matrices $A$ with independent entries satisfying Assumption \ref{assump:random}, by either using results in this paper or directly resorting to random matrix theory. However, independence across the entries of $A$ cannot be relaxed to the weaker row independent isotropic setting, as the following proposition shows.

\begin{proposition}\label{prop:non_universality_linear_reg}
Fix $m, n \in \N$ with $m\geq 2$, $m>n$, and $L_n> 1$. There exists some centered random vector $b_0 \in \R^n$ with $\E b_0^{\otimes 2} = \E (\mathcal{N}(0,I_n))^{\otimes 2}$ such that the following hold: With $B_0 \in \R^{m\times n}$ denoting a random matrix whose rows are i.i.d. as $b_0$, and $B\equiv B_0/\sqrt{m}$, we have $
n^{-1}\E \pnorm{\hat{\mu}^{\lse}_B-\mu_0}{}^2 \geq L_n\cdot n^{-1}\E \pnorm{\hat{\mu}^{\lse}_G-\mu_0}{}^2$. 
\end{proposition}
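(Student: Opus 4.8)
The plan is to express the least--squares risk purely through the trace of the inverse sample covariance, for which the Gaussian value is exactly computable, and then to exhibit an isotropic law whose sample covariance has an anomalously small eigenvalue with constant probability. First I would record the elementary reduction: whenever $A = A_0/\sqrt{m}$ with $A_0^\top A_0$ a.s.\ invertible, the identity $\hat{\mu}_A^{\lse} - \mu_0 = (A^\top A)^{-1}A^\top \xi$ together with $\xi\sim\mathcal{N}(0,I_m)$ independent of $A$ gives $\E[\pnorm{\hat{\mu}_A^{\lse}-\mu_0}{}^2\mid A] = \tr\big((A^\top A)^{-1}\big) = m\,\tr\big((A_0^\top A_0)^{-1}\big)$, and in particular the risk does not depend on $\mu_0$. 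Hence $\E\pnorm{\hat{\mu}_A^{\lse}-\mu_0}{}^2 = m\,\E\tr\big((A_0^\top A_0)^{-1}\big)$; for the standard Gaussian design $G$ this equals $m\cdot n/(m-n-1)$ when $m\ge n+2$ (and $+\infty$ if $m=n+1$), by the standard formula for the mean of an inverse Wishart matrix. So it suffices to build an isotropic $b_0$ for which $\E\tr\big((B_0^\top B_0)^{-1}\big)$ is arbitrarily large.

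For the construction I would take a scale mixture of two centered Gaussians: fix $\delta = n/(4m)\in(0,\tfrac14)$ and a small $\epsilon>0$ to be chosen, put $c^2 \equiv (1-(1-\delta)\epsilon^2)/\delta>0$, and let $b_0 = \sigma Z$ with $Z\sim\mathcal{N}(0,I_n)$ independent of a scalar $\sigma$ satisfying $\Prob(\sigma = c)=\delta$ and $\Prob(\sigma=\epsilon)=1-\delta$. Then $\E b_0 = 0$ and $\E b_0 b_0^\top = (\delta c^2 + (1-\delta)\epsilon^2)I_n = I_n$, so $b_0$ is admissible; moreover, since every realization of $\sigma$ is strictly positive and $m>n$, the matrix $B_0^\top B_0 = \sum_{i=1}^m \sigma_i^2 Z_i Z_i^\top$ is a.s.\ positive definite, so $\hat{\mu}_B^{\lse}$ is genuinely well defined (this is precisely why one uses a nondegenerate mixture rather than a point mass at the origin).

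Next I would exhibit a small eigenvalue on a good event. Let $N = \#\{i:\sigma_i = c\}\sim\mathrm{Bin}(m,\delta)$, so $\E N = m\delta = n/4$, and set $E = \{N\le n-1\}\cap\big\{\sum_{i=1}^m\pnorm{Z_i}{}^2\le 4mn\big\}$; Markov's inequality gives $\Prob(E)\ge 1-\tfrac14-\tfrac14 = \tfrac12$. On $\{N\le n-1\}$ choose a unit vector $v$ orthogonal to $\mathrm{span}\{Z_i:\sigma_i = c\}$, a subspace of dimension at most $n-1$; then $v^\top B_0^\top B_0\, v = \epsilon^2\sum_{i:\sigma_i = \epsilon}(Z_i^\top v)^2 \le \epsilon^2\sum_{i=1}^m\pnorm{Z_i}{}^2$, so on $E$ we get $\lambda_{\min}(B_0^\top B_0)\le 4\epsilon^2 mn$ and hence $\tr\big((B_0^\top B_0)^{-1}\big)\ge (4\epsilon^2 mn)^{-1}$. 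Therefore $\E\tr\big((B_0^\top B_0)^{-1}\big)\ge \Prob(E)\,(4\epsilon^2 mn)^{-1}\ge (8\epsilon^2 mn)^{-1}$, i.e.\ $n^{-1}\E\pnorm{\hat{\mu}_B^{\lse}-\mu_0}{}^2\ge (8\epsilon^2 n^2)^{-1}$. For $m\ge n+2$ it now suffices to take $\epsilon^2 = (m-n-1)/(8L_n m n^2)$, which forces $n^{-1}\E\pnorm{\hat{\mu}_B^{\lse}-\mu_0}{}^2\ge L_n\cdot m/(m-n-1) = L_n\, n^{-1}\E\pnorm{\hat{\mu}_G^{\lse}-\mu_0}{}^2$, as desired. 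The boundary case $m=n+1$ is immediate: on $\{N=0\}$, which has positive probability, $B_0^\top B_0 = \epsilon^2\sum_{i=1}^m Z_i Z_i^\top$ is $\epsilon^2$ times a Wishart matrix with $n+1$ degrees of freedom in dimension $n$, whose inverse has infinite expected trace, so both sides equal $+\infty$.

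The only genuinely delicate point is the step above: driving $\lambda_{\min}(B_0^\top B_0)$ arbitrarily small while keeping $B_0^\top B_0$ invertible with probability one, so that the counterexample concerns a bona fide least--squares estimator rather than an ill-posed optimization. The scale mixture achieves exactly this, since with constant probability fewer than $n$ rows carry the large scale $c$, those rows span a proper subspace, and in the orthogonal directions the Gram matrix is $\epsilon^2$ times a well-conditioned Gaussian block; sending $\epsilon\downarrow 0$ (with $\delta$ and $c$ re-adjusted to preserve isotropy) drives the smallest eigenvalue to zero without ever reaching it. For the Gaussian design this mechanism is ruled out by the Marchenko--Pastur lower edge, which is precisely the source of the non-universality.
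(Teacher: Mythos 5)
Your argument is correct. Both you and the paper use the same isotropic--mixture ansatz $b_0 = \sigma Z$ with a two--point scalar $\sigma$ independent of $Z\sim\mathcal{N}(0,I_n)$, and both reduce the risk to $m\,\E\tr\big((B_0^\top B_0)^{-1}\big)$; the divergence is in how the lower bound on this trace is extracted. The paper puts the small scale at $L_n^{-1}$ with probability $1-1/m$, conditions on the event that \emph{every} row carries the small scale (probability $(1-1/m)^m\geq e^{-2}$), and on that event the Gram matrix is exactly $L_n^{-2}$ times the Gaussian Gram matrix, so the trace of the inverse inflates by the deterministic factor $L_n^2$; the bound $e^{-2}L_n^2$ is then turned into $L_n$ by running the construction with a rescaled parameter. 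You instead fix $\delta=n/(4m)$, send the small scale $\epsilon\downarrow 0$, and use a Markov/eigenvalue argument: with probability at least $1/2$ fewer than $n$ rows carry the large scale $c$, hence the sample covariance has a direction $v$ on which the quadratic form is $O(\epsilon^2 mn)$, giving $\tr\big((B_0^\top B_0)^{-1}\big)\gtrsim(\epsilon^2 mn)^{-1}$, which you then compare to the explicit inverse--Wishart value $n/(m-n-1)$ (both sides infinite when $m=n+1$). The paper's route is shorter and entirely self-contained---it never needs the closed form for the Gaussian risk or any eigenvalue estimate---and gives a clean \emph{multiplicative} comparison directly. Your route is a bit longer and needs the explicit Gaussian formula, but it exhibits concretely the mechanism the paper only alludes to afterward: the collapse of the lower edge of the spectrum of $B_0^\top B_0$ is what breaks universality, whereas the Marchenko--Pastur lower edge forbids this for the Gaussian design.
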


\begin{proof}
	Take $L_n>1$. Let $U$ be a discrete distribution supported on $\{\pm L_n^{-1}, \pm S_n \}$ with $2\Prob(U=\pm L_n^{-1})=1-1/m$ and $2\Prob(U=\pm S_n)=1/m$. Here $S_n>0$ is determined by the condition $\E U^2 = 1$; in fact some simple calculation shows that $S_n = \big\{m\big(1-L_n^{-2}(1-1/m)\big)\big\}^{1/2}$. Now let $b_0\equiv U\cdot Z$, where $Z\sim \mathcal{N}(0,I_n)$ is independent of $U$. Then by construction $\E b_0^{\otimes 2} = \E (\mathcal{N}(0,I_n))^{\otimes 2}$. With $U_i$'s and $Z_i$'s being independent copies of $U$ and $Z$, we have
	\begin{align*}
	&n^{-1}\E \pnorm{\hat{\mu}^{\lse}_B-\mu_0}{}^2 = n^{-1} \E \tr\big((B^\top B)^{-1}\big) \geq n^{-1} \E \tr\bigg[\bigg(\frac{1}{m}\sum_{i=1}^m U_i^2 Z_iZ_i^\top \bigg)^{-1}\bigg] \bm{1}_{U_i^2 = L_n^{-2},\forall i \in [m]}\\
	&\geq \bigg(1-\frac{1}{m}\bigg)^m\cdot  L_n^2\cdot \E \tr\bigg[\bigg(\frac{1}{m}\sum_{i=1}^m Z_iZ_i^\top \bigg)^{-1}\bigg] \geq  e^{-2} L_n^2\cdot n^{-1}\E \pnorm{\hat{\mu}^{\lse}_G-\mu_0}{}^2,
	\end{align*}
	where the last inequality used the simple fact that $(1-1/m)^m\geq e^{-2}$ for $m\geq 2$. Now the claim follows by adjusting constants.
\end{proof}

From the proof above, it is clear that the failure of risk universality is due to the non-universality of the spectrum of the sample covariance $\hat{\Sigma}_a\equiv m^{-1}\sum_{i=1}^m a_i a_i^\top \in \R^{n\times n}$, where $\{a_i\}_{i=1}^m $ are the rows of the design matrix $A$ being i.i.d. isotropic $\R^n$-valued random vectors. In a recent work \cite{yaskov2016necessary}, it is shown that the empirical spectral distribution (ESD) of $\hat{\Sigma}_a$ converges to the Marchenko-Pastur law if and only if 
\begin{align}\label{eqn:MP_law_1}
\frac{1}{n}\Big(a_1^\top(\hat{\Sigma}_a+\epsilon I)^{-1}a_1-\tr\{(\hat{\Sigma}_a+\epsilon I)^{-1}\}\Big)\stackrel{\mathrm{p}}{\to} 0,\quad \forall \epsilon>0.
\end{align}
This characterizing condition (\ref{eqn:MP_law_1}) is satisfied if a weak law of large numbers holds for quadratic forms in the following form:
\begin{align}\label{eqn:MP_law_2}
\frac{1}{n}\Big(a_1^\top D_n a_1-\tr D_n\Big)\stackrel{\mathrm{p}}{\to} 0,\quad \forall\, \Big\{\hbox{real symmetric p.s.d. $D_n$ with }\pnorm{D_n}{\op}\leq 1\Big\}.
\end{align}
The condition (\ref{eqn:MP_law_2}) is strictly stronger than (\ref{eqn:MP_law_1}), cf. \cite{adamczak2011marchenko}. (\ref{eqn:MP_law_2}) or its variants are known in the literature as a sufficient condition for the ESD of the sample covariance to converge to the Marchenko-Pastur law; see e.g. \cite[Theorem 1.1]{bai2008large} for an $L_2$ version of (\ref{eqn:MP_law_2}) and also \cite{elkaroui2009concentration,adamczak2013remarks} for related results under variants of (\ref{eqn:MP_law_2}). An interesting question arising from the above discussion is whether (risk) universality properties of general regularized regression estimators hold under design matrices $A$ with i.i.d. rows satisfying the condition (\ref{eqn:MP_law_1}) or the slightly stronger condition (\ref{eqn:MP_law_2}). We leave this to a future work.

\subsection{Some illustrative simulation}\label{section:simulation}

We perform a small scale simulation here to illustrate the (non-)universality results proved in the previous subsections. 

First we examine (non-)universality of risk asymptotics under different distributions of $(A,\xi)$. As can be seen from Figure \ref{fig:risk}, for both Ridge and Lasso estimators, universality of risk asymptotics holds for $(\sqrt{m}A,\xi)$ with i.i.d. entries from a $t$ distribution with only $3.5$ degrees of freedom (dof), and then gradually breaks down when the dof approaches $2.5$. It also seems reasonable to conjecture that a phase transition near $t(3)$ occurs for the risk universality for both Ridge and Lasso estimators. On the other hand, under the setup of (non-universality Section \ref{section:non_universality}) with a simple three-point delta prior, we see a matching second moment of the design does not guarantee universality.

\begin{figure}
	\centering
	\begin{subfigure}[b]{0.48\textwidth}
		\centering
		\includegraphics[width=\textwidth]{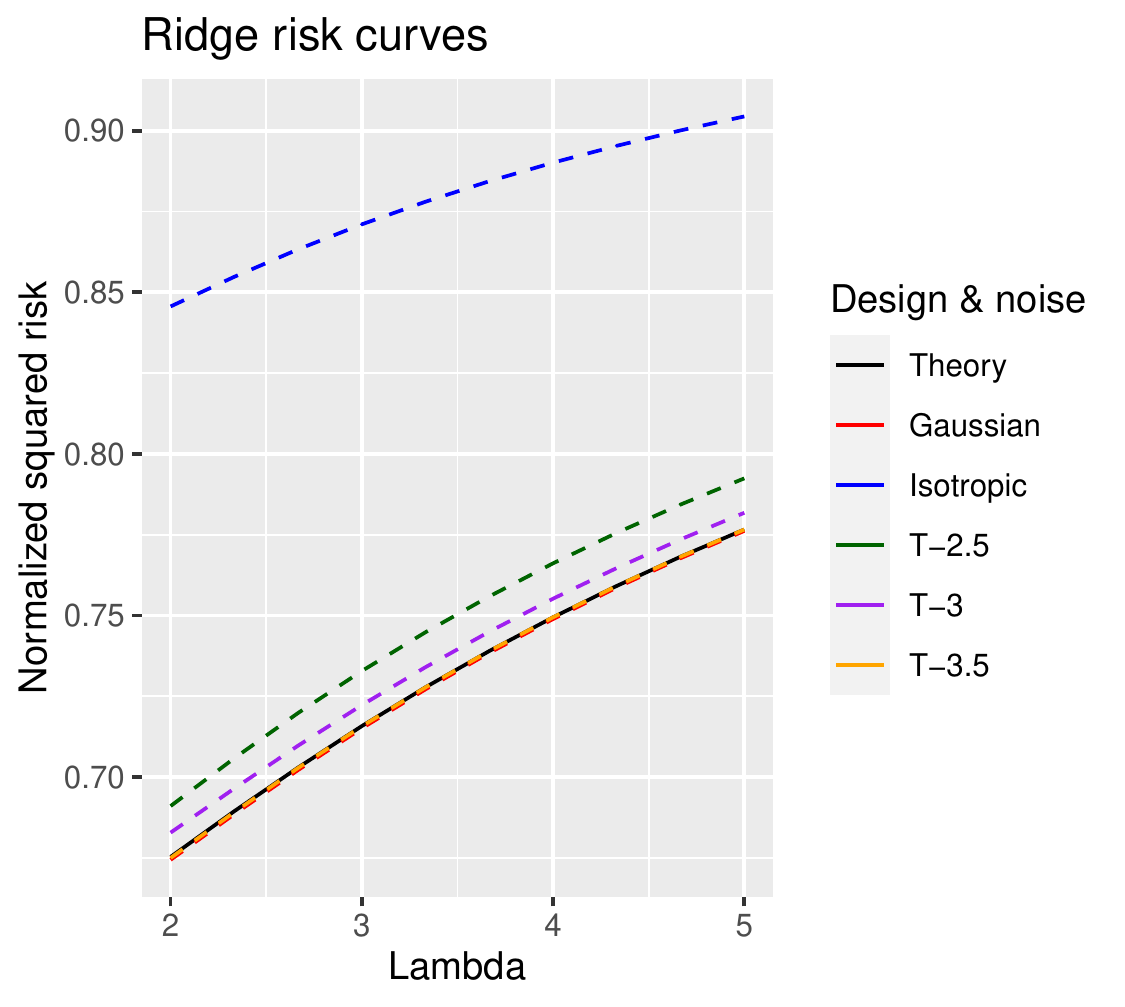}
		\caption{Ridge risk curves}
		\label{fig:y equals x}
	\end{subfigure}
	\hfill
	\begin{subfigure}[b]{0.48\textwidth}
		\centering
		\includegraphics[width=\textwidth]{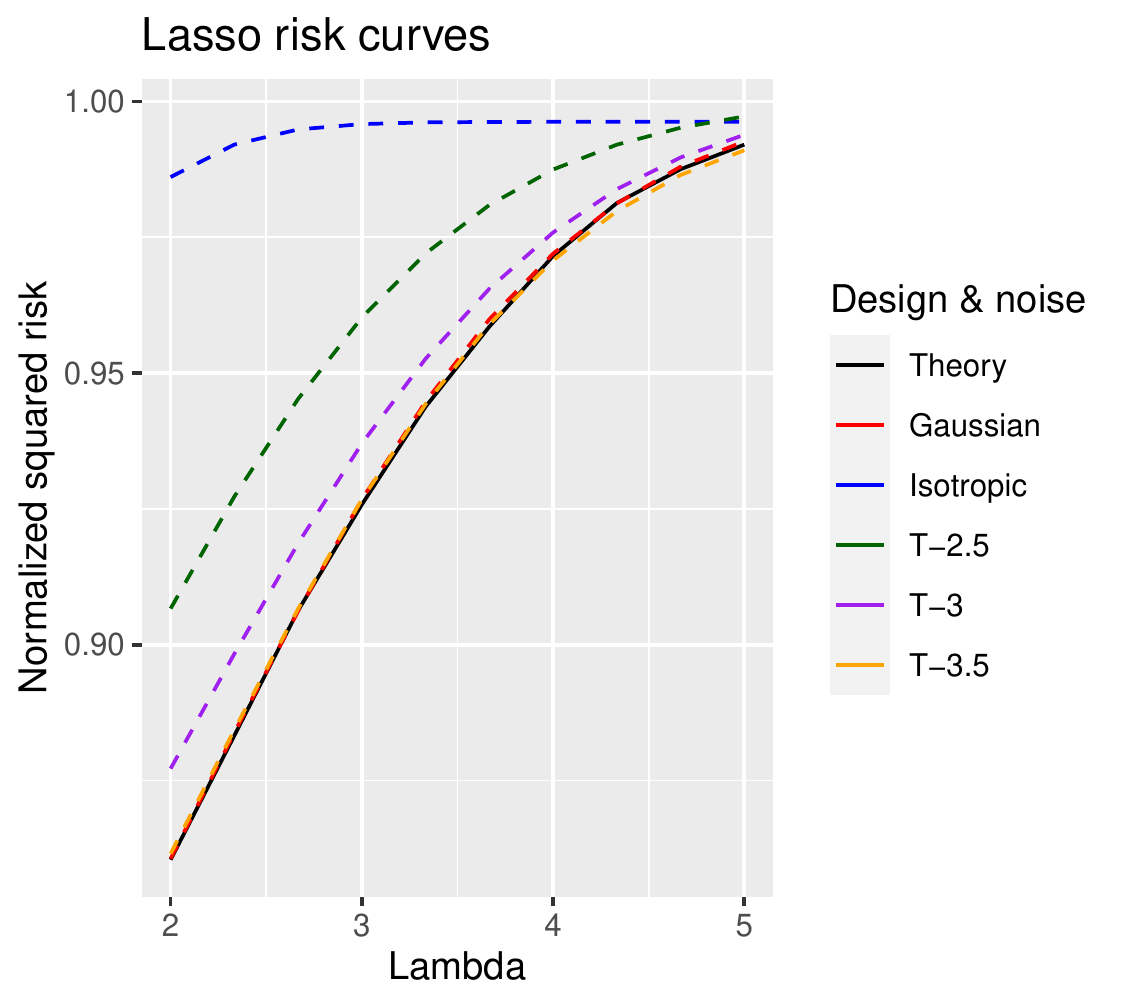}
		\caption{Lasso risk curves}
		\label{fig:three sin x}
	\end{subfigure}
	\caption{The black solid line marks the theoretical risk given by $w^{\mathsf{L}}_*$ and $w^{\mathsf{R}}_*$. Simulation parameters: $m = 1200$, $n = 1500$, $\mu_0\in\R^n$ are i.i.d. $\mathcal{N}(0,1)$, $(\sqrt{m}A, \xi)$ have i.i.d. entries following $\mathcal{N}(0,1)$ (red), $t$ distribution with df 3.5 (yellow), df 3 (purple), df $2.5$ (green), with proper normalization in the latter two cases so that the variance is $1$. In the isotropic case (blue), $\xi$ has i.i.d. $\mathcal{N}(0,1)$ entries and $(\sqrt{m}A)_{i,\cdot} = Z_iU_i$ with $Z_i \stackrel{\textrm{i.i.d.}}{\sim} \mathcal{N}(0,I_n)$ and $U_i \stackrel{\textrm{i.i.d.}}{\sim} 0.25\delta_{\sqrt{2}} + 0.25\delta_{-\sqrt{2}} + 0.5\delta_{0}$. The empirical risk curves are averaged over $50$ replications.}
	\label{fig:risk}
\end{figure}

Next we examine the distributional universality proved for Ridge and Lasso estimators in Theorems \ref{thm:ridge_dist} and \ref{thm:lasso_dist}. By the QQ plots in Figure \ref{fig:dist}, we see that such closeness holds all the way down to the very heavy-tailed situation where $(\sqrt{m}A, \xi)$ have i.i.d. entries following a $t$ distribution with only about $2$ dof. Here the simulation setup is similar to that used in Figure \ref{fig:risk} with the exception that the variance level of $\mu_0$ is enlarged to ensure a visible difference in the QQ plots. 

\begin{figure}
	\centering
	\begin{subfigure}[b]{0.48\textwidth}
		\centering
		\includegraphics[width=\textwidth]{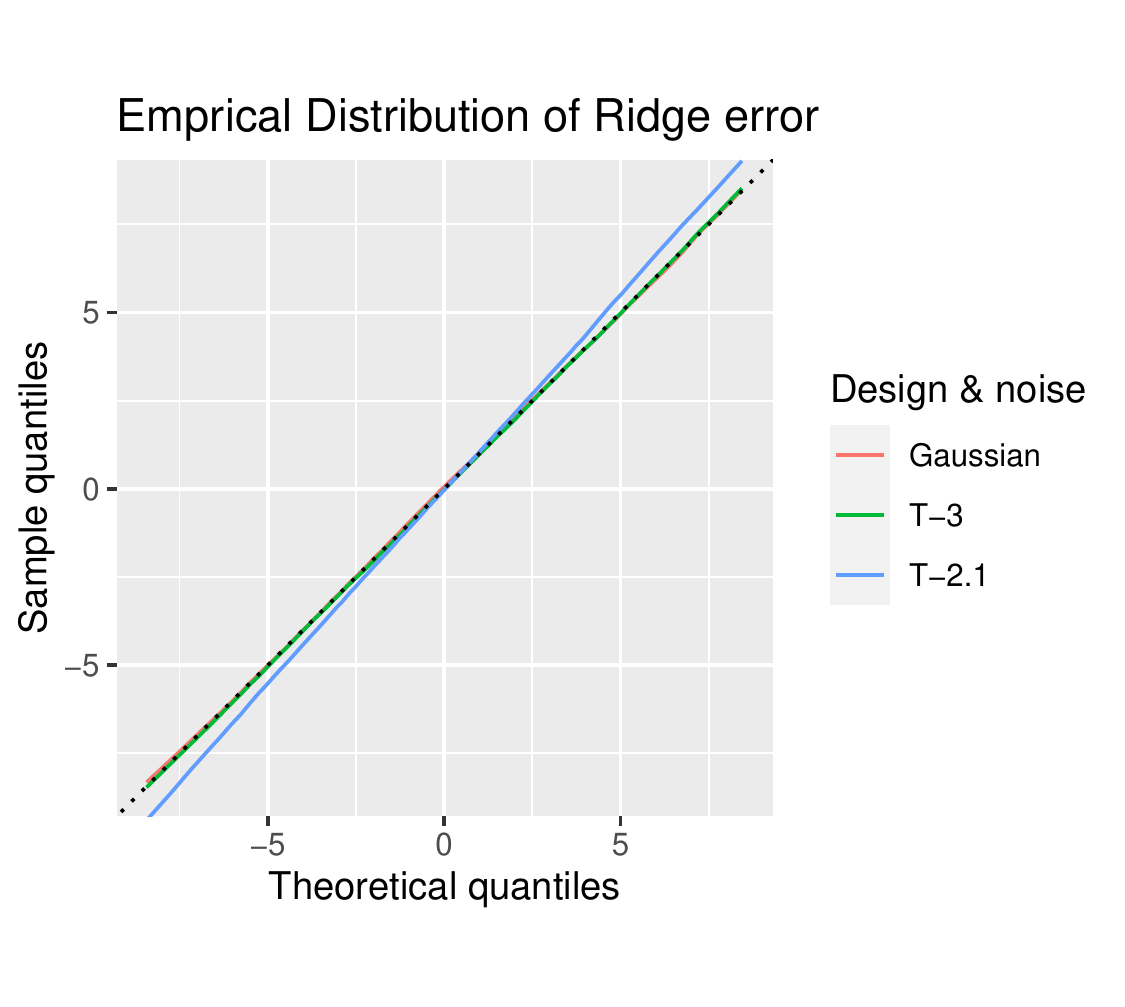}
		\caption{QQ plot of Ridge errors}
	\end{subfigure}
	\hfill
	\begin{subfigure}[b]{0.48\textwidth}
		\centering
		\includegraphics[width=\textwidth]{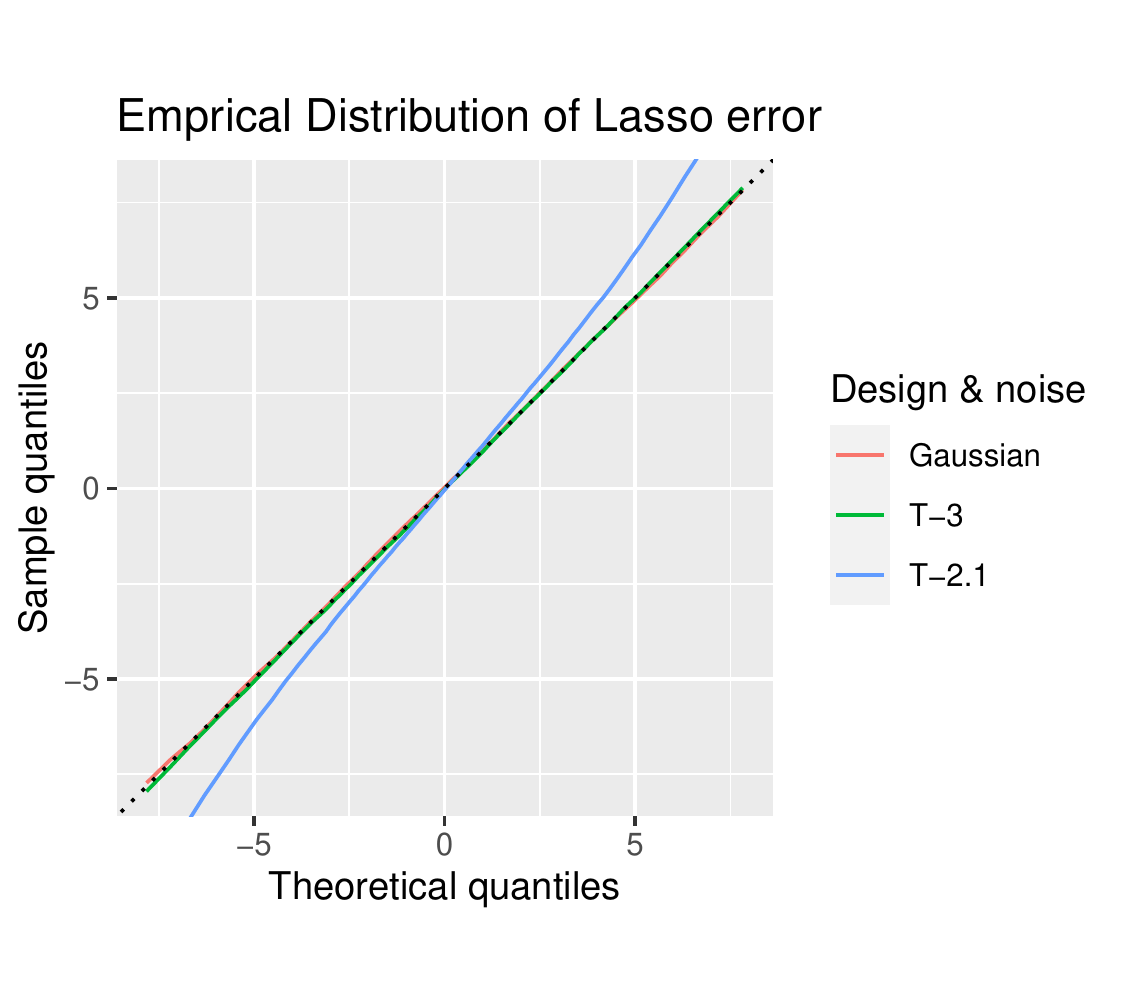}
		\caption{QQ plot of Lasso errors}
	\end{subfigure}
	\caption{Comparison of the empirical quantiles of the error with theoretical quantiles given by $w^{\mathsf{L}}_*$ and $w^{\mathsf{R}}_*$. Simulation parameters: $m = 1200$, $n = 1500$, $\mu_0\in\R^n$ are i.i.d. $\mathcal{N}(0,5^2)$, $(\sqrt{m}A, \xi)$ have i.i.d. entries following $\mathcal{N}(0,1)$ (red), $t$ distribution with df $3$ (green), $t$ distribution with df $2.1$ (blue), with proper normalization in the latter two cases so that the variance is $1$. The empirical quantiles are averaged over $50$ replications.}
	\label{fig:dist}
\end{figure}

\section{Proofs for Section \ref{section:theory}}\label{section:proof_theory}

\subsection{Notation on Hamiltonian}
Let $W$ be a finite set. For any Hamiltonian system $\{H_0(w,A): w \in W\}$ (indexed by any `other structure' $A$, which is typically a matrix in this paper), let $\ang{\cdot}{H_0}$ be the expectation under the Gibbs measure over $W$ induced by the Hamiltonian $H_0$, i.e., for any function $f(\cdot,A): W \to \R$, we have
\begin{align}\label{def:hamilton}
\ang{f}{H_0}\equiv \ang{f(\cdot,A)}{H_0}\equiv \frac{\sum_{w \in W}  f(w,A) e^{-H_0(w,A)}  }{  \sum_{w \in W} e^{-H_0(w,A)}}. 
\end{align}
It is easy to verify that for a generic differentiation operator $\partial$,
\begin{align}\label{eqn:hamilton_derivative}
\partial \ang{f}{H_0}=\ang{\partial f}{H_0}-\ang{f\cdot \partial H_0}{H_0}+\ang{f}{H_0}\ang{\partial H_0}{H_0}.
\end{align}
The above formula will often be used with $\partial\equiv \partial/\partial A_{ij}$. 

\subsection{Proof of Theorem \ref{thm:universality_smooth}}\label{section:proof_universality_smooth}
Let for $\beta>0$ and $N \in \N$, define the `soft-min' function
\begin{align}
F_\beta(x) \equiv F_{\beta;N}(x) \equiv -\frac{1}{\beta}\log \bigg(\sum_{j=1}^N \exp(-\beta x_j)\bigg),\quad x \in \R^N. 
\end{align}

Recall $H_\psi(w,A)$ defined in (\ref{def:H_psi}) for $\{\psi_i\}$ and its un-normalized version $\bar{H}_\psi = m\cdot H_{\psi}$. We will write $H_{\psi_\rho}(w,A)$ when $\{\psi_i\}$'s are replaced with their smoothed versions $\{\psi_{i;\rho}\}$'s (cf. Assumption \ref{assump:loss}) in (\ref{def:H_psi}) and $\bar{H}_{\psi_\rho} = m\cdot H_{\psi_\rho}$.

\begin{proposition}\label{prop:comparision_smooth}
	For any finite set $W$ and $\mathsf{g} \in C^1(\R)$, 
	\begin{align*}
	\biggabs{\E \mathsf{g}\Big(\min_{w \in W } H_{\psi_\rho}(w,A)\Big) - \E \mathsf{g}\Big[m^{-1}F_\beta\Big( \big(\bar{H}_{\psi_\rho}(w,A)\big)_{w \in W }   \Big)\Big]}\leq \frac{\pnorm{\mathsf{g}'}{\infty}\cdot \log \abs{W} }{m\beta}.
	\end{align*}
\end{proposition}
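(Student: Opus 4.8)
The plan is to reduce the statement to an elementary two-sided bound comparing the ``soft-min'' $F_\beta$ with the true minimum, and then to invoke the Lipschitz continuity of $\mathsf{g}$ implied by $\mathsf{g}\in C^1(\R)$. No probabilistic input is needed beyond $\abs{\E X}\leq \E\abs{X}$; the whole comparison is carried out pointwise in the random matrix $A$.

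\textbf{Key deterministic step.} I would first record that for any $x\in\R^N$,
\[
\min_{1\leq j\leq N} x_j - \frac{\log N}{\beta}\;\leq\; F_{\beta;N}(x)\;\leq\; \min_{1\leq j\leq N} x_j .
\]
The upper bound follows from $\sum_{j=1}^N e^{-\beta x_j}\geq e^{-\beta\min_j x_j}$ together with the monotonicity of $\log$ and the sign in the definition of $F_{\beta;N}$; the lower bound follows from $\sum_{j=1}^N e^{-\beta x_j}\leq N\,e^{-\beta\min_j x_j}$. Equivalently, $0\leq \min_j x_j - F_{\beta;N}(x)\leq (\log N)/\beta$.

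\textbf{Application and conclusion.} Next I would apply the previous display with $N=\abs{W}$ and $x_w=\bar{H}_{\psi_\rho}(w,A)$ for $w\in W$. Since $\bar{H}_{\psi_\rho}=m\cdot H_{\psi_\rho}$, dividing through by $m$ gives, for every realization of $A$,
\[
0\;\leq\; \min_{w\in W} H_{\psi_\rho}(w,A) - m^{-1}F_\beta\Big(\big(\bar{H}_{\psi_\rho}(w,A)\big)_{w\in W}\Big)\;\leq\;\frac{\log\abs{W}}{m\beta}.
\]
Finally, $\mathsf{g}\in C^1(\R)$ gives $\abs{\mathsf{g}(a)-\mathsf{g}(b)}\leq \pnorm{\mathsf{g}'}{\infty}\abs{a-b}$ by the mean value theorem; applying this with $a=\min_{w\in W}H_{\psi_\rho}(w,A)$, $b=m^{-1}F_\beta\big((\bar{H}_{\psi_\rho}(w,A))_{w\in W}\big)$, using the bound just displayed, and then taking expectation over $A$ (via $\abs{\E X}\leq\E\abs{X}$) yields exactly the claimed inequality.

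I do not anticipate any genuine obstacle here: this is the standard soft-min approximation lemma, and the only point requiring care is the direction and size of the two-sided bound on $F_{\beta;N}$. The reason to isolate it as a proposition is that $F_\beta$ is a smooth function of the entries of $A$, unlike $w\mapsto\min_{w\in W}H_{\psi_\rho}(w,A)$, which is what makes the later Lindeberg-path differentiation possible; but that smoothness plays no role in the present statement.
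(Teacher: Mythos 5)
Your proof is correct and follows essentially the same route as the paper: both hinge on the elementary two-sided soft-min inequality $\min_j x_j - (\log N)/\beta \leq F_{\beta;N}(x) \leq \min_j x_j$, then divide by $m$, apply the Lipschitz bound from $\mathsf{g}\in C^1(\R)$, and take expectation. The paper simply compresses these steps into a single display, while you spell out each elementary inequality; there is no substantive difference.
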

\begin{proof}
	The left hand side of the claimed inequality is bounded by
	\begin{align*}
	&\pnorm{\mathsf{g}'}{\infty} \cdot m^{-1} \E \biggabs{\min_{w \in W} \bar{H}_{\psi_\rho}(w,A) - F_\beta\Big( \big(\bar{H}_{\psi_\rho}(w,A)\big)_{w \in W }   \Big)  }\leq \frac{\pnorm{\mathsf{g}'}{\infty}\cdot \log \abs{W} }{m\beta}.
	\end{align*}
	Here in the last inequality we used the easy fact that $\min_j x_j- \log \abs{W}/\beta\leq F_{\beta;\abs{W}}(x)\leq \min_j x_j$ for all $x \in \R^{\abs{W}}$. 
\end{proof}

\begin{proposition}\label{prop:comparision_discrete}
	Suppose Assumption \ref{assump:loss} holds. Fix $L_n\geq 1$. There exists some $C_0=C_0(q)>0$, such that for any measurable $\mathcal{S}_n \subset [-L_n,L_n]^n$ and any $\delta \in (0,1)$, there exists a deterministic finite set $\mathcal{S}_{n,\delta}$ with $\abs{\mathcal{S}_{n,\delta}}\leq \ceil{2L_n/\delta}^n$ and the following hold: For any $\rho \in (0,\bar{\rho})$ and $\mathsf{g} \in C^1(\R)$, 
	\begin{align*}
	&\biggabs{\E \mathsf{g}\Big(\min_{w \in \mathcal{S}_{n,\delta}} H_{\psi_\rho}(w,A)\Big) - \E \mathsf{g}\Big( \min_{w \in \mathcal{S}_{n}} H_{\psi_\rho}(w,A) \Big)}\\
	&\leq C_0\cdot \pnorm{\mathsf{g}'}{\infty}\bigg( \mathscr{D}_{\psi}(\rho)\delta\cdot  L_n^{q_0q_1} \Av\big(\{L_{\psi_i}^{q_1}\}\big) \cdot \big(1+ \E \pnorm{A}{\infty}^{q_0q_1+1}\big) + \mathscr{M}_{\mathsf{f}}(L_n,\delta)\bigg),
	\end{align*}
	where  $\pnorm{A}{\infty}$ is the matrix norm induced by $\ell_\infty$. 
\end{proposition}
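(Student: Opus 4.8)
The plan is to take $\mathcal{S}_{n,\delta}$ to be a deterministic $\ell_\infty$-net of $\mathcal{S}_n$ drawn from $\mathcal{S}_n$ itself. Concretely, I would partition the cube $[-L_n,L_n]^n$ into $\lceil 2L_n/\delta\rceil^n$ half-open subcubes of side $\delta$, and from every subcube that intersects $\mathcal{S}_n$ pick one representative point lying in $\mathcal{S}_n$; the collection $\mathcal{S}_{n,\delta}$ of these representatives is finite and deterministic, satisfies $\abs{\mathcal{S}_{n,\delta}}\le\lceil 2L_n/\delta\rceil^n$ and $\mathcal{S}_{n,\delta}\subseteq\mathcal{S}_n$, and has the property that every $w\in\mathcal{S}_n$ lies in a common subcube with some $w'\in\mathcal{S}_{n,\delta}$, hence $\pnorm{w-w'}{\infty}\le\delta$. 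The inclusion $\mathcal{S}_{n,\delta}\subseteq\mathcal{S}_n$ already gives $\min_{w\in\mathcal{S}_{n,\delta}}H_{\psi_\rho}(w,A)\ge\min_{w\in\mathcal{S}_n}H_{\psi_\rho}(w,A)$, so the whole point is the matching upper bound, for which I need a one-sided, $w$-uniform control on how much $H_{\psi_\rho}$ changes when a point of $\mathcal{S}_n$ is replaced by its net representative.

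The core of the argument is therefore a deterministic (conditional on $A$) estimate. Fix $w\in\mathcal{S}_n$ and its representative $w'$. Writing $\pnorm{A}{\infty}$ for the $\ell_\infty\to\ell_\infty$ operator norm (the maximal absolute row sum), I would use $\pnorm{Aw}{\infty}\vee\pnorm{Aw'}{\infty}\le\pnorm{A}{\infty}L_n$ and $\pnorm{A(w-w')}{\infty}\le\pnorm{A}{\infty}\delta$. Applying the mean value theorem coordinate-wise to $\psi_{i;\rho}\in C^1$, combined with the self-bounding property $\abs{\psi_{i;\rho}'(x)}\le\mathscr{D}_\psi(\rho)(1+\abs{\psi_{i;\rho}(x)}^{q_1})$, the at-most-polynomial growth $\abs{\psi_i(x)}\le L_{\psi_i}(1+\abs{x}^{q_0})$ upgraded to $\abs{\psi_{i;\rho}(x)}\le CL_{\psi_i}(1+\abs{x}^{q_0})$ via $\pnorm{\psi_{i;\rho}-\psi_i}{\infty}\le\mathscr{M}_\psi(\rho)\le1$ and $L_{\psi_i}\ge1$, and the bound $\abs{x}\le\pnorm{A}{\infty}L_n$ on the relevant segments, the target loss-term estimate to establish is
\begin{align*}
\frac1m\sum_{i=1}^m\bigabs{\psi_{i;\rho}((Aw')_i)-\psi_{i;\rho}((Aw)_i)}\le C_0\,\mathscr{D}_\psi(\rho)\,\delta\,L_n^{q_0q_1}\,\Av\big(\{L_{\psi_i}^{q_1}\}\big)\,\big(1+\pnorm{A}{\infty}^{q_0q_1+1}\big),
\end{align*}
with $C_0=C_0(q)$, while the regularizer contributes at most $\abs{\mathsf{f}(w)-\mathsf{f}(w')}\le\mathscr{M}_{\mathsf{f}}(L_n,\delta)$ since $w,w'\in[-L_n,L_n]^n$ and $\pnorm{w-w'}{\infty}\le\delta$. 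Crucially, the right-hand side above is independent of $w$.

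Denoting by $\mathcal{E}(A)$ the sum of these two bounds, I then have $H_{\psi_\rho}(w',A)\le H_{\psi_\rho}(w,A)+\mathcal{E}(A)$ for every $w\in\mathcal{S}_n$, hence $\min_{\mathcal{S}_{n,\delta}}H_{\psi_\rho}(\cdot,A)\le\min_{\mathcal{S}_n}H_{\psi_\rho}(\cdot,A)+\mathcal{E}(A)$; combined with the reverse inequality from the inclusion this gives $0\le\min_{\mathcal{S}_{n,\delta}}H_{\psi_\rho}(\cdot,A)-\min_{\mathcal{S}_n}H_{\psi_\rho}(\cdot,A)\le\mathcal{E}(A)$. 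One last application of the mean value theorem to $\mathsf{g}$, followed by taking expectations, converts this into the claimed inequality, with $\E\pnorm{A}{\infty}^{q_0q_1+1}$ replacing $\pnorm{A}{\infty}^{q_0q_1+1}$. The step I expect to require the most care is the power bookkeeping in the displayed estimate: passing from the self-bounding bound phrased in $\abs{\psi_{i;\rho}}^{q_1}$ to one polynomial in $\abs{x}$ and then in $\pnorm{A}{\infty}L_n$, while checking that the averaged loss constant comes out as $\Av(\{L_{\psi_i}^{q_1}\})$ (rather than $\Av(\{L_{\psi_i}\})^{q_1}$) and that the exponent on $\pnorm{A}{\infty}$ is exactly $q_0q_1+1$ — one factor from the increment $\pnorm{A(w-w')}{\infty}$ and $q_0q_1$ from the polynomial envelope evaluated at $\pnorm{A}{\infty}L_n$. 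Everything else is a routine covering argument; in particular no probabilistic input beyond monotonicity of expectation is needed, and the smoothing only enters to license the $C^1$ mean value theorem.
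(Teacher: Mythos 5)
Your proposal is correct and follows essentially the same route as the paper: the same lattice/net construction of $\mathcal{S}_{n,\delta}\subseteq\mathcal{S}_n$, the same reduction via Lipschitzness of $\mathsf{g}$ to bounding $0\le\min_{\mathcal{S}_{n,\delta}}H_{\psi_\rho}-\min_{\mathcal{S}_n}H_{\psi_\rho}$, and the same mean-value/self-bounding estimate through $\pnorm{A}{\infty}$. The only cosmetic difference is that the paper evaluates the loss increment at the specific optimizer $w_\ast$ and its net representative $w_{\ast,\delta}$, whereas you phrase the identical estimate as a $w$-uniform bound $\mathcal{E}(A)$; both rely on $\abs{(Aw)_i}\le\pnorm{A}{\infty}L_n$ and give the same constant.
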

\begin{proof}
	Fix $\delta \in (0,1)$, we may construct $\mathcal{S}_{n,\delta}$ as follows: For all closed hyper-rectangles determined by the lattice $(\delta \mathbb{Z})^n\cap [-L_n,L_n]^n$ whose intersection with $\mathcal{S}_n$ is non-empty, pick one arbitrary element. Then collect all such elements to form $\mathcal{S}_{n,\delta}$. By construction, $\mathcal{S}_{n,\delta}\subset \mathcal{S}_n$ is a $\delta$-cover of $\mathcal{S}_n$ under $\ell_\infty$ with $\abs{\mathcal{S}_{n,\delta}}\leq \ceil{2L_n/\delta}^n$. Let $w_\ast \in \argmin_{w \in \mathcal{S}_n} H_{\psi_\rho}(w,A)$ (be well-defined without loss of generality), and $w_{\ast,\delta} \in \mathcal{S}_{n,\delta}$ be such that $\pnorm{w_\ast-w_{\ast,\delta}}{\infty}\leq \delta$. Then the left hand side of the desired inequality is bounded by 
	\begin{align*}
	&\pnorm{\mathsf{g}'}{\infty}\cdot \E \bigabs{\min_{w \in \mathcal{S}_{n,\delta}} H_{\psi_\rho}(w,A)-  \min_{w \in \mathcal{S}_{n}} H_{\psi_\rho}(w,A)}\\
	& \leq \pnorm{\mathsf{g}'}{\infty}\cdot   \E \big(H_{\psi_\rho}(w_{\ast,\delta},A)-H_{\psi_\rho}(w_\ast,A)\big)_+\\
	&\lesssim \pnorm{\mathsf{g}'}{\infty}\cdot  \bigg(\frac{1}{m}\E \biggabs{ \sum_{i=1}^m \Big[\psi_{i;\rho}\big((A w_{\ast,\delta})_i\big)- \psi_{i;\rho}\big((A w_{\ast})_i\big)\Big] }+ \sup_{\substack{w,w'\in [-L_n,L_n]^n,\\ \pnorm{w-w'}{\infty}\leq \delta}} \bigabs{\mathsf{f}(w)-\mathsf{f}(w') } \bigg)\\
	&\equiv \pnorm{\mathsf{g}'}{\infty}\cdot \big(T_1+ \mathscr{M}_{\mathsf{f}}(L_n,\delta)\big).
	\end{align*}
	By the assumption on $\{\psi_{i;\rho}\}$, we have
	\begin{align*}
	&\bigabs{\psi_{i;\rho}\big((A w_{\ast,\delta})_i\big)- \psi_{i;\rho}\big((A w_{\ast})_i\big)}\\
	&\lesssim_q  {\mathscr{D}_{\psi}}(\rho) \cdot \bigabs{ \big(A(w_\ast-w_{\ast,\delta})\big)_i}\cdot L_{\psi_i}^{q_1}\Big(1+\abs{(A w_{\ast})_i}^{q_0q_1}+ \abs{(A w_{\ast,\delta})_i}^{q_0q_1}\Big).
	\end{align*}
	This implies that 
	\begin{align*}
	T_1&\lesssim_q   m^{-1} \mathscr{D}_{\psi}(\rho) \E  \pnorm{A(w_\ast-w_{\ast,\delta})}{\infty} \sum_{i=1}^m L_{\psi_i}^{q_1} \Big(1+\abs{(A w_{\ast})_i}^{q_0q_1}+ \abs{(A w_{\ast,\delta})_i}^{q_0q_1}\Big)\\
	&\lesssim    \mathscr{D}_{\psi}(\rho)\cdot \delta \cdot \E \pnorm{A}{\infty}\bigg(\pnorm{A w_\ast}{\infty}^{q_0q_1}+ \pnorm{A w_{\ast,\delta}}{\infty}^{q_0q_1}+\frac{1}{m}\sum_{i=1}^m L_{\psi_i}^{q_1}\bigg)\\
	&\lesssim   \mathscr{D}_{\psi}(\rho)\cdot \delta\cdot  \Big(L_n^{q_0q_1}+ \Av\big(\{L_{\psi_i}^{q_1}\}\big)\Big) \big(1+ \E \pnorm{A}{\infty}^{q_0q_1+1}\big).
	\end{align*}
	The claim follows as $L_n\wedge L_{\psi_i}\geq 1$. 
\end{proof}

\begin{proposition}\label{prop:dim_free_smooth_min}
	Suppose Assumptions \ref{assump:setting} and \ref{assump:loss} hold. Let $A_0,B_0 \in \R^{m\times n}$ be two random matrices with independent components, such that $\E A_{0;ij}=\E B_{0;ij}=0$ and $\E A_{0;ij}^2 = \E B_{0;ij}^2$ for all $i \in [m], j \in [n]$. Further assume that 
	\begin{align*}
	M\equiv  \max_{i\in [m],j \in[n]}\big(\E \abs{A_{0;ij}}^{\bar{\mathsf{q}}}+\E \abs{B_{0;ij}}^{\bar{\mathsf{q}}}\big)<\infty.
	\end{align*}
	Let $A\equiv A_0/\sqrt{m}$ and $B\equiv B_0/\sqrt{m}$. Then there exists some $C_0=C_0(\tau,q,M)>0$ such that for any finite set $W\subset [-L_n,L_n]^n$ with $L_n\geq 1$, any $\rho \in (0,1)$, and any $\mathsf{g} \in C^3(\R)$, we have 
	\begin{align*}
	& \biggabs{\E \mathsf{g} \Big[m^{-1} F_\beta\Big( \big( \bar{H}_{\psi_\rho}(w,A)\big)_{w \in W }   \Big)\Big]-\E \mathsf{g} \Big[m^{-1} F_\beta\Big( \big(\bar{H}_{\psi_\rho}(w,B)\big)_{w \in W}   \Big)\Big] }\\
	&\leq C_0 \cdot K_{\mathsf{g}}\cdot \mathscr{D}_{\psi}^3(\rho)\cdot  L_n^{\bar{\mathsf{q}}} \Av\big(\{L_{\psi_i}^{\mathsf{q}}\}\big) \cdot  (1\vee\beta^2)\cdot n^{-1/2}.
	\end{align*}
	Here recall $K_{\mathsf{g}}=1+\max_{\ell=1,2,3} \pnorm{\mathsf{g}^{(\ell)}}{\infty}$, and $\mathsf{q},\bar{\mathsf{q}}$ are given in (\ref{def:q_bar}).
\end{proposition}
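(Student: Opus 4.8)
The plan is to prove Proposition \ref{prop:dim_free_smooth_min} by the quantitative Lindeberg (swapping) method, where all the work goes into an almost dimension-free bound for the third derivative of the smoothed soft-min cost. First I would enumerate the $mn$ entries and interpolate between $A_0$ and $B_0$ one scalar entry at a time. Writing $\phi(M_0)\equiv \mathsf{g}\big(m^{-1}F_\beta\big((\bar H_{\psi_\rho}(w,M_0/\sqrt m))_{w\in W}\big)\big)$, one Taylor-expands $\phi$ to third order in each swapped entry; since $\E A_{0;ij}=\E B_{0;ij}$, $\E A_{0;ij}^2=\E B_{0;ij}^2$, and $F_\beta,\mathsf{g},\psi_{i;\rho}$ are all $C^3$ (this is exactly where the smoothing in Assumption \ref{assump:loss}(2) is needed), the zeroth, first and second order terms cancel identically. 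The telescoping then gives $\big|\E\phi(A_0)-\E\phi(B_0)\big|\le \tfrac16\sum_{i,j}\big(\E|A_{0;ij}|^3+\E|B_{0;ij}|^3\big)\,\E\sup_{\theta\in[0,1]}\big|\partial^3_{A_{0;ij}}\phi\big|$ along the interpolation, with $\E|A_{0;ij}|^3\le M^{3/\bar{\mathsf q}}$ by Jensen since $\bar{\mathsf q}\ge 3$.

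The core estimate is then the third derivative of $m^{-1}F_\beta$, which (up to a $w$-independent additive constant that may be dropped) is the free energy of the Gibbs measure with weight $\propto e^{-\beta\bar H_{\psi_\rho}(w,\cdot)}$. Using the Hamiltonian differentiation identity \eqref{eqn:hamilton_derivative} together with $\partial^\ell_{A_{0;ij}}\bar H_{\psi_\rho}(w,M_0/\sqrt m)=m^{-\ell/2}\psi^{(\ell)}_{i;\rho}((Mw)_i)\,w_j^\ell$, one sees that $\partial^3_{A_{0;ij}}\big[m^{-1}F_\beta\big]$ equals $m^{-5/2}$ times a finite sum of Gibbs averages $\langle\cdot\rangle$ of products of at most three factors $\psi^{(\ell)}_{i;\rho}((Mw)_i)w_j^\ell$ with $\ell\ge1$ and exponents summing to $3$, with prefactor at most $1\vee\beta^2$ (from the soft-min). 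The three lower-order Faà di Bruno terms coming from the outer $\mathsf{g}$ carry even smaller powers of $m$ and are dominated. The factor $m^{-5/2}$ is the decisive point: combined with the $mn$ entries and $m\asymp n$, the budget is $m^{-5/2}\cdot mn=m^{-3/2}n\asymp n^{-1/2}$. Into this I feed the $\ell_\infty$ constraint $|w_j|\le L_n$ to pull out $L_n^3$, and the self-bounding property $|\psi^{(\ell)}_{i;\rho}(x)|\le \mathscr{D}_\psi(\rho)(1+\psi_{i;\rho}(x)^{q_\ell})$ to pull out $\mathscr{D}_\psi^3(\rho)$, with the product of the $\psi_{i;\rho}$-exponents bounded by $\mathsf q=\max\{q_3,q_1+q_2,3q_1\}$. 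This reduces the whole bound to controlling $\E\langle\psi_{i;\rho}((Mw)_i)^r\rangle$ for $r\le\mathsf q$.

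The delicate step — and what I expect to be the main obstacle — is precisely this residual Gibbs estimate. By Assumption \ref{assump:loss}(1) and $\mathscr{M}_\psi(\rho)\le 1\le L_{\psi_i}$ one has $\psi_{i;\rho}((Mw)_i)\lesssim L_{\psi_i}(1+|(Mw)_i|^{q_0})$, so the task becomes bounding $\E\langle|(Mw)_i|^{q_0 r}\rangle$. The naive move of replacing the Gibbs average by $\max_{w\in W}|(Mw)_i|^{q_0 r}$ is fatal: since $\max_{w\in[-L_n,L_n]^n}|(Mw)_i|=L_n\pnorm{M_{i\cdot}}{1}\asymp L_n\sqrt n$, this would cost an extra $n^{q_0 r/2}$ and destroy the $n^{-1/2}$ rate. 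Instead one must exploit that, conditionally on $w$, $(Mw)_i=m^{-1/2}\sum_j M_{0;ij}w_j$ has variance $m^{-1}\pnorm{w}{}^2\lesssim L_n^2$, together with the fact that $\partial_{A_{0;ij}}\bar H_{\psi_\rho}$ touches only the $i$-th loss summand (so the relevant Gibbs weight is essentially that of the row-$i$ cavity Hamiltonian $\bar H_{\psi_\rho}-\psi_{i;\rho}((Mw)_i)$, which is independent of row $i$ of $M_0$), to obtain $\E\langle|(Mw)_i|^{q_0 r}\rangle\lesssim L_n^{q_0 r}$, using that $\bar{\mathsf q}\ge q_0\mathsf q$ guarantees enough moments of the entries. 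Hence $\E\langle\psi_{i;\rho}((Mw)_i)^r\rangle\lesssim L_{\psi_i}^{\mathsf q}L_n^{q_0\mathsf q}$ and $L_n^3\cdot L_n^{q_0\mathsf q}=L_n^{\bar{\mathsf q}}$.

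Assembling, $\E\sup_\theta\big|\partial^3_{A_{0;ij}}\phi\big|\lesssim K_{\mathsf{g}}\,m^{-5/2}(1\vee\beta^2)\,\mathscr{D}_\psi^3(\rho)\,L_n^{\bar{\mathsf q}}\,(1+L_{\psi_i}^{\mathsf q})$; summing over $(i,j)$, using $\sum_i L_{\psi_i}^{\mathsf q}=m\Av(\{L_{\psi_i}^{\mathsf q}\})$ and $m\asymp n$ yields the factor $m^{-5/2}\cdot nm\asymp n^{-1/2}$, which is exactly the stated bound $C_0 K_{\mathsf{g}}\mathscr{D}_\psi^3(\rho)L_n^{\bar{\mathsf q}}\Av(\{L_{\psi_i}^{\mathsf q}\})(1\vee\beta^2)n^{-1/2}$ with $C_0=C_0(\tau,q,M)$. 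Everything outside the cavity-type estimate of the previous paragraph is bookkeeping once the Hamiltonian calculus \eqref{eqn:hamilton_derivative}, the $\ell_\infty$ constraint, and the self-bounding structure of Assumption \ref{assump:loss} are combined.
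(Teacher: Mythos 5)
Your overall strategy is the paper's: Lindeberg interpolation, Hamiltonian calculus of \eqref{eqn:hamilton_derivative}, the $\ell_\infty$ constraint to pull out $L_n^3$, and the reduction of the residual Gibbs estimate to a row-$i$ cavity measure so that the row and the measure decouple. Two steps, however, are not carried out correctly and are exactly the places where the constant structure of the statement forces you to work harder.

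First, the Lindeberg telescoping cannot be written as $\sum_{i,j}(\E|A_{0;ij}|^3+\E|B_{0;ij}|^3)\cdot\E\sup_\theta|\partial^3_{A_{0;ij}}\phi|$: the third derivative of $\phi$ at the interpolated matrix contains the interpolated $(i,j)$ entry through $(Mw)_i$, so $\sup_\theta|\partial^3_{A_{0;ij}}\phi|$ is \emph{not} independent of $A_{0;ij}$ and the two expectations do not factor. The derivative in fact grows like $1+|t|^{\mathsf{q}_0}$ in the interpolation parameter $t$ at the $(i,j)$ slot (this is where $|\psi_{i;\rho}((Mw)_i)|^{\mathsf q}\lesssim L_{\psi_i}^{\mathsf q}(1+|(Mw)_i|^{q_0\mathsf q})$ with $(Mw)_i$ containing $t\,w_j$ shows up), so one needs the integral form of the Lindeberg remainder, $\E\int_0^{D}(1+t^{\mathsf{q}_0})(D-t)^2\,\d t \lesssim \E|D|^3+\E|D|^{\mathsf{q}_0+3}$, and this is precisely why the $\bar{\mathsf q}=q_0\mathsf q+3$-th moment of the entries appears in the hypotheses. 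Your claim that only third moments of the swapped entry are used, with $\bar{\mathsf q}$ used only for the other entries of row $i$, cannot be correct: without $\E|D|^{\bar{\mathsf q}}$ the Lindeberg remainder is uncontrolled.

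Second, the word ``essentially'' in ``the relevant Gibbs weight is essentially that of the row-$i$ cavity Hamiltonian'' is hiding the key inequality. The Gibbs average $\ang{\psi_{i;\rho}^{\mathsf q}}{\bar H_\beta}$ is taken under the \emph{full} Hamiltonian, which depends on row $i$ of the matrix, so the conditional-variance computation you invoke doesn't apply directly. The paper replaces the full Gibbs measure by the cavity one via Chebyshev's association inequality: since $t\mapsto e^{-\beta t}$ is decreasing and $t\mapsto t^{\mathsf q}$ is increasing on $t\ge 0$, $\ang{e^{-\beta\bar H^{(i)}}\bar H^{(i)\mathsf q}}{\bar H_{\beta;-i}}\le \ang{e^{-\beta\bar H^{(i)}}}{\bar H_{\beta;-i}}\cdot\ang{\bar H^{(i)\mathsf q}}{\bar H_{\beta;-i}}$, which yields $\ang{\bar H^{(i)\mathsf q}}{\bar H_\beta}\le\ang{\bar H^{(i)\mathsf q}}{\bar H_{\beta;-i}}$. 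Only after this one-sided comparison is the cavity Gibbs measure independent of row $i$, and only then can you decondition over the entries of row $i$ as you propose. This association step is a genuine additional idea, not bookkeeping, and your sketch is incomplete without it.
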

\begin{proof}	
	We will simply write $\bar{H}_{\psi_\rho}$ as $\bar{H}$, and work with the case $\beta\geq 1$ in the proof for notational simplicity. Recall the notation $\ang{\cdot}{H_0}$ in (\ref{def:hamilton}) for a generic Hamiltonian $H_0$. Let $\bar{H}_\beta \equiv \beta\cdot \bar{H}$, and $F_\beta(A)\equiv F_\beta\big( \big(\bar{H}(w,A)\big)_{w \in W}   \big)$. For $i \in [m], j\in [n], \ell \in \N$, write $\partial_{ij}^\ell\equiv \partial^\ell/\partial A_{ij}^\ell$ and repeatedly using the derivative formula (\ref{eqn:hamilton_derivative}), we have
	\begin{align*}
	\partial_{ij} \mathsf{g}\big(m^{-1}F_\beta(A)\big)
	&= m^{-1}\bigang{\partial_{ij} \bar{H} }{\bar{H}_\beta}\cdot \mathsf{g}'\big(m^{-1}F_\beta(A)\big) ,\\
	\partial_{ij}^2 \mathsf{g}\big(m^{-1}F_\beta(A)\big)& =  m^{-1}\bigg\{\bigang{\partial_{ij}^2 \bar{H}}{\bar{H}_\beta}- \beta\bigang{ \big(\partial_{ij} \bar{H}\big)^2}{\bar{H}_\beta}+\beta\bigang{\partial_{ij} \bar{H}}{\bar{H}_\beta}^2\bigg\} \cdot \mathsf{g}'\big(m^{-1}F_\beta(A)\big)\\
	&\qquad + m^{-2}\bigang{\partial_{ij} \bar{H} }{\bar{H}_\beta}^2 \mathsf{g}''\big(m^{-1}F_\beta(A)\big) ,
	\end{align*}
	and for $\beta\geq 1$,
	\begin{align}\label{ineq:smooth_error_1}
	\bigabs{\partial_{ij}^3 \mathsf{g}\big(m^{-1}F_\beta (A)\big)}&\lesssim K_{\mathsf{g}} m^{-1}\beta^2\bigg\{\bigang{ \abs{\partial^3_{ij} \bar{H}} }{\bar{H}_\beta} + \bigang{ \abs{\partial^2_{ij} \bar{H}}\cdot \abs{\partial_{ij}\bar{H}} }{\bar{H}_\beta}\nonumber\\
	&\qquad + \bigang{ \abs{\partial^2_{ij} \bar{H}} }{\bar{H}_\beta} \bigang{ \abs{\partial_{ij} \bar{H}} }{\bar{H}_\beta}+\bigang{\abs{\partial_{ij} \bar{H}}^3 }{\bar{H}_\beta}\bigg\}.
	\end{align}
	Let 
	\begin{align}\label{ineq:smooth_error_2}
	\bar{H}^{(i)}(w,A)\equiv \psi_{i;\rho} \big((Aw)_i\big)\geq 0,
	\end{align}
	so $\bar{H}(w,A) = \sum_{i=1}^m  \bar{H}^{(i)}(w,A)+m\cdot\mathsf{f}(w)$, and $\partial_{ij}^\ell \bar{H}(w,A)= w_j^\ell\cdot \partial^\ell \psi_{i;\rho}\big((Aw)_i\big)$ for $\ell=1,2,3$. Combined with (\ref{ineq:smooth_error_1}) and the assumption on $\psi_\rho$, we have 
	\begin{align}\label{ineq:smooth_error_3}
	&\bigabs{\partial_{ij}^3 \mathsf{g}\big(m^{-1}F_\beta (A)\big)} \lesssim K_{\mathsf{g}}m^{-1} \beta^2\cdot L_n^3 \mathscr{D}_{\psi}^3(\rho)  \Big[1+ \bigang{ \bar{H}^{(i)}(\cdot,A)^{\mathsf{q}} }{\bar{H}_\beta}\Big],
	\end{align}
	where recall $\mathsf{q}= \max\{q_3,q_1+q_2, 3q_1\}$. To apply Lindeberg's principle, consider the Lindeberg path $C_{ij}(t)$ between two random matrices $A,B$, defined by setting all elements in $C_{ij}(t)$ before (resp. after) the position $(i,j)$ as those of $A$ (resp. $B$) and $(C_{ij}(t))_{ij}=t$. Now we shall provide a bound for $\E  \bigang{\bar{H}^{(i)}(\cdot,C_{ij}(t))^{\mathsf{q}} }{\bar{H}_\beta}$. With
	\begin{align*}
	C\equiv C_{ij}(t),\quad \bar{H}_{\beta;-i}(w,C)\equiv \beta\big(\bar{H}(w,C)-\bar{H}^{(i)}(w,C)\big),
	\end{align*}
	we may continue as
	\begin{align}\label{ineq:smooth_error_4}
	\E  \bigang{\bar{H}^{(i)}(\cdot,C)^{\mathsf{q}} }{\bar{H}_\beta}& = \E \bigang{e^{-\beta \bar{H}^{(i)}(\cdot,C) }\cdot \bar{H}^{(i)}(\cdot,C)^{\mathsf{q}} }{\bar{H}_{\beta;-i}} \big/\bigang{e^{-\beta \bar{H}^{(i)}(\cdot,C) } }{\bar{H}_{\beta;-i}}   \nonumber\\
	&\leq \E \bigang{ \bar{H}^{(i)}(\cdot,C)^{\mathsf{q}} }{\bar{H}_{\beta;-i}},
	\end{align}
	where the last inequality follows by Chebyshev's association inequality, cf. \cite[Theorem 2.14]{boucheron2013concentration}. Using the definition (\ref{ineq:smooth_error_2}) and the polynomial growth assumption of $\{\psi_i\}$ along with $\max_i \pnorm{\psi_{i;\rho}-\psi_i}{\infty}\leq \mathscr{M}_{\psi}(\rho)\leq 1$, with $\mathsf{q}_0\equiv q_0\mathsf{q}$ and $\bar{\mathsf{q}}_0\equiv 2\ceil{\mathsf{q}_0/2}\leq \mathsf{q}_0+3=\bar{\mathsf{q}}$, we have
	\begin{align}\label{ineq:smooth_error_5}
	\E \bigang{ \bar{H}^{(i)}(\cdot,C)^{\mathsf{q}} }{\bar{H}_{\beta;-i}}&=\E \bigang{ \psi_{i;\rho}\big((C w)_i\big)^{\mathsf{q}} }{\bar{H}_{\beta;-i}}\lesssim_q 1+ \E \bigang{ \psi_{i}\big((C w)_i\big)^{\mathsf{q}} }{\bar{H}_{\beta;-i}} \nonumber\\
	&\lesssim L_{\psi_i}^{\mathsf{q}}\bigg\{1+ \E \bigang{(C_{ij} w_j)^{\mathsf{q}_0}}{\bar{H}_{\beta;-i}} + \E \biggang{\biggabs{\sum_{k\neq j} C_{ik}w_k}^{\mathsf{q}_0}}{\bar{H}_{\beta;-i}}\bigg\} \nonumber\\
	&\lesssim_{\tau,q,M} L_{\psi_i}^{\mathsf{q}}L_n^{\mathsf{q}_0}(1+t^{\mathsf{q}_0}).
	\end{align}
	Here the last inequality follows as
	\begin{align*}
	&\E\biggang{\bigg(\sum_{k\neq j} C_{ik}w_k\bigg)^{\bar{\mathsf{q}}_0}}{\bar{H}_{\beta;-i}}= \E \sum_{k_1,\cdots,k_{\bar{\mathsf{q}}_0}\neq j} C_{ik_1}\cdots C_{i k_{\bar{\mathsf{q}}_0}} \bigang{w_{k_1}\cdots w_{k_{\bar{\mathsf{q}}_0}}}{\bar{H}_{\beta;-i}} \\
	& \stackrel{(\ast)}{=} \sum_{k_1,\cdots,k_{\bar{\mathsf{q}}_0}\neq j} \E C_{ik_1}\cdots C_{i k_{\bar{\mathsf{q}}_0}} \E\bigang{w_{k_1}\cdots w_{k_{\bar{\mathsf{q}}_0}}}{\bar{H}_{\beta;-i}}\lesssim_{\tau,q,M} L_n^{\bar{\mathsf{q}}_0},
	\end{align*}
	and $(\ast)$ holds by independence between the Gibbs measure $\ang{\cdot}{\bar{H}_{\beta;-i}(\cdot,C)}$ and $C_{i\cdot}$. Combining (\ref{ineq:smooth_error_4})-(\ref{ineq:smooth_error_5}), we have
	\begin{align*}
	\E  \bigang{\bar{H}^{(i)}(\cdot,C)^{\mathsf{q}} }{\bar{H}_\beta}\lesssim_{\tau,M,q} L_{\psi_i}^{\mathsf{q}}L_n^{\mathsf{q}_0}(1+t^{\mathsf{q}_0}).
	\end{align*}
	Using (\ref{ineq:smooth_error_3}), it follows that
	\begin{align*}
	&\E \bigabs{\partial_{ij}^3 \mathsf{g}\big(m^{-1}F_\beta (C_{ij}(t))\big)}\lesssim_{\tau,M,q}K_{\mathsf{g}}\cdot m^{-1} \beta^2  L_n^{\mathsf{q}_0+3} \mathscr{D}_{\psi}^3(\rho)\cdot L_{\psi_i}^{\mathsf{q}} \cdot (1+t^{\mathsf{q}_0}).
	\end{align*}
	Now we may apply Chatterjee's Lindeberg principle (cf. Theorem \ref{thm:lindeberg}):
	\begin{align*}
	&\bigabs{\E \mathsf{g}\big(m^{-1}F_\beta(A)\big)-\E \mathsf{g}\big(m^{-1}F_\beta(B)\big) }\big/ K_{\mathsf{g}} \\
	&\lesssim_{\tau,M,q} m^{-1} \beta^2 L_n^{\mathsf{q}_0+3} \mathscr{D}_{\psi}^3(\rho)\cdot \sum_{i,j} L_{\psi_i}^{\mathsf{q}}\max_{D \in \{A_{ij},B_{ij}\}} \biggabs{\E \int_0^{D} (1+t^{\mathsf{q}_0})(D-t)^2\,\d{t}}\\
	&\lesssim_{\tau,M,q} \mathscr{D}_{\psi}^3(\rho)\cdot \beta^2 L_n^{\bar{\mathsf{q}}} \Av\big(\{L_{\psi_i}^{\mathsf{q}}\}\big) \cdot  n^{-1/2},
	\end{align*}
	where the last inequality follows as for $D \in \{A_{ij},B_{ij}\}$,
	\begin{align*}
	\biggabs{\E \int_0^{D} (1+t^{\mathsf{q}_0})(D-t)^2\,\d{t}}& = \biggabs{ \E D^3 \int_0^1 \big(1+D^{\mathsf{q}_0} u^{\mathsf{q}_0}\big) (1-u^2)\,\d{u}}\\
	& \lesssim \E \abs{D}^3+\E \abs{D}^{\mathsf{q}_0+3}\lesssim_{\tau,M,q} n^{-3/2}.
	\end{align*}
	The proof is complete. 
\end{proof}

We are now in position to prove Theorem \ref{thm:universality_smooth}.

\begin{proof}[Proof of Theorem \ref{thm:universality_smooth}]
	Fix $\delta \in (0,1)$, and let $\mathcal{S}_{n,\delta}$ be as in Proposition \ref{prop:comparision_discrete}. Then as $\abs{\mathcal{S}_{n,\delta}}\leq \big(1+2L_n/\delta\big)^n$, using the proceeding Propositions \ref{prop:comparision_smooth}-\ref{prop:dim_free_smooth_min},
	\begin{align*}
	&\biggabs{\E \mathsf{g}\Big(\min_{w \in \mathcal{S}_n } H_\psi(w,A)\Big) - \E  \mathsf{g}\Big(\min_{w \in \mathcal{S}_n } H_\psi(w,B)\Big)  }\\
	&\leq 2 \max_{D \in \{A,B\}} \biggabs{\E \mathsf{g}\Big(\min_{w \in \mathcal{S}_{n}} H_\psi(w,D)\Big) - \E \mathsf{g}\Big( \min_{w \in \mathcal{S}_{n}} H_{\psi_\rho}(w,D) \Big)} \\ 
	&\qquad + 2 \max_{D \in \{A,B\}} \biggabs{\E \mathsf{g}\Big(\min_{w \in \mathcal{S}_{n,\delta}} H_{\psi_\rho}(w,D)\Big) - \E \mathsf{g}\Big( \min_{w \in \mathcal{S}_{n}} H_{\psi_\rho}(w,D) \Big)}\\
	&\qquad + 2 \max_{D \in \{A,B\}}  \biggabs{\E \mathsf{g}\Big(\min_{w \in \mathcal{S}_{n,\delta} } H_{\psi_\rho}(w,D)\Big) - \E \mathsf{g}\Big[m^{-1}F_\beta\Big( \big(\bar{H}_{\psi_\rho}(w,D)\big)_{w \in \mathcal{S}_{n,\delta} }   \Big)\Big]}\\
	&\qquad + \biggabs{\E \mathsf{g} \Big[m^{-1} F_\beta\Big( \big(\bar{H}_{\psi_\rho}(w,A)\big)_{w \in \mathcal{S}_{n,\delta}}   \Big)\Big]-\E \mathsf{g} \Big[m^{-1}F_\beta\Big( \big(\bar{H}_{\psi_\rho}(w,B)\big)_{w \in \mathcal{S}_{n,\delta} }   \Big)\Big] }\\
	&\lesssim_{\tau,q,M} K_{\mathsf{g}}  \cdot \bigg(\mathscr{M}_\psi(\rho)+  \mathscr{D}_{\psi}^3(\rho)\\
	&\qquad\qquad  \times \Big\{  L_n^{q_0q_1} \Av\big(\{L_{\psi_i}^{q_1}\}\big) \cdot \big(1+ \E (\pnorm{A}{\infty}+\pnorm{B}{\infty})^{q_0q_1+1}\big)\cdot \delta+ \mathscr{M}_{\mathsf{f}}(L_n,\delta)\\
	&\qquad\qquad +\beta^{-1} \log_+(L_n/\delta)+ (1\vee\beta^2) L_n^{\bar{\mathsf{q}}} \Av\big(\{L_{\psi_i}^{\mathsf{q}}\}\big)  n^{-1/2}\Big\}\bigg).
	\end{align*}
	Using $\E \pnorm{A}{\infty}^{q_0q_1+1} = \E \big(\max_i \sum_j \abs{A_{ij}}\big)^{q_0q_1+1} \lesssim_{\tau,M} n\cdot (\sqrt{n})^{q_0q_1+1} = n^{(q_0q_1+3)/2}$, 
	in the regime $0<\delta\leq \omega_n=n^{-(q_0q_1+4)/2}$, the above bound reduces to
	\begin{align*}
	&\mathscr{M}_\psi(\rho)+  \mathscr{D}_{\psi}^3(\rho)  \Big\{   \mathscr{M}_{\mathsf{f}}(L_n,\delta) +\beta^{-1} \log_+(L_n/\delta)+ (1\vee\beta^2) L_n^{\bar{\mathsf{q}}} \Av\big(\{L_{\psi_i}^{\mathsf{q}}\}\big)   n^{-1/2}\Big\},
	\end{align*}
	modulo a multiplicative factor of $K_{\mathsf{g}}$. Now optimizing over $\beta>0$, the last two terms in the above bracket becomes $\mathfrak{a}^{2/3}\mathfrak{b}^{1/3}+\mathfrak{b}$, where $\mathfrak{a}\equiv \log_+(L_n/\delta)$ and $\mathfrak{b}\equiv L_n^{\bar{\mathsf{q}}} \Av\big(\{L_{\psi_i}^{\mathsf{q}}\}\big)n^{-1/2}$. The term $\mathfrak{b}$ can be dropped for free as it is effective only when $\mathfrak{b}\geq \mathfrak{a}\geq 1$---in this case the bound is trivial. This concludes the main inequality.  
	
	For the second claim, take any non-negative $g \in C^3(\R)$ such that $g(x)= 0$ for $x\leq 1$ and $g(x)=2$ for $x>3$. With $\phi(A)\equiv \min_{w \in \mathcal{S}_n } H_\psi(w,A)$, for any $z>0,\epsilon>0$, 
	\begin{align*}
	\Prob\big( \phi(A)-z>3\epsilon\big)&\leq \Prob \big[g\big(( \phi(A)-z)\big/\epsilon \big)\geq 2\big]\leq \frac{1}{2}\cdot \E g\big(( \phi(A)-z)\big/\epsilon \big)\\
	&\leq \frac{1}{2}\cdot \E g\big( (\phi(B)-z)\big/\epsilon \big)+C\cdot (1\vee \epsilon^{-3})  \mathsf{r}_{\mathsf{f}}(L_n)\\
	&\leq \Prob\big( \phi(B)-z>\epsilon\big)+ C\cdot (1\vee \epsilon^{-3})  \mathsf{r}_{\mathsf{f}}(L_n),
	\end{align*}
	as desired.
\end{proof}

\subsection{Proof of Theorem \ref{thm:universality_optimizer}}\label{section:proof_universality_optimizer}

	By (O1), the event $E_n\equiv \big\{\hat{w}_A \in [-L_n,L_n]^n\big\}$ satisfies $\Prob(E_n^c)\leq \epsilon_n$. Using that the event
	\begin{align*}
	&\bigg\{\min_{\pnorm{w}{\infty}\leq L_n} H_\psi(w,A)< z+3\rho_0\bigg\}\cap \bigg\{\min_{w \in \mathcal{S}_n\cap [-L_n,L_n]^n} H_\psi(w,A)> z+6\rho_0\bigg\}
	\end{align*}
	is included in $\big\{\hat{w}_A \notin \mathcal{S}_n \cap [-L_n,L_n]^n \big\}$, we have
	\begin{align*}
	&\Prob\big(\hat{w}_A \in \mathcal{S}_n\big)\leq \Prob\big(\hat{w}_A \in \mathcal{S}_n \cap [-L_n,L_n]^n\big)+\Prob(E_n^c)\\
	&\leq \Prob\bigg(\min_{\pnorm{w}{\infty}\leq L_n} H_\psi(w,A)\geq  z+3\rho_0\bigg)+ \Prob\bigg(\min_{w \in \mathcal{S}_n\cap [-L_n,L_n]^n} H_\psi(w,A)\leq z+6\rho_0\bigg)+\epsilon_n\\
	&\equiv \mathfrak{p}_1+\mathfrak{p}_2+\epsilon_n. 
	\end{align*}
	First we handle $\mathfrak{p}_1$:
	\begin{align*}
	\mathfrak{p}_1 &\leq \Prob\bigg(\min_{\pnorm{w}{\infty}\leq L_n} H_\psi(w,G)\geq z+\rho_0 \bigg)+ C_1(1\vee\rho_0^{-3}) \mathsf{r}_{\mathsf{f}}(L_n) \quad \hbox{(by Theorem \ref{thm:universality_smooth})}\\
	&\leq \Prob\bigg(\min_{w \in \R^n} H_\psi(w,G)\geq z+\rho_0 \bigg)+\epsilon_n+ C_1(1\vee\rho_0^{-3}) \mathsf{r}_{\mathsf{f}}(L_n) \quad \hbox{(by (O1))}\\
	&\leq  2\epsilon_n+ C_1(1\vee\rho_0^{-3}) \mathsf{r}_{\mathsf{f}}(L_n)\quad \hbox{(by (O2))}.
	\end{align*}
	As $\min_{w \in \mathcal{S}_n \cap [-L_n,L_n]^n} H_\psi(w,G)\geq \min_{w \in \mathcal{S}_n} H_\psi(w,G)$, (O2) then entails that
	\begin{align*}
	\Prob\bigg(\min_{w \in \mathcal{S}_n  \cap [-L_n,L_n]^n } H_\psi(w,G)\leq z+2\rho_0 \bigg) \leq \epsilon_n.
	\end{align*}
	Consequently, using Theorem \ref{thm:universality_smooth} we obtain, 
	\begin{align*}
	\mathfrak{p}_2\leq \epsilon_n+ C_1(1\vee \rho_0^{-3}) \mathsf{r}_{\mathsf{f}}(L_n).
	\end{align*}
	Collecting the estimates to conclude. \qed

\subsection{Proof of Theorem \ref{thm:min_max_universality}}\label{section:proof_universality_min_max}

	We introduce some further notation. Fix finite sets $U,W$, and $\beta_1,\beta_2 >0$. For $x=(x_{u,w})_{u \in U, w \in W}$, let
	\begin{align*}
	F_u(x)&\equiv F_{\beta_2}\big((x_{u,w})_{w \in W}\big) = -\beta_2^{-1} \log\bigg(\sum_{w \in W} e^{-\beta_2 x_{u,w}}\bigg),\\
	F(x)&\equiv \beta_1^{-1} \log\bigg(\sum_{u \in U} e^{\beta_1 F_u(x)}\bigg). 
	\end{align*}
	Then using $\min_w x_{u,w}-\log\abs{W}/\beta_2\leq F_u(x)\leq \min_w x_{u,w}$, and $\max_u F_u(x)\leq F(x)\leq \max_u F_u(x)+\log\abs{U}/\beta_1$, we have
	\begin{align}\label{ineq:min_max_universality_0}
	\max_{u \in U} \min_{w \in W} x_{u,w}-\frac{\log \abs{W}}{\beta_2}\leq F(x)\leq \max_{u \in U} \min_{w \in W} x_{u,w}+ \frac{\log \abs{U}}{\beta_1}.
	\end{align}
	For notational simplicity, we use the notation $x(A)$ when  $X(u,w;A)$ is viewed as a function of $A$:
	\begin{align*}
	x_{u,w} \equiv x_{u,w}(A)\equiv X(u,w;A)= u^\top A w +Q(u,w). 
	\end{align*}
	\noindent (\textbf{Step 1}). Using (\ref{ineq:min_max_universality_0}), for any finite sets $S_u\subset \R^m,S_w\subset \R^n$, we have
	\begin{align}\label{ineq:min_max_universality_1}
	&\biggabs{\E \mathsf{g}\Big(\max_{u \in S_u} \min_{w \in S_w} x_{u,w}(A)\Big)-\E \mathsf{g}\Big[F\Big((x_{u,w}(A))_{(u,w)\in S_u\times S_w}\Big)\Big]     }\nonumber\\
	&\leq \pnorm{\mathsf{g}'}{\infty}\bigg(\frac{\log \abs{S_u}}{\beta_1}+\frac{\log \abs{S_w}}{\beta_2}\bigg).
	\end{align}
	
	\noindent (\textbf{Step 2}). In this step, we show the following. Take any $\mathcal{S}_u \subset [-L_u,L_u]^m,\mathcal{S}_w \subset [-L_w,L_w]^n$. Let $\mathcal{S}_{u,\delta}, \mathcal{S}_{w,\delta}$ be $\delta$-covers of $\mathcal{S}_u,\mathcal{S}_w$ under $\ell_\infty$ as constructed in the proof of Proposition \ref{prop:comparision_discrete} (so $\abs{\mathcal{S}_{u,\delta}}\leq \ceil{2L_u/\delta}^m$, $\abs{\mathcal{S}_{w,\delta}}\leq \ceil{2L_w/\delta}^n$). Then,
	\begin{align}\label{ineq:min_max_universality_2}
	&\biggabs{\E \mathsf{g}\Big(\max_{u \in \mathcal{S}_{u,\delta}} \min_{w \in \mathcal{S}_{w,\delta}} x_{u,w}(A)\Big)-\E \mathsf{g}\Big(\max_{u \in \mathcal{S}_u} \min_{w \in \mathcal{S}_w} x_{u,w}(A)\Big) }\nonumber\\
	&\leq \pnorm{\mathsf{g}'}{\infty}\bigg(\sum_{i,j}\E \abs{A_{ij}}\cdot (L_u+L_w)\delta+ \mathscr{M}_Q(L,\delta)\bigg).
	\end{align}
	We first claim that
	\begin{align}\label{ineq:min_max_universality_3}
	\biggabs{\max_{u \in \mathcal{S}_{u,\delta}} \min_{w \in \mathcal{S}_{w,\delta}} x_{u,w}-\max_{u \in \mathcal{S}_u} \min_{w \in \mathcal{S}_w} x_{u,w}}\leq \sum_{i,j}\abs{A_{ij}}\cdot (L_u+L_w)\delta+ \mathscr{M}_Q(L,\delta).
	\end{align}
	Let $u_{\ast,\delta}\in \mathcal{S}_{u,\delta}$ be the maximizer for $u\mapsto \min_{w \in \mathcal{S}_{w,\delta}} x_{u,w}$, and $u_\ast \in \mathcal{S}_u$ be such that $\pnorm{u_{\ast,\delta}-u_\ast}{\infty}\leq \delta$. Let $w_\ast \in \mathcal{S}_w$ be the minimizer for $w \mapsto x_{u_\ast,w}$, and $w_{\ast,\delta} \in \mathcal{S}_{w,\delta}$ be such that $\pnorm{w_{\ast,\delta}-w_\ast}{\infty}\leq \delta$. Then 
	\begin{align*}
	&\max_{u \in \mathcal{S}_{u,\delta}} \min_{w \in \mathcal{S}_{w,\delta}} x_{u,w}-\max_{u \in \mathcal{S}_u} \min_{w \in \mathcal{S}_w} x_{u,w}\\
	&\leq \min_{w \in \mathcal{S}_{w,\delta}} x_{u_{\ast,\delta},w}- \min_{w \in \mathcal{S}_w} x_{u_\ast,w}\leq x_{u_{\ast,\delta},w_{\ast,\delta}} - x_{u_\ast,w_\ast}.
	\end{align*}
	Arguing the other direction in a similar way, it is now easy to see that
	\begin{align*}
	&\biggabs{\max_{u \in \mathcal{S}_{u,\delta}} \min_{w \in \mathcal{S}_{w,\delta}} x_{u,w}-\max_{u \in \mathcal{S}_u} \min_{w \in \mathcal{S}_w} x_{u,w}}\\
	&\leq \sup_{\cdots}\abs{x_{u_\delta,w_\delta}-x_{u,w}}\leq \sup_{\cdots}\bigabs{u_\delta^\top A w_\delta-u^\top A w}+ \mathscr{M}_Q(L,\delta),
	\end{align*}
	where the supremum are taken over all $u,u_\delta \in [-L_u,L_u]^m$, $w,w_\delta \in [-L_w,L_w]^n$ with $\pnorm{u-u_\delta}{\infty}\vee \pnorm{w-w_\delta}{\infty}\leq \delta$. Furthermore, the first term on the right hand side above can be further bounded by 
	\begin{align*}
	&\sup_{\cdots}\big(\abs{u^\top A w-u^\top A w_\delta}+\abs{u^\top A w_\delta-u_\delta^\top A w_\delta}\big)\\
	&\leq \sup_{\cdots}\big(\pnorm{A^\top u}{1}+\pnorm{Aw_\delta}{1}\big)\delta \leq \sum_{i,j}\abs{A_{ij}}\cdot (L_u+L_w)\delta.
	\end{align*}
	The claim (\ref{ineq:min_max_universality_3}) follows by combining the two above displays. Now (\ref{ineq:min_max_universality_2}) follows by a simple Taylor expansion.

	\noindent (\textbf{Step 3}). In this step, we show the following: There exists some universal constant $K>0$ such that for any finite set $U\subset \R^m,W\subset \R^n$ with $\max_{u \in U}\pnorm{u}{\infty}\leq L_u, \max_{w \in W}\pnorm{w}{\infty}\leq L_w$, we have for $\mathsf{g} \in C^3(\R)$,
	\begin{align}\label{ineq:min_max_universality_4}
	&\bigabs{\E \mathsf{g}\big(F(x(A))-\E \mathsf{g}\big(F(x(B))\big)}\nonumber\\
	&\leq K\cdot  \sum_{\ell=1}^3 \pnorm{\mathsf{g}^{(\ell)}}{\infty}(\beta_1+\beta_2)^{\ell-1} L_u^\ell L_w^\ell \cdot \sum_{i,j} \big(\E \abs{A_{ij}}^3+\E \abs{B_{ij}}^3\big).
	\end{align}
	To this end, define the Hamiltonian
	\begin{align*}
	H_1(u)&\equiv \frac{e^{\beta_1 F_u(x)} }{ \sum_{u \in U} e^{\beta_1 F_u(x)}   }, \quad H_{|1}(u,w)\equiv \frac{e^{-\beta_2 x_{u,w}} }{ \sum_{w \in W} e^{-\beta_2 x_{u,w}}  },
	\end{align*}
	and $H(u,w)\equiv H_1(u)\cdot H_{|1}(u,w)$. Recall the notation (\ref{def:hamilton}). Then with $\partial_{ij}\equiv \partial/\partial A_{ij}$, some calculations show that $\partial_{ij} F_u(x) = \ang{u_iw_j}{H_{|1}}$ and
	\begin{align*}
	\partial_{ij}H_1(u)&=\beta_1 H_1(u)\Big(\ang{u_iw_j}{H_{|1}}-\ang{u_iw_j}{H}\Big)\\
	\partial_{ij} H_{|1}(u,w)& = \beta_2 H_{|1}(u,w)\Big(-u_iw_j+\ang{u_iw_j}{H_{|1}}\Big)\\
	\partial_{ij} H(u,w)& = H(u,w)\Big[\beta_1\Big(\ang{u_iw_j}{H_{|1}}-\ang{u_iw_j}{H}\Big) +\beta_2 \Big(-u_iw_j+\ang{u_iw_j}{H_{|1}}\Big)\Big].
	\end{align*}
	Here $\ang{f(u,w)}{H}, \ang{f(u,w)}{H_{|1}}, \ang{f(u)}{H_1}$ are understood as total expectation over the Gibbs measure $H$ on $(u,w)$, conditional expectation over $H_{|1}(u,\cdot)$ on $w$, and marginal expectation over $H_1$ on $u$. Consequently:
	\begin{enumerate}
		\item As $\partial_{ij} F(x)=\ang{\partial_{ij} F_u(x)}{H_1}= \ang{u_i w_j}{H}$, we have $\abs{\partial_{ij} F(x)}\leq L_u L_w$. 
		\item Using (1) and the generic derivative formula (\ref{eqn:hamilton_derivative}), we find $\partial_{ij}^2 F(x)= -\ang{u_iw_j \partial_{ij}H}{H}+\ang{u_iw_j}{H}\ang{\partial_{ij}H}{H}$.  By the derivative formula for $\partial_{ij} H$ above and the fact that $\abs{H}\vee \abs{H_1}\vee \abs{H_{|1}}\leq 1$, we have $\abs{\partial_{ij}^2 F(x)}\lesssim (\beta_1+\beta_2)L_u^2 L_w^2$.
		\item The formula for the third derivative $\partial_{ij}^3 F(x)$ is rather tedious, but its exact form is immaterial and we only need $\abs{\partial_{ij}^3 F(x)}\lesssim (\beta_1^2+\beta_2^2)L_u^3 L_w^3$.
	\end{enumerate}
	Now using the formula $\partial_{ij}^3 \mathsf{g}(F(x))=\sum_{\ell=1}^3\binom{2}{\ell-1}\mathsf{g}^{(\ell)}(F(x))\cdot \partial_{ij}^{4-\ell} F(x)$, we have 
	\begin{align*}
	\sup_{t \in [0,1]}\bigabs{\partial_{ij}^3 \mathsf{g}\big(F(x(C_{ij}(t)))\big)}\leq K\cdot \sum_{\ell=1}^3 \pnorm{\mathsf{g}^{(\ell)}}{\infty}(\beta_1+\beta_2)^{\ell-1} L_u^\ell L_w^\ell
	\end{align*}
	for some universal constant $K>0$. Here $\{C_{ij}(t)\}_{t \in [0,1]}$ is the Lindeberg interpolation between $A$ and $B$ as defined in the proof of Proposition \ref{prop:dim_free_smooth_min}. So by Chatterjee's Lindeberg principle (cf. Theorem \ref{thm:lindeberg}),
	\begin{align*}
	&\bigabs{\E \mathsf{g}\big(F(x(A))-\E \mathsf{g}\big(F(x(B))\big)}\bigg/ \sum_{\ell=1}^3 \pnorm{\mathsf{g}^{(\ell)}}{\infty}(\beta_1+\beta_2)^{\ell-1} L_u^\ell L_w^\ell\\
	&\lesssim \sum_{i,j} \max_{D \in \{A_{ij},B_{ij}\}}\biggabs{\E \int_0^D (D-t)^2\,\d{t}} \asymp \sum_{i,j} \big(\E \abs{A_{ij}}^3+\E \abs{B_{ij}}^3\big),
	\end{align*}
	proving (\ref{ineq:min_max_universality_4}).
	
	\noindent (\textbf{Step 4}). With the claims proved in Steps 1-3, and noting that $\abs{\mathcal{S}_{u,\delta}}\leq (1+2L_u/\delta)^m$ and $\abs{\mathcal{S}_{w,\delta}}\leq (1+2L_w/\delta)^n$, we have
	\begin{align*}
	&\biggabs{\E \mathsf{g}\Big(\max_{u \in \mathcal{S}_u} \min_{w \in \mathcal{S}_w} x_{u,w}(A)\Big)-\E \mathsf{g}\Big(\max_{u \in \mathcal{S}_u} \min_{w \in \mathcal{S}_w} x_{u,w}(B)\Big)}\\
	&\leq 2\max_{D \in \{A,B\}}\biggabs{\E \mathsf{g}\Big(\max_{u \in \mathcal{S}_u} \min_{w \in \mathcal{S}_w} x_{u,w}(D)\Big)-\E \mathsf{g}\Big(\max_{u \in {S}_u(\delta)} \min_{w \in {S}_w(\delta)} x_{u,w}(D)\Big)}\\
	&\quad + 2\max_{D \in \{A,B\}}\biggabs{\E \mathsf{g}\Big(\max_{u \in {S}_u(\delta)} \min_{w \in {S}_w(\delta)} x_{u,w}(D)\Big)- \E \mathsf{g}\Big[F\Big((x_{u,w}(D))_{(u,w)\in \mathcal{S}_{u,\delta}\times \mathcal{S}_{w,\delta}}\Big)\Big] }\\
	&\quad + \biggabs{\E \mathsf{g}\Big[F\Big((x_{u,w}(A))_{(u,w)\in \mathcal{S}_{u,\delta}\times \mathcal{S}_{w,\delta}}\Big)\Big]-\E \mathsf{g}\Big[F\Big((x_{u,w}(B))_{(u,w)\in \mathcal{S}_{u,\delta}\times \mathcal{S}_{w,\delta}}\Big)\Big]}\\
	&\lesssim \pnorm{\mathsf{g}'}{\infty}\bigg\{ M_1 (L_u+L_w)\delta+ \mathscr{M}_Q(L,\delta)+ \bigg(\frac{m\log_+(L_u/\delta)}{\beta_1}+\frac{n \log_+(L_w/\delta)}{\beta_2}\bigg)\bigg\}\\
	&\qquad +M_3\cdot \sum_{\ell=1}^3 \pnorm{\mathsf{g}^{(\ell)}}{\infty}(\beta_1+\beta_2)^{\ell-1} L_u^\ell L_w^\ell.
	\end{align*}
	By setting $\beta\equiv \beta_1=\beta_2$, the above display can be further bounded, up to a multiplicative factor of $K_{\mathsf{g}}$, by
	\begin{align*}
	M_1 L\delta+\mathscr{M}_Q(L,\delta)+ \beta^{-1}(m+n)\log_+(L/\delta)+  (1\vee \beta^2)\cdot M_3 L^6.
	\end{align*}
	Optimizing over $\beta>0$ to conclude. \qed

\subsection{Proof of Corollary \ref{cor:min_max_universality}}\label{section:proof_universality_min_max_cor}

	Using Theorem \ref{thm:min_max_universality} with the random matrix replaced by $A/m$, we have $M_\ell\leq K n^{2-3\ell/2} M_{0}^{\ell/3}$ for $\ell=1,2,3$, the bound in Theorem \ref{thm:min_max_universality} is bounded, up to a constant factor of $K_{\mathsf{g}}$ that depends on $(\tau, M_0)$, by
	\begin{align*}
	n^{1/2} L \delta+ \mathscr{M}_{Q_n}(L,\delta)+\frac{L^2 \log_+^{2/3}(L/\delta)}{n^{1/6}}.
	\end{align*}
	The first term can be assimilated into the last term in the regime $\delta\leq n^{-1}$. The second claim follows from the same argument as in the proof of Theorem \ref{thm:universality_smooth}. Generalizations to the other two cases are immediate. \qed

\section{Proofs for Section \ref{section:examples}: Ridge}\label{section:proof_ridge}

\noindent \emph{Convention}: We shall write
\begin{align*}
\bar{H}^{\ridge}(w) \equiv \bar{H}^{\ridge}(w,A)\equiv \bar{H}^{\ridge}(w,A,\xi),
\end{align*}
and will usually omit the superscript $(\cdot)^{\ridge}$ if no confusion could arise. We also usually omit the subscript $A$ that indicates the design matrix, but we will use the subscript $G$ for Gaussian designs when needed. For notational simplicity, we write
\begin{align*}
r_n\equiv \sqrt{\log n/n},\quad s_n\equiv n^{-1/6}\log^2 n.
\end{align*}
The constants $C,K>0$, typically depending on $\sigma,\lambda,\tau,M_2$, will vary from line to line. All notation will be local in this section.

\subsection{Proof of Proposition \ref{prop:ridge_risk}}

Define the (column) leave-one-out Ridge version
\begin{align}\label{def:ridge_loo}
\hat{w}^{(s)}\equiv \argmin_{w\in\R^n: w_s = 0} H(w,A,\xi)= \argmin_{w\in\R^n: w_s = 0} \frac{1}{2}\pnorm{Aw-\xi}{}^2 + \frac{\lambda}{2} \pnorm{w + \mu_0}{}^2,
\end{align}
and the (row) leave-one-out Ridge version
\begin{align}\label{def:ridge_loo_row}
\hat{w}^{[t]}\equiv  \argmin_{w\in\R^n} \frac{1}{2}\pnorm{A_{[-t]}w-\xi_{-t}}{}^2 + \frac{\lambda}{2} \pnorm{w + \mu_0}{}^2,
\end{align}
where $A_{[-t]} \in \R^{(m-1)\times n}$ is $A$ minus its $t$-th row. Recall that $A_s$ and $a_t$ denote the $s$-th column and the $t$-th row of $A$ respectively.

\begin{lemma}\label{lem:ridge_loo}
	The following deterministic inequalities hold.
	\begin{enumerate}
		\item $\pnorm{  A\big(\hat{w}^{(s)}-\hat{w}\big)  }{}^2+ \lambda\cdot \hat{w}_s^2
		\leq 4\lambda^{-1} \big( \abs{A_s^\top (A_{-s} \hat{w}^{(s)}_{-s}-\xi)}+\lambda \abs{\mu_{0,s}}  \big)^2$.
		\item $\pnorm{ A(\hat{w}^{[t]}-\hat{w}) }{}\leq 2 \big(\abs{\xi_t}+\abs{a_t^\top \hat{w}^{[t]}}\big)$. 
	\end{enumerate}
\end{lemma}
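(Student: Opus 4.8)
The statement to prove is Lemma~\ref{lem:ridge_loo}, giving two deterministic perturbation bounds comparing $\hat w$ with its leave-one-out variants.

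\textbf{Plan for part (1).} The idea is to exploit strong convexity of the Ridge objective. Since $w\mapsto \bar H^{\ridge}(w,A,\xi) = \tfrac12\pnorm{Aw-\xi}{}^2 + \tfrac\lambda2\pnorm{w+\mu_0}{}^2$ is $\lambda$-strongly convex in the sense that $\bar H(w') - \bar H(w) \geq \iprod{\nabla \bar H(w)}{w'-w} + \tfrac12\pnorm{A(w'-w)}{}^2 + \tfrac\lambda2\pnorm{w'-w}{}^2$, and $\hat w$ is the global minimizer so $\nabla\bar H(\hat w)=0$, I would first write
\[
\tfrac12\pnorm{A(\hat w^{(s)}-\hat w)}{}^2 + \tfrac\lambda2\pnorm{\hat w^{(s)}-\hat w}{}^2 \leq \bar H(\hat w^{(s)}) - \bar H(\hat w).
\]
In particular $\tfrac\lambda2\hat w_s^2 \leq \pnorm{\hat w^{(s)}-\hat w}{}^2\cdot\lambda/2 \leq \bar H(\hat w^{(s)})-\bar H(\hat w)$ because $\hat w^{(s)}_s=0$. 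To bound the right-hand side, note that $\hat w^{(s)}$ minimizes $\bar H$ over the hyperplane $\{w_s=0\}$, so for the comparison point I would take the vector $\hat w^{(s)} + t e_s$ and optimize over $t\in\R$: since $\bar H(\hat w^{(s)}+te_s)$ is a quadratic in $t$ with leading coefficient $\tfrac12(\pnorm{A_s}{}^2+\lambda)\ge \tfrac\lambda2$ and linear coefficient $A_s^\top(A\hat w^{(s)}-\xi) + \lambda(\hat w^{(s)}_s+\mu_{0,s}) = A_s^\top(A_{-s}\hat w^{(s)}_{-s}-\xi) + \lambda\mu_{0,s}$, minimizing gives
\[
\min_t \bar H(\hat w^{(s)}+te_s) - \bar H(\hat w^{(s)}) = -\frac{\big(A_s^\top(A_{-s}\hat w^{(s)}_{-s}-\xi)+\lambda\mu_{0,s}\big)^2}{2(\pnorm{A_s}{}^2+\lambda)} \geq -\frac{\big(\,\abs{A_s^\top(A_{-s}\hat w^{(s)}_{-s}-\xi)}+\lambda\abs{\mu_{0,s}}\,\big)^2}{2\lambda}.
\]
Since $\bar H(\hat w) \leq \min_t\bar H(\hat w^{(s)}+te_s)$, combining the two displays yields $\pnorm{A(\hat w^{(s)}-\hat w)}{}^2 + \lambda\hat w_s^2 \leq 2\big(\bar H(\hat w^{(s)})-\bar H(\hat w)\big) \leq \tfrac2\lambda(\cdots)^2$; adjusting the constant (the clean bound in the statement has a factor $4/\lambda$, leaving room) finishes it.

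\textbf{Plan for part (2).} Here $\hat w^{[t]}$ minimizes the objective with the $t$-th observation removed, i.e. $\bar H^{[t]}(w) \equiv \tfrac12\pnorm{A_{[-t]}w-\xi_{-t}}{}^2 + \tfrac\lambda2\pnorm{w+\mu_0}{}^2$, and $\bar H(w) = \bar H^{[t]}(w) + \tfrac12(a_t^\top w - \xi_t)^2$. Both $\bar H$ and $\bar H^{[t]}$ are $\lambda$-strongly convex. I would again use the strong convexity inequality at the two minimizers. From $\nabla\bar H(\hat w)=0$ and $\lambda$-strong convexity of $\bar H$:
\[
\tfrac12\pnorm{A(\hat w^{[t]}-\hat w)}{}^2 \leq \bar H(\hat w^{[t]}) - \bar H(\hat w).
\]
From $\nabla\bar H^{[t]}(\hat w^{[t]})=0$ and convexity of $\bar H^{[t]}$: $\bar H^{[t]}(\hat w) \geq \bar H^{[t]}(\hat w^{[t]})$, i.e. $\bar H(\hat w) - \tfrac12(a_t^\top\hat w-\xi_t)^2 \geq \bar H(\hat w^{[t]}) - \tfrac12(a_t^\top\hat w^{[t]}-\xi_t)^2$. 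Substituting,
\[
\tfrac12\pnorm{A(\hat w^{[t]}-\hat w)}{}^2 \leq \tfrac12(a_t^\top\hat w^{[t]}-\xi_t)^2 - \tfrac12(a_t^\top\hat w-\xi_t)^2 \leq \tfrac12(a_t^\top\hat w^{[t]}-\xi_t)^2 \leq \big(\abs{a_t^\top\hat w^{[t]}}+\abs{\xi_t}\big)^2,
\]
so $\pnorm{A(\hat w^{[t]}-\hat w)}{} \leq \sqrt2\,(\abs{\xi_t}+\abs{a_t^\top\hat w^{[t]}}) \leq 2(\abs{\xi_t}+\abs{a_t^\top\hat w^{[t]}})$, as claimed.

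\textbf{Main obstacle.} Neither part is deep — both reduce to the quadratic/strong-convexity identities above — so the only thing requiring care is bookkeeping the constants and making sure the comparison point in part (1) (the one-dimensional minimization over $te_s$) is handled cleanly, and that the sign of the cross term in part (2) is correctly dropped. I would double-check that $\hat w^{(s)}_s + \mu_{0,s} = \mu_{0,s}$ when expanding the linear coefficient, since that is where $\mu_{0,s}$ enters the bound. Everything else is routine.
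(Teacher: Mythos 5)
Your treatment of part (2) is correct, and in fact cleaner than the paper's: after obtaining $\tfrac12\pnorm{A(\hat w^{[t]}-\hat w)}{}^2 \le \tfrac12(a_t^\top\hat w^{[t]}-\xi_t)^2 - \tfrac12(a_t^\top\hat w-\xi_t)^2$ from the optimality of $\hat w^{[t]}$ for $\bar H^{[t]}$, you simply drop the nonnegative subtracted term and take a square root, whereas the paper expands the difference of squares and obtains a self-referential inequality $\tfrac12\pnorm{\cdot}{}^2 \le \pnorm{\cdot}{}\cdot(\cdots)$. Your route actually delivers $\pnorm{A(\hat w^{[t]}-\hat w)}{}\le \abs{a_t^\top\hat w^{[t]}-\xi_t}$, which is strictly sharper than the stated factor $2$.

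Part (1), however, has a genuine gap in the direction of an inequality. Your first display
\[
\tfrac12\pnorm{A(\hat w^{(s)}-\hat w)}{}^2 + \tfrac{\lambda}{2}\hat w_s^2 \le \bar H(\hat w^{(s)}) - \bar H(\hat w)
\]
is correct (indeed, equality holds for the quadratic with $\nabla\bar H(\hat w)=0$, modulo the drop from $\pnorm{\hat w^{(s)}-\hat w}{}^2$ to $\hat w_s^2$), and you now need an \emph{upper} bound on $\bar H(\hat w^{(s)}) - \bar H(\hat w)$. But the one-dimensional minimization over $\hat w^{(s)}+te_s$, combined with $\bar H(\hat w)\le \min_t\bar H(\hat w^{(s)}+te_s)$, only gives $\bar H(\hat w^{(s)}) - \bar H(\hat w) \ge \bar H(\hat w^{(s)}) - \min_t \bar H(\hat w^{(s)}+te_s) = c'^2/(2(\pnorm{A_s}{}^2+\lambda))$, a \emph{lower} bound. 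Since $\hat w$ need not lie on the line $\{\hat w^{(s)}+te_s\}$, there is no way to get the reverse inequality from this comparison, and the chain ``$\le 2(\bar H(\hat w^{(s)})-\bar H(\hat w)) \le \tfrac2\lambda(\cdots)^2$'' is unsupported.

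The fix, which is essentially what the paper does, is to expand $\bar H$ around $\hat w^{(s)}$ rather than minimize along a line: since $\hat w^{(s)}$ minimizes $\bar H$ on $\{w_s=0\}$, the KKT condition forces $(\nabla\bar H(\hat w^{(s)}))_{-s}=0$, so the quadratic Taylor expansion of $\bar H(\hat w)$ at $\hat w^{(s)}$ gives
\[
0 \le \bar H(\hat w^{(s)}) - \bar H(\hat w) = -c'\hat w_s - \tfrac12\pnorm{A(\hat w^{(s)}-\hat w)}{}^2 - \tfrac{\lambda}{2}\pnorm{\hat w^{(s)}-\hat w}{}^2,
\]
with $c' = A_s^\top(A_{-s}\hat w^{(s)}_{-s}-\xi)+\lambda\mu_{0,s}$. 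Rearranging, using $\pnorm{\hat w^{(s)}-\hat w}{}^2\ge \hat w_s^2$, and then absorbing $\abs{\hat w_s}$ via $\tfrac\lambda2\hat w_s^2\le c\abs{\hat w_s}$ (so $\abs{\hat w_s}\le 2c/\lambda$) yields the claim. Your one-dimensional quadratic computation correctly identifies $c'$ and the curvature $\pnorm{A_s}{}^2+\lambda$, but that computation must feed into the Taylor expansion at $\hat w^{(s)}$, not into a minimization over the line.
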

\begin{proof}
	Write $\mathsf{f}(w)\equiv (\lambda/2)\pnorm{w+\mu_0}{}^2$ (without normalization $1/m$). 
	
	\noindent (1). By the cost optimality of $\bar{H}$,
	\begin{align*}
	0&\leq \bar{H}(\hat{w}^{(s)})- \bar{H}(\hat{w})\\
	& = \frac{1}{2} \bigpnorm{A\big(\hat{w}^{(s)}-\hat{w}\big)+A\hat{w}-\xi}{}^2 + \mathsf{f}(\hat{w}^{(s)}) - \frac{1}{2} \pnorm{A\hat{w}-\xi}{}^2-\mathsf{f}(\hat{w})\\
	& = \frac{1}{2}\bigpnorm{  A\big(\hat{w}^{(s)}-\hat{w}\big)  }{}^2+\iprod{ A\big(\hat{w}^{(s)}-\hat{w}\big)  }{  A\hat{w}-\xi } + \mathsf{f}(\hat{w}^{(s)})-\mathsf{f}(\hat{w})\\
	& = -\frac{1}{2}\bigpnorm{  A\big(\hat{w}^{(s)}-\hat{w}\big)  }{}^2+\iprod{ A\big(\hat{w}^{(s)}-\hat{w}\big)  }{  A\hat{w}^{(s)}-\xi } + \mathsf{f}(\hat{w}^{(s)})-\mathsf{f}(\hat{w}).
	\end{align*}
	Now using the decomposition
	\begin{align*}
	&\iprod{ A\big(\hat{w}^{(s)}-\hat{w}\big)  }{  A\hat{w}^{(s)}-\xi } = - \hat{w}_s\cdot A_s^\top (A_{-s} \hat{w}^{(s)}_{-s}-\xi)    +\iprod{ \hat{w}^{(s)}_{-s}-\hat{w}_{-s}  }{  A_{-s}^\top \big(A_{-s}\hat{w}^{(s)}_{-s}-\xi\big) },
	\end{align*}
	we arrive at
	\begin{align*}
	0&\leq -\frac{1}{2}\bigpnorm{  A\big(\hat{w}^{(s)}-\hat{w}\big)  }{}^2- \hat{w}_s\cdot A_s^\top (A_{-s} \hat{w}^{(s)}_{-s}-\xi)\nonumber\\
	&\qquad +\iprod{ \hat{w}^{(s)}_{-s}-\hat{w}_{-s}  }{  A_{-s}^\top \big(A_{-s}\hat{w}^{(s)}_{-s}-\xi\big) } + \mathsf{f}(\hat{w}^{(s)})-\mathsf{f}(\hat{w}).
	\end{align*}
	Using KKT condition for $\hat{w}^{(s)}$ which reads $A_{-s}^\top \big(A_{-s}\hat{w}^{(s)}_{-s}-\xi\big) = -\lambda (\mu_{0,-s}+\hat{w}^{(s)}_{-s})$, 
	\begin{align*}
	0&\leq  -\frac{1}{2}\bigpnorm{  A\big(\hat{w}^{(s)}-\hat{w}\big)  }{}^2- \hat{w}_s\cdot A_s^\top (A_{-s} \hat{w}^{(s)}_{-s}-\xi)\\
	&\qquad + \frac{\lambda}{2}\bigg( \pnorm{\hat{w}^{(s)}+\mu_0}{}^2- \pnorm{\hat{w}+\mu_0}{}^2-2\iprod{  \hat{w}^{(s)}_{-s}-\hat{w}_{-s} }{ \mu_{0,-s}+\hat{w}^{(s)}_{-s} } \bigg)\\
	& \leq  -\frac{1}{2}\bigpnorm{  A\big(\hat{w}^{(s)}-\hat{w}\big)  }{}^2- \hat{w}_s\cdot A_s^\top (A_{-s} \hat{w}^{(s)}_{-s}-\xi)+ \frac{\lambda}{2}\big( (\mu_{0,s})^2- (\hat{w}_s+\mu_{0,s})^2\big).
	\end{align*}
	Here the last inequality follows by the convexity of $\mathsf{f}$. This implies 
	\begin{align*}
	\frac{1}{2}\bigpnorm{  A\big(\hat{w}^{(s)}-\hat{w}\big)  }{}^2+ \frac{\lambda}{2}\cdot \hat{w}_s^2
	&\leq \abs{\hat{w}_s}\cdot \big( \abs{A_s^\top (A_{-s} \hat{w}^{(s)}_{-s}-\xi)}+\lambda \abs{\mu_{0,s}}  \big)\\
	&\leq \frac{2}{\lambda} \big( \abs{A_s^\top (A_{-s} \hat{w}^{(s)}_{-s}-\xi)}+\lambda \abs{\mu_{0,s}}  \big)^2,
	\end{align*}
	as desired.
	
	\noindent (2). For any $w \in \R^n$, by expanding the Ridge cost $\bar{H}$ at $\hat{w}$, 
	\begin{align*}
	\bar{H}(w)& = \bar{H}(\hat{w}) + \frac{1}{2}\pnorm{A(w - \hat{w})}{}^2 +\iprod{A(w-\hat{w})}{A\hat{w}-\xi}+\mathsf{f}(w)-\mathsf{f}(\hat{w}).
	\end{align*}
	Using the KKT condition for $\hat{w}$, the last two terms of the above display becomes $
	\mathsf{f}(w)-\mathsf{f}(\hat{w})-\iprod{w-\hat{w}}{v(\hat{w})}\geq 0$, 
	where $v(\hat{w})$ is a subgradient of $\mathsf{f}$ at $\hat{w}$ (here $\mathsf{f}$ is smooth so it is the gradient, but we will maintain this generality for convenient generalization to Lasso later on). Consequently, for any $w \in \R^n$,
	\begin{align*}
	\frac{1}{2}\pnorm{A(w - \hat{w})}{}^2&\leq \bar{H}(w)- \bar{H}(\hat{w}).
	\end{align*}
	Let $a_t$ be the $t$-th row of $A$, $\hat{w}^{[t]}=\argmin_{w \in \R^n}\big(\bar{H}(w)-\abs{a_t^\top w-\xi_t}^2/2\big)$, so with $w=\hat{w}^{[t]}$, the above display further yields
	\begin{align*}
	\frac{1}{2}\pnorm{A(\hat{w}^{[t]} - \hat{w})}{}^2&\leq  \frac{1}{2}\Big(\abs{a_t^\top \hat{w}^{[t]}-\xi_t}^2- \abs{a_t^\top \hat{w}-\xi_t}^2\Big)\\
	&= -\frac{1}{2}\abs{a_t^\top(\hat{w}-\hat{w}^{[t]})}^2+ a_t^\top(\hat{w}^{[t]}-\hat{w})\big(a_t^\top \hat{w}^{[t]}-\xi_t\big)\\
	&\leq \pnorm{ A(\hat{w}^{[t]}-\hat{w}) }{}\cdot \big(\abs{\xi_t}+\abs{a_t^\top \hat{w}^{[t]}}\big),
	\end{align*}
	as desired.
\end{proof}

\begin{proof}[Proof of Proposition \ref{prop:ridge_risk}]	
	\noindent (1). By cost optimality of $H$, we have
	\begin{align*}
	\frac{1}{2}\pnorm{A\hat{w}-\xi}{}^2+\frac{\lambda}{2} \pnorm{\hat{w}+\mu_0}{}^2\leq \frac{1}{2}\pnorm{\xi}{}^2+ \frac{\lambda}{2}\pnorm{\mu_0}{}^2,
	\end{align*}
	so $\pnorm{A\hat{w}}{}^2\lesssim \pnorm{\xi}{}^2+\lambda \pnorm{\mu_0}{}^2$. The claim follows by the assumptions. 
	
	\noindent (2). Recall the column leave-one-out Ridge version $\hat{w}^{(s)}$ defined in (\ref{def:ridge_loo}). By Lemma \ref{lem:ridge_loo}, for any $s \in [n]$, we have
	\begin{align*}
	\abs{\hat{w}_s}&\leq 2\lambda^{-1}  \abs{A_s^\top (A_{-s} \hat{w}^{(s)}_{-s}-\xi)}+ 2\abs{\mu_{0,s}}\\
	&\leq 2\lambda^{-1}  \abs{ A_s^\top A_{-s} \hat{w}^{(s)}_{-s}  }+2\lambda^{-1} \abs{A_s^\top \xi}+ 2\abs{\mu_{0,s}}. 
	\end{align*}
	Using independence between $A_s$ and $A_{-s}\hat{w}_{-s}^{(s)}$, $\abs{ A_s^\top A_{-s} \hat{w}^{(s)}_{-s}  }\lesssim \lambda \sqrt{\log n}$ uniformly in $s$ with probability at least $1-C n^{-100}$. The second term $\abs{A_s^\top \xi}\lesssim \sqrt{\log n}$ uniformly in $s$ with the same probability. 
	
	\noindent (3). Recall the row leave-one-out Ridge version $\hat{w}^{[t]}$ defined in (\ref{def:ridge_loo_row}). Recall $a_t$ denotes the $t$-th row of $A$, 
	\begin{align*}
	\abs{(A\hat{w})_t} = \abs{a_t^\top \hat{w}}\leq \abs{a_t^\top \hat{w}^{[t]}}+ \abs{a_t^\top(\hat{w}-\hat{w}^{[t]})}\leq \abs{a_t^\top \hat{w}^{[t]}}+ \pnorm{A(\hat{w}-\hat{w}^{[t]})}{}.
	\end{align*}
	Using Lemma \ref{lem:ridge_loo}-(2), we have
	\begin{align*}
	\abs{(A\hat{w})_t}\lesssim \abs{\xi_t}+ \abs{a_t^\top \hat{w}^{[t]}}.
	\end{align*}
	As $\hat{w}^{[t]}$ is independent of $a_t$, so on an event with probability at least $1-C n^{-100}$, uniformly for all $t \in [m]$, $\abs{(A\hat{w})_t}\leq K \sqrt{\log n}$.
\end{proof}

\subsection{Proof of Theorem \ref{thm:ridge_universality_generic}}

	By Proposition \ref{prop:ridge_risk}, we may take $L_n=K (\sqrt{\log n}+\pnorm{\mu_0}{\infty})\leq n$ for $n$ large (as $\pnorm{\mu_0}{\infty}\leq \sqrt{M_2 n}$) and $\epsilon_n=C n^{-100}$. Now with $\mathsf{f}(w)\equiv(\lambda/2m)\pnorm{w+\mu_0}{}^2$, (\ref{cond:f_moduli}) can be easily verified, and we may apply Theorem \ref{thm:universality_reg} to conclude. \qed

\subsection{The Gaussian design problem}

Consider the Gaussian design problem where the entries of $A=G$ are i.i.d. $\mathcal{N}(0,1/m)$. Now let $h,{\ell}: \R^n\times \R^m\to \R$ be defined by 
\begin{align}\label{def:h_l_ridge}
h(w,u)& = \frac{1}{m}u^\top G w-\frac{1}{m}u^\top \xi-\frac{1}{2m} \pnorm{u}{}^2 + \frac{\lambda}{2m} \big(\pnorm{w+\mu_0}{}^2-\pnorm{\mu_0}{}^2\big),\nonumber\\
{\ell}(w,u)& = -\frac{1}{m^{3/2}} \pnorm{u}{}g^\top w +\frac{\pnorm{w}{}}{m^{1/2}}\cdot \frac{h^\top u}{m}-\frac{1}{m}u^\top \xi\nonumber\\
&\qquad\qquad -\frac{1}{2m}\pnorm{u}{}^2 + \frac{\lambda}{2m} \big(\pnorm{w+\mu_0}{}^2-\pnorm{\mu_0}{}^2\big).
\end{align}
Here $g \in \R^n, h \in \R^m$ are independent standard Gaussian vectors. The Ridge cost function in the Gaussian design case can be realized as $H(w,G)=\max_u h(w,u)$. Let $L(w)\equiv \max_u {\ell}(w,u)$ be the associated Gordon cost, and 
\begin{align}\label{def:ridge_psi_emp}
\psi_n(\beta,\gamma)&\equiv  \bigg(\frac{\sigma^2}{\gamma}+\gamma\bigg) \frac{\beta}{2} - \frac{\beta^2}{2} -\frac{1}{m/n}\cdot \frac{\gamma \lambda^2}{2(\beta+\lambda \gamma)}\cdot \frac{1}{n}\sum_{j=1}^n \bigg(\mu_{0,j}-\frac{\beta}{\lambda}g_j\bigg)^2.
\end{align}
Let $\psi$ be the `population version' of $\psi_n$ defined by
\begin{align}\label{def:ridge_psi_pop}
\psi(\beta,\gamma)
&\equiv \bigg(\frac{\sigma^2}{\gamma}+\gamma\bigg)\frac{\beta}{2}-\frac{\beta^2}{2}+\frac{1}{m/n}\E \min_w \bigg\{\frac{w^2}{2\gamma} \beta-\beta Z w+\frac{\lambda}{2}\Big((w+\Pi_{\mu_0})^2-\Pi_{\mu_0}^2\Big)\bigg\}\nonumber\\
&= \bigg(\frac{\sigma^2}{\gamma}+\gamma\bigg) \frac{\beta}{2} - \frac{\beta^2}{2}-\frac{1}{m/n}\cdot \frac{\gamma \lambda^2}{2(\beta+\lambda \gamma)} \E\bigg(\Pi_{\mu_0}-\frac{\beta}{\lambda} Z\bigg)^2,
\end{align}
where $\Pi_{\mu_0}\otimes Z \equiv (n^{-1}\sum_{j=1}^n \delta_{\mu_{0,j}})\otimes \mathcal{N}(0,1)$.

\begin{proposition}\label{prop:ridge_empirical_char}
	Suppose (R1)-(R2) hold, and the entries of $\xi_0$ are independent, mean $0$, variance $1$ and uniformly sub-Gaussian. There exist constants $C,K>0$ depending only on $\sigma,\lambda,\tau,M_2$ such that the following hold with probability at least $1-C n^{-100}$.
	\begin{enumerate}
		\item The max-min problem $\max_{\beta\geq 0}\min_{\gamma>0} \psi_n(\beta,\gamma)$ admits a unique saddle point $(\beta_{\ast,n},\gamma_{*,n}) \in [1/K,K]^2$ that can be characterized by
		\begin{align*}
		\gamma_{*,n}^2& = \sigma^2 + \frac{1}{m/n}\cdot \Prob_n\bigg[\eta_2\bigg(\mu_0+\gamma_{*,n} \cdot g; \frac{\gamma_{*,n} \lambda }{\beta_{\ast,n} }\bigg)-\mu_0\bigg]^2,\\
		\beta_{\ast,n}& = \gamma_{*,n}-\frac{1}{m/n} \Prob_n \bigg\{g\cdot \bigg[\eta_2\bigg(\mu_0+\gamma_{*,n} \cdot g; \frac{\gamma_{*,n} \lambda }{\beta_{\ast,n} }\bigg)-\mu_0\bigg]\bigg\}.
		\end{align*}
		Here $\Prob_n\equiv n^{-1}\sum_{j=1}^n\delta_{(\mu_{0,j},g_j)}$. 
		\item There exists some constant $K= K(\sigma,\lambda,\tau,M_2)>1$ such that the max-min problem $\max_{\beta\geq 0}\min_{\gamma>0} \psi(\beta,\gamma)$ admits a unique saddle point $(\beta_\ast,\gamma_\ast) \in [1/K,K]^2$ that can be characterized by (\ref{eqn:ridge_fpe}).	
		\item $(\beta_{\ast,n},\gamma_{\ast,n})$ is close to $(\beta_{\ast},\gamma_{\ast})$ in the sense that
		\begin{align*}
		\abs{\beta_{\ast,n}-\beta_\ast}\vee \abs{\gamma_{*,n}-\gamma_\ast}\leq K r_n.
		\end{align*} 
		\item We have 
		\begin{align*}
		\max_{Q \in \{L,H(\cdot,G)\}}\bigabs{\min_{w \in \R^n} Q(w)- \psi(\beta_\ast,\gamma_\ast)}\leq K r_n.
		\end{align*}
		\item Let $w_{\ast,n}=\argmin_{w} L(w)$, and
		\begin{align*}
		\bar{w}_{\ast,n}&\equiv \frac{-\lambda \mu_0+ \beta_{\ast,n} g}{(\beta_{\ast,n}/\gamma_{\ast,n})+\lambda} = \eta_2\bigg(\mu_0+\gamma_{*,n}\cdot g; \frac{\gamma_{*,n} \lambda }{\beta_{\ast,n} }\bigg)-\mu_0.
		\end{align*}
		Then $\pnorm{w_{\ast,n}-\bar{w}_{\ast,n}}{}/\sqrt{m}\leq K r_n^{1/2}$. 
	\end{enumerate}
\end{proposition}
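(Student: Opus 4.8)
The plan is to run the Convex Gaussian Min-Max scheme for the strongly convex Ridge problem, tracking every approximation at the $r_n$ scale. First I would eliminate $u$ from the Gordon objective $\ell(w,u)$ in (\ref{def:h_l_ridge}): maximizing over the direction of $u$, setting $\beta=\pnorm{u}{}/\sqrt m\ge 0$, and introducing $\gamma>0$ via $\sqrt a=\min_{\gamma>0}\{a/(2\gamma)+\gamma/2\}$ gives $L(w)=\max_{\beta\ge 0}\min_{\gamma>0}\ell^{\mathrm{sc}}_n(w,\beta,\gamma)$, where $\ell^{\mathrm{sc}}_n$ is strongly convex in $w$ and its only random inputs besides $g$ are the three scalars $\pnorm{h}{}^2/m$, $h^\top\xi/m^{3/2}$, $\pnorm{\xi}{}^2/m$. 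Replacing these by their limits $1$, $0$, $\sigma^2$ (at uniform cost $\lesssim r_n$ over $(\beta,\gamma)$ in a fixed compact box and $w$ in an $O(\sqrt n)$-ball, by sub-Gaussian/Bernstein concentration) reduces $\ell^{\mathrm{sc}}_n$ to a quadratic in $w$ with minimizer $(\beta g-\lambda\mu_0)/(\beta/\gamma+\lambda)$ — equal to $\bar w_{\ast,n}$ at $(\beta_{\ast,n},\gamma_{\ast,n})$ — and $\min_w$-value exactly $\psi_n(\beta,\gamma)$ of (\ref{def:ridge_psi_emp}). A minimax theorem (convexity in $w$, concavity in $\beta$ of the post-$\gamma$-minimization objective) then lets me exchange $\min_w$ with $\max_\beta\min_\gamma$, yielding $\abs{\min_w L(w)-\max_{\beta\ge0}\min_{\gamma>0}\psi_n(\beta,\gamma)}\lesssim r_n$ with probability $\ge1-Cn^{-100}$.

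For $\psi_n$ and $\psi$ I would establish the standard structure: $\beta\mapsto\psi_\bullet(\beta,\gamma)$ strictly concave, $\gamma\mapsto\psi_\bullet(\beta,\gamma)$ strictly convex, with coercivity forcing the saddle into $[1/K,K]^2$, $K=K(\sigma,\lambda,\tau,M_2)$; for $\psi_n$ this uses $\pnorm{\mu_0}{}^2/n\le M_2$, $m/n\in[\tau,1/\tau]$ and the concentrations $\pnorm g{}^2/n=1+O(r_n)$, $n^{-1}\sum_j\mu_{0,j}g_j=O(\sqrt{M_2}\,r_n)$, which are exact identities for $\psi$. Existence and uniqueness of the saddle follow from a minimax theorem plus strict convex-concavity; the fixed-point characterizations in part (1) and in (\ref{eqn:ridge_fpe}) come from setting $\partial_\gamma\psi_\bullet=0$ and $\partial_\beta\psi_\bullet=0$ and simplifying, which is transparent because $\eta_2(z;\tau)=z/(1+\tau)$, so $\eta_2(\mu_0+\gamma g;\gamma\lambda/\beta)-\mu_0=\gamma(\beta g-\lambda\mu_0)/(\beta+\gamma\lambda)$, with Gaussian integration by parts passing between the $\Prob_n\{g[\eta_2-\mu_0]\}$ and $\E\eta_2'$ forms. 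For part (3) I would compare fixed-point maps rather than values: on the box, $\psi_n-\psi$ and $\nabla\psi_n-\nabla\psi$ are both $\lesssim r_n$ with probability $\ge1-Cn^{-100}$ (the discrepancy is the explicit random quantity $n^{-1}\sum_j(\mu_{0,j}-\beta g_j/\lambda)^2$ and its $\beta$-derivative, smooth functions of $n^{-1}\sum_j\mu_{0,j}g_j$ and $n^{-1}\sum_j g_j^2$), and combining this with the quantitative invertibility of the $2\times2$ Jacobian of $\nabla\psi$ at $(\beta_\ast,\gamma_\ast)$ (a consequence of the strict convex-concavity) via a Newton--Kantorovich/implicit-function argument gives $\abs{\beta_{\ast,n}-\beta_\ast}\vee\abs{\gamma_{\ast,n}-\gamma_\ast}\lesssim r_n$.

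For part (4), the scalarization and part (3) give $\min_w L(w)=\max_\beta\min_\gamma\psi_n+O(r_n)=\psi_n(\beta_{\ast,n},\gamma_{\ast,n})+O(r_n)=\psi(\beta_\ast,\gamma_\ast)+O(r_n)$; since $w\mapsto H^{\ridge}(w,G)$ and $w\mapsto L(w)$ are convex with minimizers confined to an $O(\sqrt n)$-ball (by Proposition \ref{prop:ridge_risk} and its Gordon analogue), the two-sided Convex Gaussian Min-Max Theorem (Theorem \ref{thm:CGMT}) transfers this to $\abs{\min_w H^{\ridge}(w,G)-\psi(\beta_\ast,\gamma_\ast)}\lesssim r_n$ on the same event. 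For part (5), $L$ is $(\lambda/m)$-strongly convex (the ridge term has Hessian $(\lambda/m)I_n$, the rest is convex in $w$), so $\pnorm{w_{\ast,n}-\bar w_{\ast,n}}{}^2\le(2m/\lambda)\big(L(\bar w_{\ast,n})-\min_w L(w)\big)$; since $\bar w_{\ast,n}$ is exactly the $w$-minimizer of the idealized scalar objective at $(\beta_{\ast,n},\gamma_{\ast,n})$, the envelope identity makes $(\beta_{\ast,n},\gamma_{\ast,n})$ a critical — hence, by concave-convexity, saddle — point of that objective evaluated at $\bar w_{\ast,n}$, so $L(\bar w_{\ast,n})=\psi_n(\beta_{\ast,n},\gamma_{\ast,n})+O(r_n)=\min_w L(w)+O(r_n)$, and dividing by $m$ gives $\pnorm{w_{\ast,n}-\bar w_{\ast,n}}{}/\sqrt m\lesssim r_n^{1/2}$.

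The delicate pieces are (i) carrying out the scalarization with \emph{uniform} $O(r_n)$ control of every error term while keeping $u,\beta,\gamma,w$ in the right compact sets so the minimax exchanges and Theorem \ref{thm:CGMT} are legitimate, and (ii) the quantitative, $\mu_0$- and $(m/n)$-uniform non-degeneracy of the scalar saddle of $\psi$, which is what upgrades the rate in part (3) from $r_n^{1/2}$ to $r_n$ and underlies uniqueness in parts (1)--(2). The remaining ingredients — the concentration inequalities, the strong convexity of $L$, the linearity of $\eta_2$ — are routine.
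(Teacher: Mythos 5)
Your overall strategy --- scalarize the Gordon cost, compute the inner $\min_w$ in closed form via the linearity of $\eta_2$, localize $(\beta,\gamma)$ to a compact box by coercivity, compare the empirical and population saddles, and transfer to $H(\cdot,G)$ via CGMT --- is the paper's route. For part (3) you propose a Newton--Kantorovich / implicit-function comparison of $\nabla\psi_n$ with $\nabla\psi$ using quantitative nondegeneracy of the Hessian at the saddle; this is correct and more portable than the paper's explicit algebra in (\ref{ineq:emp_pop_diff_1})--(\ref{ineq:emp_pop_diff_3}), which first solves a quadratic for the ratio $\gamma_\ast/\beta_\ast$ and then extracts $\gamma_\ast$ separately. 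Your route would carry over to non-quadratic penalties; the paper's is shorter here but exploits $\eta_2$ being affine.

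The gap is in part (5), where you assert that $L$ is $(\lambda/m)$-strongly convex because ``the rest is convex in $w$.'' It is not: after maximizing out $u$, the Gordon cost (\ref{ineq:ridge_psi_empirical_pm_1.1}) retains the cross term $-\tfrac{\beta}{\gamma}\tfrac{\pnorm{w}{}}{\sqrt m}\Delta_m$ with $\Delta_m= h^\top\xi/m$, which is concave in $w$ (through $\pnorm{w}{}$) whenever $\Delta_m>0$ --- an event of probability about $1/2$. So $L$ itself may fail to be convex, and the inequality $\pnorm{w_{\ast,n}-\bar w_{\ast,n}}{}^2\leq (2m/\lambda)\big(L(\bar w_{\ast,n})-\min_w L\big)$ does not hold as written. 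The repair is the paper's Preliminary Step~1: on the localization ball $\pnorm{w}{}/\sqrt m\leq L_w$, replace the $\Delta_m$ cross term by its $w$-independent envelope $\pm L_w\abs{\Delta_m}$, absorb it into $(\sigma_m^\pm)^2$, and obtain $(\lambda/m)$-strongly convex sandwiching costs $L^-\leq L\leq L^+$ with $\pnorm{L^+-L^-}{\infty}\lesssim r_n$. Your envelope/Danskin observation --- that $(\beta_{\ast,n},\gamma_{\ast,n})$ is a critical, hence by strict concave--convexity a saddle, point of the scalar objective evaluated at $\bar w_{\ast,n}$, so $L(\bar w_{\ast,n})=\min_w L+\bigo(r_n)$ --- is valid and is arguably cleaner than the paper's comparison of $w_{\ast,n}^\pm$ to $\bar w_{\ast,n}$. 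But it must then be combined with strong convexity of $L^-$, applied to both $\bar w_{\ast,n}$ and $w_{\ast,n}$ against $w_{\ast,n}^-=\argmin L^-$, to recover $\pnorm{w_{\ast,n}-\bar w_{\ast,n}}{}/\sqrt m\lesssim r_n^{1/2}$. So the conclusion stands, but ``the strong convexity of $L$'' you list among the routine ingredients is not a fact; the $L^\pm$ construction is what does the work.
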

\begin{proof}
We introduce some further notation. Let
\begin{align}\label{ineq:ridge_psi_empirical_notation}
e_g\equiv \frac{\pnorm{g}{}}{\sqrt{n}},\quad e_h\equiv \frac{\pnorm{h}{}}{\sqrt{m}},\quad  \Delta_m\equiv\frac{h^\top \xi}{m},\quad  \sigma_m^2\equiv \frac{\pnorm{\xi}{}^2}{m}.
\end{align}
Consider the event 
\begin{align}\label{ineq:ridge_psi_empirical_E}
E&\equiv \big\{\abs{e_h-1}\vee \abs{e_g-1}\vee \abs{\Delta_m}\vee \abs{\sigma_m^2-\sigma^2}\leq K_0r_n\big\}\nonumber\\
&\qquad \cap \big\{\abs{\iprod{\mu_0}{g}}\leq K_0 \pnorm{\mu_0}{} \sqrt{\log n}  \big\},
\end{align}
where $K_0>0$ is a large enough constant so that $\Prob(E)\geq 1- Cn^{-100}$. 

\noindent (\textbf{Preliminary Step 1}). We rewrite the Gordon cost that suits our purposes: with $\mathsf{f}(w)\equiv (\lambda/2m)\big(\pnorm{w+\mu_0}{}^2-\pnorm{\mu_0}{}^2\big)$, 
\begin{align}\label{ineq:ridge_psi_empirical_pm_1}
&L(w) = \max_u \ell (w,u)\nonumber\\
&= \max_{\beta> 0} \max_{\pnorm{u}{}=\beta }\bigg\{-\frac{1}{m^{3/2}} \pnorm{u}{}g^\top w+\frac{\pnorm{w}{}}{m^{1/2}}\cdot \frac{h^\top u}{m}-\frac{1}{m}u^\top \xi-\frac{1}{2m} \pnorm{u}{}^2 + \mathsf{f}(w) \bigg\}\nonumber\\
& = \max_{\beta> 0} \bigg\{ \frac{\beta}{\sqrt{m}}\cdot \bigg(\biggpnorm{\frac{ \pnorm{w}{}}{m^{1/2}}\cdot \frac{h}{\sqrt{m}}-\frac{\xi}{\sqrt{m}} }{}  - \frac{g^\top w}{m} \bigg)- \frac{(\beta/\sqrt{m})^2}{2}   +\mathsf{f}(w)\bigg\}.
\end{align}
Using $\pnorm{t}{}=\min_{\gamma} (\pnorm{t}{}^2/\gamma+\gamma)/2$ and replacing $\beta/\sqrt{m}$ by $\beta$, we may write $L(w)$ as
\begin{align}\label{ineq:ridge_psi_empirical_pm_1.1}
L(w)
& = \max_\beta \min_\gamma\bigg\{ \bigg(\frac{\sigma_m^2}{\gamma}+\gamma\bigg) \frac{\beta}{2} - \frac{\beta^2}{2}\nonumber\\
&\qquad +\bigg[\frac{\beta}{2\gamma}\frac{\pnorm{w}{}^2}{m}e_h^2 - \beta\cdot \frac{g^\top w}{m}+\mathsf{f}(w)\bigg]- \frac{\beta}{\gamma}\frac{\pnorm{w}{}}{\sqrt{m}} \Delta_m\bigg\}.
\end{align}
Now for $L_w>0$ to be determined later on, let
\begin{align}\label{ineq:ridge_psi_empirical_pm_1.2}
&\bar{\psi}_n^\pm(w,\beta,\gamma;L_w) \equiv  \bigg(\frac{(\sigma_m^\pm)^2}{\gamma}+\gamma\bigg) \frac{\beta}{2} - \frac{\beta^2}{2} +\bigg[\frac{\beta}{2\gamma}\frac{\pnorm{w}{}^2}{m}e_h^2 - \beta\cdot \frac{g^\top w}{m}+\mathsf{f}(w)\bigg],
\end{align}
where $(\sigma_m^\pm)^2\equiv (\sigma_m^2\pm 2L_w  \abs{\Delta_m})_+$. 
Further let 
\begin{align}\label{def:L_pm_ridge}
L^\pm(w)&\equiv \max_\beta \min_\gamma \bar{\psi}_n^\pm(w,\beta,\gamma;L_w) = \bigg(\sqrt{ \frac{\pnorm{w}{}^2}{m}e_h^2+(\sigma_m^\pm)^2  }-\frac{g^\top w}{m}\bigg)_+ +\mathsf{f}(w).
\end{align}
As the map $w\mapsto \sqrt{a\pnorm{w}{}^2+b}$ is convex for $a,b\geq 0$, $L^\pm$ is $1/m$-strongly convex. Furthermore, $L^-\leq L\leq L^+$ with $\pnorm{L^+-L^-}{\infty}\leq \abs{\sigma_m^+-\sigma_m^-}\leq 2L_w \abs{\Delta_m}/\sigma_m$.

\noindent (\textbf{Preliminary Step 2}). We will prove localization of $w$. We will show that there exists some constant $K>0$ depending on $(\sigma,\lambda,\tau,M_2)$ such that the global minimizers 
$w_{\ast,n}=\argmin_w L(w)$, $w_{\ast,n}^\pm=\argmin_w \max_\beta \min_\gamma \bar{\psi}_n^\pm(w,\beta,\gamma;L_w)$ satisfy
\begin{align*}
m^{-1/2}\max\{\pnorm{w_{\ast,n}}{},\pnorm{w_{\ast,n}^\pm}{}\}\leq K
\end{align*}
with probability at least $1-Cn^{-100}$. We will only prove this localization claim for $w_{\ast,n}$ as the claims for $w_{\ast,n}^\pm$ follow from the same arguments. First note that by (\ref{ineq:ridge_psi_empirical_pm_1}), $
\min_w L(w)\leq L(0) = \max_\beta \big\{ \sigma_m \beta - \beta^2/2\big\} = {\sigma_m^2}/{2}$. 
On the other hand, using (\ref{ineq:ridge_psi_empirical_pm_1}) again, for any $w\in \R^n$ with $\pnorm{w}{}/\sqrt{m}=\alpha$, we have $L(w)\geq \mathsf{f}(w)\geq  (\lambda/2)\big(\alpha^2-2(\pnorm{\mu_0}{}/\sqrt{m})\alpha\big)$. Combining the two inequalities,
\begin{align*}
\frac{\lambda}{2}\bigg(\alpha^2-2\frac{\pnorm{\mu_0}{}}{\sqrt{m}}\alpha\bigg)\leq \frac{\sigma_m^2}{2}.
\end{align*}
Solving the above inequality gives $\pnorm{w_{\ast,n}}{}/\sqrt{m}\leq K(1\vee \{\sigma_m/\sigma\})$ for some $K=K(\sigma,\tau,\lambda,M_2)>0$. Similarly $\pnorm{w_{\ast,n}^\pm}{}/\sqrt{m}\leq K(1\vee \{\sigma_m^\pm/\sigma\})$. Now we choose $L_w\equiv 2K$ and write $\bar{\psi}_n^\pm(w,\beta,\gamma;L_w)$ simply as $\bar{\psi}_n^\pm(w,\beta,\gamma)$. Then on $E$, $m^{-1/2}\max\{\pnorm{w_{\ast,n}}{},\pnorm{w_{\ast,n}^\pm}{}\}\leq 2K$ for $n$ large, and so
\begin{align}\label{ineq:ridge_psi_empirical_pm_2}
\min_{w \in \R^n} \max_{\beta>0} \min_{\gamma>0}\bar{\psi}_n^-(w,\beta,\gamma)\leq \min_{w \in \R^n} L(w)\leq \min_{w \in \R^n} \max_{\beta>0} \min_{\gamma>0}\bar{\psi}_n^+(w,\beta,\gamma).
\end{align}

\noindent (\textbf{Preliminary Step 3}). We continue rewriting Gordon's cost based on (\ref{ineq:ridge_psi_empirical_pm_2}). Clearly on an event with probability $1$, for any $w \in \R^n$, the saddle points to the max-min problems $\max_{\beta>0} \min_{\gamma>0}\bar{\psi}_n^\pm (w,\beta,\gamma)$ in (\ref{ineq:ridge_psi_empirical_pm_2}) do not reach boundary, so we may interchange the order of max-min by Sion's min-max theorem to obtain $\max_{\beta>0} \min_{\gamma>0}\bar{\psi}_n^\pm (w,\beta,\gamma)=\min_{\gamma>0}\max_{\beta>0} \bar{\psi}_n^\pm (w,\beta,\gamma)$ for all $w\in \R^n$ on a full probability event. Using Sion's min-max theorem again in view of the joint convexity of $\bar{\psi}_n^\pm$ in $(w,\gamma)$, we have  
\begin{align*}
\max_{\beta} \min_\gamma \bigg(\min_w \bar{\psi}_n^-(w,\beta,\gamma)\bigg)\leq \min_w L(w) \leq \max_\beta \min_\gamma \bigg(\min_w \bar{\psi}_n^+(w,\beta,\gamma)\bigg).
\end{align*}
The inner most minimum with respect to $w$ in $\psi_n^\pm$ takes the same form: 
\begin{align*}
&\min_w\bigg\{\frac{\beta}{2\gamma}\frac{ \pnorm{w}{}^2}{m} e_h^2 - \beta\cdot \frac{g^\top w}{m}+\mathsf{f}(w)\bigg\}\\
& = \frac{1}{m} \min_w \bigg\{ \frac{\beta e_h^2}{2\gamma}\pnorm{w-(\gamma/e_h^2)g}{}^2+ \frac{\lambda}{2}\pnorm{w+\mu_0}{}^2 \bigg\}- \frac{\beta \gamma}{2m e_h^2}\pnorm{g}{}^2-\frac{\lambda}{2m}\pnorm{\mu_0}{}^2\\
& = \frac{\lambda}{m} \env_{\pnorm{\cdot}{}^2/2}\bigg(\mu_0+\frac{\gamma}{e_h^2}  g; \frac{\gamma \lambda }{\beta e_h^2 }\bigg)- \frac{\beta \gamma}{2m e_h^2}\pnorm{g}{}^2-\frac{\lambda}{2m}\pnorm{\mu_0}{}^2,
\end{align*}
where $\env_{\pnorm{\cdot}{}^2/2}$ is the Moreau envelope associated with the function $x\mapsto \pnorm{x}{}^2/2$. Some further calculations lead to
\begin{align}\label{ineq:ridge_psi_empirical_pm_3}
&\min_w \bar{\psi}_n^\pm(w,\beta,\gamma) = \bigg(\frac{(\sigma_m^\pm)^2}{\gamma}+\gamma\bigg) \frac{\beta}{2} - \frac{\beta^2}{2} \nonumber\\
&\quad + \frac{\lambda}{m} \env_{\pnorm{\cdot}{}^2/2}\bigg(\mu_0+\frac{\gamma}{e_h^2} g; \frac{\gamma \lambda }{\beta e_h^2 }\bigg)- \frac{\beta \gamma}{2m e_h^2}\pnorm{g}{}^2-\frac{\lambda}{2m}\pnorm{\mu_0}{}^2\nonumber \\
& = \bigg(\frac{(\sigma_m^\pm)^2}{\gamma}+\gamma\bigg) \frac{\beta}{2}- \frac{\beta^2}{2} -\frac{1}{m/n}\cdot \frac{\gamma \lambda^2}{2(\beta e_h^2+\lambda \gamma)}\cdot \frac{1}{n}\sum_{j=1}^n \bigg(\mu_{0,j}-\frac{\beta}{\lambda}g_j\bigg)^2\nonumber\\
& \equiv \psi_n^{\pm}(\beta,\gamma),
\end{align}
where, for every $(\beta,\gamma)$, the minimum in the above display is attained at 
\begin{align*}
{w}&= \eta_2\bigg(\mu_0+\frac{\gamma}{e_h^2} \cdot g; \frac{\gamma \lambda }{\beta e_h^2 }\bigg)-\mu_0=\frac{-\lambda \mu_{0}+\beta g}{(\beta/\gamma)e_h^2+\lambda}.
\end{align*}
Summarizing, we have shown that with probability at least $1-Cn^{-100}$,
\begin{align}\label{ineq:ridge_psi_empirical_pm_4}
\max_{\beta>0} \min_{\gamma>0}\psi_n^-(\beta,\gamma)\leq  \min_{w\in \R^n} L(w)\leq \max_{\beta>0} \min_{\gamma>0}\psi_n^+(\beta,\gamma).
\end{align}

\noindent (\textbf{Step 1}). We prove the localization claim in (1). We shall show that the range of maximum and minimum in the max-min problems $\max_{\beta>0} \min_{\gamma>0}\psi_n^\pm(\beta,\gamma)$ and $\max_{\beta>0} \min_{\gamma>0}\psi_n(\beta,\gamma)$  can be localized to $[1/K,K]^2$ with probability at least $1-Cn^{-100}$. We will do so only for $\psi_n$, as the claims for $\psi_n^\pm$ are similar.

Let $\Psi_n(\beta)\equiv \min_\gamma \psi_n(\beta,\gamma)$. Note that map $\Psi_n(\cdot)= \min_\gamma \psi_n(\cdot,\gamma)$ is the infimum of $1$-strongly concave functions, so itself is also a.s. strongly concave. Furthermore, it is clear that a.s. $\Psi_n(\beta)\to -\infty$ as $\beta \to \infty$ and $\Psi_n(0) = -\lambda \pnorm{\mu_0}{}^2/(2m)$, so $\beta_{\ast,n}=\argmax_{\beta\geq 0} \Psi_n(\beta)$ is a.s. well-defined. 

First we obtain an upper bound for $\beta_{\ast,n}$ on $E$. As
\begin{align}\label{ineq:ridge_psi_empirical_pm_5}
\Psi_n(\beta)\leq  2\sigma \beta-\frac{\beta^2}{2},
\end{align}
so for $\beta\geq \bar{B}$ where $\bar{B}= \bar{B}(\sigma,\lambda,\tau,M_2)>0$, we have $\Psi_n(\beta)<\Psi_n(0)$. In other words, $\beta_{\ast,n}\leq  \bar{B}$ on $E$. Next we obtain a lower bound for $\beta_{\ast,n}$ on $E$:
\begin{align*}
\Psi_n(\beta)&\geq \min_{\gamma>0} \bigg\{\bigg(\frac{\sigma^2}{\gamma}+\gamma\bigg) \frac{\beta}{2}-\frac{\beta^2}{2}- \frac{\lambda}{2m}\bigg(\pnorm{\mu_0}{}^2-\frac{2\beta}{\lambda}\iprod{\mu_0}{g}+  \frac{\beta^2 \pnorm{g}{}^2}{\lambda^2}\bigg) \bigg\}\\
& = \Psi_n(0)+\beta \bigg(\sigma+\frac{ \iprod{\mu_0}{g} }{m}\bigg)-\frac{\beta^2}{2}\bigg(1+\frac{\pnorm{g}{}^2}{m\lambda} \bigg)\\
&\geq \Psi_n(0)+ \beta (\sigma/2)- K \beta^2.
\end{align*}
This means on $E$, $\Psi_n(\beta)>\Psi_n(0)$ for $\beta\leq \underline{B}=\underline{B}(\sigma,\lambda,\tau,M_2)$, and therefore $\beta_{\ast,n} \geq \underline{B}$. This proves that on the event $E$, $0<\underline{B}\leq \beta_{\ast,n}\leq \bar{B}<\infty$. 

For $\beta_{\ast,n} \in (0,\infty)$, clearly $\gamma_{\ast,n}=\argmin_{\gamma>0} \psi_n(\beta_{\ast,n},\gamma)$ is a.s. well-defined with $\gamma_{*,n} \in (0,\infty)$.  On $E$, by (\ref{ineq:ridge_psi_empirical_pm_5}),
\begin{align*}
C_1(\sigma,\bar{B})\geq \Psi_n(\beta_{\ast,n})&=\psi_n(\beta_{\ast,n},\gamma_{*,n})\geq \bigg(\frac{\sigma^2}{\gamma_{*,n}}+\gamma_{*,n}\bigg) \frac{\underline{B}}{2}-C_2(\bar{B},\tau,\lambda,M_2),
\end{align*}
which gives both lower and upper bounds for $\gamma_{*,n}$. This proves the desired high probability localization claim. Consequently, with probability at least $1- Cn^{-100}$, 
\begin{align}\label{ineq:ridge_psi_empirical_pm_6}
\bigabs{\min_{w\in \R^n} L(w)-\max_{\beta>0} \min_{\gamma>0}\psi_n(\beta,\gamma)}\leq K r_n.
\end{align}
The term $\max_{\beta>0} \min_{\gamma>0}\psi_n(\beta,\gamma)$ can be replaced by $\max_{\beta>0} \min_{\gamma>0}\psi_n^{\pm}(\beta,\gamma)$ in the above display.

\noindent (\textbf{Step 2}). We prove the fixed point equation characterization claim in (1). Recall the relationship (see e.g., \cite[Lemma D.1-(iii)]{thrampoulidis2018precise})
\begin{align}\label{ineq:ridge_psi_empirical_pm_7}
\nabla_z \env_{\pnorm{\cdot}{}^2/2}(z;\rho)& = \rho^{-1}\big(z-\prox_{\pnorm{\cdot}{}^2/2}(z;\rho)\big) = \rho^{-1}\big(z-\eta_2(z;\rho)\big),\nonumber\\
\frac{\partial}{\partial \rho} \env_{\pnorm{\cdot}{}^2/2}(z;\rho) &= -(2\rho^{2})^{-1}\pnorm{z-\eta_2(z;\rho)}{}^2.
\end{align}
We will evaluate the partial derivatives of $\psi_n$ using the representation
\begin{align*}
\psi_n(\beta,\gamma)=\bigg(\frac{\sigma^2}{\gamma}+\gamma\bigg) \frac{\beta}{2} - \frac{\beta^2}{2}  + \frac{\lambda}{m} \env_{\pnorm{\cdot}{}^2/2}\bigg(\mu_0+\gamma g; \frac{\gamma \lambda }{\beta}\bigg)- \frac{\beta \gamma}{2m}\pnorm{g}{}^2-\frac{\lambda}{2m}\pnorm{\mu_0}{}^2,
\end{align*}
and the identities in (\ref{ineq:ridge_psi_empirical_pm_7}). We first evaluate with some calculations that
\begin{align*}
\frac{\partial}{\partial \beta}\env_{\pnorm{\cdot}{}^2/2}\bigg(\mu_0+\gamma g; \frac{\gamma \lambda }{\beta}\bigg)& = \frac{1}{2\gamma \lambda}\biggpnorm{\mu_0+\gamma g-\eta_2\bigg(\mu_0+\gamma  g; \frac{\gamma \lambda }{\beta }\bigg)}{}^2,\\
\frac{\partial}{\partial \gamma}\env_{\pnorm{\cdot}{}^2/2}\bigg(\mu_0+\gamma g; \frac{\gamma \lambda }{\beta }\bigg)&=\frac{\beta}{2\lambda}
\pnorm{g}{}^2- \frac{\beta}{2\gamma^2 \lambda}\biggpnorm{\mu_0-\eta_2\bigg(\mu_0+\gamma g; \frac{\gamma \lambda }{\beta}\bigg)}{}^2.
\end{align*}
This means
\begin{align}\label{ineq:ridge_psi_empirical_2}
\frac{\partial \psi_n}{\partial \beta}
& = \bigg(\frac{\sigma^2}{\gamma}+\gamma\bigg)\frac{1}{2}-\beta+\frac{1}{2\gamma m} \biggpnorm{\mu_0-\eta_2\bigg(\mu_0+\gamma g; \frac{\gamma \lambda }{\beta }\bigg)}{}^2\nonumber\\
&\qquad\qquad  + \frac{1}{m}\iprod{g}{\mu_0-\eta_2\bigg(\mu_0+\gamma  g; \frac{\gamma \lambda }{\beta }\bigg)},
\end{align}
and
\begin{align}\label{ineq:ridge_psi_empirical_3}
\frac{\partial \psi_n}{\partial \gamma}
& = \bigg(-\frac{\sigma^2}{\gamma^2}+1\bigg) \frac{\beta}{2}- \frac{\beta}{2m\gamma^2} \biggpnorm{\mu_0-\eta_2\bigg(\mu_0+\gamma  g; \frac{\gamma \lambda }{\beta }\bigg)}{}^2.
\end{align}
Setting the RHS of (\ref{ineq:ridge_psi_empirical_2})-(\ref{ineq:ridge_psi_empirical_3}) to be 0 yields the fixed point equation.

\noindent (\textbf{Step 3}). We prove the claim in (2). The same proof as in Steps 1-2 can be used to reach the conclusion with the first equation in (\ref{eqn:ridge_fpe}) as stated, and the second equation in (\ref{eqn:ridge_fpe}) reading
\begin{align*}
\beta_\ast = \gamma_\ast-\frac{1}{m/n} \E \bigg\{ Z\cdot \bigg[\eta_2\bigg(\Pi_{\mu_0}+\gamma_\ast Z; \frac{\gamma_\ast \lambda }{\beta_\ast }\bigg)-\Pi_{\mu_0}\bigg]\bigg\}.
\end{align*}
Now we may apply Stein's identity to conclude the fixed point equation characterization at the population level.

\noindent (\textbf{Step 4}). We prove the claim in (3). Using the closed form expression for $\eta_2(z;\lambda)=z/(1+\lambda)$, on the event $E$, the second equation in (1) becomes
\begin{align*}
\beta_{\ast,n} &= \gamma_{*,n} - \frac{1}{m/n}\cdot \Prob_n \bigg[g \cdot \frac{ \gamma_{*,n} g}{1+ (\gamma_{*,n} \lambda/\beta_{\ast,n} ) }\bigg] + \mathcal{O}(r_n)\\
& = \gamma_{*,n} \bigg(1- \frac{1}{m/n}\cdot \frac{1}{1+\lambda (\gamma_{*,n} /\beta_{\ast,n})   } \bigg)+\mathcal{O}(r_n).
\end{align*}
Consequently, on the event $E$, 
\begin{align}\label{ineq:emp_pop_diff_1}
\frac{1}{\gamma_{*,n} /\beta_{\ast,n}} = 1- \frac{1}{m/n}\cdot \frac{1}{1+\lambda (\gamma_{*,n} /\beta_{\ast,n})   }+ \mathcal{O}(r_n). 
\end{align}
The above display is a quadratic equation in $\gamma_{*,n}/\beta_{\ast,n}$, and $\gamma_\ast/\beta_\ast$ verifies the equation exactly without $\mathcal{O}(r_n)$, so 
\begin{align}\label{ineq:emp_pop_diff_2}
\biggabs{\frac{\gamma_{*,n}}{\beta_{\ast,n}}-\frac{\gamma_\ast}{\beta_\ast}  } = \mathcal{O}(r_n). 
\end{align}
Now with $a_n \equiv  \gamma_{*,n}$, $b_n \equiv  \gamma_{*,n} \lambda/(\beta_{\ast,n})$ and $a\equiv \gamma_\ast$, $b\equiv \gamma_\ast \lambda/\beta_\ast$, the above display entails $\abs{b_n-b}=\mathcal{O}(r_n)$ and both $b_n,b$ are bounded away from $0$ and $\infty$. This means 
\begin{align*}
&\Prob_n\Big(\eta_2(\mu_0+a_n g;b_n)-\mu_0\Big)^2 
= \Prob_n \bigg(\frac{-\mu_0+(a_n/b) g}{1/b+1}\bigg)^2 +\mathcal{O}(r_n)\\
& = \Prob_n \bigg(\frac{-b\mu_0+a g}{1+b}\bigg)^2+\frac{a_n^2-a^2}{(1+b)^2}\Prob_n g^2+\bigo\big(r_n(1\vee\abs{a_n-a})\big)\\
& = \E \bigg(\frac{-b\mu_0+a Z}{1+b}\bigg)^2+\frac{a_n^2-a^2}{(1+b)^2}+\bigo\big(r_n(1\vee\abs{a_n-a})\big)\\
& = (m/n)\Big((\gamma_\ast)^2-\sigma^2\Big) +\frac{(\gamma_{*,n})^2-(\gamma_\ast)^2}{(1+b)^2}+\bigo\big(r_n(1\vee\abs{\gamma_{*,n}-\gamma_\ast})\big).
\end{align*}
Now using the first equation in (1), we arrive at
\begin{align}\label{ineq:emp_pop_diff_3}
(\gamma_{*,n})^2-(\gamma_\ast)^2&=\frac{ (\gamma_{*,n})^2-(\gamma_\ast)^2 }{(m/n)(1+b)^2}+ \bigo\big(r_n(1\vee\abs{\gamma_{*,n}-\gamma_\ast})\big).
\end{align}
By the population version of (\ref{ineq:emp_pop_diff_1}) and the definition of $b$, we have 
\begin{align*}
\frac{\lambda}{b} = 1-\frac{1}{(m/n)(1+b)}\,\Rightarrow \frac{1}{(m/n)(1+b)^2} = \frac{1}{1+b}\bigg(1-\frac{\lambda}{b}\bigg) \in [\epsilon,1-\epsilon]
\end{align*}
for some $\epsilon=\epsilon(\sigma,\lambda,\tau,M_2) \in (0,1/2)$. Now by  (\ref{ineq:emp_pop_diff_3}), we have
\begin{align*}
\epsilon\cdot \abs{(\gamma_{*,n})^2-(\gamma_\ast)^2}\leq \bigo\big(r_n(1\vee\abs{\gamma_{*,n}-\gamma_\ast})\big).
\end{align*}
Using the boundedness of $\gamma_{\ast,n},\gamma_\ast$ and solving the inequality yield that $\abs{\gamma_{*,n}-\gamma_\ast}=\bigo(r_n)$. The claimed bounds for $\abs{\beta_{\ast,n}-\beta_\ast}\vee \abs{\gamma_{*,n}-\gamma_\ast}$ follows by combining (\ref{ineq:emp_pop_diff_2}). 

\noindent (\textbf{Step 5}). We prove the claim in (4). By Step 4 and standard concentration arguments, with probability at least $1-Cn^{-100}$, we have $\abs{\psi_n(\beta_{\ast,n},\gamma_{\ast,n})-\psi(\beta_\ast,\gamma_\ast)}\leq K r_n$. Combined with (\ref{ineq:ridge_psi_empirical_pm_6}), we have proved the inequality in (4) for the Gordon cost $\min_w L(w)$. The inequality involving $\min_w H(w)$ follows further by an application of the CGMT; details are omitted.

\noindent (\textbf{Step 6}). We prove the claim in (5). Note that the claim (3) proved in Step 4 also holds when $(\beta_{\ast,n},\gamma_{\ast,n})$ is replaced by $(\beta_{\ast,n}^{\pm},\gamma_{\ast,n}^{\pm})$ defined as the saddle point for $\max_{\beta>0} \min_{\gamma>0}\psi_n^\pm(\beta,\gamma)$. So with probability at least $1-Cn^{-100}$, $\pnorm{w_{\ast,n}^\pm-\bar{w}_{\ast,n}}{}/\sqrt{m}\leq K r_n$ (and so $\pnorm{w_{\ast,n}^+-w_{\ast,n}^-}{}/\sqrt{m}\leq K r_n$). Recall $L^\pm$ defined in (\ref{def:L_pm_ridge}) are $1/m$-strongly convex with $L^-\leq L\leq L^+$, and $\pnorm{L^+-L^-}{\infty}\leq Kr_n$ with probability at least $1-Cn^{-100}$.  This means we may control the distance of the minimizer $w_{\ast,n}$ for $L$ and the minimizers $w_{\ast,n}^\pm$ of $L^\pm$ in that $\pnorm{w^\pm_{\ast,n}-w_{\ast,n}}{}/\sqrt{m}\leq K r_n^{1/2}$.  Combining we find $\pnorm{\bar{w}_{\ast,n}-w_{\ast,n}}{}/\sqrt{m}\leq K r_n^{1/2}$. This completes the proof for all the desired claims.
\end{proof}

\subsection{Proof of Theorem \ref{thm:ridge_dist}, distribution of $\hat{w}^{\ridge}_A$}

\begin{proposition}\label{prop:ridge_W2_gap}
Suppose (R1)-(R2) hold, and the entries of $\xi_0$ are independent, mean $0$, variance $1$ and uniformly sub-Gaussian. Let $\mathsf{g}:\R^n\to \R$ be $1$-Lipschitz. Then there exist constants $C,K>0$ depending only on $\sigma,\lambda,\tau,M_2$ such that for all $\epsilon \geq Kr_n$, with probability at least $1-C n^{-100}$,
\begin{align*}
\min_{w \in D_\epsilon(\mathsf{g})} H(w,G)\geq \psi(\beta_\ast,\gamma_\ast)+ K^{-1}\epsilon.
\end{align*} 
Here $
D_\epsilon(\mathsf{g})\equiv \big\{ w \in \R^n: \abs{ \mathsf{g}\big(w/\sqrt{n}\big) - \E \mathsf{g}\big(w_\ast/\sqrt{n}\big)}\geq \epsilon^{1/2} \big\}$, where $w_\ast$ is defined in (\ref{def:w_ridge}).
\end{proposition}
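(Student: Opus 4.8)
The plan is to run the one-sided Convex Gaussian Min-Max Theorem (Theorem \ref{thm:CGMT})—which requires no convexity of the $w$-constraint set—to pass from the primal Ridge cost $H(\cdot,G)=\max_u h(\cdot,u)$ on the exceptional set $D_\epsilon(\mathsf{g})$ to the associated Gordon cost $L=\max_u\ell(\cdot,u)$ (with $h,\ell$ as in (\ref{def:h_l_ridge})), and then to establish the gap $\min_{w\in D_\epsilon(\mathsf{g})}L(w)\ge\psi(\beta_\ast,\gamma_\ast)+c\,\epsilon$ for the Gordon problem directly. Since $D_\epsilon(\mathsf{g})$ is neither convex nor bounded, I would first localize: because $H(w,G)\wedge L(w)\ge\mathsf{f}(w)$ and $\mathsf{f}(w)=(\lambda/2m)(\pnorm{w+\mu_0}{}^2-\pnorm{\mu_0}{}^2)$ is coercive (uniformly, using $\pnorm{\mu_0}{}^2/n\le M_2$), there is a ball $B$ (depending on $\epsilon$ and $\sigma,\lambda,\tau,M_2$) outside which both costs exceed $\psi(\beta_\ast,\gamma_\ast)+\epsilon$ deterministically; hence the minima over $D_\epsilon(\mathsf{g})$ and over the compact set $D_\epsilon(\mathsf{g})\cap B$ agree, with the convention that the infimum over $\emptyset$ is $+\infty$ (in which case the claim is trivial), and the CGMT may be applied on $D_\epsilon(\mathsf{g})\cap B$.

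For the Gordon-cost gap, I would use $L\ge L^-$, where $L^-$ from (\ref{def:L_pm_ridge}) is $(\lambda/m)$-strongly convex; writing $w^-_{\ast,n}=\argmin_w L^-(w)$, one has $L^-(w^-_{\ast,n})=\max_\beta\min_\gamma\psi_n^-(\beta,\gamma)$. Running Steps 1--6 of the proof of Proposition \ref{prop:ridge_empirical_char} for the $\pm$-versions gives, on an event $E$ with $\Prob(E)\ge1-Cn^{-100}$, both $\abs{L^-(w^-_{\ast,n})-\psi(\beta_\ast,\gamma_\ast)}\le Kr_n$ and $\pnorm{w^-_{\ast,n}-\bar{w}_{\ast,n}}{}/\sqrt m\le Kr_n$ with $\bar{w}_{\ast,n}$ as in Proposition \ref{prop:ridge_empirical_char}-(5). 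I would then compare $\bar{w}_{\ast,n}$ with $w_\ast=w^{\ridge}_\ast$ from (\ref{def:w_ridge}): via the closed form $\eta_2(z;\rho)=z/(1+\rho)$, both are explicit affine functions of the \emph{same} Gaussian vector $g\sim\mathcal{N}(0,I_n)$ appearing in $\ell$, with coefficients determined by $(\beta_{\ast,n},\gamma_{\ast,n})$ respectively $(\beta_\ast,\gamma_\ast)$, which are within $Kr_n$ and bounded away from $0$ and $\infty$ (Proposition \ref{prop:ridge_empirical_char}-(3)); enlarging $E$ to control $\pnorm{g}{}^2/n$ and $\pnorm{\mu_0}{}^2/n$ then yields $\pnorm{\bar{w}_{\ast,n}-w_\ast}{}/\sqrt n\le Kr_n$ on $E$. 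Finally, $g\mapsto\mathsf{g}(w_\ast/\sqrt n)$ is $(K/\sqrt n)$-Lipschitz, so by Gaussian concentration $\abs{\mathsf{g}(w_\ast/\sqrt n)-\E\mathsf{g}(w_\ast/\sqrt n)}\le Kr_n$ with probability $\ge1-Cn^{-100}$; folding this into $E$ and combining (using $m\asymp n$ and $\pnorm{\mathsf{g}}{\lip}\le1$) gives $\abs{\mathsf{g}(w^-_{\ast,n}/\sqrt n)-\E\mathsf{g}(w_\ast/\sqrt n)}\le K_1r_n$ on $E$.

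With these approximations in hand the rest is quantitative bookkeeping. Enlarging the hypothesized constant so that $\epsilon\ge Kr_n$ forces $\epsilon^{1/2}\ge2K_1r_n$, on $E$ any $w\in D_\epsilon(\mathsf{g})$ satisfies $\epsilon^{1/2}\le\abs{\mathsf{g}(w/\sqrt n)-\mathsf{g}(w^-_{\ast,n}/\sqrt n)}+\epsilon^{1/2}/2\le\pnorm{w-w^-_{\ast,n}}{}/\sqrt n+\epsilon^{1/2}/2$, hence $\pnorm{w-w^-_{\ast,n}}{}^2/m\ge\tau\epsilon/4$ by (R1); strong convexity of $L^-$ then gives $L(w)\ge L^-(w^-_{\ast,n})+(\lambda/2m)\pnorm{w-w^-_{\ast,n}}{}^2\ge\psi(\beta_\ast,\gamma_\ast)-Kr_n+(\lambda\tau/8)\epsilon$, and a further enlargement ($Kr_n\le(\lambda\tau/16)\epsilon$) yields $\min_{w\in D_\epsilon(\mathsf{g})}L(w)\ge\psi(\beta_\ast,\gamma_\ast)+(\lambda\tau/16)\epsilon$ on $E$. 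Applying the one-sided CGMT on $S_w=D_\epsilon(\mathsf{g})\cap B$ with threshold $t=\psi(\beta_\ast,\gamma_\ast)+(\lambda\tau/32)\epsilon$ then gives $\Prob\big(\min_{w\in D_\epsilon(\mathsf{g})\cap B}H(w,G)\le t\big)\le2\Prob\big(\min_{w\in D_\epsilon(\mathsf{g})\cap B}L(w)\le t\big)\le2\Prob(E^c)\le Cn^{-100}$, which together with the deterministic localization to $B$ gives $\min_{w\in D_\epsilon(\mathsf{g})}H(w,G)>t\ge\psi(\beta_\ast,\gamma_\ast)+K^{-1}\epsilon$ with probability at least $1-Cn^{-100}$, upon renaming constants.

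The main obstacle will be the middle step: correctly handling the non-convex, non-compact exceptional set $D_\epsilon(\mathsf{g})$—which forces the coercivity-based reduction to $B$ and the use of the one-sided (rather than two-sided) CGMT—and, within the Gordon problem, pinning down the location of the minimizer $w^-_{\ast,n}$ relative to the target $w_\ast$ through the chain $w^-_{\ast,n}\approx\bar{w}_{\ast,n}\approx w_\ast$ combined with Gaussian concentration of $\mathsf{g}(w_\ast/\sqrt n)$; once these are established, the strong-convexity lower bound and the CGMT transfer are routine.
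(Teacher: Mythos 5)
Your proposal is correct and follows essentially the same strategy as the paper's proof: Gaussian concentration of $\mathsf{g}(w_\ast/\sqrt n)$, a chain of approximations tying $w_\ast$ to the Gordon minimizer via $\bar{w}_{\ast,n}$, a strong-convexity gap for the Gordon cost over $D_\epsilon(\mathsf{g})$, and the one-sided CGMT to transfer that gap to $H(\cdot,G)$. Your one substantive departure is to run the strong-convexity argument on the sandwich function $L^-$ (with its minimizer $w^-_{\ast,n}$, using Step 6 of the proof of Proposition \ref{prop:ridge_empirical_char} to chain $w^-_{\ast,n}\approx\bar w_{\ast,n}\approx w_\ast$) rather than on $L$ and $w_{\ast,n}$ directly as the paper does; this is arguably cleaner, since the $(\lambda/m)$-strong convexity of $L^-$ is transparent from the $\sqrt{a\pnorm{w}{}^2+b}$ form noted in the paper's Preliminary Step 1, whereas the paper's direct claim that $L$ is globally $(\lambda/m)$-strongly convex is more delicate (the radial factor $\alpha\mapsto\pnorm{\alpha h-\xi}{}$ inside $L$ is convex but not monotone on $\alpha\ge 0$, so its composition with $\pnorm{\cdot}{}$ is not manifestly convex). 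Your explicit localization to a compact ball $B$ followed by a hand-rolled application of Theorem \ref{thm:CGMT}-(1) on $D_\epsilon(\mathsf{g})\cap B$ plays the role of the paper's appeal to the ready-made modified CGMT of \cite[Corollary 5.1-(1)]{miolane2021distribution}, so there is no substantive difference at that step.
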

\begin{proof}
By Gaussian concentration for Lipschitz function of Gaussian random variables, with probability at least $1-C n^{-100}$, we have $\abs{\mathsf{g}\big(w_\ast/\sqrt{n}\big)-\E \mathsf{g}\big(w_\ast/\sqrt{n}\big)}\leq K r_n$. This means that with probability at least $1-C n^{-100}$, 
\begin{align*}
\epsilon^{1/2}&\leq \bigabs{\mathsf{g}\big(w/\sqrt{n}\big) - \E \mathsf{g}\big(w_\ast/\sqrt{n}\big)}\leq \bigabs{\mathsf{g}\big(w/\sqrt{n}\big) -  \mathsf{g}\big(w_\ast/\sqrt{n}\big)}+ Kr_n\\
&\leq n^{-1/2}\pnorm{w-w_\ast}{}+Kr_n\leq n^{-1/2}\pnorm{w-w_{\ast,n} }{}+Kr_n^{1/2}
\end{align*}
holds uniformly in  $w \in D_\epsilon(\mathsf{g})$. Here the last inequality follows as 
\begin{align*}
n^{-1/2}\pnorm{w-w_\ast}{} &\leq n^{-1/2}\big(\pnorm{w-w_{\ast,n}}{}+ \pnorm{w_\ast-\bar{w}_{\ast,n}}{}+\pnorm{\bar{w}_{\ast,n}-w_{\ast,n}}{}\big)\\
& \leq n^{1/2}\pnorm{w-w_{\ast,n}}{}+ K r_n^{1/2},
\end{align*}
by using Proposition \ref{prop:ridge_empirical_char}-(3)(5). So for any $\epsilon>K r_n$, with probability at least $1-C n^{-100}$,
\begin{align}\label{ineq:ridge_W2_gap_1}
n^{-1}\pnorm{w-w_{\ast,n}}{}^2\geq \epsilon/2,\quad \forall w \in D_\epsilon(\mathsf{g}).
\end{align}
As $L$ is globally $(\lambda/m)$-strongly convex, we conclude that for any $\epsilon \geq Kr_n$, with probability at least $1-C n^{-100}$,
\begin{align*}
 \min_{w \in D_\epsilon(\mathsf{g})} L(w) \geq \min_w L(w)+ K^{-1}\epsilon.
\end{align*} 
By Proposition \ref{prop:ridge_empirical_char}-(4), $\min_w L(w)$ in the above display may be replaced by the deterministic quantity $\psi(\beta_\ast,\gamma_\ast)$, by possibly changing $C,K>0$ accordingly. Now the claim follows by (a Ridge modified form of) the CGMT in the form given by \cite[Corollary 5.1-(1)]{miolane2021distribution}.
\end{proof}

\begin{proof}[Proof of Theorem \ref{thm:ridge_dist}, distribution of $\hat{w}^{\ridge}_A$]
By Proposition \ref{prop:ridge_empirical_char}-(4), we have
\begin{align*}
\Prob\bigg(\min_w H(w,G)\geq \psi(\beta_\ast,\gamma_\ast) +K r_n \bigg)\leq C n^{-100}. 
\end{align*}
By Proposition \ref{prop:ridge_W2_gap}, we have for any $\epsilon \geq K r_n$, 
\begin{align*}
\Prob\bigg(\min_{w \in D_\epsilon(\mathsf{g}) } H(w,G)\leq  \psi(\beta_\ast,\gamma_\ast)+ K^{-1}\epsilon\bigg)\leq C n^{-100}. 
\end{align*}
Now let $\mathcal{S}_n\equiv D_\epsilon(\mathsf{g})$, $z\equiv \psi(\beta_\ast,\gamma_\ast)$, $\rho_0\equiv \epsilon/K_1$ for some large $K_1>0$, we may apply Theorem \ref{thm:ridge_universality_generic} to conclude. The restriction $\epsilon \geq K r_n$ can be dropped for free as the probability bound becomes trivial otherwise. 
\end{proof}

\subsection{Proof of Theorem \ref{thm:ridge_dist}, distribution of $\hat{r}^{\ridge}_A$}

For a general design matrix $A$, let
\begin{align}\label{def:h_ridge_general}
h(w,u;A)& \equiv \frac{1}{m} u^\top A w-\frac{1}{m} u^\top\xi-\frac{1}{2m} \pnorm{u}{}^2 + \frac{\lambda}{2m} \big(\pnorm{w+\mu_0}{}^2-\pnorm{w}{}^2\big).
\end{align}
It is easy to see that
\begin{align}\label{def:ridge_u}
\hat{u}\equiv \hat{u}_A\equiv \argmax_{u \in \R^m} \min_{w \in \R^n} h(w,u;A) = A \hat{w}-\xi.
\end{align}
We define
\begin{align}\label{def:u_ridge}
u_\ast\equiv 
 \frac{\beta_\ast}{\gamma_\ast} \bigg(\sqrt{ \gamma_\ast^2-\sigma^2}\cdot h-\xi\bigg),
\end{align}
as the `population version' of $\hat{u}$ in the Gordon problem. Recall $\ell(\cdot,\cdot)$ defined in (\ref{def:h_l_ridge}), and $w_\ast$ defined in (\ref{def:w_ridge}).

\begin{proposition}\label{prop:ridge_u_map}
Suppose (R1)-(R2) hold, and the entries of $\xi_0$ are independent, mean $0$, variance $1$ and uniformly sub-Gaussian. There exist constants $C,K>0$ depending only on $\sigma,\lambda,\tau, M_2$ such that the following hold with probability $1-Cn^{-100}$. 
\begin{enumerate}
	\item $u\mapsto \ell(w_\ast,u)$ is $1/m$-strongly concave with a unique maximizer $u_{\ast,n}$:
	\begin{align*}
	u_{\ast,n}& =\bigg[1-\frac{(g^\top w_\ast)/m^{1/2}}{ \pnorm{(\pnorm{w_\ast}{}/m^{1/2})h-\xi}{}  }\bigg]\cdot \Big[ (\pnorm{w_\ast}{}/m^{1/2})h-\xi \Big]. 
	\end{align*}
	\item $\abs{\max_u \ell(w_\ast,u)-\psi(\beta_\ast,\gamma_\ast)}\leq Kr_n^{1/2}$.
	\item $m^{-1/2}\pnorm{u_{\ast,n}-u_\ast}{}\leq Kr_n$.
\end{enumerate}
\end{proposition}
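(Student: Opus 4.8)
The plan is to freeze $w$ at $w_\ast$ and observe that $u\mapsto\ell(w_\ast,u)$ in (\ref{def:h_l_ridge}) is a concave function of $u$ plus the linear term coming from the $h$-contribution plus the strongly concave quadratic $-\pnorm{u}{}^2/(2m)$, so it can be maximized in closed form; claims (2) and (3) then follow by concentrating the few scalars/vectors appearing in this closed form around their population values, using the fixed point equations (\ref{eqn:ridge_fpe}). I would work throughout on the same high-probability event and with the localization bounds from the proof of Proposition \ref{prop:ridge_empirical_char}, on which $\abs{e_g-1}\vee\abs{e_h-1}\vee\abs{\Delta_m}\vee\abs{\sigma_m^2-\sigma^2}\lesssim r_n$ and $\abs{\iprod{\mu_0}{g}}\lesssim\sqrt{M_2}\sqrt{n\log n}$, and I would freely use the closed form $\eta_2(z;\rho)=z/(1+\rho)$, giving $w_\ast=\big(\gamma_\ast g-(\gamma_\ast\lambda/\beta_\ast)\mu_0\big)\big/\big(1+\gamma_\ast\lambda/\beta_\ast\big)$.

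For (1): writing $a\equiv(g^\top w_\ast)/m^{3/2}$ and $b\equiv m^{-1}\big((\pnorm{w_\ast}{}/m^{1/2})h-\xi\big)$, one has $\ell(w_\ast,u)=-a\pnorm{u}{}+b^\top u-\pnorm{u}{}^2/(2m)+\text{const}$. Since $g^\top w_\ast$ is a positive multiple of $\gamma_\ast\pnorm{g}{}^2-(\lambda/\beta_\ast)\iprod{\mu_0}{g}$, it is positive on the event above, so $-a\pnorm{u}{}$ is concave and $\ell(w_\ast,\cdot)$ is $1/m$-strongly concave, hence has a unique maximizer $u_{\ast,n}$. Solving $-a\,u/\pnorm{u}{}+b-u/m=0$ gives $u_{\ast,n}=mb\,(1-a/\pnorm{b}{})$ (valid since $\pnorm{b}{}>a$ on the event, by the estimates below), and substituting $mb=(\pnorm{w_\ast}{}/m^{1/2})h-\xi$ and $a/\pnorm{b}{}=\frac{(g^\top w_\ast)/m^{1/2}}{\pnorm{(\pnorm{w_\ast}{}/m^{1/2})h-\xi}{}}$ reproduces the stated formula.

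For (2): since $\max_u\ell(w_\ast,u)=L(w_\ast)$ and $L$ is convex, $L(w_\ast)-L(w_{\ast,n})\le\pnorm{\partial L(w_\ast)}{}\,\pnorm{w_\ast-w_{\ast,n}}{}$; on the localization event, differentiating the closed form $L(w)=\tfrac12\big(\pnorm{(\pnorm{w}{}/m^{1/2})(h/m^{1/2})-\xi/m^{1/2}}{}-g^\top w/m\big)_+^2+\mathsf f(w)$ from (\ref{ineq:ridge_psi_empirical_pm_1}) gives $\pnorm{\partial L(w_\ast)}{}\lesssim m^{-1/2}$, and Proposition \ref{prop:ridge_empirical_char}-(3),(5) gives $\pnorm{w_\ast-w_{\ast,n}}{}\le K m^{1/2}r_n^{1/2}$, so $\abs{L(w_\ast)-L(w_{\ast,n})}\lesssim r_n^{1/2}$; combining with $\abs{L(w_{\ast,n})-\psi(\beta_\ast,\gamma_\ast)}=\abs{\min_wL(w)-\psi(\beta_\ast,\gamma_\ast)}\le Kr_n$ from Proposition \ref{prop:ridge_empirical_char}-(4) yields the claim.

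For (3): I would compare the explicit formulas for $u_{\ast,n}$ and $u_\ast$ term by term, using the rate-$r_n$ approximations, valid on the event above, $\pnorm{w_\ast}{}^2/m=\gamma_\ast^2-\sigma^2+O(r_n)$ (concentration of $\pnorm{\eta_2(\mu_0+\gamma_\ast g;\cdot)-\mu_0}{}^2/n$ around its mean $(m/n)(\gamma_\ast^2-\sigma^2)$, by the first equation of (\ref{eqn:ridge_fpe})), $\pnorm{(\pnorm{w_\ast}{}/m^{1/2})h-\xi}{}^2/m=(\pnorm{w_\ast}{}^2/m)e_h^2-2(\pnorm{w_\ast}{}/m^{1/2})\Delta_m+\sigma_m^2=\gamma_\ast^2+O(r_n)$, and $\tfrac{(g^\top w_\ast)/m^{1/2}}{\pnorm{(\pnorm{w_\ast}{}/m^{1/2})h-\xi}{}}=1-\beta_\ast/\gamma_\ast+O(r_n)$ (from $g^\top w_\ast=\big(\gamma_\ast\pnorm{g}{}^2-\tfrac{\gamma_\ast\lambda}{\beta_\ast}\iprod{\mu_0}{g}\big)\big/\big(1+\tfrac{\gamma_\ast\lambda}{\beta_\ast}\big)$, $\pnorm{g}{}^2/n\to1$, $\iprod{\mu_0}{g}/n\to0$, and the second equation of (\ref{eqn:ridge_fpe}) in the form $\tfrac1{1+\gamma_\ast\lambda/\beta_\ast}=(m/n)(1-\beta_\ast/\gamma_\ast)$). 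Plugging these into the formula for $u_{\ast,n}$ and comparing with $u_\ast=\tfrac{\beta_\ast}{\gamma_\ast}\big(\sqrt{\gamma_\ast^2-\sigma^2}\,h-\xi\big)$ — the scalar prefactor matching up to $O(r_n)$ and the vector $(\pnorm{w_\ast}{}/m^{1/2})h-\xi$ matching up to $O(m^{1/2}r_n)$ in $\ell_2$ — would give $m^{-1/2}\pnorm{u_{\ast,n}-u_\ast}{}\le Kr_n$. The hard part will be (3): pushing these concentration statements through with the stated $r_n$ rate and, above all, bookkeeping that all constants depend only on $\sigma,\lambda,\tau,M_2$ — in particular checking that $\mu_0$ enters the error terms only through $\iprod{\mu_0}{g}$, controlled via $\pnorm{\mu_0}{}^2/n\le M_2$. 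A lesser technical point is the subgradient bound $\pnorm{\partial L(w_\ast)}{}\lesssim m^{-1/2}$ in (2), which requires the localization of $w_\ast$.
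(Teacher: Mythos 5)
Your proposal matches the paper's proof in structure: part (1) derives $1/m$-strong concavity from $g^\top w_\ast > 0$ on a high-probability event (via Lemma \ref{lem:concentration_w_ridge}) and reads off $u_{\ast,n}$ from the first-order condition exactly as the paper does; part (3) is the same concentration-plus-fixed-point-equations comparison of $u_{\ast,n}$ with $u_\ast$, using $m^{-1}\E\pnorm{w_\ast}{}^2=\gamma_\ast^2-\sigma^2$ and $m^{-1}\E g^\top w_\ast=\gamma_\ast-\beta_\ast$ as the paper notes. So (1) and (3) are essentially identical in approach, and your explicit bookkeeping for (3) is a correct unpacking of the paper's terse statement.

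For (2), however, your route is a genuine variant that has a subtle wrinkle. You bound $L(w_\ast)-L(w_{\ast,n})\le\pnorm{\partial L(w_\ast)}{}\,\pnorm{w_\ast-w_{\ast,n}}{}$ ``since $L$ is convex.'' But the Gordon cost $L(w)=\tfrac12\big(\pnorm{(\pnorm{w}{}/m)h-\xi/\sqrt m}{}-g^\top w/m\big)_+^2+\mathsf f(w)$ is \emph{not} globally convex: the first summand composes the convex but non-monotone map $r\mapsto\pnorm{(r/m)h-\xi/\sqrt m}{}$ with $\pnorm{w}{}$, and such a composition fails convexity for small $\pnorm{w}{}$ (equivalently, $\ell(w,u)$ is concave rather than convex in $w$ when $h^\top u<0$). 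This is exactly why the paper introduces the sandwiching $L^-\le L\le L^+$ with strongly convex $L^\pm$ in the proof of Proposition \ref{prop:ridge_empirical_char}; it never asserts convexity of $L$ itself. The paper's proof of (2) sidesteps the issue entirely by instead restricting $\max_u$ to a compact ball $\{\pnorm{u}{}^2/m\le K\}$ (valid on the event) and bounding
\begin{align*}
L(w_\ast)-\min_w L(w)
\ \le\ \sup_{\pnorm{u}{}^2/m\le K}\big\lvert \ell(w_\ast,u)-\ell(w_{\ast,n},u)\big\rvert
\ \lesssim\ m^{-1/2}\pnorm{w_\ast-w_{\ast,n}}{},
\end{align*}
a plain Lipschitz-in-$w$ estimate of $\ell$ over the compact $u$-range that needs no convexity of $L$. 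Your argument can be repaired: on the localization event both $w_\ast$ and $w_{\ast,n}$ have $\ell_2$-norm of order $\sqrt m$ and the segment between them stays in a region where $r\mapsto\pnorm{(r/m)h-\xi/\sqrt m}{}$ is increasing, so $L$ is $C^1$ and locally convex along the segment and the first-order bound holds. But as written, the appeal to global convexity of $L$ is incorrect, and the paper's Lipschitz-over-a-compact-$u$-ball argument is both simpler and more robust. Your gradient bound $\pnorm{\nabla L(w_\ast)}{}\lesssim m^{-1/2}$ and the inputs from Proposition \ref{prop:ridge_empirical_char}-(3),(4),(5) are all correct.
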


We need a simple lemma before the proof of Proposition \ref{prop:ridge_u_map}.
\begin{lemma}\label{lem:concentration_w_ridge}
	Suppose (R1)-(R2) hold. Recall $w_\ast$ defined in (\ref{def:w_ridge}). Then there exist constants $C,K>0$ depending only on $\sigma,\lambda,\tau, M_2$ such that with probability at least $1- Cn^{-100}$, the term
	\begin{align*}
	n^{-1}\max\bigg\{\bigabs{g^\top w_\ast-\E g^\top w_\ast}, \bigabs{\pnorm{ w_\ast}{}^2-\E \pnorm{w_\ast}{}^2  }, \bigabs{\pnorm{ w_\ast+\mu_0}{}^2-\E \pnorm{w_\ast+\mu_0}{}^2  }\bigg\}
	\end{align*}
	is bounded by $K r_n$. 
\end{lemma}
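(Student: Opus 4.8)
The plan is to exploit the closed form of the proximal map, $\eta_2(z;\tau)=z/(1+\tau)$, to represent $w_\ast$ as an explicit affine function of the Gaussian vector $g$, thereby reducing all three concentration statements to textbook tail bounds for a linear Gaussian functional and a $\chi^2$ variable.

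First I would record that, by Proposition \ref{prop:ridge_empirical_char}-(2), the pair $(\beta_\ast,\gamma_\ast)\in[1/K,K]^2$ is deterministic (it is the saddle point of the deterministic objective $\psi$), so that $c\equiv \big(1+\gamma_\ast\lambda/\beta_\ast\big)^{-1}\in(0,1)$ is a deterministic constant bounded away from $0$ and $1$ by a constant depending only on $\sigma,\lambda,\tau,M_2$. Applying $\eta_2(z;\tau)=z/(1+\tau)$ coordinatewise in \eqref{def:w_ridge} gives
\begin{align*}
w_{\ast,j}=\frac{\mu_{0,j}+\gamma_\ast g_j}{1+\gamma_\ast\lambda/\beta_\ast}-\mu_{0,j}=(c-1)\mu_{0,j}+c\gamma_\ast\, g_j,\qquad j\in[n],
\end{align*}
equivalently $w_\ast=(c-1)\mu_0+c\gamma_\ast g$ and $w_\ast+\mu_0=c(\mu_0+\gamma_\ast g)$. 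Consequently each of $g^\top w_\ast$, $\pnorm{w_\ast}{}^2$ and $\pnorm{w_\ast+\mu_0}{}^2$ expands into a fixed linear combination — with coefficients controlled by $c$ and $\gamma_\ast$, hence by $\sigma,\lambda,\tau$ only — of the deterministic quantity $\pnorm{\mu_0}{}^2$, the linear Gaussian functional $\iprod{\mu_0}{g}$, and the quadratic $\pnorm{g}{}^2$; for instance $g^\top w_\ast=(c-1)\iprod{\mu_0}{g}+c\gamma_\ast\pnorm{g}{}^2$, and similarly for the other two, the $\pnorm{\mu_0}{}^2$ term being deterministic and cancelling against its own expectation.

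Next I would invoke two standard facts. Since $\iprod{\mu_0}{g}\sim\mathcal{N}(0,\pnorm{\mu_0}{}^2)$ and $\pnorm{\mu_0}{}^2\leq M_2 n$ by (R2), a sub-Gaussian tail bound yields $\bigabs{\iprod{\mu_0}{g}}\leq K\sqrt{n\log n}$ with probability at least $1-Cn^{-100}$; and by the Laurent--Massart bound for $\chi^2_n$ variables, $\bigabs{\pnorm{g}{}^2-n}\leq K\sqrt{n\log n}$ with probability at least $1-Cn^{-100}$. A union bound over these two events, combined with the affine representation above, shows that each of the three quantities deviates from its expectation by at most $K\sqrt{n\log n}$; dividing by $n$ and recalling $r_n=\sqrt{\log n/n}$ gives the claim.

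This lemma is routine and I do not anticipate a genuine obstacle; the only points requiring care are the uniform boundedness of the coefficients in the affine representation of $w_\ast$ — which is exactly what the localization $(\beta_\ast,\gamma_\ast)\in[1/K,K]^2$ from Proposition \ref{prop:ridge_empirical_char}-(2) supplies — and the use of (R2) to bound $\pnorm{\mu_0}{}^2$. One could alternatively bypass the explicit computation and apply the Hanson--Wright inequality directly to these quadratic forms in $g$, but the closed form makes the linear/quadratic split, and hence the $\sqrt{n\log n}$ rate, completely transparent.
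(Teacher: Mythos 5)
Your proof is correct and takes essentially the same approach as the paper's: the paper's proof is a one-line observation that $w_\ast= (-\lambda \mu_0+ \beta_\ast g)/\big((\beta_\ast/\gamma_\ast)+\lambda\big)$ is an explicit affine function of $g$ with coefficients bounded via Proposition~\ref{prop:ridge_empirical_char}-(2), after which "standard arguments" apply. You have simply spelled out those standard arguments — the decomposition into $\iprod{\mu_0}{g}$ and $\pnorm{g}{}^2$, the Gaussian and $\chi^2$ tail bounds — which is exactly what the paper leaves implicit.
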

\begin{proof}
	Note that $
	w_\ast= (-\lambda_\ast \mu_0+ \beta_\ast g)/\big((\beta_\ast/\gamma_\ast)+\lambda\big)$, so the concentration properties follows from standard arguments along with the boundedness of $\beta_\ast,\gamma_\ast$ proved in Proposition \ref{prop:ridge_empirical_char}-(2). 
\end{proof}

\begin{proof}[Proof of Proposition \ref{prop:ridge_u_map}]
\noindent (1). By Lemma \ref{lem:concentration_w_ridge}, with probability at least $1- Cn^{-100}$, $
n^{-1} g^\top w_\ast \geq  n^{-1}\E g^\top w_\ast-Kr_n \geq 1/K >0$ 
for $n$ large. This means that $u\mapsto \ell(w_\ast,u)$ is the sum of a $1/m$-strongly concave function and a concave function, and therefore again a $1/m$-strongly concave function. The desired expression for $u_{\ast,n}$ follows from the calculations in (\ref{ineq:ridge_psi_empirical_pm_1}), which gives 
\begin{align*}
\frac{\beta(w_\ast)}{\sqrt{m}}&=\frac{\pnorm{u_{\ast,n}}{}}{\sqrt{m}}=\biggpnorm{\frac{ \pnorm{w_\ast}{}}{m^{1/2}}\cdot \frac{h}{\sqrt{m}}-\frac{\xi}{\sqrt{m}} }{}  - \frac{g^\top w_\ast}{m},
\end{align*}
and
\begin{align*}
u_{\ast,n}& =\beta(w_\ast)\cdot \frac{ (\pnorm{w_\ast}{}/m^{1/2})h-\xi }{  \pnorm{(\pnorm{w_\ast}{}/m^{1/2})h-\xi}{}  }\\
&=\bigg[1-\frac{(g^\top w_\ast)/m^{1/2}}{ \pnorm{(\pnorm{w_\ast}{}/m^{1/2})h-\xi}{}  }\bigg]\cdot \Big[ (\pnorm{w_\ast}{}/m^{1/2})h-\xi \Big],
\end{align*}
as claimed.

\noindent (2). Note that from the calculations in (\ref{ineq:ridge_psi_empirical_pm_1}), both the range of maximum over $u$ in $\min_w L(w)=L(w_{\ast,n})=\max_u \ell(w_{\ast,n},u)$ and $L(w_{\ast})=\max_u \ell(w_{\ast},u)$ can be restricted to $\{\pnorm{u}{}^2/m\leq K\}$ for some large enough $K>0$ on an event $E_1$ with probability at least $1- Cn^{-100}$.  Now on the intersection of $E_1$, the event $\{\pnorm{g}{}\vee \pnorm{h}{}\leq K n^{1/2}\}$ and the event on which Proposition \ref{prop:ridge_empirical_char} is valid---which holds with probability at least $1- Cn^{-100}$---we have
\begin{align*}
&L(w_\ast)-\min_w L(w) = \max_{\pnorm{u}{}^2/m\leq K} \ell(w_\ast,u)-\max_{\pnorm{u}{}^2/m\leq K} \ell(w_{\ast,n},u)\\
&\leq K\max_{\pnorm{u}{}^2/m\leq K}\bigg\{m^{-3/2}\pnorm{u}{}\pnorm{g}{}\cdot \pnorm{w_\ast-w_{\ast,n}}{}+ m^{-1}\pnorm{h}{}\pnorm{u}{}\cdot m^{-1/2}\bigabs{\pnorm{w_\ast}{}-\pnorm{w_{\ast,n}}{}}\\
&\qquad \qquad + m^{-1}\bigabs{\pnorm{w_\ast+\mu_0}{}^2-\pnorm{w_{\ast,n}+\mu_0}{}^2} \bigg\}\\
&\leq K\cdot m^{-1/2}\pnorm{w_\ast-w_{\ast,n}}{}\leq Km^{-1/2}\big(\pnorm{w_\ast-\bar{w}_{\ast,n}}{}+\pnorm{w_{\ast,n}-\bar{w}_{\ast,n}}{}\big)\leq K r_n^{1/2}. 
\end{align*}
The last inequality follows from the explicit formula for $w_\ast,\bar{w}_{\ast,n}$ and Proposition \ref{prop:ridge_empirical_char}-(5). Now using Proposition \ref{prop:ridge_empirical_char}-(4) yields the claim.

\noindent (3). The claim follows by using Lemma \ref{lem:concentration_w_ridge} and the fact that $m^{-1}\E \pnorm{w_\ast}{}^2=\gamma_\ast^2-\sigma^2$ and $m^{-1}\E g^\top w_\ast =\gamma_\ast-\beta_\ast$ (this form can be most easily seen by that of the fixed point equations defined in Lemma \ref{prop:ridge_empirical_char}).
\end{proof}

\begin{proof}[Proof of Theorem \ref{thm:ridge_dist}: distribution of $\hat{r}^{\ridge}_A$]
	Without loss of generality, we assume $\pnorm{\mu_0}{\infty}\geq 1$. Recall $r_n=\sqrt{\log n/n}$ and $s_n\equiv n^{-1/6}\log^2 n$. As $\hat{u}= A\hat{\mu}-Y=-\hat{r}$ by (\ref{def:ridge_u}), we only need to study $\hat{u}$. Fix any $\epsilon>0$, and any $\mathsf{h}:\R^m \to \R$, let
	\begin{align*}
	D_\epsilon\equiv D_\epsilon(\mathsf{h})&\equiv \bigg\{ u \in \R^m : \bigabs{ \mathsf{h}(u/\sqrt{n}) - \E_h \mathsf{h}(u_\ast/\sqrt{n})}\geq \epsilon^{1/2} \bigg\},
	\end{align*}
	where recall $u_\ast$ is defined in (\ref{def:u_ridge}).
	Consider the event $E_1$ on which $\pnorm{\hat{u}}{\infty}\leq K(\sqrt{\log n}+\pnorm{\mu_0}{\infty})\equiv L_n$ (which is $\leq n$ for $n$ large), and $E_{2}$ on which $\pnorm{\hat{w}}{\infty}\leq K\sqrt{\log n}$ for some large enough $K>0$. By Proposition \ref{prop:ridge_risk}-(2)(3), $\Prob(E_1)\wedge \Prob(E_{2})\geq 1-Cn^{-100}$. Let
	\begin{align}\label{ineq:ridge_u_dist_1}
	\rho_0\equiv \epsilon/K,\quad z_0\equiv \psi(\beta_\ast,\gamma_\ast)-3\rho_0
	\end{align}
	for some large enough $K>0$. Recall $h(\cdot,\cdot;A)$ defined in (\ref{def:h_ridge_general}). Note that the set inclusion
	\begin{align*}
	& \bigg\{\max_{u \in [-L_n,L_n]^m}\min_{w \in \R^n} h(w,u;A)\geq z_0+6\rho_0\bigg\}\\
	&\quad \cap \bigg\{\max_{u \in [-L_n,L_n]^m \cap D_\epsilon}\min_{w \in \R^n} h(w,u;A)\leq z_0+3\rho_0\bigg\} \cap E_1 \subset \big\{\hat{u} \notin D_\epsilon\big\}
	\end{align*}
	implies that
	\begin{align}\label{ineq:ridge_u_dist_2}
	\Prob\big(\hat{u} \in D_\epsilon\big)&\leq \Prob\bigg(\max_{u \in [-L_n,L_n]^m}\min_{w \in \R^n} h(w,u;A)< z_0+6\rho_0\bigg)\nonumber\\
	&\qquad + \Prob\bigg(\max_{u \in [-L_n,L_n]^m \cap D_\epsilon}\min_{w \in \R^n} h(w,u;A)> z_0+3\rho_0\bigg)+\Prob(E_1^c)\nonumber\\
	&\equiv \mathfrak{p}_1+\mathfrak{p}_2+\Prob(E_1^c).
	\end{align}
	For $\mathfrak{p}_1$, on the event $E_1$, $\max_{u \in [-L_n,L_n]^m}$ can be replaced by the global maximum, and we may then exchange the order of max and min to obtain
	\begin{align*}
	\mathfrak{p}_1&\leq  \Prob\bigg(\min_{w \in \R^n} H(w,A)< z_0+6\rho_0\bigg)+ \Prob(E_1^c)\\
	&\leq \Prob\bigg(\min_{w \in [-L_n,L_n]^n} H(w,A)< z_0+6\rho_0\bigg)+ \Prob(E_1^c)+\Prob(E_{2}^c). 
	\end{align*}
	Now apply Theorem \ref{thm:universality_smooth} for the first term in the above display, we have 
	\begin{align*}
	\mathfrak{p}_1&\leq  \Prob\bigg(\min_{w \in [-L_n,L_n]^n} H(w,G)< z_0+2\rho_0\bigg)+ C\cdot\pnorm{\mu_0}{\infty}^2(1\vee \rho_0^{-3})s_n\\
	&\leq \Prob\bigg(\min_{w \in \R^n} H(w,G)< \psi(\beta_\ast,\gamma_\ast)-\rho_0\bigg)+ C\cdot \pnorm{\mu_0}{\infty}^2(1\vee \rho_0^{-3})s_n.
	\end{align*}
	By (\ref{ineq:ridge_u_dist_1}) and Proposition \ref{prop:ridge_empirical_char}-(4), for $\epsilon>Kr_n$, 
	\begin{align}\label{ineq:ridge_u_dist_3}
	\mathfrak{p}_1\leq  C \cdot \pnorm{\mu_0}{\infty}^2(1\vee \rho_0^{-3})s_n.
	\end{align}
	Next we handle $\mathfrak{p}_2$. To use Corollary \ref{cor:min_max_universality}, note that here $L_u=L_w\equiv L_n=K(\sqrt{\log n}+\pnorm{\mu_0}{\infty})$ (which is $\leq n$ for $n$ large), and the function $Q_n(u,w)$ is defined via
	\begin{align*}
	Q_n(u,w)& = -\frac{1}{m} u^\top\xi-\frac{1}{2m} \pnorm{u}{}^2 + \frac{\lambda}{2m} \big(\pnorm{w+\mu_0}{}^2-\pnorm{w}{}^2\big).
	\end{align*}
	Then $\mathscr{M}_{Q_n}(L_n,\delta)\leq K(L_n+\pnorm{\xi}{1}/m)\delta\leq n\delta$ on an event with probability at least $1-Cn^{-100}$, and therefore applying Corollary \ref{cor:min_max_universality} conditionally on $\xi$ first and then taking expectation yield that
	\begin{align*}
	\mathfrak{p}_2&\leq  \Prob\bigg(\max_{u \in [-L_n,L_n]^m \cap D_\epsilon}\min_{w \in [-L_n,L_n]^n} h(w,u;A)> z_0+3\rho_0\bigg)\\
	&\leq \Prob\bigg(\max_{u \in [-L_n,L_n]^m \cap D_\epsilon}\min_{w \in [-L_n,L_n]^n} h(w,u;G)> z_0+\rho_0\bigg)+ C\cdot \pnorm{\mu_0}{\infty}^2 (1\vee\rho_0^{-3}) s_n\\
	&\leq 2\Prob\bigg(\max_{u \in [-L_n,L_n]^m \cap D_\epsilon}\min_{w \in [-L_n,L_n]^n} {\ell}(w,u)> \psi(\beta_\ast,\gamma_\ast)-2\rho_0\bigg)+ C\cdot \pnorm{\mu_0}{\infty}^2 (1\vee\rho_0^{-3}) s_n.
	\end{align*}
	Here the last inequality follows by the CGMT (for max-min and inequality $>$, the set in the max only need be closed, and again we first condition on $\xi$ and then take expectation) and uses the definition of $z_0$ in (\ref{ineq:ridge_u_dist_1}). Recall the definition of $w_\ast$ in (\ref{def:w_ridge}). Clearly $\pnorm{w_\ast}{\infty}\leq L_n$ with probability at least $1- C n^{-100}$, so we may continue bounding the above display as follows:
	\begin{align}\label{ineq:ridge_u_dist_4}
	\mathfrak{p}_2&\leq 2\Prob\bigg(\max_{u \in [-L_n,L_n]^m \cap D_\epsilon} {\ell}(w_\ast,u)> \psi(\beta_\ast,\gamma_\ast)-2\rho_0\bigg)   + C \pnorm{\mu_0}{\infty}^2(1\vee\rho_0^{-3}) s_n+ C n^{-100}\nonumber\\
	&\leq 2\Prob\bigg(\max_{u \in  D_\epsilon} {\ell}(w_\ast,u)> \psi(\beta_\ast,\gamma_\ast)-2\rho_0\bigg)+ C \pnorm{\mu_0}{\infty}^2(1\vee\rho_0^{-3}) s_n .
	\end{align}
	By Proposition \ref{prop:ridge_u_map}, with probability at least $1- Cn^{-100}$, (i) the function $u\mapsto {\ell}(w_\ast,u)$ is $1/m$-strongly concave with a unique minimizer $u_{\ast,n}$, (ii) $\abs{\max_u {\ell}(w_\ast,u)- \psi(\beta_\ast,\gamma_\ast)}\leq Kr_n^{1/2}$, and (iii) $m^{-1/2}\pnorm{u_{\ast,n}-u_\ast}{}\leq Kr_n$, where recall $u_\ast$ is defined in (\ref{def:u_ridge}). Now using the Lipschitz property of $\mathsf{h}$ and Gaussian concentration conditionally on $\xi$ (in similar spirit to the argument in (\ref{ineq:ridge_W2_gap_1})), for any $\epsilon \geq K r_n^2$, with unconditional probability at least $1-C n^{-100}$, 
	\begin{align*}
	m^{-1}\pnorm{u-u_\ast}{}^2\geq \epsilon/2,\quad \forall u \in D_\epsilon. 
	\end{align*}
	Combined with (iii) above, we conclude that for any $\epsilon \geq Kr_n^2$, with unconditional probability at least $1- C n^{-100}$, 
	\begin{align*}
	m^{-1}\pnorm{u-u_{\ast,n}}{}^2\geq \epsilon/4, \quad \forall u \in D_\epsilon. 
	\end{align*}
	Using the high probability strong concavity of ${\ell}(w_\ast,\cdot)$, we now see that for all $\epsilon\geq Kr_n^2$, with probability at least $1- C n^{-100}$, 
	\begin{align*}
	\max_{u \in D_\epsilon} {\ell}(w_\ast,u)\leq \max_{u} {\ell}(w_\ast,u) - 2\epsilon/K\leq \psi(\beta_\ast,\gamma_\ast)-\epsilon/K.
	\end{align*}
	Combined with (\ref{ineq:ridge_u_dist_4}) and the definition of $\rho_0$ in (\ref{ineq:ridge_u_dist_1}), for all $\epsilon\geq Kr_n^2$,
	\begin{align}\label{ineq:ridge_u_dist_5}
	\mathfrak{p}_2\leq  C \cdot \pnorm{\mu_0}{\infty}^2(1\vee \epsilon^{-3}) s_n.
	\end{align}
	Now combining (\ref{ineq:ridge_u_dist_2}), (\ref{ineq:ridge_u_dist_3}) and (\ref{ineq:ridge_u_dist_5}), we find that for all $\epsilon\geq Kr_n$,
	\begin{align*}
	\Prob\big(\hat{u} \in D_\epsilon\big)&\leq C \cdot \pnorm{\mu_0}{\infty}^2(1\vee \epsilon^{-3}) s_n.
	\end{align*}
	The condition on $\epsilon$ can be dropped for free, so the proof is complete. 
\end{proof}

\section{Proofs for Section \ref{section:examples}: Lasso}\label{section:proof_lasso}

\noindent \emph{Convention}: We shall write
\begin{align*}
\bar{H}^{\lasso}(w) \equiv \bar{H}^{\lasso}(w,A)\equiv \bar{H}^{\lasso}(w,A,\xi),
\end{align*}
and will usually omit the superscript $(\cdot)^{\lasso}$ if no confusion could arise. We also usually omit the subscript $A$ that indicates the design matrix, but we will use the subscript $G$ for Gaussian designs when needed. Recall $r_n\equiv \sqrt{\log n/n}$ and $s_n\equiv n^{-1/6}\log^2 n$. The constants $C,K>0$, typically depending on $\sigma,\lambda,\tau,M_2$, will vary from line to line. All notation will be local in this section. 

\subsection{Proof of Proposition \ref{prop:lasso_risk}}

Define the (column) leave-one-out Lasso version
\begin{align}\label{def:lasso_loo}
\hat{w}^{(s)} \equiv \argmin_{w\in\R^n: w_s = 0} H(w,A,\xi) = \argmin_{w\in\R^n: w_s = 0} \frac{1}{2}\pnorm{Aw-\xi}{}^2 + \lambda \pnorm{w + \mu_0}{1},
\end{align}
and the (row) leave-one-out Lasso version
\begin{align}\label{def:lasso_loo_row}
\hat{w}^{[t]} \equiv  \argmin_{w\in\R^n} \frac{1}{2}\pnorm{A_{[-t]}w-\xi_{-t}}{}^2 + \lambda \pnorm{w + \mu_0}{1},
\end{align}
where $A_{[-t]} \in \R^{(m-1)\times n}$ is $A$ minus its $t$-th row. The following perturbation lemma controls the difference between the original Lasso solution $\hat{w}$ and its leave-one-out versions $\hat{w}^{(s)},\hat{w}^{[t]}$.
\begin{lemma}\label{lem:loo_perturb}
	Let the column and row leave-one-out Lasso versions $\hat{w}^{(s)},\hat{w}^{[t]}$ be defined as in (\ref{def:lasso_loo})-(\ref{def:lasso_loo_row}). Assume the same conditions as in Theorem \ref{thm:lasso_universality_generic}. Suppose $\lambda \geq K_0 (1\vee \sigma)\sqrt{(n/m)\log_+(n/m)}$ for some $K_0=K_0(M_2)>0$. Then with probability at least $1-C n^{-100}$, we have
	\begin{align*}
	\max_{s\in[n]} \pnorm{\hat{w}_{-s} - \hat{w}^{(s)}_{-s}}{}\vee \max_{t\in[m]} \pnorm{\hat{w} - \hat{w}^{[t]}}{}\leq K \sqrt{\log n},
	\end{align*}
	where $C,K>0$ depend on $(\sigma,\lambda,\tau,M_2)$. Furthermore,
	\begin{align*}
	\pnorm{A(\hat{w}^{[t]}-\hat{w}) }{}\leq 2 \big(\abs{\xi_t}+\abs{a_t^\top \hat{w}^{[t]}}\big)
	\end{align*}
	 holds. The lower bound on $\lambda$ can be eliminated when $m/n\geq 1+\epsilon$ for some $\epsilon>0$ at the cost of possibly enlarged constants $C,K$ depending further on $\epsilon$.
\end{lemma}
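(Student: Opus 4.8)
The plan is to reduce both leave-one-out comparisons to two ingredients: (i) a \emph{prediction-error} estimate bounding $\pnorm{A(\hat{w}^{(s)}-\hat{w})}{}$ (resp. $\pnorm{A(\hat{w}^{[t]}-\hat{w})}{}$) by a quantity of order $\sqrt{\log n}$, obtained via convexity and the optimality conditions exactly as in Lemma \ref{lem:ridge_loo}; and (ii) a \emph{sparse restricted eigenvalue} step converting this into an $\ell_2$ bound, using that the difference vector $\hat{w}^{(s)}-\hat{w}=\hat{\mu}^{(s)}_A-\hat{\mu}_A$ is supported on $\mathrm{supp}(\hat{\mu}_A)\cup\mathrm{supp}(\hat{\mu}^{(s)}_A)$ (and similarly for the row version), which has cardinality $\lesssim \delta_0 m$ for a small constant $\delta_0$ provided $\lambda$ is large enough --- this is precisely where Lemma \ref{lem:lasso_sparsity} and the lower bound on $\lambda$ enter. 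Throughout I would work on a high-probability event on which $\pnorm{\xi}{}^2\lesssim m$, the leave-one-out solutions are $\delta_0 m$-sparse, and the $\delta_0 m$-sparse minimum singular value of $A$ is bounded below (a standard fact for uniformly sub-Gaussian designs in the proportional regime under (R1), (R3), with $\delta_0$ chosen small enough).

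For the column leave-one-out bound, I would mimic the proof of Lemma \ref{lem:ridge_loo}-(1), now with $\mathsf{f}(w)=\lambda\pnorm{w+\mu_0}{1}$: global optimality of $\hat{w}$ gives $\bar{H}^{\lasso}(\hat{w}^{(s)})-\bar{H}^{\lasso}(\hat{w})\geq \tfrac12\pnorm{A(\hat{w}^{(s)}-\hat{w})}{}^2$, while the KKT condition for $\hat{w}^{(s)}$ together with convexity of $\pnorm{\cdot}{1}$ (extending the subgradient to coordinate $s$, where $(\hat{w}^{(s)}+\mu_0)_s=\mu_{0,s}$, by an element of $[-1,1]$) yields $\bar{H}^{\lasso}(\hat{w}^{(s)})-\bar{H}^{\lasso}(\hat{w})\leq \abs{\hat{w}_s}(\abs{A_s^\top \hat{r}^{(s)}}+\lambda)$, where $\hat{r}^{(s)}\equiv A\hat{w}^{(s)}-\xi$. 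Since $\hat{r}^{(s)}$ is independent of the column $A_s$ and $\pnorm{\hat{r}^{(s)}}{}^2\leq \pnorm{\xi}{}^2+2\lambda\pnorm{\mu_0}{1}\lesssim n$ deterministically (from $\bar{H}^{\lasso}(\hat{w}^{(s)})\leq \bar{H}^{\lasso}(0)$ and (R2)), a sub-Gaussian tail bound plus a union bound over $s\in[n]$ give $\max_s\abs{A_s^\top\hat{r}^{(s)}}\lesssim\sqrt{\log n}$. Combining with $\abs{\hat{w}_s}\leq\pnorm{\hat{w}-\hat{w}^{(s)}}{}$ and the sparse restricted eigenvalue inequality $\pnorm{A(\hat{w}^{(s)}-\hat{w})}{}^2\gtrsim\pnorm{\hat{w}^{(s)}-\hat{w}}{}^2$ then closes the bound, and $\pnorm{\hat{w}_{-s}-\hat{w}^{(s)}_{-s}}{}\leq\pnorm{\hat{w}-\hat{w}^{(s)}}{}$.

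For the row leave-one-out bound, the inequality $\pnorm{A(\hat{w}^{[t]}-\hat{w})}{}\leq 2(\abs{\xi_t}+\abs{a_t^\top\hat{w}^{[t]}})$ is proved verbatim as in Lemma \ref{lem:ridge_loo}-(2) (that argument only uses convexity of the penalty). It remains to bound the right side: $\max_t\abs{\xi_t}\lesssim\sqrt{\log n}$ by sub-Gaussianity of $\xi$, while $\abs{a_t^\top\hat{w}^{[t]}}\lesssim\sqrt{\log n}$ uniformly in $t$ since $a_t$ is independent of $\hat{w}^{[t]}$ and $\pnorm{\hat{w}^{[t]}}{}^2\lesssim n$ (comparing the row-deleted cost at $\hat{w}^{[t]}$ and at $0$ and applying sparse restricted eigenvalue). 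A final application of sparse restricted eigenvalue to $\hat{w}^{[t]}-\hat{w}=\hat{\mu}^{[t]}_A-\hat{\mu}_A$ upgrades $\pnorm{A(\hat{w}^{[t]}-\hat{w})}{}\lesssim\sqrt{\log n}$ to $\pnorm{\hat{w}^{[t]}-\hat{w}}{}\lesssim\sqrt{\log n}$. When $m/n\geq 1+\epsilon$, the smallest singular value of $A$ is itself bounded below (Bai--Yin), so the sparse restricted eigenvalue steps and the sparsity input --- hence the lower bound on $\lambda$ --- can be dispensed with, at the cost of constants depending on $\epsilon$. The main obstacle is the underdetermined regime $m<n$: one must verify that the difference vectors lie in a support set of size below the restricted-eigenvalue sparsity threshold, which rests on linear-order sparsity of $\hat{\mu}_A^{\lasso}$ and of its leave-one-out variants (themselves Lasso problems in the proportional regime) via Lemma \ref{lem:lasso_sparsity}, and on aligning the union bounds over $s\in[n]$ and $t\in[m]$ with the respective column/row independence structures.
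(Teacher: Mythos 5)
Your proposal is essentially correct and, for the column leave-one-out bound, it takes a genuinely different (and arguably cleaner) route than the paper. The paper first restricts to the subspace $\{w_s=0\}$ to obtain $\frac{1}{2}\pnorm{A_{-s}(\hat{w}_{-s}-\hat{w}^{(s)}_{-s})}{}^2\leq \bar{H}((0,\hat{w}_{-s}))-\bar{H}(\hat{w}^{(s)})$, then invokes a one-dimensional quadratic comparison (Lemma \ref{lem:compare_quad_form}, exploiting the explicit structure of $x\mapsto\bar{H}((x,w_{-s}))$) to bound the right side, and finally must do an orthogonal-projection step $A_{-s}(\cdot)\mapsto P_{A_s}^\perp A_{-s}(\cdot)=A\bar{w}$ to cancel the leading quadratic term before the sparse restricted eigenvalue inequality can be applied. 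You instead sandwich the full cost difference: the lower bound $\bar{H}(\hat{w}^{(s)})-\bar{H}(\hat{w})\geq\frac{1}{2}\pnorm{A(\hat{w}^{(s)}-\hat{w})}{}^2$ comes from first-order optimality of $\hat{w}$ as in Lemma \ref{lem:ridge_loo}-(2), and the upper bound $\leq\abs{\hat{w}_s}(\abs{A_s^\top\hat{r}^{(s)}}+\lambda)$ from first-order optimality of $\hat{w}^{(s)}$ (extending the KKT subgradient to coordinate $s$); the sparse restricted eigenvalue step is then applied directly to $\hat{w}^{(s)}-\hat{w}=\hat{\mu}^{(s)}-\hat{\mu}$ and closed with $\abs{\hat{w}_s}\leq\pnorm{\hat{w}^{(s)}-\hat{w}}{}$. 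This avoids Lemma \ref{lem:compare_quad_form} and the projection step entirely, at the cost of working with the full vector $\hat{w}^{(s)}-\hat{w}$ rather than only the $(-s)$-block; the support-size counts and the independence structure ($A_s$ independent of $\hat{r}^{(s)}$) are identical, so nothing is lost. For the row leave-one-out bound your argument matches the paper's.

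One small inaccuracy to flag: in the row part you claim $\pnorm{\hat{w}^{[t]}}{}^2\lesssim n$ ``by applying sparse restricted eigenvalue'' to $\hat{w}^{[t]}$, but $\hat{w}^{[t]}=\hat{\mu}^{[t]}-\mu_0$ is not sparse (only $\hat{\mu}^{[t]}$ is). The correct version is to first note $\pnorm{A_{[-t]}\hat{\mu}^{[t]}}{}\leq\pnorm{A_{[-t]}\hat{w}^{[t]}}{}+\pnorm{A_{[-t]}\mu_0}{}\lesssim\sqrt{n}$, apply the sparse restricted eigenvalue inequality to the sparse vector $\hat{\mu}^{[t]}$ to get $\pnorm{\hat{\mu}^{[t]}}{}\lesssim\sqrt{n}$, and then conclude $\pnorm{\hat{w}^{[t]}}{}\leq\pnorm{\hat{\mu}^{[t]}}{}+\pnorm{\mu_0}{}\lesssim\sqrt{n}$ using (R2). (This boundedness of $\pnorm{\hat{w}^{[t]}}{}/\sqrt{m}$ is also needed, implicitly, in the paper's terse treatment of the row step in order to conclude $\abs{a_t^\top\hat{w}^{[t]}}\lesssim\sqrt{\log n}$.) Similarly, the claim $\pnorm{\hat r^{(s)}}{}^2\lesssim n$ is not deterministic --- it holds on the event $\{\pnorm{\xi}{}^2\lesssim m\}$ that you condition on --- but since you stipulate that event up front, this is only a slip of wording. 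With these two touch-ups the proposal is complete and sound.
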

\begin{proof}
	The proof is inspired by that of \cite[Lemma 6.3]{javanmard2018debiasing}.

	\noindent (\textbf{Bounds for $\hat{w}^{(s)}$}).  Recall the definition of $H(w)=H(w,A)$ in (\ref{def:lasso_cost}), and $
	\hat{w} = \argmin_{w\in\R^{n}} H(w)$, $\hat{w}^{(s)} = \argmin_{w\in\R^{n}: w_s = 0} H(w)$. 
	The KKT condition for $\hat{w}^{(s)}$ yields that
	\begin{align}\label{ineq:perturb_basic_0}
	A_{-s}^\top(A_{-s}\hat{w}^{(s)}_{-s} - \xi) + \lambda v_{-s} = 0,
	\end{align}
	where $v_{-s}\in\R^{n-1}$ is a sub-gradient of $w_{-s}\mapsto\pnorm{w_{-s}}{1}$ at $(\hat{w}^{(s)} + \mu_0)_{-s}$. 
	On the other hand, for any $w_{-s} \in \R^{n-1}$, let $(0,w_{-s})$ denote the vector in $\R^n$ with its $s$-th entry equal to $0$ (the notation is slightly incorrect for $s\neq 1$, but for simplicity we shall abuse this notation or regard $s$ as $1$). 
	Then expanding the cost $\bar{H}((0,w_{-s}))$ around $\hat{w}^{(s)}$ yields that
	\begin{align*}
	&\bar{H}((0,w_{-s})) =\bar{H}(\hat{w}^{(s)}) + \frac{1}{2}\pnorm{A_{-s}(w_{-s} - \hat{w}^{(s)}_{-s})}{}^2\\
	& \qquad+ \iprod{A_{-s}\hat{w}^{(s)}_{-s}-\xi}{A_{-s}(w_{-s} - \hat{w}^{(s)}_{-s})} + \lambda\pnorm{(0, w_{-s}) + \mu_0}{1}- \lambda\pnorm{\hat{w}^{(s)} + \mu_0}{1}\\
	&\stackrel{(*)}{=}  \bar{H}(\hat{w}^{(s)}) + \frac{1}{2}\pnorm{A_{-s}(w_{-s} - \hat{w}^{(s)}_{-s})}{}^2\\
	&\qquad+ \lambda\pnorm{(0, w_{-s}) + \mu_0}{1}- \lambda\pnorm{\hat{w}^{(s)} + \mu_0}{1} - \lambda \iprod{v_{-s}}{w_{-s} - \hat{w}^{(s)}_{-s}}\\
	&\stackrel{(**)}{\geq}  \bar{H}(\hat{w}^{(s)}) + \frac{1}{2}\pnorm{A_{-s}(w_{-s} - \hat{w}^{(s)}_{-s})}{}^2.
	\end{align*}
	Here $(*)$ follows from the KKT condition of $\hat{w}^{(s)}$ in (\ref{ineq:perturb_basic_0}), and $(**)$ follows from the convexity of $\pnorm{\cdot}{1}$. Hence by choosing $w_{-s}$ to be $\hat{w}_{-s}$, we have
	\begin{align}\label{ineq:perturb_basic}
	\frac{1}{2}\pnorm{A_{-s}(\hat{w}_{-s} - \hat{w}^{(s)}_{-s})}{}^2 \leq \bar{H}\big((0,\hat{w}_{-s})\big) - \bar{H}(\hat{w}^{(s)}).
	\end{align}
	Next, for any $x \in \R$ and $w_{-s}\in\R^{n-1}$, 
	\begin{align*}
	\bar{H}((x,w_{-s})) &= \frac{1}{2}\pnorm{\xi - xA_s - A_{-s}w_{-s}}{}^2 + \lambda\pnorm{(x,w_{-s}) + \mu_0}{1}\\
	&= \lambda|x + (\mu_0)_s| + \frac{\mathsf{c}}{2}(x - \mathsf{u})^2 + \mathsf{b},
	\end{align*}
	where $\mathsf{c} \equiv \pnorm{A_s}{}^2$,
	\begin{align*}
	\mathsf{u} &\equiv \mathsf{u}(w_{-s}) \equiv \frac{\iprod{A_s}{\xi - A_{-s}w_{-s}}}{\pnorm{A_s}{}^2},\\
	\mathsf{b} &\equiv \mathsf{b}(w_{-s}) \equiv -\frac{\mathsf{c}}{2}\mathsf{u}^2 + H((0,w_{-s}))-\lambda \abs{(\mu_0)_s}.
	\end{align*}
	So we have
	\begin{align*}
	\bar{H}((x,\hat{w}_{-s})) = \lambda|x + (\mu_0)_s| + \frac{\mathsf{c}}{2}(x - \mathsf{u}_1)^2 + \mathsf{b}_1 \equiv f_1(x),\\
	\bar{H}((x,\hat{w}^{(s)}_{-s})) = \lambda|x + (\mu_0)_s| + \frac{\mathsf{c}}{2}(x - \mathsf{u}_2)^2 + \mathsf{b}_2 \equiv f_2(x),
	\end{align*}
	where $(\mathsf{u}_1, \mathsf{b}_1) = (\mathsf{u}(\hat{w}_{-s}), \mathsf{b}(\hat{w}_{-s}))$ and  $(\mathsf{u}_2, \mathsf{b}_2) = (\mathsf{u}(\hat{w}^{(s)}_{-s}), \mathsf{b}(\hat{w}^{(s)}_{-s}))$. 
	Since 
	\begin{align*}
	\min_{x \in \R} f_1(x)&=\min_{x\in\R} \bar{H}((x,\hat{w}_{-s})) = \min_{w\in\R^n} \bar{H}(w)\\
	& \leq \min_{x\in\R} \bar{H}((x,\hat{w}^{(s)}_{-s}))=\min_{x \in \R} f_2(x),
	\end{align*}
	by Lemma \ref{lem:compare_quad_form}, we have
	\begin{align*}
	&\bar{H}((0,\hat{w}_{-s})) - \bar{H}(\hat{w}^{(s)}) \leq \big(f_1(0) - \min_x f_1(x)\big) - \big(f_2(0) - \min_x f_2(x)\big)\\
	&\leq \frac{1}{2}\frac{\iprod{A_s}{A_{-s}(\hat{w}_{-s} - \hat{w}^{(s)}_{-s})}^2}{\pnorm{A_s}{}^2} + \Big(\big|\iprod{A_s}{\xi - A_{-s}\hat{w}^{(s)}_{-s}}\big| + \lambda\Big)\frac{\big|\iprod{A_s}{A_{-s}(\hat{w}_{-s} - \hat{w}^{(s)}_{-s})}\big|}{\pnorm{A_s}{}^2}.
	\end{align*}
	Let $P_{A_s} \equiv A_sA_s^\top/\pnorm{A_s}{}^2$ be the projection matrix onto $A_s$, and $P_{A_s}^\perp \equiv I_{m} - P_{A_s}$ be its orthogonal complement. Combining the above estimate with (\ref{ineq:perturb_basic}) yields that
	\begin{align*}
	\frac{1}{2}\pnorm{P_{A_s}^\perp\big(A_{-s}(\hat{w}_{-s} - \hat{w}^{(s)}_{-s})\big)}{}^2 &\leq \Big(\big|\iprod{A_s}{\xi - A_{-s}\hat{w}^{(s)}_{-s}}\big| + \lambda\Big)\frac{\big|\iprod{A_s}{A_{-s}(\hat{w}_{-s} - \hat{w}^{(s)}_{-s})}\big|}{\pnorm{A_s}{}^2}\\
	&\leq \Big(\big|\iprod{A_s}{\xi - A_{-s}\hat{w}^{(s)}_{-s}}\big| + \lambda\Big)\frac{\pnorm{A_{-s}}{\op}\pnorm{\hat{w}_{-s} - \hat{w}^{(s)}_{-s}}{}}{\pnorm{A_s}{}}.
	\end{align*}
	Note that the left hand side of the above display can be written as
	\begin{align*}
	P_{A_s}^\perp\big(A_{-s}(\hat{w}_{-s} - \hat{w}^{(s)}_{-s})\big) = A_s \cdot \bigg(-\frac{1}{\pnorm{A_s}{}^2}A_s^\top A_{-s}(\hat{w}_{-s} - \hat{w}^{(s)}_{-s})\bigg) + A_{-s} (\hat{w}_{-s} - \hat{w}^{(s)}_{-s}) \equiv A\bar{w}, 
	\end{align*}
	and by Lemma \ref{lem:lasso_sparsity}, $\bar{w}$ with $\bar{w}_{-s} = \hat{w}_{-s} - \hat{w}^{(s)}_{-s} = \hat{\mu}_{-s} - \hat{\mu}^{(s)}$ is at most $(2c_0m + 1)$-sparse for some sufficiently small $c_0 > 0$ (recall that $\hat{\mu}^{(s)} = \hat{w}^{(s)} + \mu_0$). Hence by the sparse eigenvalue Lemma \ref{lem:sparse_eigenvalue}, we have with high probability $\pnorm{A\bar{w}}{}^2 \geq \pnorm{\bar{w}}{}^2/2 \geq \pnorm{\hat{w}_{-s} - \hat{w}^{(s)}_{-s}}{}^2/2$. This leads to
	\begin{align}\label{ineq:perturb_basic_1}
	\pnorm{\hat{w}_{-s} - \hat{w}^{(s)}_{-s}}{}^2 &\lesssim  \Big(\big|\iprod{A_s}{\xi - A_{-s}\hat{w}^{(s)}_{-s}}\big| + \lambda\Big)\frac{\pnorm{A_{-s}}{\op}\pnorm{\hat{w}_{-s} - \hat{w}^{(s)}_{-s}}{}}{\pnorm{A_s}{}}\nonumber\\
	&\leq K \sqrt{\log n}\cdot \pnorm{\hat{w}_{-s} - \hat{w}^{(s)}_{-s}}{}.
	\end{align}
	Here in the last inequality we used: (i) $\big|\iprod{A_s}{\xi - A_{-s}\hat{w}^{(s)}_{-s}}\big| \leq K  \sqrt{\log n}$ with probability at least $1-C n^{-100}$, by the independence between $A_s$ and $(\xi, A_{-s}, \hat{w}^{(s)}_{-s})$; (ii) $\pnorm{A_{-s}}{\op}\lesssim 1$ and $\pnorm{A_s}{} \gtrsim 1$ with high probability. If $m/n\geq 1+\epsilon$ for some $\epsilon>0$, $\pnorm{A\bar{w}}{}^2\geq c_\epsilon \pnorm{\bar{w}}{}^2$ holds with probability at least $1-C_\epsilon n^{-100}$, so (\ref{ineq:perturb_basic_1}) holds by enlarging the constant that may depend further on $\epsilon$. 
	
	\noindent (\textbf{Bounds for $\hat{w}^{[t]}$}). Verbatim following the proof of Lemma \ref{lem:ridge_loo}-(2), we have 
	\begin{align*}
	\pnorm{A(\hat{w}^{[t]} - \hat{w})}{}\leq 2 \big(\abs{\xi_t}+\abs{a_t^\top \hat{w}^{[t]}}\big).
	\end{align*}
	By the independence between $a_t$ and $\hat{w}^{[t]}$, with probability at least $1-C n^{-100}$, uniformly in $t \in [m]$, $\pnorm{A(\hat{w}^{[t]} - \hat{w})}{}^2\leq K\sqrt{\log n}\cdot \pnorm{ \hat{w}^{[t]}-\hat{w} }{}$. Now apply the sparse eigenvalue argument as in (\ref{ineq:perturb_basic_1}) with the help of Lemma \ref{lem:lasso_sparsity} to conclude. 
\end{proof}

The following two lemmas are used in the proof of Lemma \ref{lem:loo_perturb} above. 

\begin{lemma}\label{lem:compare_quad_form}
	Fix $c>0, (\mu_0)_s, b_1,b_2 \in \R$. Let $f_k(x) \equiv (c/2)(x - u_k)^2 + \lambda |x + (\mu_0)_s| + b_k$ for $k=1,2$. Then 
	\begin{align*}
	\Big|\big(f_1(0) - \min_x f_1(x)\big) - \big(f_2(0) - \min_x f_2(x)\big)\Big| \leq \frac{c}{2}(u_1 - u_2)^2 + (c|u_2| + \lambda)|u_1 - u_2|.
	\end{align*}
\end{lemma}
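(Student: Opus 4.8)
\textbf{Proof proposal for Lemma \ref{lem:compare_quad_form}.}

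The plan is to reduce everything to the Lipschitz property of a one-dimensional Moreau envelope. First note that the additive constants $b_1,b_2$ play no role: the quantity $D_k \equiv f_k(0)-\min_x f_k(x)$ is invariant under adding a constant to $f_k$, so without loss of generality one may work with $g_k(x)\equiv (c/2)(x-u_k)^2+\lambda|x+(\mu_0)_s|$ and $D_k = g_k(0)-\min_x g_k(x)$. Since $g_k(0) = (c/2)u_k^2 + \lambda|(\mu_0)_s|$, it suffices to analyze $\min_x g_k(x)$ as a function of $u_k$.

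Next, introduce
\begin{align*}
F(u)\equiv \min_{x\in\R}\Big\{\tfrac{c}{2}(x-u)^2+\lambda\abs{x+(\mu_0)_s}\Big\} = \env_{\lambda\abs{\cdot+(\mu_0)_s}}(u;1/c),
\end{align*}
so that $\min_x g_k(x) = F(u_k)$ and hence $D_k = \tfrac{c}{2}u_k^2 + \lambda|(\mu_0)_s| - F(u_k)$. The key (and only nontrivial) ingredient is that $F$ is $\lambda$-Lipschitz: the function $t\mapsto \lambda|t+(\mu_0)_s|$ is convex and $\lambda$-Lipschitz, and the Moreau envelope of a convex $L$-Lipschitz function is itself $L$-Lipschitz — this can be seen directly from $F'(u)=c\,(u-\prox_{\lambda\abs{\cdot+(\mu_0)_s}}(u;1/c))$ together with the first-order optimality condition $c(u-x^\ast) = \lambda s$ with $|s|\le 1$, which gives $|F'(u)|\le\lambda$ a.e.

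Finally, subtract: $D_1-D_2 = \tfrac{c}{2}(u_1^2-u_2^2) - \big(F(u_1)-F(u_2)\big)$. The $F$-term is controlled by $|F(u_1)-F(u_2)|\le\lambda|u_1-u_2|$, and the quadratic term by the elementary inequality $|u_1^2-u_2^2| = |u_1-u_2|\,|u_1+u_2| \le |u_1-u_2|^2 + 2|u_2|\,|u_1-u_2|$, giving $\tfrac{c}{2}|u_1^2-u_2^2|\le\tfrac{c}{2}(u_1-u_2)^2 + c|u_2|\,|u_1-u_2|$. Adding the two bounds yields $|D_1-D_2|\le \tfrac{c}{2}(u_1-u_2)^2 + (c|u_2|+\lambda)|u_1-u_2|$, which is the claim. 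There is no real obstacle here; the statement is essentially a bookkeeping exercise once one recognizes $\min_x f_k$ as a shifted Moreau envelope and invokes its standard $\lambda$-Lipschitz bound.
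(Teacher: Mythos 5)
Your proposal is correct and follows essentially the same route as the paper: the paper expresses $\min_x f_k(x)$ via the Huber function $\lambda H((\mu_0)_s+u_k;\lambda/c)$ and invokes its $1$-Lipschitz property, which is the same Moreau-envelope Lipschitz bound you derive for your $F(u)$ (your $F(u)=\lambda H((\mu_0)_s+u;\lambda/c)$), and then both proofs close with the identical elementary bound $\tfrac{c}{2}\abs{u_1^2-u_2^2}\le \tfrac{c}{2}(u_1-u_2)^2+c\abs{u_2}\abs{u_1-u_2}$. The only difference is cosmetic notation.
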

\begin{proof}
	For any $\tau > 0$, let $H(\cdot;\tau)$ be the Huber function, i.e., $H(x;\tau) = x^2/(2\tau)$ for $|x|\leq \tau$ and $H(x;\tau) = |x| - \tau/2$ for $|x| \geq \tau$. Then $\min_x f_k(x) = \lambda H((\mu_0)_s + u_k;\lambda/c) + b_k$, hence
	\begin{align*}
	\Delta_k \equiv f_k(0) - \min_x f_k(x) = \frac{c}{2}u_k^2 + \lambda|(\mu_0)_s| - \lambda H((\mu_0)_s + u_k;\lambda/c).
	\end{align*}
	Using that $H(\cdot;\tau)$ is 1-Lipschitz, we have
	\begin{align*}
	|\Delta_1 - \Delta_2| \leq c|u_2||u_1 - u_2| + \frac{c}{2}(u_1 - u_2)^2 + \lambda |u_1 - u_2|,
	\end{align*}
	as desired. 
\end{proof}

\begin{lemma}\label{lem:lasso_sparsity}
	Assume the same conditions as in Theorem \ref{thm:lasso_universality_generic}.  Then there exists some $K=K(M_2)>0$ such that for any $c_0 \in (0,1)$, if the tuning parameter $\lambda$ is chosen such that
	\begin{align*}
	\lambda\geq K(1\vee \sigma)\sqrt{\bigg(\frac{n}{m}+1\bigg)\bigg(\frac{1}{c_0}+\log \bigg(\frac{en}{c_0m}\bigg)\bigg)},
	\end{align*}
	then $\hat{\mu} = \hat{w} + \mu_0$ is $(c_0 m)$-sparse with probability $1 - C\exp\big(-\frac{c_0\lambda^2m}{C\sigma^2}\big)$, where $C>0$ is universal.
\end{lemma}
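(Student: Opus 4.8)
The plan is to use the KKT conditions of the Lasso to reduce the sparsity claim to a deterministic bound on the size of the active set $\hat S\equiv\{j:\hat\mu_j\neq 0\}$, and then control the resulting random quantities. Write $G_{\hat S}\equiv A_{\hat S}^\top A_{\hat S}$ and let $P_{\hat S}$ be the orthogonal projection onto the column span of $A_{\hat S}$. On an event of probability $1$ (or, if need be, after an infinitesimal perturbation of the design) the Lasso solution is unique with $A_{\hat S}$ of full column rank and $|\hat S|\le m$. Stationarity restricted to $\hat S$ reads $A_{\hat S}^\top(Y-A_{\hat S}\hat\mu_{\hat S})=\lambda\,\sign(\hat\mu_{\hat S})$, whence $\hat\mu_{\hat S}=G_{\hat S}^{-1}(A_{\hat S}^\top Y-\lambda\,\sign(\hat\mu_{\hat S}))$. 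Pairing with $\sign(\hat\mu_{\hat S})$ and using $\|\hat\mu\|_1=\sign(\hat\mu_{\hat S})^\top\hat\mu_{\hat S}\ge 0$ gives $\lambda\,\sign(\hat\mu_{\hat S})^\top G_{\hat S}^{-1}\sign(\hat\mu_{\hat S})\le \sign(\hat\mu_{\hat S})^\top G_{\hat S}^{-1}A_{\hat S}^\top Y$. Setting $q\equiv\pnorm{G_{\hat S}^{-1/2}\sign(\hat\mu_{\hat S})}{}$ and applying Cauchy--Schwarz on the right together with $\pnorm{G_{\hat S}^{-1/2}A_{\hat S}^\top Y}{}^2=\pnorm{P_{\hat S}Y}{}^2\le\pnorm{Y}{}^2$, this becomes $\lambda q^2\le q\pnorm{Y}{}$, i.e.\ $\lambda q\le\pnorm{Y}{}$. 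Since $q^2\ge |\hat S|/\lambda_{\max}(G_{\hat S})=|\hat S|/\pnorm{A_{\hat S}}{\op}^2$, we arrive at the deterministic bound
\begin{align*}
|\hat S|\le \frac{\pnorm{A_{\hat S}}{\op}^2\,\pnorm{Y}{}^2}{\lambda^2}.
\end{align*}

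\textbf{Controlling the right-hand side.} It then suffices to show $\pnorm{A_{\hat S}}{\op}^2\pnorm{Y}{}^2<c_0m\lambda^2$ on the relevant event. Since $Y=A\mu_0+\xi$ and $\pnorm{A}{\op}\lesssim_\tau 1$ with overwhelming probability, $\pnorm{Y}{}^2\le 2\pnorm{A}{\op}^2\pnorm{\mu_0}{}^2+2\pnorm{\xi}{}^2\lesssim_\tau M_2 n+\pnorm{\xi}{}^2$, and $\pnorm{\xi}{}^2=\sigma^2\pnorm{\xi_0}{}^2$ concentrates around $\sigma^2m$ with a sub-exponential tail by (R3). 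For the operator norm, uniform sub-Gaussianity of the entries of $A_0$ gives, for fixed $T$, $\pnorm{A_{0,T}}{\op}\le C\kappa(\sqrt m+\sqrt{|T|})+s$ with probability $1-2e^{-cs^2/\kappa^2}$; a union bound over all $T\subseteq[n]$ with $|T|\le m$ (at most $e^{C_\tau m}$ of them by Assumption \ref{assump:setting}) controls $\max_{|T|\le m}\pnorm{A_T}{\op}^2$. On the intersection of these events the display yields $|\hat S|\le C_\tau(M_2+\sigma^2)m/\lambda^2$, which is $<c_0m$ once $\lambda^2>C_\tau(M_2+\sigma^2)/c_0$; the hypothesis guarantees exactly this, since $\lambda^2\ge K^2(1\vee\sigma^2)/c_0$ and $M_2+\sigma^2\le(1\vee M_2)(1\vee\sigma^2)$, so $K\asymp\sqrt{M_2}$ (up to the dependence on $\tau$) suffices.

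\textbf{The main obstacle.} The genuinely delicate point is sharpening the failure probability to the claimed $C\exp(-c_0\lambda^2m/(C\sigma^2))$: the crude bounds just described only yield failure probability $e^{-cm}$, whereas the statement asks for a rate scaling with $\lambda^2/\sigma^2$, which is large under the hypothesis. To reach it one must run the argument at the level of active sets \emph{and} sign patterns: there are $\binom{n}{c_0m}2^{c_0m}\le\exp\big(c_0m\log(en/(c_0m))+c_0m\big)$ candidate pairs $(T,\mathrm{sgn})$ -- this is precisely where the $\log(en/(c_0m))$ term in the lower bound on $\lambda$ originates -- and for each fixed pair one bounds the probability that it arises as the Lasso active set/sign pattern by a sharp sub-Gaussian tail estimate of order $\exp(-c\lambda^2m/\sigma^2)$, obtained from the sign-consistency equation $\mathrm{sgn}\odot\hat\mu_T>0$ (which forces $\mathrm{sgn}^\top G_T^{-1}A_T^\top Y>\lambda\,\mathrm{sgn}^\top G_T^{-1}\mathrm{sgn}\gtrsim\lambda c_0m/\pnorm{A_T}{\op}^2$) played against the fact that $\mathrm{sgn}^\top G_T^{-1}A_T^\top Y$ has fluctuation only of order $\pnorm{\mu_0}{}+\sigma\sqrt m$; the operator norm $\pnorm{A_T}{\op}$ of the fixed block is handled on a separate high-probability event. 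Balancing this exponent against the log-cardinality of the union, and verifying that the hypothesis on $\lambda$ makes the former dominate, is the technical heart of the proof; everything else is routine concentration.
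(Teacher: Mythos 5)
Your deterministic reduction is genuinely different from the paper's and is correct. The paper proceeds by squaring the coordinatewise KKT identity $A_i^\top A\hat{w}=z_i\mp\lambda$ on $\hat{S}_\pm$, summing, and bounding the resulting quadratic form with the sparse-eigenvalue lemma and the prediction bound, after which a union bound over $|S|\geq c_0m$ controls the $2\lambda\bigl|\sum_{i\in S}z_i\bigr|$ term via a sub-Gaussian tail. Your route---pair the restricted KKT equation with $\sign(\hat{\mu}_{\hat S})$, use $\|\hat\mu\|_1\geq 0$ and Cauchy--Schwarz together with the identity $\bigl\lVert G_{\hat S}^{-1/2}A_{\hat S}^\top Y\bigr\rVert=\|P_{\hat S}Y\|$---produces the cleaner deterministic bound $|\hat S|\leq\|A_{\hat S}\|_{\op}^2\|Y\|^2/\lambda^2$. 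Two remarks here. First, your derivation invokes $G_{\hat S}^{-1}$; this needs $A_{\hat S}$ to have full column rank, which can fail on a positive-probability event for discrete sub-Gaussian designs (the stated conditions only require sub-Gaussianity). The same bound is available without inverting $G_{\hat S}$: from $\lambda\,\sign_{\hat S}=A_{\hat S}^\top(Y-A\hat\mu)$ one gets $\lambda^2|\hat S|=\|A_{\hat S}^\top(Y-A\hat\mu)\|^2\leq\|A_{\hat S}\|_{\op}^2\|Y-A\hat\mu\|^2\leq\|A_{\hat S}\|_{\op}^2\|Y\|^2$, the last step by cost optimality against $\mu=0$. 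This version holds for every minimizer. Second, your deterministic bound is actually sharper in the sense that it does not need the $\log(en/(c_0m))$ factor in $\lambda$, which in the paper is forced both by the sparse-eigenvalue lemma and by the union over supports.

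Where the proposal is incomplete is the failure-probability refinement. You correctly observe that crude concentration of $\|A\|_{\op}$ and $\|Y\|$ only yields $e^{-cm}$, but the union over pairs $(T,\mathrm{sgn})$ you sketch---and do not carry out---is heavier than the problem requires given your own deterministic bound. Once you have $|\hat S|\leq\|A\|_{\op}^2\|Y\|^2/\lambda^2$, the $\lambda^2/\sigma^2$-scaling comes out of a \emph{single} large-deviation estimate: on the event $\{\|A\|_{\op}^2\leq K_1\}$ (failure probability $e^{-cm}$), $|\hat S|\geq c_0m$ forces $\|Y\|^2\geq c_0m\lambda^2/K_1$, and since $\|Y\|^2\leq 2K_1\|\mu_0\|^2+2\sigma^2\|\xi_0\|^2$ with the first term controlled by $M_2n\lesssim c_0m\lambda^2$ under the stated lower bound on $\lambda$, this in turn forces $\|\xi_0\|^2\gtrsim c_0m\lambda^2/\sigma^2$. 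Sub-exponential concentration of $\|\xi_0\|^2$ about $m$ at that deviation level gives $\exp(-c'c_0\lambda^2 m/\sigma^2)$ directly, with no union over sign patterns or active sets at all. This is the step missing from the proposal; the sketch you give via sign-consistency events is plausible but unnecessary and not executed. (Finally, both your argument and the paper's carry a residual $e^{-cm}$ from auxiliary events such as $\{\|A\|_{\op}^2\leq K_1\}$; the paper's remark that this is ``assimilated into the above probability bound by adjusting constants'' is not literally correct when $\lambda$ is very large, but this is a harmless imprecision for how the lemma is used.)
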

\begin{proof}
	The KKT condition for Lasso yields that $
	A^\top(A\hat{w} - \xi) + \lambda v = 0$, 
	where $v$ is a sub-gradient of $\pnorm{\cdot}{1}$ at $\hat{\mu}=\hat{w}+\mu_0$.  Let $\hat{S}_\pm \equiv \{i\in[n]: (\hat{\mu}_i)_\pm > 0\}$, so that $\hat{\mu}$ has sparsity $|\hat{S}_+| + |\hat{S}_-|$. Let $z\equiv A^\top \xi\in\R^n$. Then we have
	\begin{align*}
	A_i^\top (A\hat{w}) &= z_i \pm \lambda, \quad i\in \hat{S}_\pm.
	\end{align*}
	Taking the square for both sides and summing over $i\in\hat{S}_+$ yield that
	\begin{align*}
	\sum_{i\in\hat{S}_+}z_i^2 + \lambda^2|\hat{S}_+| - 2\lambda\sum_{i\in\hat{S}_+}z_i &= (A\hat{w})^\top (A_{\hat{S}_+}A_{\hat{S}_+}^\top) (A\hat{w})\leq \pnorm{A\hat{w}}{}^2\pnorm{A_{\hat{S}_+}A_{\hat{S}_+}^\top}{\op},
	\end{align*}
	where $A_S$ denotes the columns of $A$ in $S$. By Lemma \ref{lem:sparse_eigenvalue}, we have 
	\begin{align*}
	\pnorm{A_{\hat{S}_+}A_{\hat{S}_+}^\top}{\op}=\pnorm{A_{\hat{S}_+}^\top A_{\hat{S}_+}}{\op} \leq K_2 (1 + |\hat{S}_+|\log(en/|\hat{S}_+|)/m)
	\end{align*}
	with probability $1 - \exp(-m/K_2)$ for some universal $K_2>0$. Using the prediction bound in (\ref{ineq:lasso_prediction}) below, there exists some $K_3=K_3(M_2)\geq 1$,
	\begin{align}\label{ineq:sparsity_bound}
	\lambda^2|\hat{S}_+| \leq 2\lambda\Big|\sum_{i\in\hat{S}_+}z_i\Big| + K_3\bigg(\sigma^2+\frac{n}{m}\bigg) \cdot m\cdot \Big(1 + \frac{|\hat{S}_+|\log(en/|\hat{S}_+|)}{m}\Big).
	\end{align}
	Fix any $c_0 > 0$, and let $E$ denote the event that (\ref{ineq:sparsity_bound}) holds. By choosing
	\begin{align}\label{def:lambda_lower}
	\lambda_0 = K\cdot \bigg\{K_3 \bigg(\sigma^2+\frac{n}{m}+1\bigg) \bigg(\frac{1}{c_0} +  \log\bigg(\frac{en}{c_0 m}\bigg)\bigg)\bigg\}^{1/2},
	\end{align}
	where $K>0$ is a large enough universal constant, the second term on the right most side of (\ref{ineq:sparsity_bound}) is bounded by $\lambda^2\abs{\hat{S}_+}/2$ for every $\lambda\geq \lambda_0$ on the event $\{|\hat{S}_+| \geq c_0m\}$. This means for every $\lambda\geq \lambda_0$, 
	\begin{align*}
	\Prob\Big(\Big\{|\hat{S}_+| \geq c_0m\Big\}\cap E\Big)
	&\leq \Prob\bigg(\Big\{|\hat{S}_+| \geq c_0m\Big\} \cap \bigg\{\lambda^2|\hat{S}_+|/2 \leq 2\lambda\Big|\sum_{i\in\hat{S}_+}z_i\Big|\bigg\}\bigg).
	\end{align*}
	Now using an easy union bound, the probability on the right hand side above can be further bounded by
	\begin{align*}
	&\sum_{s = \ceil{c_0m} }^n \sum_{S\subset[n]: |S| = s} \Prob\bigg(\Big|\sum_{i\in S}z_i\Big| \geq \lambda s/4\bigg)\leq 2 \sum_{s = \ceil{c_0m}}^n {n \choose s}\exp\Big(-\frac{\lambda^2(s/4)^2}{2s\sigma^2}\Big) \\
	&\leq 2 \sum_{s = \ceil{c_0m} }^n \exp\Big(s\log(en/s) - \frac{\lambda^2 s}{32\sigma^2}\Big) \leq 2\sum_{s = \ceil{c_0m} }^n \exp\Big(-\frac{\lambda^2 s}{64\sigma^2}\Big)  \leq 4\exp\Big(-\frac{c_0\lambda^2m}{64\sigma^2}\Big).
	\end{align*}
	A similar argument applies to $\hat{S}_-$. The proof is complete as $\Prob(E^c) \leq \exp(-Cm)$ for some universal $C > 0$ that can be assimilated into the above probability bound by adjusting constants.
\end{proof}

Now we are in position to prove Proposition \ref{prop:lasso_risk}.
\begin{proof}[Proof of Proposition \ref{prop:lasso_risk}]
	
	\noindent{(1). (Prediction risk)} By optimality of $\hat{w}$ versus $w = -\mu_0$, 
	\begin{align*}
	\frac{1}{2}\pnorm{A\hat{w} - \xi}{}^2 + \lambda\pnorm{\hat{w} + \mu_0}{1} \leq \frac{1}{2}\pnorm{A\mu_0+\xi}{}^2.
	\end{align*}
	This implies that with probability at least $1-Cn^{-100}$, 
	\begin{align}\label{ineq:lasso_prediction}
	\pnorm{A\hat{w}}{}^2 \leq 2\big(\pnorm{A\hat{w} - \xi}{}^2 + \pnorm{\xi}{}^2\big) \lesssim \pnorm{A\mu_0}{}^2+ \pnorm{\xi}{}^2 \leq K_1\cdot (n M_2+\sigma^2 m)
	\end{align}
	for some universal $K_1 >0$.

	\noindent{(2). ($\ell_\infty$ risk)}
	Recall that the Lasso solution is $\hat{\mu} = \hat{w} +\mu_0$, where $\hat{w}=\argmin_{w\in\R^n} H(w)$. The KKT condition of this optimization problem yields that $
	A^\top (A\hat{w} - \xi) + \lambda v = 0$, where $v$ is a sub-gradient of $\pnorm{\cdot}{1}$ at $\hat{w} + \mu_0$. Hence with $\hat{\Sigma} \equiv A^\top A\in\R^{n\times n}$ denoting the sample covariance, we have
	\begin{align*}
	\hat{\Sigma}\hat{w} = A^\top\xi - \lambda v.
	\end{align*}
	Fix $s\in[n]$. It is clear that $\pnorm{A^\top \xi}{\infty} \vee \pnorm{\lambda v}{\infty} \leq K \sqrt{\log n}$ with probability at least $1-Cn^{-100}$, so taking the $s$-th component of the above display yields that
	\begin{align*}
	\Big|\hat{\Sigma}_{s,s}\hat{w}_s + \hat{\Sigma}_{s,-s}\hat{w}_{-s}\Big| = \Big|\pnorm{A_s}{}^2\hat{w}_s + A_s^\top A_{-s} \hat{w}_{-s}\Big| \leq K \sqrt{\log n}.
	\end{align*}
	It is clear that $\inf_{s\in[n]}\pnorm{A_s}{}^2 \geq 1/2$ with probability at least $1-Cn^{-100}$, so 
	\begin{align}\label{eq:linfty_decomp}
	|\hat{w}_s|/K \leq \bigabs{A_s^\top A_{-s} \hat{w}_{-s}} + \sqrt{\log n}
	\end{align}
	with the prescribed probability. Recall the column leave-one-out Lasso version $\hat{w}^{(s)}$ defined in (\ref{def:lasso_loo}). Then
	\begin{align*}
	\bigabs{A_s^\top A_{-s} \hat{w}_{-s}}&\leq \bigabs{A_s^\top A_{-s} \hat{w}^{(s)}_{-s}} + \bigabs{A_s^\top A_{-s} (\hat{w}_{-s} - \hat{w}^{(s)}_{-s})}\\
	&\leq \bigabs{A_s^\top A_{-s} \hat{w}^{(s)}_{-s}} + \pnorm{A_s}{}\pnorm{A_{-s}}{\op}\pnorm{(\hat{w}_{-s} - \hat{w}^{(s)}_{-s})}{}\equiv \mathsf{w}_{1;s} + \mathsf{w}_{2;s}.
	\end{align*}
	For $\mathsf{w}_{1;s}$, using the independence between $A_s$ and $A_{-s}\hat{w}^{(s)}_{-s}$, and the prediction risk proved in (1), with probability at least $1-C n^{-100}$, uniformly in $s$ we have
	\begin{align*}
	\mathsf{w}_{1;s} \lesssim \sqrt{\log n}\cdot \frac{1}{\sqrt{n}}\pnorm{A_{-s}\hat{w}^{(s)}_{-s}}{} \leq K  \sqrt{\log n}.
	\end{align*}
	Moreover, it is easy to see that $\pnorm{A_s}{}\vee\pnorm{A_{-s}}{\op} \leq K$ holds with probability at least $1-Cn^{-100}$, and by Lemma \ref{lem:loo_perturb}, $\mathsf{w}_{2;s} \leq K \sqrt{\log n}$ holds uniformly in $s$ with probability at least $1-Cn^{-100}$. We therefore have the bound $ \sqrt{\log n}$ for the first term in (\ref{eq:linfty_decomp}).
	
	\noindent (3). (Prediction $\ell_\infty$ risk) The proof follows verbatim as that of Proposition \ref{prop:ridge_risk}-(3) upon using the row leave-one-out Lasso version defined in (\ref{def:lasso_loo_row}) with Lemma \ref{lem:loo_perturb}.	
\end{proof}

\subsection{Proof of Theorem \ref{thm:lasso_universality_generic}}

	The proof is almost the same as that of Theorem \ref{thm:ridge_universality_generic}, but now using Proposition \ref{prop:lasso_risk}, and then we may take $L_n=K\sqrt{\log n}$ and $\epsilon_n=C n^{-100}$. It is easy to verify (\ref{cond:f_moduli}) so we may apply Theorem \ref{thm:universality_reg} to conclude.\qed

\subsection{Relating the Gordon cost functions for Gaussian and non-Gaussian errors}

In the Gaussian design case, we write $A=G$. Let $h,\ell: \R^n\times \R^m\to \R$ be defined similarly as in (\ref{def:h_l_ridge}) with the penalty $(\pnorm{w+\mu_0}{}^2-\pnorm{\mu_0}{}^2)/2$ replaced by 
$(\pnorm{w+\mu_0}{1}-\pnorm{\mu_0}{1})$:
\begin{align}\label{def:h_l_lasso}
h(w,u)& = \frac{1}{m}u^\top G w-\frac{1}{m}u^\top \xi-\frac{1}{2m} \pnorm{u}{}^2 + \frac{\lambda}{m} \big(\pnorm{w+\mu_0}{1}-\pnorm{\mu_0}{1}\big),\nonumber\\
{\ell}(w,u)& = -\frac{1}{m^{3/2}} \pnorm{u}{}g^\top w +\frac{\pnorm{w}{}}{m^{1/2}}\cdot \frac{h^\top u}{m}-\frac{1}{m}u^\top \xi\nonumber\\
&\qquad\qquad -\frac{1}{2m} \pnorm{u}{}^2 + \frac{\lambda}{m}\big(\pnorm{w+\mu_0}{1}-\pnorm{\mu_0}{1}\big).
\end{align}
Here $g \in \R^n, h \in \R^m$ are independent standard Gaussian vectors. The Lasso cost function in the Gaussian design case can be realized as $H(w,G)=\max_u h(w,u)$, and let the associated Gordon cost $L$ be defined by $L(w)=\max_u \ell (w,u)$. 

When the error vector $\xi$ is also Gaussian, we write $\xi=\sigma z$, where $z \sim \mathcal{N}(0,I_m)$. Let the `Gaussian error version' of $h,\ell$ in (\ref{def:h_l_lasso}) be defined by
\begin{align}\label{def:h_l_lasso_errg}
h^{\errg}(w,u)& = \frac{1}{m} u^\top\big(G w-\sigma z\big)-\frac{1}{2m} \pnorm{u}{}^2 + \frac{\lambda}{m} \big(\pnorm{w+\mu_0}{1}-\pnorm{\mu_0}{1}\big),\nonumber\\
{\ell}^{\errg}(w,u)& = -\frac{1}{m^{3/2}} \pnorm{u}{}g^\top w+\frac{1}{m}\pnorm{u}{}\cdot g'\sigma +\sqrt{\frac{\pnorm{w}{}^2}{m}+\sigma^2}\cdot \frac{h^\top u}{m}\nonumber\\
&\qquad\qquad -\frac{1}{2m} \pnorm{u}{}^2 + \frac{\lambda}{m}\big(\pnorm{w+\mu_0}{1}-\pnorm{\mu_0}{1}\big).
\end{align}
Here $g' \in \R$ is a standard normal independent of $g,h$ and the original Gaussian noise vector $\xi=\sigma z$. Similarly, the Lasso cost function in this Gaussian design and Gaussian error case $H^{\errg}(w,G)=\max_u h^{\errg}(w,u)$, and let the associated Gordon cost with Gaussian error $L^{\errg}$ be defined by $L^{\errg}(w)=\max_u \ell^{\errg} (w,u)$.

Finally, similar to (\ref{def:ridge_psi_pop}), let
\begin{align*}
\psi(\beta,\gamma)&\equiv \bigg(\frac{\sigma^2}{\gamma}+\gamma\bigg)\frac{\beta}{2}-\frac{\beta^2}{2}+\frac{1}{m/n}\E \min_w \bigg\{\frac{w^2}{2\gamma}-\beta Z w+\lambda\Big(\abs{w+\Pi_{\mu_0} }-\abs{\Pi_{\mu_0}}\Big)\bigg\},
\end{align*}
where $\Pi_{\mu_0}\otimes Z \equiv \big(n^{-1}\sum_{j=1}^n \delta_{\mu_{0,j}}\big)\otimes \mathcal{N}(0,1)$. $(\beta_\ast,\gamma_\ast)$ defined in (\ref{eqn:lasso_fpe}) is the unique saddle point for the max-min problem $\max_\beta \min_\gamma \psi(\beta,\gamma)$ under (R1)-(R2). 

The following proposition relates the Lasso Gordon cost functions with Gaussian and non-Gaussian errors. We will work with the probability space that Gaussian random variables $g,h,g',z$, the possibly non-Gaussian noise vector $\xi$, and the Gaussian design matrix $G$ are all independent. 

\begin{proposition}\label{prop:lasso_gordon_cost_com}
Suppose (R1)-(R2) hold, and the entries of $\xi_0$ are independent, mean $0$, variance $1$ and uniformly sub-Gaussian. There exist constants $C,K>0$ depending only on $\sigma,\lambda,\tau,M_2$ such that with probability at least $1-C n^{-100}$,
\begin{align*}
\sup_{1\leq R\leq 1/(Kr_n)}\sup_{w\in \R^n: \pnorm{w}{}/\sqrt{m}\leq R}\abs{ L^{\errg}(w) -L(w)   }\big/R^2\leq K r_n.
\end{align*}
\end{proposition}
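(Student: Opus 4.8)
## Proof plan for Proposition \ref{prop:lasso_gordon_cost_com}

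The plan is to compare the two Gordon costs $L^{\errg}$ and $L$ by first carrying out the inner maximization over $u$ in both, reducing each to a one-dimensional max-min over the scalar parameters $(\beta,\gamma)$ with an explicit dependence on $w$ only through $\pnorm{w}{}/\sqrt m$ and $g^\top w/m$, and then bounding the difference of the resulting scalar expressions uniformly over the ball $\pnorm{w}{}/\sqrt m\le R$. Concretely, following the computation in (\ref{ineq:ridge_psi_empirical_pm_1}) (with the Ridge penalty replaced by $\lambda\pnorm{w+\mu_0}{1}/m$, which plays no role in the $u$-maximization), one gets
\begin{align*}
L(w)=\max_{\beta>0}\Bigl\{\frac{\beta}{\sqrt m}\Bigl(\bigpnorm{\tfrac{\pnorm{w}{}}{\sqrt m}\tfrac{h}{\sqrt m}-\tfrac{\xi}{\sqrt m}}{}-\tfrac{g^\top w}{m}\Bigr)-\tfrac{(\beta/\sqrt m)^2}{2}+\mathsf f(w)\Bigr\},
\end{align*}
while the analogous reduction of $L^{\errg}$ (now the $u$-direction also picks up the $g'\sigma$ term, so the norm inside becomes the norm of a vector with an extra Gaussian coordinate) gives
\begin{align*}
L^{\errg}(w)=\max_{\beta>0}\Bigl\{\frac{\beta}{\sqrt m}\Bigl(\sqrt{\tfrac{\pnorm{w}{}^2}{m}e_h^2+\sigma^2 e_{g'}^2+\ldots}-\tfrac{g^\top w}{m}\Bigr)-\tfrac{(\beta/\sqrt m)^2}{2}+\mathsf f(w)\Bigr\},
\end{align*}
where the quantities $e_h,e_{g'}$ and the scalar noise averages concentrate around their means. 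The key point is that both $L$ and $L^{\errg}$ are, after this reduction, of the form $\max_{\beta}\{\beta\, a(w)-\beta^2/2+\mathsf f(w)\}$ for scalar coefficients $a(w)$, and for such functions $|\max_\beta\{\beta a-\beta^2/2\}-\max_\beta\{\beta a'-\beta^2/2\}|=|\tfrac12(a_+^2-a_+'^2)|\le (|a|\vee|a'|)\cdot|a-a'|$; thus it suffices to bound $|a(w)-a^{\errg}(w)|$ and to control $|a(w)|\vee|a^{\errg}(w)|$, both uniformly over the ball of radius $R$.

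The next step is therefore to estimate $|a^{\errg}(w)-a(w)|$, which equals
\begin{align*}
\Bigl|\sqrt{\tfrac{\pnorm{w}{}^2}{m}e_h^2+\sigma^2 e_{g'}^2}-\bigpnorm{\tfrac{\pnorm{w}{}}{\sqrt m}\tfrac{h}{\sqrt m}-\tfrac{\xi}{\sqrt m}}{}\Bigr|
\end{align*}
up to lower-order terms. Here I would expand $\pnorm{\tfrac{\pnorm{w}{}}{\sqrt m}\tfrac{h}{\sqrt m}-\tfrac{\xi}{\sqrt m}}{}^2=\tfrac{\pnorm{w}{}^2}{m}e_h^2-2\tfrac{\pnorm{w}{}}{\sqrt m}\tfrac{h^\top\xi}{m}+\tfrac{\pnorm{\xi}{}^2}{m}$ and use the concentration event $E$ as in (\ref{ineq:ridge_psi_empirical_E}): on $E$ one has $|e_h-1|,|e_{g'}-1|\lesssim r_n$, $|h^\top\xi/m|\lesssim r_n$, and $|\pnorm{\xi}{}^2/m-\sigma^2|\lesssim r_n$ (the last three using the uniform sub-Gaussianity of $\xi_0$ and $m\asymp n$). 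Since $\sqrt{\cdot}$ is $1$-Lipschitz away from $0$ and the radicands are bounded below by a constant multiple of $\sigma^2$, the difference of square roots is controlled by the difference of radicands, which is $\lesssim r_n(1+\pnorm{w}{}^2/m)\lesssim r_n(1+R^2)$. Dividing through by the overall normalization and recalling $R\ge 1$, this yields $|a^{\errg}(w)-a(w)|\lesssim r_n R$. Similarly $|a(w)|\vee|a^{\errg}(w)|\lesssim 1+\pnorm{w}{}/\sqrt m+|g^\top w|/m\lesssim R$ on $E$ (using $\pnorm{g}{}\lesssim\sqrt n$ on $E$, so $|g^\top w|/m\le\pnorm{g}{}\pnorm{w}{}/m\lesssim R$). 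Combining via the scalar inequality above gives $|L^{\errg}(w)-L(w)|\lesssim r_n R\cdot R=r_n R^2$, which is exactly the claimed bound after dividing by $R^2$; the supremum over $1\le R\le 1/(Kr_n)$ is harmless since the bound is independent of $R$.

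The main obstacle I anticipate is not any single estimate but the bookkeeping needed to make the reduction to the scalar max-min fully rigorous and \emph{uniform} in $w$ over an unbounded family of balls. In particular one must check that the $\beta$-maximizer in each problem stays in a bounded range so that the quadratic-in-$\beta$ comparison is legitimate, and that the interchange of the $u$-maximization with the passage to the Gordon form introduces no $w$-dependent error beyond what is captured by $a(w)$; this is where the explicit formulas from the Ridge computation in (\ref{ineq:ridge_psi_empirical_pm_1})–(\ref{ineq:ridge_psi_empirical_pm_1.1}) are essential, since the $\ell_1$ penalty only enters additively and identically in both costs and hence cancels in the difference. A secondary technical point is verifying that the extra Gaussian scalar $g'$ in $\ell^{\errg}$ genuinely reproduces, after optimizing over $u$, the term $\sigma^2 e_{g'}^2$ inside the square root with the correct sign — this follows because the $u$-maximization of $m^{-1}\pnorm{u}{}g'\sigma$ together with $\sqrt{\pnorm{w}{}^2/m+\sigma^2}\,h^\top u/m$ combines the two contributions in quadrature, exactly as in the Gaussian-width heuristic underlying the CGMT. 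Once these structural points are pinned down, all remaining estimates are routine concentration bounds on the event $E$.
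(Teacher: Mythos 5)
Your strategy is sound and, modulo one conceptual slip about the form of $\ell^{\errg}$, the estimates close correctly; but the route differs from the paper's in one useful way, and your account of where the $g'$ term lives is wrong.

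On the route: after the $u$-maximization both $L(w)$ and $L^{\errg}(w)$ have the form $\max_{\beta>0}\{\beta\,a(w)-\beta^2/2+\mathsf f(w)\}$, and you exploit the closed form $\max_{\beta>0}\{\beta a-\beta^2/2\}=a_+^2/2$ together with the elementary inequality $\bigl|\tfrac12(a_+^2-a_+'^2)\bigr|\le(|a|\vee|a'|)\,|a-a'|$ to reduce everything to comparing the two scalars $a(w),a^{\errg}(w)$. The paper instead introduces the auxiliary $\gamma$ via $\|t\|=\min_\gamma(\|t\|^2/\gamma+\gamma)/2$, passes to the $(\beta,\gamma)$ max-min form $\psi_n,\psi_n^{\errg}$, localizes both $\gamma_{*,n},\gamma_{*,n}^{\errg}$ to $[\sigma/2,2(\sigma+R)]$ and both $\beta$-maximizers to $[0,2R+K_\beta]$ on $E$, and then bounds $\sup_{\beta,\gamma}|\psi_n^{\errg}-\psi_n|$ over these ranges. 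Your one-variable version avoids the localization of $\gamma$ entirely and shortcuts the $\beta$-localization you worry about at the end (the closed form makes it unnecessary); for this particular proposition the direct route is cleaner, though the paper's $(\beta,\gamma)$ form is reused elsewhere which is why the authors stick to it. Your concern about "interchange of the $u$-maximization" is also moot: $L$ and $L^{\errg}$ are defined directly by explicit formulas in $g,h,g'$, not as iterated min-max over $A$, so no interchange occurs.

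On the $g'$ term: your description of $L^{\errg}$ is off in a way you should fix before writing a final proof, although it happens not to affect the order of the final bound. Looking at (\ref{def:h_l_lasso_errg}), the extra scalar Gaussian $g'$ enters $\ell^{\errg}$ as $\tfrac1m\|u\|\cdot g'\sigma$, which is \emph{linear} in $\|u\|$, not part of the $h^\top u$ direction term. After optimizing $u$ one gets $a^{\errg}(w)=\sqrt{\|w\|^2/m+\sigma^2}\cdot e_h - g^\top w/m + g'\sigma/\sqrt m$, \emph{not} $\sqrt{\tfrac{\|w\|^2}{m}e_h^2+\sigma^2 e_{g'}^2}-g^\top w/m$. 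The "combine in quadrature" heuristic is incorrect: it is the factor $\sqrt{\|w\|^2/m+\sigma^2}$ itself that comes from the norm of the extended variable $(w,-\sqrt m\,\sigma)$ when $\xi=\sigma z$ is absorbed into the design before applying CGMT; the extra coordinate there is the \emph{deterministic} $-\sqrt m\,\sigma$, not Gaussian, while the genuinely extra Gaussian $g'$ sits outside the square root as a linear perturbation. Correspondingly the statement "$|e_{g'}-1|\lesssim r_n$" is meaningless for a scalar Gaussian $g'$ --- $|g'|$ does not concentrate near $1$ --- but this is harmless because the quantity that actually appears, $g'\sigma/\sqrt m$, satisfies $|g'\sigma/\sqrt m|\lesssim r_n$ on $E$ via $|g'|\lesssim\sqrt{\log n}$. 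With the corrected $a^{\errg}(w)$, your Lipschitz-of-$\sqrt{\cdot}$ bound applied to $\sqrt{\|w\|^2/m+\sigma^2}\cdot e_h$ versus $\bigl\|\tfrac{\|w\|}{\sqrt m}\tfrac{h}{\sqrt m}-\tfrac{\xi}{\sqrt m}\bigr\|$ still gives $|a^{\errg}(w)-a(w)|\lesssim Rr_n$, and the rest of your argument goes through unchanged.
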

\begin{proof}
We continue using the notation in (\ref{ineq:ridge_psi_empirical_notation}), and consider the event $E$ :
\begin{align*}
E&\equiv \big\{\abs{e_h-1}\vee \abs{e_g-1}\vee \abs{\Delta_m}\vee \abs{\sigma_m^2-\sigma^2}\leq K_0r_n\big\}\cap \big\{ \abs{g'}\leq\sqrt{\log n}  \big\}.
\end{align*}
Clearly $\Prob(E)\geq 1- Cn^{-100}$. By exactly the same calculations in (\ref{ineq:ridge_psi_empirical_pm_1})-(\ref{ineq:ridge_psi_empirical_pm_1.1}), now with $\mathsf{f}(w)=(\lambda/m)\big(\pnorm{w+\mu_0}{1}-\pnorm{\mu_0}{1}\big)$, we have
\begin{align}\label{ineq:lasso_errg_1}
&L(w) = \max_{\beta> 0} \bigg\{ \beta \cdot \bigg(\biggpnorm{\frac{ \pnorm{w}{}}{m^{1/2}}\cdot \frac{h}{\sqrt{m}}-\frac{\xi}{\sqrt{m}} }{}  - \frac{g^\top w}{m} \bigg)- \frac{\beta^2}{2} + \mathsf{f}(w)\bigg\}\nonumber\\
& = \max_\beta \min_\gamma\bigg\{ \bigg(\frac{\sigma_m^2}{\gamma}+\gamma\bigg) \frac{\beta}{2} - \frac{\beta^2}{2} +\bigg[\frac{\beta}{2\gamma}\frac{\pnorm{w}{}^2}{m}e_h^2 - \beta\cdot \frac{g^\top w}{m}+\mathsf{f}(w)\bigg] - \frac{\beta}{\gamma}\frac{\pnorm{w}{}}{\sqrt{m}} \Delta_m\bigg\} \nonumber\\
&\equiv \max_{\beta}\min_{\gamma}\psi_n(\beta,\gamma,w),
\end{align}
and
\begin{align}\label{ineq:lasso_errg_2}
&L^{\errg}(w)  = \max_{\beta}\bigg\{\beta\cdot \bigg(\sqrt{ \frac{\pnorm{w}{}^2}{m}+\sigma^2}\cdot e_h-\frac{g^\top w}{m}+\frac{g' \sigma}{\sqrt{m}}\bigg)-\frac{\beta^2}{2}+ \mathsf{f}(w)\bigg\}\nonumber\\
&= \max_\beta\min_\gamma \bigg\{ \bigg(\frac{\sigma^2 e_h^2}{\gamma}+\gamma\bigg) \frac{\beta}{2}- \frac{\beta^2}{2} +\bigg[\frac{\beta}{2\gamma}\frac{\pnorm{w}{}^2}{m}e_h^2 - \beta\cdot \frac{g^\top w}{m}+\mathsf{f}(w)\bigg]+ \beta \cdot \frac{g'\sigma}{\sqrt{m}}\bigg\}\nonumber\\
&\equiv \max_\beta \min_\gamma \psi_n^{\errg}(\beta,\gamma,w).
\end{align}
The inner minimum with respect to $\gamma$ in (\ref{ineq:lasso_errg_1})-(\ref{ineq:lasso_errg_2}), denoted $\gamma_{\ast,n}(w),\gamma_{\ast,n}^{\errg}(w)$, can be computed exactly: $\gamma_{\ast,n}(w)=\bigpnorm{\frac{ \pnorm{w}{}}{m^{1/2}}\cdot \frac{h}{\sqrt{m}}-\frac{\xi}{\sqrt{m}} }{}$, $\gamma_{\ast,n}^{\errg}(w)=e_h\big(\sigma^2+\pnorm{w}{}^2/m\big)^{1/2}$. So on the event $E$, for any $w\in \R^n$ such that $\pnorm{w}{}/\sqrt{m}\leq R$ with $1\leq R\leq 1/(Kr_n)$,
\begin{align}\label{ineq:lasso_errg_3}
\frac{\sigma}{2}\leq \gamma_{\ast,n}^{\errg}(w)\wedge \gamma_{\ast,n}(w)\leq \gamma_{\ast,n}^{\errg}(w)\vee \gamma_{\ast,n}(w)\leq 2(\sigma+R).
\end{align}
On the other hand, the maximizers with respect to $\beta$ in (\ref{ineq:lasso_errg_1})-(\ref{ineq:lasso_errg_2}), denoted $\beta_{\ast,n}^{\errg}(w),\beta_{\ast,n}(w)$, are solutions to a quadratic form. It then easily follows that on the event $E$, for any $w\in \R^n$ such that $\pnorm{w}{}/\sqrt{m}\leq R$, 
\begin{align}\label{ineq:lasso_errg_4}
\beta_{\ast,n}^{\errg}(w)\vee \beta_{\ast,n}(w)\leq 2R+K_\beta
\end{align}
holds for some $K_\beta=K_\beta(\sigma)>0$. Now using the final lines in (\ref{ineq:lasso_errg_1})-(\ref{ineq:lasso_errg_2}) with the range estimates in (\ref{ineq:lasso_errg_3})-(\ref{ineq:lasso_errg_4}), on the event $E$, for any $w\in \R^n$ such that $\pnorm{w}{}/\sqrt{m}\leq R$ with $1\leq R\leq 1/(Kr_n)$, 
\begin{align*}
&\bigabs{ L^{\errg}(w) -L(w)   }=\bigabs{\psi_n^{\errg}(\beta_{\ast,n}^{\errg},\gamma_{\ast,n}^{\errg},w)-\psi_n(\beta_{\ast,n},\gamma_{\ast,n},w)}\\
&\leq \max_{0\leq \beta \leq 2R+K_\beta}\max_{\sigma/2\leq \gamma\leq 2(\sigma+R)}\bigabs{\psi_n^{\errg}(\beta,\gamma,w)-\psi_n(\beta,\gamma,w)}\\
&\leq \max_{0\leq \beta \leq 2R+K_\beta}\max_{\sigma/2\leq \gamma\leq 2(\sigma+R)}\bigg\{\bigg(\bigabs{\sigma_m^2-\sigma^2e_h^2}+2R\abs{\Delta_m}\bigg)\frac{\beta}{2\gamma}+\beta\cdot \frac{g'\sigma}{\sqrt{m}} \bigg\}\\
&\leq K_1\cdot R^2 r_n
\end{align*}
for some $K_1=K(\sigma,\tau)>0$. The proof is thus complete.
\end{proof}

\subsection{Proof of Theorem \ref{thm:lasso_dist}, distribution of $\hat{w}^{\lasso}_A$}

\begin{proposition}\label{prop:lasso_est_gap}
Suppose (R1)-(R2) hold, and the entries of $\xi_0$ are independent, mean $0$, variance $1$ and uniformly sub-Gaussian. Let $\mathsf{g}:\R^n\to \R$ be $1$-Lipschitz, and $1\leq R\leq n^{0.49}$ be a real number. Then there exist constants $C,K>0$ depending only on $\sigma,\lambda,\tau,M_2$ such that for all $\epsilon \geq K \cdot R^2 r_n$, with probability at least $1-C n^{-100}$, 
\begin{align*}
\min_{w \in D_\epsilon(\mathsf{g})\cap B_n(\sqrt{m}R)} H(w,G)\geq \psi(\beta_\ast,\gamma_\ast)+ K^{-1}\epsilon.
\end{align*} 
Here $
D_\epsilon(\mathsf{g})\equiv \big\{ w \in \R^n: \abs{ \mathsf{g}\big(w/\sqrt{n}\big) - \E \mathsf{g}\big(w_\ast/\sqrt{n}\big)}\geq \epsilon^{1/2} \big\}$.
\end{proposition}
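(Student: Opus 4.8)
The plan is to mirror the proof of Proposition~\ref{prop:ridge_W2_gap}, with the global strong convexity used there for the Ridge cost replaced by a Lasso-specific ``Gordon gap'' estimate. First I would convert the structural set $D_\epsilon(\mathsf{g})$ into a Euclidean deviation set. Since $g\mapsto \mathsf{g}(w_\ast^{\lasso}/\sqrt n)$ is $(\gamma_\ast^{\lasso}/\sqrt n)$-Lipschitz in $g$ (the soft-thresholding map $\eta_1(\cdot;\cdot)$ being $1$-Lipschitz in its first argument), Gaussian concentration gives $\abs{\mathsf{g}(w_\ast^{\lasso}/\sqrt n)-\E\mathsf{g}(w_\ast^{\lasso}/\sqrt n)}\leq K r_n$ off an event of probability $Cn^{-100}$; combined with the Lipschitz property of $\mathsf{g}$, every $w\in D_\epsilon(\mathsf{g})$ then satisfies $\pnorm{w-w_\ast^{\lasso}}{}/\sqrt n\geq \epsilon^{1/2}-Kr_n\geq \epsilon^{1/2}/2$ once $\epsilon\geq 4K^2r_n$, hence $\pnorm{w-w_\ast^{\lasso}}{}^2/n\geq \epsilon/4$. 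Thus it suffices to lower bound $\min H(w,G)$ over $w\in B_n(\sqrt m R)$ with $\pnorm{w-w_\ast^{\lasso}}{}^2/n\geq \epsilon/4$.

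The core step is a quadratic growth estimate for the Lasso auxiliary (Gordon) cost around its minimizer, restricted to $B_n(\sqrt m R)$. I would first pass from the general-error Gordon cost $L$ of (\ref{def:h_l_lasso}) to the Gaussian-error version $L^{\errg}$ via the cost-comparison Proposition~\ref{prop:lasso_gordon_cost_com}, which on $B_n(\sqrt m R)$ costs only $KR^2r_n$ --- absorbable because $\epsilon\geq KR^2 r_n$. Then, following \cite{miolane2021distribution}: (i) the scalarization $L^{\errg}(w)=\max_\beta\min_\gamma\psi_n^{\errg}(\beta,\gamma,w)$ has a saddle point $(\beta_{\ast,n}^{\errg},\gamma_{\ast,n}^{\errg})$ that localizes to $[1/K,K]^2$ and is $\mathcal{O}(r_n)$-close to $(\beta_\ast^{\lasso},\gamma_\ast^{\lasso})$ (the Lasso analogue of Proposition~\ref{prop:ridge_empirical_char}), with inner $w$-minimizer the explicit soft-threshold $\bar w_{\ast,n}=\eta_1(\mu_0+(\gamma_{\ast,n}^{\errg}/e_h^2)g;\cdot)-\mu_0$; standard concentration of the empirical averages then yields $\pnorm{w_{\ast,n}^{\errg}-w_\ast^{\lasso}}{}/\sqrt n\leq Kr_n^{1/2}$ and $\min_w L^{\errg}(w)=\psi(\beta_\ast^{\lasso},\gamma_\ast^{\lasso})+\mathcal{O}(r_n)$; (ii) the growth bound $L^{\errg}(w)\geq \min_{w'}L^{\errg}(w')+K^{-1}\pnorm{w-w_{\ast,n}^{\errg}}{}^2/n$ on $B_n(\sqrt m R)$, obtained by combining the $\Theta(1)$-order curvature that the rank-one term and the $(\beta,\gamma)$-extremization supply along the ``radius'' and $g$-aligned directions with the contribution of the $\ell_1$ penalty in the remaining (approximately sparse) directions, the latter controlled through the sparse-eigenvalue bound (Lemma~\ref{lem:sparse_eigenvalue}) and the sparsity of $\bar w_{\ast,n}$ (cf.\ Lemma~\ref{lem:lasso_sparsity}). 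Chaining these, any $w\in B_n(\sqrt m R)$ with $\pnorm{w-w_\ast^{\lasso}}{}^2/n\geq \epsilon/4$ has $\pnorm{w-w_{\ast,n}^{\errg}}{}^2/n\geq \epsilon/16$ (for $\epsilon\gg r_n$), whence $L(w)\geq L^{\errg}(w)-KR^2r_n\geq \psi(\beta_\ast^{\lasso},\gamma_\ast^{\lasso})+K^{-1}\epsilon$ after adjusting constants.

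Finally I would transfer the bound from the Gordon cost $L$ to the primal cost $H(\cdot,G)$ via the Lasso-modified CGMT in the form of \cite[Corollary 5.1-(1)]{miolane2021distribution}: the set $D_\epsilon(\mathsf{g})\cap B_n(\sqrt m R)$ is closed, and $\ell$ in (\ref{def:h_l_lasso}) depends on $\xi$ only additively, so conditioning on $\xi$ first and then taking expectation shows that $\min_{w\in D_\epsilon(\mathsf{g})\cap B_n(\sqrt m R)}H(w,G)\geq \psi(\beta_\ast^{\lasso},\gamma_\ast^{\lasso})+K^{-1}\epsilon$ holds with probability $1-Cn^{-100}$, up to a constant loss in $K^{-1}$ and an extra $Cn^{-100}$. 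The threshold $\epsilon\geq KR^2r_n$ is exactly what is needed to dominate the accumulated $\mathcal{O}(R^2 r_n)$ and $\mathcal{O}(r_n^{1/2})$ errors; in the regime relevant to Theorem~\ref{thm:lasso_dist} one has $R\asymp\sqrt{\log n}$ and $\epsilon$ bounded.

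The hard part will be the Gordon gap of step~(ii): unlike the Ridge cost, $L^{\errg}$ is not globally strongly convex, and the $\Theta(1/m)$-strongly convex quadratic in $w$ is present only before the inner minimization over $\gamma$. The growth must therefore be assembled from three distinct mechanisms --- the two-variable extremization governing radius deviations, a rank-one curvature term governing deviations aligned with $g$, and a restricted strong convexity argument using the $\ell_1$ penalty on approximately sparse directions --- while keeping the constant $K$ free of $R$. Getting this decomposition and the associated error bookkeeping to line up cleanly with the stated threshold is where essentially all of the work lies.
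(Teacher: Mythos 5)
Your high-level architecture matches the paper exactly: localize $D_\epsilon(\mathsf{g})$ to a Euclidean deviation set around $w_\ast$ via Gaussian concentration of $\mathsf{g}(w_\ast/\sqrt n)$, establish a quadratic-growth lower bound for the Gaussian-error Gordon cost $L^{\errg}$, swap $L^{\errg}$ for $L$ on $B_n(\sqrt m R)$ at cost $KR^2 r_n$ via Proposition~\ref{prop:lasso_gordon_cost_com}, and transfer to $H(\cdot,G)$ by the CGMT. The constraint $\epsilon\geq KR^2 r_n$ enters in exactly the same way.

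The genuine gap is in your step (ii). The paper disposes of the Gordon-gap estimate in one line by citing \cite[Theorem B.1, Corollary B.1]{miolane2021distribution}, which already gives the needed statement directly around the population quantity $w_\ast$: on an event of probability $1-(C/\epsilon)e^{-n\epsilon^2/C}$, every $w$ with $n^{-1}\|w-w_\ast\|^2>\epsilon$ satisfies $L^{\errg}(w)\geq\psi(\beta_\ast,\gamma_\ast)+\epsilon/K$. You instead propose to re-derive this from scratch and explicitly flag it as ``where essentially all of the work lies,'' which leaves the heart of the proof unproven. Worse, your sketch invokes the sparse-eigenvalue Lemma~\ref{lem:sparse_eigenvalue} and the sparsity Lemma~\ref{lem:lasso_sparsity} as part of the mechanism. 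Both of those statements concern the design matrix $A$ (or $G$) through $A^\top A$; but the Gordon auxiliary cost $L^{\errg}$ contains no design matrix at all — it depends on $w$ only through $\|w\|$, $g^\top w$, and $\|w+\mu_0\|_1$, with curvature supplied by the $\sqrt{\|w\|^2/m+\sigma^2}$ term after the scalarization. Sparse-eigenvalue controls on sample covariance matrices play no role in that analysis, so the decomposition into ``three distinct mechanisms'' as you describe it would not assemble into a proof. The remedy is to cite Miolane--Montanari's Gordon gap result rather than re-proving it; once you do, your remaining steps are correct and coincide with the paper's.
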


\begin{proof}
By Gaussian concentration, with probability at least $1-C n^{-100}$, $
\bigabs{\mathsf{g}(w_\ast/\sqrt{n})-\E \mathsf{g}(w_\ast/\sqrt{n})}\leq Kr_n$. This means on an event $E_1$ with probability at least $1-C n^{-100}$,
\begin{align*}
\epsilon^{1/2}&\leq \bigabs{ \mathsf{g}(w/\sqrt{n}) - \E \mathsf{g}(w_\ast/\sqrt{n})}\\
&\leq \bigabs{ \mathsf{g}(w/\sqrt{n}) - \mathsf{g}(w_\ast/\sqrt{n})}+ Kr_n\leq n^{-1/2}\pnorm{w-w_\ast}{}+Kr_n
\end{align*}
holds uniformly in $w \in D_\epsilon(\mathsf{g})$. Consequently, for $\epsilon\geq K r_n^2$, on the event $E_1$,
\begin{align}\label{ineq:lasso_w_gap_1}
n^{-1}\pnorm{w-w_{\ast}}{}^2\geq \epsilon/2,\quad \forall w \in D_\epsilon(\mathsf{g}).
\end{align}
By \cite[Theorem B.1, Corollary B.1]{miolane2021distribution}, the following holds on an event $E_2$ with probability at least $1-(C/\epsilon) e^{-n\epsilon^2/C}$: For any $w \in \R^n$ such that $n^{-1}\pnorm{w-w_\ast}{}^2> \epsilon$, $L^{\errg}(w)\geq \psi(\beta_\ast,\lambda_\ast)+\epsilon/K$. Consequently, for any $\epsilon\geq K r_n^2$, on the event $E_1\cap E_2$, $L^{\errg}(w)\geq \psi(\beta_\ast,\lambda_\ast)+\epsilon/K$ holds uniformly in $w \in D_\epsilon(\mathsf{g})$. By Proposition \ref{prop:lasso_gordon_cost_com}, $L^{\errg}(w)$ may be replaced by $L(w)$ uniformly in $w \in \R^n$ with $\pnorm{w}{}/\sqrt{m}\leq R$ with the additional constraint that $\epsilon\geq K (r_n^2+R^2 r_n)$. The claim now follows by an application of an obviously modified version of the CGMT as in \cite[Corollary 5.1]{miolane2021distribution} that holds for non-Gaussian errors.
\end{proof}

\begin{proof}[Proof of Theorem \ref{thm:lasso_dist}: distribution of $\hat{w}^{\lasso}_A$]

Let $R_n\equiv K_1\sqrt{\log n}$ for a large enough constant $K_1>0$. By Proposition \ref{prop:lasso_risk}-(2), we have $\Prob(\hat{w}_A \notin B_n(\sqrt{m}R_n))\leq C n^{-100}$. 	
	
First, by \cite[Corollary B.1]{miolane2021distribution}, we have
\begin{align*}
\Prob\bigg(\min_w L^{\errg}(w)\geq \psi(\beta_\ast,\gamma_\ast) +K r_n \bigg)\leq C n^{-100}. 
\end{align*}
We will replace $\min_w L^{\errg}(w)$ by $\min_w H(w,G)$ as follows: (1) By \cite[Theorem B.1]{miolane2021distribution} and subsequent remarks, it can be replaced by $\min_{w \in B_n (\sqrt{m}R_n)} L^{\errg}(w)$; (2) using Proposition \ref{prop:lasso_gordon_cost_com}, it can be further replaced by $\min_{w \in B_n(\sqrt{m}R_n)} L(w)$; (3) using CGMT, it can be then replaced by $\min_{w \in B_n(\sqrt{m}R_n)} H(w,G)$; (4) using the choice of $R_n$, it can be finally replaced by $\min_w H(w,G)$. In summary, 
\begin{align*}
\Prob\bigg(\min_{w} H(w,G)\geq \psi(\beta_\ast,\gamma_\ast) +K r_n \bigg)\leq C n^{-100}. 
\end{align*}
On the other hand, by Proposition \ref{prop:lasso_est_gap}, for any $\epsilon \geq K\cdot R_n^2 r_n$, 
\begin{align*}
\Prob\bigg(\min_{w \in D_\epsilon(\mathsf{g})\cap B_n(\sqrt{m}R_n) } H(w,G)\leq  \psi(\beta_\ast,\gamma_\ast)+ K^{-1}\epsilon\bigg)\leq C n^{-100}. 
\end{align*}
Now let $\mathcal{S}_n\equiv D_\epsilon(\mathsf{g})\cap B_n(\sqrt{m}R_n)$, $z\equiv \psi(\beta_\ast,\gamma_\ast)$, $\rho_0\equiv \epsilon/K_1$ for some large $K_1>0$, we may apply Theorem \ref{thm:lasso_universality_generic} to conclude that  for $\epsilon\geq K\cdot R_n^2 r_n$, 
\begin{align*}
\Prob\big(\hat{w}_A \in D_\epsilon(\mathsf{g})\cap B_n(\sqrt{m}R_n)\big)\leq C (1\vee \epsilon^{-3}) s_n. 
\end{align*}
Combined with the above display,  
\begin{align*}
\Prob\big(\hat{w}_A \in D_\epsilon(\mathsf{g})\big)\leq C (1\vee \epsilon^{-3}) s_n
\end{align*}
holds for $\epsilon\geq K r_n\log n$. The restriction on $\epsilon$ can be dropped for free as otherwise the bound becomes trivial. 
\end{proof}

\subsection{Proof of Theorem \ref{thm:lasso_dist}, distribution of $\hat{r}^{\lasso}_A$}
For a general design matrix $A$, let
\begin{align*}
h(w,u;A)& \equiv \frac{1}{m} u^\top A w -\frac{1}{m}u^\top \xi-\frac{1}{2m} \pnorm{u}{}^2 + \frac{\lambda}{m} \big(\pnorm{w+\mu_0}{1}-\pnorm{w}{1}\big).
\end{align*}
It is easy to see that
\begin{align*}
\hat{u}\equiv \hat{u}_A\equiv \argmax_{u \in \R^m} \min_{w \in \R^n} h(w,u;A) = A \hat{w}-\xi.
\end{align*}
Similar to (\ref{def:u_ridge}), we define its Lasso analogue
\begin{align}\label{def:u_lasso}
u_\ast\equiv \frac{\beta_\ast}{\gamma_\ast} \bigg(\sqrt{ (\gamma_\ast)^2-\sigma^2}\cdot h-\xi\bigg),
\end{align}
as the `population version' of $\hat{u}$ in the Gordon problem.

\begin{proof}[Proof of Theorem \ref{thm:lasso_dist}: distribution of $\hat{r}^{\lasso}_A$]
The proof follows a similar strategy to that of the second part of Theorem \ref{thm:ridge_dist}, so we only outline key steps. 
For any $\epsilon>0$, and any $\mathsf{h}:\R^m \to \R$, let
\begin{align*}
D_\epsilon\equiv D_\epsilon(\mathsf{h})&\equiv \bigg\{ u \in \R^m : \bigabs{ \mathsf{h}(u/\sqrt{n}) - \E_h \mathsf{h}(u_\ast/\sqrt{n})}\geq \epsilon^{1/2} \bigg\}.
\end{align*}
Consider the event $E_1$ on which $\pnorm{\hat{u}}{\infty}\leq K\sqrt{\log n}\equiv L_n$, and $E_{2}$ on which $\pnorm{\hat{w}}{\infty}\leq K\sqrt{\log n}$ for some large enough $K>0$. By Proposition \ref{prop:lasso_risk}-(2)(3), $\Prob(E_1)\wedge \Prob(E_{2})\geq 1-Cn^{-100}$. Let 
\begin{align}\label{ineq:lasso_u_dist_1}
\rho_0\equiv \epsilon/K,\quad z_0\equiv \psi(\beta_\ast,\gamma_\ast)-3\rho_0
\end{align}
for some large enough $K>0$. Then arguing as in the proof of the second part of Theorem \ref{thm:ridge_dist},
\begin{align}\label{ineq:lasso_u_dist_2}
\Prob\big(\hat{u} \in D_\epsilon\big)&\leq \Prob\bigg(\max_{u \in [-L_n,L_n]^m}\min_{w \in \R^n} h(w,u;A)< z_0+6\rho_0\bigg)\nonumber\\
&\qquad + \Prob\bigg(\max_{u \in [-L_n,L_n]^m \cap D_\epsilon}\min_{w \in \R^n} h(w,u;A)> z_0+3\rho_0\bigg)+\Prob(E_1^c)\nonumber\\
&\equiv \mathfrak{p}_1+\mathfrak{p}_2+\Prob(E_1^c).
\end{align}
For $\mathfrak{p}_1$, arguing as in the proof of the second part of Theorem \ref{thm:ridge_dist} that replaces $A$ by its Gaussian counterpart $G$, 
\begin{align*}
\mathfrak{p}_1
&\leq \Prob\bigg(\min_{w \in \R^n} H(w;G)< z_0+2\rho_0\bigg)+ C (1\vee \rho_0^{-3})s_n.
\end{align*}
By (a non-Gaussian error modified version of) the CGMT as in \cite[Corollary 5.1]{miolane2021distribution}, we have
\begin{align*}
\mathfrak{p}_1
&\leq 2\Prob\bigg(\min_{w \in \R^n} L(w)< z_0+2\rho_0\bigg)+ C (1\vee \rho_0^{-3})s_n.
\end{align*}
With $R_n\equiv K'\sqrt{\log n}$ for some large enough $K'>0$, using Proposition \ref{prop:lasso_gordon_cost_com} we may replace the above Gordon cost $L$ with $L^{\errg}$: for $\rho_0 \geq K r_n$,
\begin{align*}
\mathfrak{p}_1
&\leq 2\Prob\bigg(\min_{w \in B_n(\sqrt{m}R_n)} L^{\errg}(w)< z_0+\rho_0\bigg)+ C(1\vee \rho_0^{-3})s_n.
\end{align*}
In view of the definition of $z_0$ in (\ref{ineq:lasso_u_dist_1}), by \cite[Corollary B.1]{miolane2021distribution} that applies to $L^{\errg}$, we have for $\rho_0 \geq K r_n$, 
\begin{align}\label{ineq:lasso_u_dist_3}
\mathfrak{p}_1\leq C\rho_0^{-1} e^{-n\rho_0^2/C}+ C \cdot (1\vee \rho_0^{-3})s_n\leq C (1\vee \rho_0^{-3}) s_n.
\end{align}
Next we handle $\mathfrak{p}_2$. Arguing as in the proof of the second part of Theorem \ref{thm:ridge_dist},
\begin{align}\label{ineq:lasso_u_dist_4}
\mathfrak{p}_2
&\leq 2\Prob\bigg(\max_{u \in  D_\epsilon} {\ell}(w_\ast,u)> \psi(\beta_\ast,\gamma_\ast)-2\rho_0\bigg) + C (1\vee \rho_0^{-3})s_n.
\end{align}
The first term above has been almost exactly studied in \cite[Lemma D.1]{miolane2021distribution}, which holds as long as the components of the error vector is uniformly sub-Gaussian. In particular, the function $u\mapsto {\ell}(w_\ast,u)$ is $1/m$-strongly concave with a unique minimizer $u_{\ast,n}$ with probability at least $1- C e^{-n/C}$, and for any $\delta>0$ with probability at least $1- C e^{-n\delta^2/C}$, we have (i) $\abs{\max_u {\ell}(w_\ast,u)- \psi(\beta_\ast,\gamma_\ast)}\leq \delta$, and (ii) $m^{-1}\pnorm{u_{\ast,n}-u_\ast}{}^2\leq \delta$, where recall $u_\ast$ is defined in (\ref{def:u_lasso}). Arguing as in the proof of the second part of Theorem \ref{thm:ridge_dist}, i.e., using Gaussian concentration conditionally on $\xi$, for any $\epsilon\geq K r_n^2$, with unconditional probability $1-Cn^{-100}$,
\begin{align*}
m^{-1}\pnorm{u-u_{\ast,n}}{}^2\geq \epsilon/4,\quad \forall u \in D_\epsilon. 
\end{align*}
Using the high probability strong concavity of ${\ell}(w_\ast,\cdot)$ and (i)-(ii), we now see that for all $\epsilon\geq Kr_n$, with probability at least $1- C n^{-100}$, 
\begin{align*}
\max_{u \in D_\epsilon} {\ell}(w_\ast,u)\leq \max_{u} {\ell}(w_\ast,u) - 2\epsilon/K \leq \psi(\beta_\ast,\gamma_\ast)-\epsilon/K.
\end{align*}
Combined with (\ref{ineq:lasso_u_dist_4}) and the definition of $\rho_0$ in (\ref{ineq:lasso_u_dist_1}), for all $\epsilon\geq Kr_n$,
\begin{align}\label{ineq:lasso_u_dist_5}
\mathfrak{p}_2\leq C (1\vee \epsilon^{-3})s_n.
\end{align}
Now combining (\ref{ineq:lasso_u_dist_2}), (\ref{ineq:lasso_u_dist_3}) and (\ref{ineq:lasso_u_dist_5}), we find that for all $\epsilon\geq Kr_n$,
\begin{align*}
\Prob\big(\hat{u} \in D_\epsilon\big)& \leq C (1\vee \epsilon^{-3})  s_n. 
\end{align*}
The proof is complete.
\end{proof}

\subsection{Proof of Theorem \ref{thm:lasso_dist}, distribution of $\hat{v}^{\lasso}_A$}
For a general design matrix $A$, let
\begin{align*}
	k(v,w;A) &\equiv \frac{1}{2m}\pnorm{Aw - \xi}{}^2 + \frac{\lambda}{m}v^\top(w + \mu_0) - \frac{\lambda}{m}\pnorm{\mu_0}{1},\\
	k_0(u,v,w;A) &\equiv \frac{1}{m}u^\top(Aw - \xi) - \frac{1}{2m}\pnorm{u}{}^2 + \frac{\lambda}{m}v^\top(w + \mu_0) - \frac{\lambda}{m}\pnorm{\mu_0}{1}.
\end{align*}
Recall $v_\ast$ defined in (\ref{def:v_lasso}), and let $\bar{s}_n\equiv s_n \log n$.

\begin{proof}[Proof of Theorem \ref{thm:lasso_dist}: distribution of $\hat{v}^{\lasso}_A$]
	For any $\epsilon > 0$ and $\mathsf{g}: \R^n \rightarrow \R$, let
	\begin{align*}
		D_\epsilon \equiv D_\epsilon(\mathsf{g}) \equiv \Big\{v\in\R^n: \bigabs{\mathsf{g}(v/\sqrt{n}) - \E \mathsf{g}(v_\ast/\sqrt{n})}\geq \epsilon^{1/2}\Big\}.
	\end{align*}
	First note by the proof \cite[Lemma E.1]{miolane2021distribution}, and the $\ell_\infty$ bounds proved in Proposition \ref{prop:lasso_risk}, with $L_n\equiv K\sqrt{\log n}$, on an event $E_1$ with $\Prob(E_1)\geq 1 - Cn^{-100}$, we have $
	\hat{v} \equiv \hat{v}_A \in \argmax_{v\in\R^n:\pnorm{v}{\infty}\leq 1} \min_{w\in [-L_n,L_n]^n} k(v,w;A)$ and
	\begin{align}\label{eq:cost_id_subg}
	\max_{v\in[-1,1]^n}\min_{w\in [-L_n,L_n]^n} k(v,w;A) &= \max_{v\in[-1,1]^n}\min_{w\in B_{n;1}(Kn)} k(v,w;A) \nonumber\\
	& =\min_{w\in [-L_n,L_n]^n} H(w;A) = \min_{w\in\R^n} H(w;A).
	\end{align}
	Without loss of generality, we also assume that these properties hold on $E_1$ for the standard Gaussian design $G$. With $z_0,\rho_0$ as in (\ref{ineq:lasso_u_dist_1}), it is easy to verify
	\begin{align}\label{ineq:lasso_subg_1}
		\Prob(\hat{v}\in D_\epsilon) &\leq \Prob\Big(\max_{v\in[-1,1]^n}\min_{w\in [-L_n,L_n]^n} k(v,w;A) < z_0 + 6\rho_0\Big) \nonumber \\
		&\qquad +\Prob\Big(\max_{v\in D_\epsilon \cap [-1,1]^n}\min_{w\in [-L_n,L_n]^n} k(v,w;A) > z_0 + 3\rho_0\Big)+ \Prob(E_1^c) \nonumber \\
		& \equiv \mathfrak{p}_1 + \mathfrak{p}_2+ \Prob(E_1^c).
	\end{align}
	For $\mathfrak{p}_1$, we apply (\ref{eq:cost_id_subg}) and then a similar argument as in the previous proof of the Lasso residual $\hat{r}$ (up to (\ref{ineq:lasso_u_dist_3})) to obtain that
	\begin{align}\label{ineq:lasso_subg_2}
		\mathfrak{p}_1 \leq \Prob\Big(\min_{w\in\R^n} H(w;A) < z_0 + 6\rho_0\Big)+\Prob(E_1^c) \leq  C(1\vee \rho_0^{-3})s_n.
	\end{align}
	For $\mathfrak{p}_2$, using the fact that $\max_{u\in\R^m} k_0(u,v,w;A) = k(v,w;A)$, we have
	\begin{align*}
		\mathfrak{p}_2 &= \Prob\Big(\max_{v\in D_\epsilon \cap [-1,1]^n}\min_{w\in [-L_n,L_n]^n}\max_{u\in \R^m}k_0(u,v,w;A) > z_0 + 3\rho_0\Big).
	\end{align*}
	Now note that for any fixed $v\in D_\epsilon$, the inner most optimizer is $\hat{u}(v) = A\hat{w}(v) - \xi$, where $\hat{w}(v) \in \argmax_{w\in\R^n} k(v,w;A)$. By Lemma \ref{lem:l_infty_u_subgradient} below, on an event $E_2$ with $\Prob(E_2)\geq 1-Cn^{-100}$, we have $\sup_{v\in[-1,1]^n}\pnorm{\hat{u}(v)}{\infty} \leq L_n^2$. Consequently,
	\begin{align*}
		\mathfrak{p}_2 &\leq \Prob\Big(\max_{v\in D_\epsilon \cap [-1,1]^n}\min_{w\in [-L_n,L_n]^n}\max_{u\in [-L_n^2, L_n^2]^m}k_0(u,v,w;A) > z_0 + 3\rho_0\Big)+\Prob(E_2^c)\\
		&= \Prob\Big(\max_{v\in D_\epsilon \cap [-1,1]^n}\max_{u\in [-L_n^2, L_n^2]^m}\min_{w\in [-L_n,L_n]^n}k_0(u,v,w;A) > z_0 + 3\rho_0\Big)+\Prob(E_2^c),
	\end{align*}
	where the last identity follows from Sion's min-max theorem (cf. Theorem \ref{thm:sion_minmax}). Now we may apply Corollary \ref{cor:min_max_universality} by viewing $(u,v)$ as a single variable, and enlarging $A$ with $n$ rows of $0$'s from the bottom. In particular, 
	\begin{align*}
	\mathfrak{p}_2 &\leq \Prob\Big(\max_{v\in D_\epsilon \cap [-1,1]^n}\max_{u\in [-L_n^2, L_n^2]^m}\min_{w\in [-L_n,L_n]^n}k_0(u,v,w;G) > z_0 + \rho_0\Big) + C(1\vee \rho_0^{-3}) \bar{s}_n. 
	\end{align*}
	Now by reversing the steps implemented for $A$, 
	\begin{align*}
	\mathfrak{p}_2 &\leq \Prob\Big(\max_{v\in D_\epsilon \cap [-1,1]^n}\min_{w\in B_{n;1}(Kn)} k(v,w;G) > z_0 + \rho_0\Big)+ C(1\vee \rho_0^{-3}) \bar{s}_n. 
	\end{align*}
	Let
	\begin{align*}
	V^{\errg}(v)&\equiv \min_{w \in B_{n;1}(Kn)}\bigg\{\frac{1}{2}\bigg(\sqrt{ \frac{\pnorm{w}{}^2}{m}+\sigma^2 }\cdot\frac{\pnorm{h}{}}{\sqrt{m}}-\frac{g^\top w}{m}+\frac{g'\sigma}{\sqrt{m}}\bigg)_+^2 +\frac{\lambda}{m}v^\top(w+\mu_0)-\frac{\lambda}{m}\pnorm{\mu_0}{1}\bigg\}
	\end{align*}
	be the Gordon cost associated for the subgradient (when the error $\xi=\sigma z$ is Gaussian), as defined in the beginning of \cite[Appendix E.2.1]{miolane2021distribution}. Then by an application of CGMT in the form given by \cite[Proposition E.1]{miolane2021distribution}, we obtain
	\begin{align}\label{ineq:lasso_subg_3}
	\mathfrak{p}_2 &\leq 2\Prob\Big(\max_{v\in D_\epsilon \cap [-1,1]^n}V^{\errg}(v) > z_0 + \rho_0\Big)+ C(1\vee \rho_0^{-3}) \bar{s}_n. 
	\end{align}
	On the other hand, using Gaussian concentration arguments (cf. (\ref{ineq:lasso_w_gap_1})), for $\epsilon\geq K r_n^2$, on an event $E_3$ with $\Prob(E_3)\geq 1- C n^{-100}$,
	\begin{align*}
	n^{-1}\pnorm{v-v_\ast}{}^2\geq \epsilon/2,\quad \forall v \in D_\epsilon(\mathsf{g}).
	\end{align*}
	By \cite[Theorem E.7, Lemma E.2]{miolane2021distribution}, the above display implies for $\epsilon\geq K r_n$, with probability at least $1-Cn^{-100}$, 
	\begin{align*}
	\max_{v\in D_\epsilon \cap [-1,1]^n}V^{\errg}(v) \leq \max_{v\in [-1,1]^n}V^{\errg}(v) -\epsilon/K=\min_{w \in \R^n} L^{\errg}(w)-\epsilon/K.
	\end{align*} 
	Combined with (\ref{ineq:lasso_subg_3}), we have for all $\epsilon\geq Kr_n$, 
	\begin{align*}
	\mathfrak{p}_2 &\leq 2\Prob\Big(\min_{w \in \R^n} L^{\errg}(w) > z_0 + \rho_0+\epsilon/K\Big)+ C(1\vee \rho_0^{-3}) \bar{s}_n. 
	\end{align*}
	Now repeating the arguments around (\ref{ineq:lasso_u_dist_3}) yields that for $\rho_0\geq K r_n$,
	\begin{align}\label{ineq:lasso_subg_4}
	\mathfrak{p}_2 &\leq  C(1\vee \rho_0^{-3}) \bar{s}_n. 
	\end{align}
	Strictly speaking the above display is derived under the Gaussian error assumption. For general error distributions we may proceed as in Proposition \ref{prop:lasso_gordon_cost_com} that quantifies the difference between the Gordon costs for Gaussian and non-Gaussian errors; details are omitted. Now combining (\ref{ineq:lasso_subg_1}), (\ref{ineq:lasso_subg_2}) and (\ref{ineq:lasso_subg_4}) to conclude. 
\end{proof}

\begin{lemma}\label{lem:l_infty_u_subgradient}
	Suppose (R1)-(R3) hold. Fix $L_n\geq 1$. For any $v\in\R^n$ such that $\pnorm{v}{\infty} \leq 1$, let $ \hat{u}(v)$ be the maximizer of the optimization problem $\min_{w\in [-L_n,L_n]^n}\max_{u\in \R^m} k_0(u,v,w;A)$. Then there exist constants $C,K>0$ depending only on $\sigma,\lambda,\tau,M_2$ such that with probability at least $1-C n^{-100}$, 
	\begin{align*}
		\sup_{v\in[-1,1]^n}\pnorm{\hat{u}(v)}{\infty} \leq K L_n\sqrt{\log n}. 
	\end{align*}
\end{lemma}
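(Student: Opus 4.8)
The plan is to make the inner maximization explicit. For fixed $v,w$ the map $u\mapsto k_0(u,v,w;A)$ is a concave quadratic maximized at $u=Aw-\xi$, so $\max_{u\in\R^m}k_0(u,v,w;A)=k(v,w;A)$ and the outer minimization gives $\hat u(v)=A\hat w(v)-\xi$ for any minimizer $\hat w(v)\in\argmin_{w\in[-L_n,L_n]^n}k(v,w;A)$ (the vector $A\hat w(v)$, hence $\hat u(v)$, is unambiguous even if the minimizer is not unique). Thus the assertion is an $\ell_\infty$ prediction-error bound $\sup_{v\in[-1,1]^n}\pnorm{A\hat w(v)-\xi}{\infty}\lesssim L_n\sqrt{\log n}$, in the same spirit as Proposition \ref{prop:ridge_risk}(3) and Proposition \ref{prop:lasso_risk}(3), but now required to hold uniformly over the whole dual cube $[-1,1]^n$.

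Next I would run a row leave-one-out argument. For $t\in[m]$ and $v\in[-1,1]^n$ set $\hat w^{[t]}(v)\equiv\argmin_{w\in[-L_n,L_n]^n}\{\tfrac12\pnorm{A_{[-t]}w-\xi_{-t}}{}^2+\lambda v^\top(w+\mu_0)\}$, which depends only on $(A_{[-t]},\xi_{-t})$ and is therefore independent of the $t$-th row $a_t$. Since $w\mapsto\lambda v^\top(w+\mu_0)$ is convex (indeed linear), the computation in the proof of Lemma \ref{lem:ridge_loo}(2) — equivalently the last display of Lemma \ref{lem:loo_perturb} — goes through for the box-constrained problem via the first-order variational inequality at $\hat w(v)$, giving $\pnorm{A(\hat w(v)-\hat w^{[t]}(v))}{}\le 2(\abs{\xi_t}+\abs{a_t^\top\hat w^{[t]}(v)})$. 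Hence $\abs{(\hat u(v))_t}=\abs{a_t^\top\hat w(v)-\xi_t}\le 3\abs{\xi_t}+3\abs{a_t^\top\hat w^{[t]}(v)}$, and since $\max_{t\in[m]}\abs{\xi_t}\lesssim\sqrt{\log n}$ with probability at least $1-Cn^{-100}$ by (R3), the whole problem reduces to controlling $\max_{t\in[m]}\sup_{v\in[-1,1]^n}\abs{a_t^\top\hat w^{[t]}(v)}$.

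For this last quantity I would condition on $(A_{[-t]},\xi)$, so that $v\mapsto\hat w^{[t]}(v)$ is a fixed map and $a_t$ is a fresh vector of independent mean-zero uniformly sub-Gaussian entries of variance $1/m$. Two a priori controls are available uniformly in $v$: $\hat w^{[t]}(v)\in[-L_n,L_n]^n$, and comparing the leave-one-out cost at $\hat w^{[t]}(v)$ with its value at $w=0$ yields the energy bound $\pnorm{A_{[-t]}\hat w^{[t]}(v)}{}\lesssim\sqrt{nL_n}$, which together with the eigenvalue estimates for $A_{[-t]}$ used above (cf. Lemma \ref{lem:sparse_eigenvalue}) controls the component of $\hat w^{[t]}(v)$ in the row space of $A_{[-t]}$. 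The main obstacle is the passage from a pointwise-in-$v$ sub-Gaussian deviation bound for $a_t^\top\hat w^{[t]}(v)$ to a bound uniform over the continuum $v\in[-1,1]^n$: a crude $\varepsilon$-net of the $n$-dimensional dual cube is far too large for a sub-Gaussian tail to absorb. I would instead combine the stability of the minimizer $v\mapsto\hat w^{[t]}(v)$ (Lipschitz on the relevant subspace with constant governed by $\pnorm{A_{[-t]}}{\op}$ and the least nonzero singular value of $A_{[-t]}$) with the a priori $\ell_2$ radius $\sqrt{nL_n}$ to reduce to a maximal/chaining estimate over a low-complexity effective parameter set; keeping this covering small enough, and tracking the dependence of all constants on $\sigma,\lambda,\tau,M_2$, is the crux of the argument. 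Assembling the pieces and taking a union bound over $t\in[m]$ then delivers $\sup_{v\in[-1,1]^n}\pnorm{\hat u(v)}{\infty}\le KL_n\sqrt{\log n}$ with probability at least $1-Cn^{-100}$.
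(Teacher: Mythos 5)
Your reduction via the row leave-one-out minimizer $\hat{w}^{[t]}(v)$ and the box-constrained variational inequality, ending at $\abs{(A\hat{w}(v))_t}\lesssim\abs{\xi_t}+\abs{a_t^\top\hat{w}^{[t]}(v)}$, is exactly the paper's argument. But you do not finish the proof: the decisive step, showing $\max_{t\in[m]}\sup_{v\in[-1,1]^n}\abs{a_t^\top\hat{w}^{[t]}(v)}\lesssim L_n\sqrt{\log n}$ with the required probability, is explicitly flagged as ``the crux'' and left open, so the proposal as written is incomplete.

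Moreover, the chaining route you sketch would not close it. Under (R1) the lemma must hold with $m<n$, where $A_{[-t]}$ has a nullspace of dimension at least $n-m+1$. The first-order comparison of minimizers gives only $\pnorm{A_{[-t]}(\hat{w}^{[t]}(v_1)-\hat{w}^{[t]}(v_2))}{}^2\leq\lambda\pnorm{v_1-v_2}{}\pnorm{\hat{w}^{[t]}(v_1)-\hat{w}^{[t]}(v_2)}{}$, which controls nothing in the nullspace, while a small perturbation of $v$ can flip the active boundary pattern of the box QP and move the nullspace component of $\hat{w}^{[t]}(v)$ by order $L_n$ per coordinate; since $a_t$ has a nontrivial random projection onto that nullspace, $v\mapsto a_t^\top\hat{w}^{[t]}(v)$ has no usable $v$-modulus of continuity to chain with. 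The ``energy plus sparse eigenvalue'' control of $\pnorm{\hat{w}^{[t]}(v)}{}$ you invoke also fails when $m<n$ (box-constrained minimizers are not sparse, and Lemma \ref{lem:sparse_eigenvalue} only lower-bounds eigenvalues on sparse vectors), and with the a priori radius $\pnorm{\hat{w}^{[t]}(v)}{}\leq L_n\sqrt{n}$ and sub-Gaussian increment scale $\pnorm{w-w'}{}/\sqrt{m}$, a crude $\epsilon$-net of $[-1,1]^n$ produces a Dudley entropy integral of order $L_n\sqrt{n\log n}$ rather than $L_n\sqrt{\log n}$. The paper's write-up is itself terse at exactly this point, but the $v$-uniformity is the entire content of this lemma beyond Proposition \ref{prop:ridge_risk}(3), and a proof needs a concrete mechanism for it (for instance, exploiting the piecewise-linear/KKT structure of the box QP to show the effective set $\{\hat{w}^{[t]}(v):v\in[-1,1]^n\}$ has polynomial complexity in the metric induced by $a_t$, or avoiding the supremum over $v$ altogether), which your proposal does not supply.
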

\begin{proof}
	We write $B_n\equiv [-L_n,L_n]^n$ in the proof. For any fixed $v$, the inner optimizer is $\hat{u}(v) = A\hat{w}(v) - \xi$, where 
	\begin{align*}
	\hat{w}(v) \in \argmin_{w\in B_n} k(v,w;A) = \argmin_{w\in B_n} \bar{H}_v(w;A).
	\end{align*}
	Here $\bar{H}_v(w;A)\equiv \pnorm{Aw-\xi}{}^2/2+\mathsf{f}_v(w)\equiv \pnorm{Aw-\xi}{}^2/2+\lambda v^\top (w+\mu_0)$. We claim that for any $w \in B_n$, 
	\begin{align}\label{ineq:l_infty_u_subgradient_1}
	\frac{1}{2}\pnorm{A(w-\hat{w}(v))}{}^2\leq \bar{H}_v(w;A)-\bar{H}_v(\hat{w}(v);A). 
	\end{align}
	To prove (\ref{ineq:l_infty_u_subgradient_1}), first note that by expanding the cost $\bar{H}_v$ at $\hat{w}(v)$, we have
	\begin{align*}
	\bar{H}_v(w;A)& = \bar{H}_v(\hat{w}(v);A) + \frac{1}{2}\pnorm{A(w - \hat{w}(v))}{}^2\\
	&\qquad  +\iprod{A(w-\hat{w}(v))}{A\hat{w}(v)-\xi}+\mathsf{f}_v(w)-\mathsf{f}_v(\hat{w}(v)).
	\end{align*}
	By convexity of $B_n$, $\hat{w}(v)+\epsilon(w-\hat{w}(v)) \in B_n$ for $\epsilon \in [0,1]$, so the cost optimality of $\bar{H}_v$ at $\hat{w}(v)$ entails
	\begin{align*}
	0&\leq \frac{\d}{\d \epsilon}\bar{H}_v(\hat{w}(v)+\epsilon(w-\hat{w}(v));A)\bigg|_{\epsilon=0}\\
	&= \iprod{w-\hat{w}(v)}{A^\top(A\hat{w}(v)-\xi)}+ \iprod{w-\hat{w}(v)}{\nabla \mathsf{f}_v(\hat{w}(v))}.
	\end{align*}
	Combining the above two displays and using the convexity (actually, linearity) of $\mathsf{f}_v$ yield the claim (\ref{ineq:l_infty_u_subgradient_1}). Now as in the proof of Lemma \ref{lem:ridge_loo}-(2), we let $w$ in (\ref{ineq:l_infty_u_subgradient_1}) be the row leave-one-out version of $\hat{w}(v)$, defined for each $t \in [m]$ by 
	\begin{align*}
		\hat{w}^{[t]}(v) \equiv \argmin_{w\in B_n} \bigg\{\frac{1}{2}\bigpnorm{A_{[-t]}w - \xi_{-t}}{}^2 + \mathsf{f}_v(w) \bigg\} = \argmin_{w\in B_n}\bigg\{\bar{H}_v(w;A)-\frac{\abs{a_t^\top w-\xi_t}^2}{2}\bigg\}.
	\end{align*}
	Repeating the proof of Lemma \ref{lem:ridge_loo}-(2) with (\ref{ineq:l_infty_u_subgradient_1}), we have
	\begin{align}\label{ineq:l_infty_u_subgradient_2}
	\pnorm{ A(\hat{w}^{[t]}(v)-\hat{w}(v)) }{}\leq 2 \big(\abs{\xi_t}+\abs{a_t^\top \hat{w}^{[t]}(v) }\big).
	\end{align}
	Following verbatim the proof of Proposition \ref{prop:ridge_risk}-(3) and using (\ref{ineq:l_infty_u_subgradient_2}) above, 
	\begin{align*}
	\abs{(A\hat{w}(v))_t}\leq \abs{\xi_t}+\abs{a_t^\top \hat{w}^{[t]}(v) }.
	\end{align*}
	The claim follows by recalling $\hat{u}(v) = A\hat{w}(v) - \xi$ and using a union bound.
\end{proof}

\subsection{Proof of Theorem \ref{thm:lasso_dist}, distribution of $\hat{s}^{\lasso}_A$} 

We shall divide the proof into upper and lower bounds separately. Recall $\bar{s}_n=s_n\log n$ and $s_\ast$ defined in (\ref{def:s_lasso}).

\begin{proof}[Proof of the upper bound for $\hat{s}^{\lasso}_A$]
As the subgradient $\abs{\hat{v}_j}=1$ if $\hat{\mu}_j\neq 0$, we have 
\begin{align*}
\hat{s}=\frac{\pnorm{\hat{\mu}}{0}}{n}\leq \mathsf{g}^+(\hat{v}),\quad \mathsf{g}^+(v)\equiv \frac{1}{n}\sum_{j=1}^n \mathsf{g}^+_0(v_j)\equiv\frac{1}{n}\sum_{j=1}^n \bm{1}(\abs{v_j}\geq 1).
\end{align*}
For any $\delta \in (0,1)$, let $I_\delta^+\equiv (-1,-1+\delta)\cup(1-\delta,1)$ and let $\mathsf{g}^+_{0,\delta}: \R \to [0,1]$ be defined by $\mathsf{g}^+_{0,\delta}(x)\equiv \mathsf{g}^+_{0}(x)$ for $x \in \R\setminus I_\delta^+$ and linearly interpolated otherwise. Clearly $\mathsf{g}^+_{0}\leq \mathsf{g}^+_{0,\delta}$. As $\mathsf{g}^+_{0,\delta}$ is $1/\delta$-Lipschitz, $\mathsf{g}^+_{\delta}:\R^n \to [0,1]$ defined by $\mathsf{g}^+_{\delta}(v)\equiv n^{-1}\sum_{j=1}^n \mathsf{g}^+_{0,\delta}(v_j)$ is $1/\delta$-Lipschitz as well. By the proven distributional result on $\hat{v}$, on an event $E_+(\epsilon,\delta)$ with probability at least $1- C(1\vee \epsilon^{-6}) \bar{s}_n$, we have
\begin{align*}
\delta\mathsf{g}^+_{\delta}(\hat{v})&\leq \delta\E \mathsf{g}^+_{\delta}(v_\ast)+\epsilon\leq \delta\E \mathsf{g}^+(v_\ast)+ \delta \Prob\big(v_\ast \in I_\delta^+\big)+\epsilon\leq  \delta\E \mathsf{g}^+(v_\ast)+ K\delta^2 +\epsilon.
\end{align*}
Here the last inequality follows by the anti-concentration of $v_\ast$ over $I_\delta^+$ (not including $\pm1$). So by choosing $\delta\equiv \epsilon^{1/2}$, with probability at least $1- C(1\vee \epsilon^{-6}) \bar{s}_n$,
\begin{align*}
\hat{s} \leq \mathsf{g}^+(\hat{v})\leq \mathsf{g}^+_{\epsilon^{1/2}}(\hat{v})\leq \E \mathsf{g}^+(v_\ast)+ K\epsilon^{1/2} = s_\ast+ K\epsilon^{1/2},
\end{align*}
proving the upper bound. 
\end{proof}
\begin{proof}[Proof of the lower bound for $\hat{s}^{\lasso}_A$]
The proof is similar to that of the upper bound, but now we work with $\hat{\mu}$ instead of the subgradient $\hat{v}$. We provide some details below. First note that 
\begin{align*}
\hat{s}=\frac{\pnorm{\hat{\mu}}{0}}{n}= \mathsf{g}^-(\hat{\mu}),\quad \mathsf{g}^-(\mu)\equiv \frac{1}{n}\sum_{j=1}^n \mathsf{g}^-_0(\mu_j)\equiv\frac{1}{n}\sum_{j=1}^n \bm{1}(\abs{\mu_j}\neq 0).
\end{align*}
For any $\delta \in (0,1)$, let $I_\delta^-\equiv (-\delta,0)\cup (0,\delta)$ and let $\mathsf{g}^-_{0,\delta}: \R \to [0,1]$ be defined by $\mathsf{g}^-_{0,\delta}(x)\equiv \mathsf{g}^-_{0}(x)$ for $x \in \R\setminus I_\delta^{-}$ and linearly interpolated otherwise. Clearly $\mathsf{g}^-_{0}\geq \mathsf{g}^-_{0,\delta}$. As $\mathsf{g}^-_{0,\delta}$ is $1/\delta$-Lipschitz, $\mathsf{g}^-_{\delta}:\R^n \to [0,1]$ defined by $\mathsf{g}^-_{\delta}(\mu)\equiv n^{-1}\sum_{j=1}^n \mathsf{g}^-_{0,\delta}(\mu_j)$ is $1/\delta$-Lipschitz as well. 	By the proven distributional result on $\hat{\mu}$, on an event $E_-(\epsilon,\delta)$ with probability at least $1- C(1\vee \epsilon^{-6}) \bar{s}_n$, we have
\begin{align*}
\delta\mathsf{g}^-_{\delta}(\hat{\mu})&\geq \delta\E \mathsf{g}^-_{\delta}(w_\ast+\mu_0)-\epsilon\geq \delta\E \mathsf{g}^-(w_\ast+\mu_0)- \delta \Prob\big(w_\ast+\mu_0 \in I_\delta^-\big)-\epsilon\\
&\geq  \delta\E \mathsf{g}^-(w_\ast+\mu_0)- K\delta^2 -\epsilon = \delta s_\ast - K\delta^2 -\epsilon.
\end{align*}
Here the last inequality follows by the anti-concentration of $w_\ast+\mu_0$ over $I_\delta^-$ (not including $0$). Choosing similarly $\delta\equiv \epsilon^{1/2}$ to conclude the lower bound. 
\end{proof}

\subsection{Proof of Theorem \ref{thm:debiased_lasso}}
Recall that for two probability measures $\mu,\nu$ on $\R^n$ with finite second moments, their (squared) Wasserstein-2 distance is defined as $
\mathsf{W}_2^2(\mu,\nu)\equiv \inf_{\gamma} \int \pnorm{x-y}{}^2\,\gamma(\d x,\d y)$, where the infimum is taken over all couplings $\gamma$ with marginal distributions $\mu$ and $\nu$. 

We shall first establish distributional characterizations of $\hat{w}_A,\hat{v}_A$ in the Wasserstein-2 distance, that allow couplings to relate the joint distribution of $(\hat{w}_A,\hat{v}_A)$. To this end, let
\begin{align*}
\Pi_W&\equiv \hbox{ law of } \bigg(\eta_1\bigg(\Pi_{\mu_0}+\gamma_\ast Z; \frac{\gamma_\ast \lambda}{\beta_\ast}\bigg)-\Pi_{\mu_0}, \Pi_{\mu_0} \bigg),\\
\Pi_V&\equiv \hbox{ law of } \bigg(-\frac{\beta_\ast}{\gamma_\ast \lambda}\bigg[\eta_1\bigg(\Pi_{\mu_0}+\gamma_\ast Z; \frac{\gamma_\ast \lambda}{\beta_\ast}\bigg)-(\Pi_{\mu_0}+\gamma_\ast Z)\bigg], \Pi_{\mu_0} \bigg).
\end{align*}
\begin{proposition}\label{prop:lasso_w2}
Assume the same conditions as in Theorem \ref{thm:lasso_universality_generic}. Then there exists some $K=K(\sigma,\lambda,\tau,M_2)>0$ such that for any $\epsilon>0$,
\begin{align*}
\Prob\Big(\mathsf{W}_2^2\big(\Pi_{(\hat{w}_A^{\lasso},\mu_0)}, \Pi_W\big)\geq \epsilon\Big)\vee \Prob\Big(\mathsf{W}_2^2\big(\Pi_{(\hat{v}_A^{\lasso},\mu_0)}, \Pi_V\big)\geq \epsilon\Big)\leq K(1\vee \epsilon^{-3}) n^{-1/6}\log^3 n.
\end{align*}
\end{proposition}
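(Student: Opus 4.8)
The plan is to upgrade the already-established Lipschitz-test-function distributional characterizations of $\hat w_A^{\lasso}$ and $\hat v_A^{\lasso}$ (from Theorem~\ref{thm:lasso_dist}) into bounds on the expected Wasserstein-2 distance between the empirical joint distributions $\Pi_{(\hat w_A^{\lasso},\mu_0)}$, $\Pi_{(\hat v_A^{\lasso},\mu_0)}$ and their population counterparts $\Pi_W$, $\Pi_V$. The key observation is that $\mathsf W_2^2$ between two empirical measures on $\R^2$ can itself be written as (a limit of) expectations of Lipschitz test functions once one discretizes the target — more precisely, by a standard argument the map $\nu \mapsto \mathsf W_2^2(\nu,\Pi_W)$ restricted to measures supported in a bounded box is Lipschitz in $\mathsf W_2$, and $\mathsf W_2$ can be controlled by finitely many $1$-Lipschitz functionals via a covering/duality argument (e.g.\ Kantorovich–Rubinstein combined with a net of Lipschitz functions, as in \cite{miolane2021distribution}). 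So the first step is: truncate $\hat w_A^{\lasso}$, $\hat v_A^{\lasso}$ (and $\mu_0$) to the box $[-L_n,L_n]^2$ with $L_n\asymp\sqrt{\log n}$, using the $\ell_\infty$ risk bounds in Proposition~\ref{prop:lasso_risk}-(2) and the boundedness $\pnorm{\hat v_A^{\lasso}}{\infty}\le 1$, paying only a $Cn^{-100}$ probability cost and a negligible bias.

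Second, on this bounded box, I would fix a deterministic $\delta$-net $\{\phi_k\}_{k\le N}$ of $1$-Lipschitz functions $\R^2\to\R$ with $N = N(\delta,L_n)$ of polynomial size (this is possible after truncation), so that $\mathsf W_2^2(\Pi_{(\hat w_A,\mu_0)},\Pi_W) \le C\big(\delta + \max_k |n^{-1}\sum_j\phi_k(\hat w_{A,j},\mu_{0,j}) - \E\phi_k(\text{pop})|\big)^2 + (\text{truncation error})$. Applying Theorem~\ref{thm:lasso_dist} to each $\mathsf g_k(w/\sqrt n) \equiv n^{-1}\sum_j\phi_k(\sqrt n w_j,\mu_{0,j})$ — noting $w\mapsto \mathsf g_k$ is $1$-Lipschitz on $\R^n$ — and union-bounding over the $N$ functions gives the claimed $\mathsf W_2^2$ bound with rate $K(1\vee\epsilon^{-3})n^{-1/6}\log^3 n$, after optimizing $\delta$ against the polynomial factor $N$ (which only contributes logarithmically inside the $\log^3 n$). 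The same argument runs verbatim for $\hat v_A^{\lasso}$ using its distributional characterization and the representation $v_\ast^{\lasso} = -\tfrac{\beta_\ast}{\gamma_\ast\lambda}(w_\ast^{\lasso} - \gamma_\ast g)$ from \eqref{def:v_lasso}; one should check that $\Pi_V$ is indeed the pushforward of the relevant Gaussian law, which is immediate from \eqref{def:v_lasso}–\eqref{def:s_lasso}.

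The main obstacle I anticipate is controlling the union bound efficiently: a naive net of $1$-Lipschitz functions on $[-L_n,L_n]^2$ has cardinality exponential in $(L_n/\delta)^2$, which would destroy the rate. The resolution — following the strategy in \cite{miolane2021distribution} — is that one does not need a net in the sup-norm over all of the box; it suffices to control the Wasserstein distance via a \emph{finite-dimensional} family, e.g.\ by comparing one-dimensional marginals along a net of directions together with CDFs at a net of thresholds, each of which is a bounded-Lipschitz functional of the empirical measure, reducing $N$ to a polynomial in $L_n/\delta$. Combined with the $n^{-1/6}$ rate from Theorem~\ref{thm:lasso_dist}, which already absorbs $\log^3 n$ factors, this yields the stated bound.

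Finally, I would note that $\Pi_W$ and $\Pi_V$ are genuinely the correct population limits: $\Pi_W$ matches $w_\ast^{\lasso}$ in \eqref{def:w_lasso} under the pairing $(\text{coordinatewise } w_\ast, \mu_0)\sim \Pi_{\mu_0}\otimes Z$, and likewise $\Pi_V$ matches $v_\ast^{\lasso}$ in \eqref{def:v_lasso}; this consistency is what makes the Lipschitz-test-function characterizations of Theorem~\ref{thm:lasso_dist} directly applicable with no further fixed-point analysis. The $\mathsf W_2^2$ statement then feeds into the proof of Theorem~\ref{thm:debiased_lasso} by providing couplings that recover the \emph{joint} behavior of $(\hat w_A^{\lasso},\hat r_A^{\lasso})$ — or here $(\hat w_A^{\lasso},\hat v_A^{\lasso})$ — which the marginal characterizations alone do not give.
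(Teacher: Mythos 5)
Your plan --- deducing the $\mathsf W_2$ characterization from the already-established Lipschitz-test-function characterization in Theorem~\ref{thm:lasso_dist} via a covering argument --- runs into the obstruction you yourself flag in the third paragraph, and the proposed fix does not close the gap. Kantorovich--Rubinstein duality controls $\mathsf W_1$ (and, after truncation, $\mathsf W_2^2 \lesssim L_n\cdot \mathsf W_1$) by the supremum over \emph{all} $1$-Lipschitz functions on $[-L_n,L_n]^2$; discretizing this supremum to a sup-norm net of resolution $\delta$ requires $\exp\big(C(L_n/\delta)^2\big)$ functions, which is super-polynomial in $n$. Your proposed replacement --- one-dimensional marginals along a net of directions together with CDFs at a net of thresholds --- only controls a sliced-Wasserstein-type quantity, and for measures in $\R^d$ with $d\geq 2$, passing from sliced $\mathsf W_2$ back to $\mathsf W_2$ costs a polynomial power in the exponent that would degrade the $n^{-1/6}$ rate. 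Moreover, the CDF-at-threshold functionals $w\mapsto n^{-1}\sum_j \bm 1\big(\theta_1 w_j + \theta_2 \mu_{0,j}\leq t\big)$ are not Lipschitz in $w$, so Theorem~\ref{thm:lasso_dist} cannot be applied to them directly.

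The paper's proof does not go through test functions at all. It observes that the identity coupling on coordinate indices gives $\mathsf W_2^2\big(\Pi_{(w,\mu_0)},\Pi_{(w_\ast,\mu_0)}\big) \leq n^{-1}\pnorm{w-w_\ast}{}^2$ for free, and then invokes a Gaussian concentration result for the empirical measure of the proxy $w_\ast$, namely \cite[Proposition F.2]{miolane2021distribution}, which gives $\mathsf W_2^2\big(\Pi_W, \Pi_{(w_\ast,\mu_0)}\big) \leq K n^{-1/3}\log n$ with probability $1-Cn^{-100}$. By the triangle inequality for $\mathsf W_2$, for $\epsilon\geq Kn^{-1/3}\log n$ the exceptional set $D_\epsilon = \{w : \mathsf W_2^2(\Pi_{(w,\mu_0)},\Pi_W)\geq\epsilon\}$ is then contained, on a high-probability event, in $\{w : n^{-1}\pnorm{w-w_\ast}{}^2\geq\epsilon/2\}$. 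From that point the gap argument of Proposition~\ref{prop:lasso_est_gap} (which creates a gap in the Gordon cost over the complement of an $\ell_2$ ball around $w_\ast$) and the universality transfer of Theorem~\ref{thm:lasso_universality_generic} apply verbatim, exactly as in the proof of Theorem~\ref{thm:lasso_dist} for $\hat w$; the parallel argument with $v_\ast$ handles $\hat v$. The key idea missing from your proposal is this reduction to an $\ell_2$-distance exceptional set via the identity coupling, together with the Gaussian-proxy $\mathsf W_2$ concentration from \cite{miolane2021distribution}, neither of which appears in your outline.
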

\begin{proof}
First consider the claim for the Lasso error $\hat{w}$. Let $D_\epsilon\equiv \{w \in \R^n: \mathsf{W}_2^2\big(\Pi_{(w,\mu_0)}, \Pi_W\big)\geq \epsilon\} $. Recall $w_\ast$ defined in (\ref{def:w_lasso}). By \cite[Proposition F.2]{miolane2021distribution}, on an event $E_1$ with probability at least $1-Cn^{-100}$, we have $\mathsf{W}_2^2\big(\Pi_W, \Pi_{(w_\ast,\mu_0)}\big)\leq K n^{-1/3}\log n$. So on the event $E_1$, for all $w \in D_\epsilon$,
\begin{align*}
&n^{-1}\pnorm{w-w_\ast}{}^2\geq \mathsf{W}_2^2\big(\Pi_{(w,\mu_0)},\Pi_{(w_\ast,\mu_0)}\big)\\
&\geq \Big(\mathsf{W}_2\big(\Pi_{(w,\mu_0)},\Pi_W\big)-\mathsf{W}_2\big(\Pi_W,\Pi_{(w_\ast,\mu_0)})\Big)_+^2\geq \big(\epsilon^{1/2}- \{K n^{-1/3} \log n\}^{1/2}\big)_+^2.
\end{align*}
Consequently, for $\epsilon\geq K n^{-1/3}\log n$, on the event $E_1$,
\begin{align*}
n^{-1}\pnorm{w-w_{\ast}}{}^2\geq \epsilon/2,\quad \forall w \in D_\epsilon.
\end{align*}
Repeating the arguments in the proof of Proposition \ref{prop:lasso_est_gap} below (\ref{ineq:lasso_w_gap_1}), the claim of Proposition \ref{prop:lasso_est_gap} is valid with $D_\epsilon$ under the additional constraint $\epsilon\geq K  n^{-1/3}\log n$. Now we may proceed as the proof of Theorem \ref{thm:lasso_dist} for $\hat{w}$ to conclude (the additional constraint on $\epsilon$ can be dropped for free).

The claim for the Lasso subgradient $\hat{v}$ can be proved similarly, by considering the exceptional set  $D_\epsilon'\equiv \{v \in \R^n: \mathsf{W}_2^2\big(\Pi_{(v,\mu_0)}, \Pi_V\big)\geq \epsilon\} $. Then a similar argument as above shows that, for $\epsilon\geq K n^{-1/3}\log n$, on an event $E_2$ with $\Prob(E_2)\geq 1-C n^{-100}$,
\begin{align*}
n^{-1}\pnorm{v-v_{\ast}}{}^2\geq \epsilon/2,\quad \forall v \in D_\epsilon'.
\end{align*}
The same proof of Theorem \ref{thm:lasso_dist} for $\hat{v}$ applies to conclude the claim.
\end{proof}

\begin{proof}[Proof of Theorem \ref{thm:debiased_lasso}]
\noindent (1). The proof of the first inequality follows that of \cite[Theorem 3.3]{miolane2021distribution} in Appendix F.5 therein with some modifications. We provide some details below. We will work on the (high probability) event $\mathcal{E}$ defined by
\begin{align*}
\mathcal{E}&\equiv \bigg\{\mathsf{W}_2^2\big(\Pi_{(\hat{w},\mu_0)}, \Pi_W\big)\vee \mathsf{W}_2^2\big(\Pi_{(\hat{v},\mu_0)}, \Pi_V\big)\leq \epsilon^6\bigg\} \cap \bigg\{\abs{\hat{s}-s_\ast}\vee \biggabs{\frac{1}{n}\sum_{j=1}^n\bm{1}_{\abs{\hat{v}_j}=1}-s_\ast }\leq \epsilon^2\bigg\}. 
\end{align*}
By Proposition \ref{prop:lasso_w2} and (the proof of) Theorem \ref{thm:lasso_dist} for $\hat{s}$, we have $\Prob(\mathcal{E})\geq 1- K(1\vee \epsilon^{-24}) \bar{s}_n$. On the event $\mathcal{E}$, using the definition of $\mathsf{W}_2$, we may find couplings $(\Pi_{\mu_0}^w,Z^w), (\Pi_{\mu_0}^v,Z^v)$ both distributed as $ (n^{-1}\sum_{j=1}^n\delta_{\mu_{0,j}})\otimes \mathcal{N}(0,1)$, such that with
\begin{align*}
(\Pi_{\mu_0},\Pi_{\hat{\mu}},\Pi_{\hat{v}},\Pi_{\hat{\mu}^{\delasso}})=\frac{1}{n}\sum_{j=1}^n \delta_{(\mu_{0,j}, \hat{\mu}_j, \hat{v}_j,\hat{\mu}_j^{\delasso})},\quad \alpha_\ast\equiv \frac{\gamma_\ast\lambda}{\beta_\ast},
\end{align*}
we have
\begin{align*}
&\E^\circ \Big[\Big(\Pi_{\hat{\mu}}-\eta_1\big(\Pi_{\mu_0}^w+\gamma_\ast Z^w;\alpha_\ast\big)\Big)^2+\Big(\Pi_{\mu_0}-\Pi_{\mu_0}^w\Big)^2\Big]\leq 2\epsilon^6,\\
&\E^\circ \Big[\Big(\Pi_{\hat{v}}+\alpha_\ast^{-1}\Big\{ \eta_1\big(\Pi_{\mu_0}^v+\gamma_\ast Z^v;\alpha_\ast\big)-\Pi_{\mu_0}^v-\gamma_\ast Z^v\Big\}  \Big)^2+\Big(\Pi_{\mu_0}-\Pi_{\mu_0}^v\Big)^2\Big]\leq 2\epsilon^6.
\end{align*}
Here $\E^\circ[\cdot] = \E[\cdot|A,\xi]$ is taken over $(\Pi_{\mu_0}^w,Z^w), (\Pi_{\mu_0}^v,Z^v)$ and $(\Pi_{\mu_0},\Pi_{\hat{\mu}},\Pi_{\hat{v}},\Pi_{\hat{\mu}^{\delasso}})$. Proceeding as in the proof of \cite[Theorem 3.3]{miolane2021distribution} in Appendix F.5, up to Lemma F.8 therein, on an event $E$ with $\Prob^\circ (E)\geq 1- C\epsilon^2$, we may relate the two couplings $(\Pi_{\mu_0}^w,Z^w), (\Pi_{\mu_0}^v,Z^v)$ by the following relation
\begin{align}\label{ineq:lasso_debias_1}
\abs{\Pi_{\mu_0}^v+\gamma_\ast Z^v}\geq \alpha_\ast \Leftrightarrow \abs{\Pi_{\mu_0}^w+\gamma_\ast Z^w}\geq \alpha_\ast.
\end{align}
Now define 
\begin{align*}
X^{\delasso}\equiv \eta_1\big(\Pi_{\mu_0}^w+\gamma_\ast Z^w;\alpha_\ast\big)+\big\{\Pi_{\mu_0}^v+\gamma_\ast Z^v- \eta_1\big(\Pi_{\mu_0}^v+\gamma_\ast Z^v;\alpha_\ast\big)\big\}.
\end{align*}
Then using $\Pi_{\hat{\mu}_A^{\delasso}} = \Pi_{\hat{\mu}}+\mathsf{c}^{\delasso}\Pi_{\hat{v}}$, where $\mathsf{c}^{\delasso}\equiv\lambda(1-\pnorm{\hat{\mu}}{0}/m)^{-1}=\alpha_\ast+\bigo(\epsilon^2)$ on $\mathcal{E}$,
\begin{align}\label{ineq:lasso_debias_2}
&\bigabs{ \E^\circ\mathsf{g}\big(\Pi_{\hat{\mu}^{\delasso}},\Pi_{\mu_0}\big)-\E^\circ \mathsf{g}\big(X^{\delasso},\Pi_{\mu_0}\big) }\leq \pnorm{\mathsf{g}}{\lip} \cdot \E^\circ \abs{\Pi_{\hat{\mu}^{\delasso}}-X^{\delasso} }\nonumber\\
&\lesssim \pnorm{\mathsf{g}}{\lip}\cdot \Big[\E^{\circ}\Big(\Pi_{\hat{\mu}}-\eta_1\big(\Pi_{\mu_0}^w+\gamma_\ast Z^w;\alpha_\ast\big)\Big)^2\nonumber\\
&\qquad\qquad +\E^\circ \Big(\mathsf{c}^{\delasso}\Pi_{\hat{v}}-\big\{\Pi_{\mu_0}^v+\gamma_\ast Z^v- \eta_1\big(\Pi_{\mu_0}^v+\gamma_\ast Z^v;\alpha_\ast\big)\big\}\Big)^2\Big]^{1/2}\nonumber\\
& \leq K\cdot \pnorm{\mathsf{g}}{\lip}\cdot \epsilon^2. 
\end{align}
On the other hand, by (\ref{ineq:lasso_debias_1}), on the event $E$, we have 
\begin{align*}
X^{\delasso} &= (\Pi_{\mu_0}^w+\gamma_\ast Z^w)\bm{1}_{\abs{\Pi_{\mu_0}^w+\gamma_\ast Z^w}\geq \alpha_\ast}+ (\Pi_{\mu_0}^v+\gamma_\ast Z^v)\bm{1}_{\abs{\Pi_{\mu_0}^w+\gamma_\ast Z^w}< \alpha_\ast}\\
& = (\Pi_{\mu_0}^w+\gamma_\ast Z^w)\bm{1}_{\abs{\Pi_{\mu_0}^w+\gamma_\ast Z^w}\geq \alpha_\ast}+ (\Pi_{\mu_0}^v+\gamma_\ast Z^v)\bm{1}_{\abs{\Pi_{\mu_0}^v+\gamma_\ast Z^v}< \alpha_\ast}.
\end{align*}
Further using that $(\Pi_{\mu_0}^w+\gamma_\ast Z^w,\Pi_{\mu_0}^w)\equald (\Pi_{\mu_0}^v+\gamma_\ast Z^v,\Pi_{\mu_0}^v)$ under $\E^\circ$, we have
\begin{align*}
&\E \mathsf{g}\big(\Pi_{\mu_0}+\gamma_\ast Z,\Pi_{\mu_0}\big) \\
& = \E^\circ \mathsf{g}\big(\Pi_{\mu_0}^w+\gamma_\ast Z^w,\Pi_{\mu_0}^w\big) \bm{1}_{\abs{\Pi_{\mu_0}^w+\gamma_\ast Z^w}\geq \alpha_\ast}+\E^\circ \mathsf{g}\big(\Pi_{\mu_0}^w+\gamma_\ast Z^w,\Pi_{\mu_0}^w\big) \bm{1}_{\abs{\Pi_{\mu_0}^w+\gamma_\ast Z^w}< \alpha_\ast}\\
& = \E^\circ \mathsf{g}\big(\Pi_{\mu_0}^w+\gamma_\ast Z^w,\Pi_{\mu_0}^w\big) \bm{1}_{\abs{\Pi_{\mu_0}^w+\gamma_\ast Z^w}\geq \alpha_\ast}+\E^\circ \mathsf{g}\big(\Pi_{\mu_0}^v+\gamma_\ast Z^v,\Pi_{\mu_0}^v\big) \bm{1}_{\abs{\Pi_{\mu_0}^v+\gamma_\ast Z^v}< \alpha_\ast}.
\end{align*}
The above two displays imply that
\begin{align}\label{ineq:lasso_debias_3}
&\bigabs{\E^\circ \mathsf{g}\big(X^{\delasso},\Pi_{\mu_0}\big)-\E \mathsf{g}\big(\Pi_{\mu_0}+\gamma_\ast Z,\Pi_{\mu_0}\big)} \nonumber\\
& \leq \bigabs{\E^\circ \mathsf{g}\big(X^{\delasso},\Pi_{\mu_0}\big)\bm{1}_E-\E \mathsf{g}\big(\Pi_{\mu_0}+\gamma_\ast Z,\Pi_{\mu_0}\big)}+\pnorm{\mathsf{g}}{\infty}\Prob^\circ(E^c) \nonumber\\
&\leq \bigabs{ \E^\circ \big[\mathsf{g}\big(\Pi_{\mu_0}^w+\gamma_\ast Z^w,\Pi_{\mu_0}\big)-\mathsf{g}\big(\Pi_{\mu_0}^w+\gamma_\ast Z^w,\Pi_{\mu_0}^w\big) \big]\bm{1}_{\abs{\Pi_{\mu_0}^w+\gamma_\ast Z^w}\geq \alpha_\ast}\bm{1}_E }\nonumber\\
&\qquad + \bigabs{ \E^\circ \big[\mathsf{g}\big(\Pi_{\mu_0}^v+\gamma_\ast Z^v,\Pi_{\mu_0}\big)-\mathsf{g}\big(\Pi_{\mu_0}^v+\gamma_\ast Z^v,\Pi_{\mu_0}^v\big) \big]\bm{1}_{\abs{\Pi_{\mu_0}^v+\gamma_\ast Z^v}< \alpha_\ast}\bm{1}_E }\nonumber\\
&\qquad +3\pnorm{\mathsf{g}}{\infty}\Prob^\circ(E^c)\nonumber\\
&\lesssim  \pnorm{\mathsf{g}}{\lip}\max_{u \in \{w,v\}}\big\{\E^\circ \big(\Pi_{\mu_0}-\Pi_{\mu_0}^u\big)^2\big\}^{1/2} + \pnorm{\mathsf{g}}{\infty}\Prob^\circ(E^c)\nonumber\\
&\leq K\cdot ( \pnorm{\mathsf{g}}{\lip}\vee\pnorm{\mathsf{g}}{\infty})\cdot\epsilon^2.
\end{align}
Combining (\ref{ineq:lasso_debias_2})-(\ref{ineq:lasso_debias_3}) concludes the desired inequality.

\noindent (2). By Lemma \ref{lem:lasso_gamma_est} below, on an event $E_1$ with $\Prob(E_1)\geq 1-K\epsilon^{-12} \bar{s}_n$,  $\abs{\hat{\gamma}-\gamma_\ast}\leq \epsilon$. Let $\mathsf{g}^{\pm}(x,y)\equiv \mathsf{g}_0^\pm(x-y)\equiv \bm{1}\big(\abs{x-y}\leq z_{\alpha/2}(\gamma_\ast\pm \epsilon)\big)$. For $\delta \in (0,1)$, let $I_\delta^+\equiv (-z_{\alpha/2}(\gamma_\ast+ \epsilon)-\delta,-z_{\alpha/2}(\gamma_\ast+ \epsilon))\cup (z_{\alpha/2}(\gamma_\ast+ \epsilon),z_{\alpha/2}(\gamma_\ast+ \epsilon)+\delta)$, and $\mathsf{g}_{0,\delta}^+\equiv \mathsf{g}_0^+$ on $\R\setminus I_\delta^+$ and linearly interpolated otherwise. Let $\mathsf{g}_\delta^+(x,y)\equiv \mathsf{g}_{0,\delta}^+(x-y)$. It is easy to see that $\mathsf{g}_\delta^+$ is $\sqrt{2}/\delta$-Lipschitz. Then on the event $E_1$, for any $\delta \in (0,1)$,
\begin{align*}
\mathscr{C}^{\delasso} = \frac{1}{n}\sum_{j=1}^n \bm{1}\big(\abs{\hat{\mu}^{\delasso}_j-\hat{\mu}_{0,j}}\leq z_{\alpha/2} \hat{\gamma}\big)\leq \E^\circ \mathsf{g}^+\big(\Pi_{\hat{\mu}_A^{\delasso}},\Pi_{\mu_0}\big)\leq \E^\circ \mathsf{g}_\delta^+\big(\Pi_{\hat{\mu}_A^{\delasso}},\Pi_{\mu_0}\big).
\end{align*}
By the proven claim in (1), on an event $E_2$ with $\Prob(E_2)\geq 1-K\epsilon^{-12}\bar{s}_n$, 
\begin{align*}
\E^\circ \mathsf{g}_\delta^+\big(\Pi_{\hat{\mu}_A^{\delasso}},\Pi_{\mu_0}\big)&\leq \E \mathsf{g}_\delta^+\big(\Pi_{\mu_0}+\gamma_\ast Z,\Pi_{\mu_0}\big)+\sqrt{2}\epsilon/\delta\\
&\leq \E \mathsf{g}^+\big(\Pi_{\mu_0}+\gamma_\ast Z,\Pi_{\mu_0}\big)+\Prob\big(\gamma_\ast Z \in I_\delta^+\big)+\sqrt{2}\epsilon/\delta\\
& \leq \Prob\big(\abs{\gamma_\ast Z}\leq z_{\alpha/2}(\gamma_\ast+\epsilon)\big)+ K\delta+\sqrt{2}\epsilon/\delta\\
&\leq (1-\alpha)+ \bigo\big(\epsilon+\delta+\epsilon/\delta\big).
\end{align*}
Now choosing $\delta\equiv \epsilon^{1/2}$, we have shown that on $E_1\cap E_2$, $\mathscr{C}^{\delasso} \leq (1-\alpha)+\bigo(\epsilon^{1/2})$. A similar lower bound for $\mathscr{C}^{\delasso}$ can be proven analogously by smoothing $\mathsf{g}^-$. 
\end{proof}

\begin{lemma}\label{lem:lasso_gamma_est}
Assume the same conditions as in Theorem \ref{thm:lasso_universality_generic}. Then there exists some $K=K(\sigma,\lambda,\tau,M_2)>0$ such that for $\epsilon \in (0,1)$,
\begin{align*}
\Prob\big(\abs{\hat{\gamma}-\gamma_\ast}\geq \epsilon\big)\leq K \epsilon^{-12} n^{-1/6}\log^3 n.
\end{align*}
\end{lemma}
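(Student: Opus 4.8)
\textbf{Proof plan for Lemma \ref{lem:lasso_gamma_est}.}

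The plan is to control the deviation of $\hat{\gamma}_A^{\lasso}$ by decomposing it according to its defining formula \eqref{def:gamma_est_lasso}, namely $\hat{\gamma}_A^{\lasso} = (\pnorm{\hat{r}_A^{\lasso}}{}/\sqrt{m})/(1-(n/m)^{-1}\hat{s}_A^{\lasso}\cdot(m/n))$, or more precisely $\hat{\gamma} = (\pnorm{\hat{r}}{}/\sqrt m)\big/\big(1-(m/n)^{-1}\hat{s}\big)$, and to separately estimate the numerator $\pnorm{\hat{r}_A^{\lasso}}{}/\sqrt{m}$ and the denominator $1-(m/n)^{-1}\hat{s}_A^{\lasso}$ using the already-established distributional universality results of Theorem \ref{thm:lasso_dist}. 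First I would note that from \eqref{eqn:lasso_fpe} and \eqref{def:w_lasso}-\eqref{def:s_lasso}, the population target satisfies $\gamma_\ast^{\lasso} = (\E_h\pnorm{r_\ast^{\lasso}}{}^2/m)^{1/2}\big/\big(1-(m/n)^{-1}s_\ast^{\lasso}\big)$; indeed $\E\pnorm{r_\ast^{\lasso}}{}^2/m = (\beta_\ast^{\lasso}/\gamma_\ast^{\lasso})^2\gamma_\ast^{\lasso\,2}\cdot$(something) and the second fixed point equation gives $\beta_\ast^{\lasso}/\gamma_\ast^{\lasso} = 1-(m/n)^{-1}s_\ast^{\lasso}$, so the ratio collapses to $\gamma_\ast^{\lasso}$. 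Thus it suffices to show that both the numerator and denominator concentrate around their population analogues with the stated polynomial-in-$\epsilon$, $n^{-1/6}\log^3 n$ rate.

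The key steps, in order: (i) Apply Theorem \ref{thm:lasso_dist} with $\mathsf{h}(\nu) = \pnorm{\nu}{}$ (which is $1$-Lipschitz on $\R^m$) to get that $\pnorm{\hat{r}_A^{\lasso}}{}/\sqrt{m}$ is close to $\E_h\pnorm{r_\ast^{\lasso}}{}/\sqrt{m}$, up to $\epsilon$ with probability at least $1-K(1\vee\epsilon^{-6})n^{-1/6}\log^3 n$; then a standard Gaussian/sub-Gaussian concentration argument for the Lipschitz functional $\pnorm{r_\ast^{\lasso}}{}/\sqrt m$ of $(h,\xi_0)$, combined with the boundedness of $\beta_\ast^{\lasso},\gamma_\ast^{\lasso}$ (the Lasso analogue of Proposition \ref{prop:ridge_empirical_char}-(2)), shows $\E_h\pnorm{r_\ast^{\lasso}}{}/\sqrt m$ is within $O(r_n)$ of $(\E\pnorm{r_\ast^{\lasso}}{}^2/m)^{1/2}$. (ii) Apply Theorem \ref{thm:lasso_dist} for $\hat{s}_A^{\lasso}$ directly to get $\abs{\hat{s}_A^{\lasso}-s_\ast^{\lasso}}\leq \epsilon^{1/2}$ (hence $\leq\epsilon$ after relabelling) with probability at least $1-K(1\vee\epsilon^{-6})n^{-1/6}\log^3 n$. (iii) Establish a deterministic lower bound $1-(m/n)^{-1}s_\ast^{\lasso}\geq c(\sigma,\lambda,\tau,M_2)>0$ from the fixed point equations — this is where one uses that $\beta_\ast^{\lasso}>0$ is bounded away from $0$ — so that the denominator in $\hat\gamma$ stays bounded away from zero on the good event, and the ratio map $(a,b)\mapsto a/b$ is Lipschitz there. (iv) Combine via a union bound and the quotient estimate; the exponent $\epsilon^{-12}$ arises because one needs $\epsilon$-closeness of the numerator after squaring/taking square roots in step (i), which requires applying Theorem \ref{thm:lasso_dist} at scale $\epsilon^2$, costing $(1\vee\epsilon^{-2\cdot 6}) = (1\vee\epsilon^{-12})$.

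The main obstacle is the bookkeeping in step (i): the quantity directly characterized by Theorem \ref{thm:lasso_dist} is $\mathsf{h}(\hat r_A^{\lasso}/\sqrt m)$ for $1$-Lipschitz $\mathsf{h}$, so to get at $\pnorm{\hat r_A^{\lasso}}{}^2/m$ (which is what pairs cleanly with the fixed point equations) one must either work with $\pnorm{\cdot}{}$ directly and track the square-root Lipschitz constant near the (bounded, strictly positive) population value, or equivalently apply the test function at scale $\epsilon^2$ and absorb the resulting $\epsilon^{-12}$; one also needs the population second-moment identity $\E\pnorm{r_\ast^{\lasso}}{}^2/m = \gamma_\ast^{\lasso\,2}(\beta_\ast^{\lasso}/\gamma_\ast^{\lasso})^2\cdot(\text{normalizing factor})$ to match $\gamma_\ast^{\lasso}$ exactly after dividing by $(1-(m/n)^{-1}s_\ast^{\lasso})^2 = (\beta_\ast^{\lasso}/\gamma_\ast^{\lasso})^2$, which is a direct (if slightly tedious) manipulation of \eqref{eqn:lasso_fpe}. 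No genuinely new estimate is needed beyond Theorem \ref{thm:lasso_dist} and the boundedness/positivity of $(\beta_\ast^{\lasso},\gamma_\ast^{\lasso})$, so the lemma follows by assembling these ingredients.
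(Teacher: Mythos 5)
Your proposal matches the paper's proof in its essentials: decompose $\hat\gamma$ according to the quotient formula \eqref{def:gamma_est_lasso}, control the numerator $\pnorm{\hat r}{}/\sqrt m$ and the denominator $1-(m/n)^{-1}\hat s$ separately via Theorem \ref{thm:lasso_dist}, concentrate $\E_h\pnorm{r_\ast}{}/\sqrt m$ around $(\E\pnorm{r_\ast}{}^2/m)^{1/2}$ (the paper does this with Gaussian-Poincar\'e plus the event $\{\abs{\pnorm{\xi_0}{}^2/m-1}\lesssim r_n\}$), verify the fixed-point identity $s_\ast = (m/n)(1-\beta_\ast/\gamma_\ast)$ so the ratio collapses to $\gamma_\ast$, and use that $\beta_\ast/\gamma_\ast$ is bounded away from zero to make the quotient map Lipschitz. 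One small correction on the bookkeeping in your step (iv): the $\epsilon^{-12}$ in the paper's statement actually comes from the sparsity bullet of Theorem \ref{thm:lasso_dist}, which is stated at scale $\epsilon^{1/2}$ — so requiring $\abs{\hat s-s_\ast}\le\epsilon$ costs $\epsilon^{-12}$ directly — rather than from applying the residual bullet at scale $\epsilon^2$; the paper never squares the residual, since taking $\mathsf{h}(\nu)=\pnorm{\nu}{}$ at scale $\epsilon$ only costs $\epsilon^{-6}$ and the union bound is dominated by the sparsity term. This does not affect the final exponent, but it is the correct attribution.
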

\begin{proof}
By Theorem \ref{thm:lasso_dist}, on an event $E_1$ with $\Prob(E_1)\geq 1- K \epsilon^{-12}\bar{s}_n$,
\begin{align*}
\biggabs{ \frac{ \pnorm{\hat{r}}{} }{\sqrt{m}}- \frac{\beta_\ast}{\gamma_\ast \sqrt{m}} \E_h \bigpnorm{\sigma \xi_0+\sqrt{\gamma_\ast^2-\sigma^2} h}{}  } \vee \bigabs{\hat{s}-s_\ast}\leq \epsilon. 
\end{align*}
Note that with $F(h)\equiv \pnorm{\sigma \xi_0+\sqrt{\gamma_\ast^2-\sigma^2} h}{} $, $\pnorm{\nabla F(h)}{}\leq \sqrt{\gamma_\ast^2-\sigma^2}$ and therefore Gaussian-Poincar\'e inequality yields that
\begin{align*}
0&\leq \E_h \bigpnorm{\sigma \xi_0+\sqrt{\gamma_\ast^2-\sigma^2} h}{}^2-\Big(\E_h \bigpnorm{\sigma \xi_0+\sqrt{\gamma_\ast^2-\sigma^2} h}{}\Big)^2\\
&=\var\big(F(h)\big)\leq \E\pnorm{\nabla F(h)}{}^2\leq \gamma_\ast^2-\sigma^2. 
\end{align*}
On the event $E_2\equiv \{\abs{\pnorm{\xi_0}{}^2/m-1}\leq Kr_n\}$ so that $\Prob(E_2)\geq 1- Cn^{-100}$,  we have 
\begin{align*}
\E_h \bigpnorm{\sigma \xi_0+\sqrt{\gamma_\ast^2-\sigma^2} h}{}^2 = m\Big[\sigma^2 (\pnorm{\xi_0}{}^2/m)+ (\gamma_\ast^2-\sigma^2)\Big] = m \big(\gamma_\ast^2+\bigo(r_n)\big).
\end{align*}
So on $E_1\cap E_2$, we have
\begin{align*}
\biggabs{ \frac{ \pnorm{\hat{r}}{} }{\sqrt{m}}- \beta_\ast}\leq \epsilon+\mathcal{O}(r_n). 
\end{align*}
By the fixed point equation (\ref{eqn:lasso_fpe}) and the definition of $s_\ast$ in (\ref{def:s_lasso}), we have $s_\ast = (m/n)\big(1-\{\beta_\ast/\gamma_\ast\}\big)$. Consequently by definition of $\hat{\gamma}$ in (\ref{def:gamma_est_lasso}), on the event $E_1\cap E_2$,
\begin{align*}
\hat{\gamma} = \frac{\beta_\ast+\bigo(\epsilon \vee r_n)}{\beta_\ast/\gamma_\ast+\bigo(\epsilon)} = \gamma_\ast+\bigo(\epsilon\vee r_n). 
\end{align*}
The claim follows as we do not need to consider the regime $\epsilon\leq r_n$. 
\end{proof}

\section{Proofs for Section \ref{section:examples}: Robust regression}\label{section:proof_robust}

\noindent \emph{Convention}: We shall write
\begin{align*}
\bar{H}^{\rob}(w) \equiv \bar{H}^{\rob}(w,A)\equiv \bar{H}^{\rob}(w,A,\xi),
\end{align*}
and will usually omit the superscript $(\cdot)^{\rob}$ if no confusion could arise. We also usually omit the subscript $A$ that indicates the design matrix, but we will use the subscript $G$ for Gaussian designs when needed. All notation will be local in this section. 

\subsection{Proof of Proposition \ref{prop:robust_risk}}

	The proof idea is similar to that of Proposition \ref{prop:ridge_risk}-(2).  Fix any $s\in[n]$. Define the (column) leave-one-out version by $\hat{w}^{(s)} \equiv \argmin_{w\in\R^n: w_s = 0} H(w)$. By the cost optimality of $\hat{w}$,
	\begin{align*}
	0 &\leq \bar{H}(\hat{w}^{(s)}) - \bar{H}(\hat{w}) \nonumber \\
	&= -\sum_{i=1}^m\Big(\psi_0(a_i^\top \hat{w}-\xi_i) - \psi_0(a_i^\top \hat{w}^{(s)}-\xi_i)\Big) + \frac{\lambda}{2}\big(\pnorm{\hat{w}^{(s)} + \mu_0}{}^2 - \pnorm{\hat{w} + \mu_0}{}^2\big)\nonumber\\
	&\stackrel{(*)}{\leq} -\sum_{i=1}^m \psi_0'\big(a_i^\top\hat{w}^{(s)}-\xi_i\big)\cdot \big[a_{i,s}\hat{w}_s - a_{i,-s}^\top(\hat{w}^{(s)}_{-s} - \hat{w}_{-s})\big] + \frac{\lambda}{2}\big(\pnorm{\hat{w}^{(s)} + \mu_0}{}^2 - \pnorm{\hat{w} + \mu_0}{}^2\big)\nonumber\\
	&\stackrel{(**)}{=} -\hat{w}_s \cdot \sum_{i=1}^m \psi_0'\big(a_i^\top\hat{w}^{(s)}-\xi_i\big)\cdot a_{i,s} + \frac{\lambda}{2}\big((\mu_{0,s})^2 - (\hat{w}_{s} + \mu_{0,s})^2\big)\nonumber\\
	&\qquad + \frac{\lambda}{2}\Big(\pnorm{\hat{w}^{(s)}_{-s} + \mu_{0,-s}}{}^2 - \pnorm{\hat{w}_{-s} + \mu_{0,-s}}{}^2 - 2(\hat{w}^{(s)}_{-s} + \mu_{0,-s})(\hat{w}^{(s)}_{-s} - \hat{w}_{-s})\Big)\nonumber\\
	&= -\hat{w}_s \cdot \sum_{i=1}^m \psi_0'\big(a_{i,-s}^\top\hat{w}^{(s)}_{-s}-\xi_i\big)\cdot a_{i,s} - \frac{\lambda}{2}(\hat{w}_s)^2 - \lambda\hat{w}_{s}\mu_{0,s} - \frac{\lambda}{2}\pnorm{\hat{w}^{(s)}_{-s} - \hat{w}_{-s}}{}^2.
	\end{align*}
	Here $(*)$ follows from the convexity of $\psi_0(\cdot)$, and $(**)$ follows from the KKT condition for $\hat{w}^{(s)}$ that reads $
	\sum_{i=1}^m a_{i,-s}\psi_0'(a_{i,-s}^\top\hat{w}^{(s)}_{-s}-\xi_i) + \lambda(\hat{w}^{(s)}_{-s} + \mu_{0,-s}) = 0$. Rearranging terms yields that
	\begin{align*}
	\abs{\hat{w}_s} \lesssim \lambda^{-1}\cdot\biggabs{\sum_{i=1}^m a_{i,s}\cdot\psi_0'(a_{i,-s}^\top \hat{w}^{(s)}_{-s}-\xi_i)} + \pnorm{\mu_0}{\infty}.
	\end{align*}
	So for $p\geq 2$, 
	\begin{align*}
	\E \abs{\hat{w}_s}^p&\lesssim_p \lambda^{-p} \E \Big|\sum_{i=1}^m a_{i,s}\cdot\psi_0'(a_{i,-s}^\top \hat{w}^{(s)}_{-s}-\xi_i)\Big|^p + \pnorm{\mu_0}{\infty}^p\nonumber\\
	&\stackrel{(a)}{\leq} (L_0/\lambda)^p \E \biggabs{\sum_{i=1}^m a_{i,s}}^p+ \pnorm{\mu_0}{\infty}^p \stackrel{(b)}{\lesssim_p} (L_0/\lambda)^p \E \biggabs{\sum_{i=1}^m \epsilon_i a_{i,s}}^p+ \pnorm{\mu_0}{\infty}^p\nonumber \\
	&\stackrel{(c)}{\lesssim_p } (L_0/\lambda)^p \E \biggabs{\sum_{i=1}^m a_{i,s}^2}^{p/2}+ \pnorm{\mu_0}{\infty}^p  \lesssim_p (L_0/\lambda)^p M_{p;A}+ \pnorm{\mu_0}{\infty}^p.
	\end{align*}
	Here $\epsilon_i$'s are i.i.d. Rademachers that are also independent of other random variables, and $(a)$ follows from the contraction principle (cf. \cite[Corollary 3.1.18]{gine2015mathematical}), $(b)$ follows from the symmetrization inequality (cf. \cite[Theorem 3.1.21]{gine2015mathematical}) and $(c)$ follows from Khintchine's inequality (cf. \cite[Proposition 3.2.8]{gine2015mathematical}).\qed

\subsection{Proof of Theorem \ref{thm:robust_universality_generic}}

	Note that by Proposition \ref{prop:robust_risk}, with $p\equiv 6+\delta$, 
	\begin{align*}
	\Prob\big(\pnorm{\hat{w}_A}{\infty}>L_n\big)\leq L_n^{-p}\E \max_{j \in [n]} \abs{\hat{w}_{A,s}}^p \lesssim_{p,L_0,\lambda} (nL_n^{-p})\big(M_{p;A}+ \pnorm{\mu_0}{\infty}^p\big).
	\end{align*}
	It is easy to verify (\ref{cond:f_moduli}) for the ridge penalty, so we may apply Theorem \ref{thm:universality_reg} to obtain that for some $K=K(p,M_{6+\delta;A},\tau,\lambda)>0$, 
	\begin{align*}
	\Prob\big(\hat{w}_A \in \mathcal{S}_n\big)\leq 4\epsilon_n + K\bigg\{(nL_n^{-(6+\delta)})\big(1\vee \pnorm{\mu_0}{\infty}^{6+\delta}\big)+(1\vee \rho_0^{-3}) \bigg(\frac{ L_n \log^{2/3} n}{n^{1/6}}\bigg)^{1/7}\bigg\}.
	\end{align*}
	Now setting $L_n\equiv n^{1/6-\epsilon}$ with $\epsilon \equiv \epsilon_\delta \equiv \delta/(6\cdot (6+1/7+\delta))\geq \delta/43$ for $\delta \in (0,1)$, and the right hand side of the above display is bounded by 
	\begin{align*}
	4\epsilon_n + K'\big(1+\pnorm{\mu_0}{\infty}^{6+\delta}+\rho_0^{-3}\big) \cdot n^{-(1\wedge\delta)/500},
	\end{align*}
	where $K'>0$ further depends on $\delta \in (0,1)$. This completes the proof. \qed

\subsection{Proof of Theorem \ref{thm:robust_risk_asymp}}

	We will use the results of \cite{thrampoulidis2018precise} to derive the result. Assume for notational simplicity that $m/n=\tau_0$ (instead of equal in limit). Now consider a reparametrized regression model $Y=G_n \nu_0+\xi$, where $G_n=\sqrt{\tau_0} G$ whose entries are $\mathcal{N}(0,1/n)$ and $\nu_0\equiv \mu_0/\sqrt{\tau_0}$. Let $\lambda_\nu \equiv \tau_0 \lambda$, and 
	\begin{align*}
	\hat{\nu}\equiv\argmin_{\nu \in \R^n} \bigg\{\sum_{i=1}^m \psi_0\big(y_i-(G_n \nu)_i\big)+\frac{\lambda_\nu}{2}\pnorm{\nu}{}^2\bigg\}.
	\end{align*}
	Then $\hat{\nu}=\hat{\mu}/\sqrt{\tau_0}$. The purpose of this reparametrization is to match the setup of \cite{thrampoulidis2018precise} exactly. For this reparametrized model, conditions (1)-(5) ensure that both Theorem \ref{thm:robust_universality_generic} and \cite[Theorem 4.1]{thrampoulidis2018precise} may be applied. In particular, regularities conditions on $\psi_0$, $\xi_0$ and moment conditions on $A_0,\mu_0$ in \cite[Theorem 4.1]{thrampoulidis2018precise} are guaranteed by (1)-(5); (stochastic) boundedness of the normalized estimation error in \cite[Theorem 4.1]{thrampoulidis2018precise} is guaranteed by Proposition \ref{prop:robust_risk}; the moment condition in (5) ensures that $\E \pnorm{\nu_0}{\infty}^{6+\delta}/n^\epsilon \to 0$ for any $\epsilon>0$. Now \cite[Eqn. (23)]{thrampoulidis2018precise} applies so the system of equations 
	\begin{align}\label{ineq:robust_risk_asymp_1}
	{\gamma}_\ast^2& =\tau_0 {\beta}_\ast^2\cdot \E \mathsf{e}_{\psi_0}'\big({\gamma}_\ast Z+\xi_1;{\beta}_\ast\big)^2+{\lambda}_\nu^2 {\beta}_\ast^2 \big(\E \Pi_0^2/\tau_0),\nonumber\\
	{\gamma}_\ast(1-\lambda_\nu {\beta}_\ast)& = \tau_0{\beta}_\ast\cdot \E \Big(\mathsf{e}_{\psi_0}'\big({\gamma}_\ast Z+\xi_1;{\beta}_\ast\big)\cdot Z\Big)
	\end{align}
	admits a unique non-trivial solution $({\beta}_\ast,{\gamma}_\ast)\in (0,\infty)^2$. As $\mathsf{e}_{\psi_0}'(x;\tau)=\tau^{-1}(x-\prox_{\psi_0}(x;\tau))$, the first equation in (\ref{ineq:robust_risk_asymp_1}) becomes
	\begin{align}\label{ineq:robust_risk_asymp_2}
	\gamma_\ast^2 /\tau_0 =  \E \Big(\gamma_\ast Z+\xi_1- \prox_{\psi_0}\big(\gamma_\ast Z+\xi_1;\beta_\ast\big)\Big)^2 +\lambda^2 {\beta}_\ast^2 \cdot \E \Pi_0^2. 
	\end{align}
	Furthermore, since $x\mapsto \prox_{\psi_0}(x;\tau)$ is $1$-Lipschitz (cf. Lemma \ref{lem:prox_lipschitz}), we may apply Stein's identity to the second equation in (\ref{ineq:robust_risk_asymp_1}), which reduces to 
	\begin{align}\label{ineq:robust_risk_asymp_3}
	& 1-\lambda_\nu {\beta}_\ast = \tau_0{\beta}_\ast\cdot \E \mathsf{e}_{\psi_0}''\big({\gamma}_\ast Z+\xi_1;{\beta}_\ast\big) = \tau_0\Big(1- \E \prox_{\psi_0}'\big({\gamma}_\ast Z+\xi_1;{\beta}_\ast\big)  \Big),\nonumber\\
	&\Leftrightarrow\quad  1-\tau_0^{-1}+ \lambda \beta_\ast=\E \prox_{\psi_0}'\big({\gamma}_\ast Z+\xi_1;{\beta}_\ast\big). 
	\end{align}
	On the other hand, \cite[Eqns. (85)-(86)]{thrampoulidis2018precise} along with \cite[Lemma A.5-(b)]{thrampoulidis2018precise} ensure that the limits in probability (denoted $\plim$) of the normalized estimation error satisfy $
	\plim {\pnorm{\hat{\nu}-\nu_0}{}^2}/{n}=\plim {\pnorm{\hat{\mu}-\mu_0}{}^2}/(\tau_0 n)={\gamma}_\ast^2$, 
	in the stronger sense that there exist $z, \rho_0>0$ such that for any small enough $\epsilon>0$, with $\mathcal{S}_n(\epsilon)\equiv \big\{w \in \R^n: \abs{ \pnorm{w}{}^2/n-\tau_0{\gamma}_\ast^2}>\epsilon\big\}$, 
	\begin{align*}
	\Prob\bigg(\min_{w \in \R^n} H(w,G)\geq z+\rho_0 \bigg)\vee 	\Prob\bigg(\min_{w \in \mathcal{S}_n(\epsilon)} H(w,G)\leq z+2\rho_0 \bigg)\to 0.
	\end{align*}
	The claim now follows by utilizing (\ref{ineq:robust_risk_asymp_2})-(\ref{ineq:robust_risk_asymp_3}). \qed

\appendix

\section{Technical tools}

The following Lindeberg principle is essentially taken from \cite{chatterjee2006generalization}.

\begin{theorem}\label{thm:lindeberg}
Let $X=(X_1,\ldots,X_n)$ and $Y=(Y_1,\ldots,Y_n)$ be two random vectors in $\R^n$ with independent component and matching first and second moments: $\E X_i^\ell = \E Y_i^\ell$ holds for all $i \in [n]$ and $\ell = 1,2$. Then for any $f \in C^3(\R^n)$,
\begin{align*}
\bigabs{\E f(X) - \E f(Y)}\leq \sum_{i=1}^n \max_{U_i \in \{X_i,Y_i\}}\biggabs{\E \int_0^{U_i} \partial_i^3 f(X_{[1:(i-1)]},t, Y_{[(i+1):n]} )(U_i-t)^2\,\d{t}}.
\end{align*}
\end{theorem}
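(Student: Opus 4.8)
The plan is to run the classical Lindeberg swapping argument: interpolate between $Y$ and $X$ one coordinate at a time, and for each single-coordinate swap use a third-order Taylor expansion whose zeroth-, first-, and second-order terms are annihilated by the matching of first and second moments. Concretely, for $i \in \{0,1,\dots,n\}$ introduce the hybrid vector $W_i \equiv (X_1,\dots,X_i,Y_{i+1},\dots,Y_n)$, so that $W_0 = Y$, $W_n = X$, and write the telescoping decomposition $\E f(X) - \E f(Y) = \sum_{i=1}^n \big(\E f(W_i) - \E f(W_{i-1})\big)$. For a fixed $i$, the vectors $W_i$ and $W_{i-1}$ coincide in all coordinates except the $i$-th (which carries $X_i$ in $W_i$ and $Y_i$ in $W_{i-1}$); write $V_i \equiv (X_1,\dots,X_{i-1},Y_{i+1},\dots,Y_n)$ for the common punctured vector and $g_i(t) \equiv f(X_1,\dots,X_{i-1},t,Y_{i+1},\dots,Y_n)$ for the associated (random, $C^3$) section, which depends only on $V_i$. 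Since the components of $X$ and of $Y$ are independent, both $X_i$ and $Y_i$ are independent of $V_i$, hence $\E f(W_i) - \E f(W_{i-1}) = \E\big[g_i(X_i) - g_i(Y_i)\big]$, where the expectation may be computed by first integrating over $X_i$ (resp. $Y_i$) with $V_i$ frozen.

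Next, with $V_i$ fixed, Taylor-expand $g_i$ about $0$ with the integral form of the remainder,
\[
g_i(s) = g_i(0) + g_i'(0)\,s + \tfrac12 g_i''(0)\,s^2 + \tfrac12\int_0^{s} g_i'''(t)\,(s-t)^2\,\d t ,
\]
noting that $g_i'''(t) = \partial_i^3 f(X_{[1:(i-1)]},t,Y_{[(i+1):n]})$. Evaluating at $s = X_i$ and $s = Y_i$, subtracting, and using $\E X_i^\ell = \E Y_i^\ell$ for $\ell = 1,2$ together with the independence of $\big(g_i(0),g_i'(0),g_i''(0)\big)$ from $(X_i,Y_i)$, the polynomial contributions all cancel, leaving
\begin{align*}
\E f(W_i) - \E f(W_{i-1}) = \tfrac12\Big(&\E\int_0^{X_i}\partial_i^3 f(X_{[1:(i-1)]},t,Y_{[(i+1):n]})(X_i-t)^2\,\d t \\
&- \E\int_0^{Y_i}\partial_i^3 f(X_{[1:(i-1)]},t,Y_{[(i+1):n]})(Y_i-t)^2\,\d t\Big).
\end{align*}
Taking absolute values and using $\tfrac12(|a|+|b|) \le \max\{|a|,|b|\}$ bounds the $i$-th term of the telescoping sum by $\max_{U_i \in \{X_i,Y_i\}}\big|\E\int_0^{U_i}\partial_i^3 f(X_{[1:(i-1)]},t,Y_{[(i+1):n]})(U_i-t)^2\,\d t\big|$; summing over $i$ with the triangle inequality yields the claim. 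One may assume the right-hand side is finite (otherwise there is nothing to prove), which also justifies, via Fubini--Tonelli, all interchanges of expectation and the remainder integral above.

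This is essentially a routine argument; the only points needing attention are the bookkeeping of which coordinates stay ``$X$-type'' and which stay ``$Y$-type'' inside the remainder integral—these must be exactly $X_{[1:(i-1)]}$ before position $i$ and $Y_{[(i+1):n]}$ after, matching the statement—and the observation that the factor $\tfrac12$ in the Taylor remainder is precisely what allows the two remainder terms to be absorbed into a single maximum rather than a sum. I do not anticipate any genuine obstacle.
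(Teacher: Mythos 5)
Your proof is correct and takes essentially the same route as the paper, whose own proof simply cites Chatterjee's Lindeberg-swap argument and notes one should use the integral form of the Taylor remainder; you have spelled out precisely that argument (telescoping along the hybrid vectors, cancellation of the zeroth-/first-/second-order terms via independence and moment matching, and absorbing the factor $\tfrac12$ from the remainder into the maximum over $U_i\in\{X_i,Y_i\}$).
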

\begin{proof}
The proof is essentially a repetition of \cite[Theorem 1.1]{chatterjee2006generalization} by using the integral remainder in the Taylor expansion. 
\end{proof}

We need the following min-max theorem due to Sion \cite{sion1958general}.

\begin{theorem}[Sion's min-max theorem]\label{thm:sion_minmax}
	Let $X$ be a compact convex subset of a linear topological space and $Y$ a convex subset of a linear topological space. If $f$ is a real-valued function on $X\times Y$ satisfying:
	\begin{enumerate}
		\item $y\mapsto f(x,y)$ is upper-semicontinuous and quasi-concave for all $x \in X$;
		\item$x\mapsto f(x,y)$ is lower-semicontinuous and quasi-convex  for all $y \in Y$.
	\end{enumerate}
	Then $\min_{x \in X} \sup_{y \in Y} f(x,y)= \sup_{y \in Y} \min_{x \in X}f(x,y)$.
\end{theorem}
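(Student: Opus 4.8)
The direction $\sup_{y\in Y}\min_{x\in X}f(x,y)\le\min_{x\in X}\sup_{y\in Y}f(x,y)$ is immediate, and both outer extrema are attained: $x\mapsto f(x,y)$ is lower semicontinuous on the compact set $X$, and $x\mapsto\sup_{y\in Y}f(x,y)$, being a supremum of lower semicontinuous functions, is lower semicontinuous on $X$ as well. Write $v\equiv\min_{x\in X}\sup_{y\in Y}f(x,y)$. The plan is to prove the reverse inequality by contradiction. Suppose $\sup_{y\in Y}\min_{x\in X}f(x,y)<v$ and fix $\alpha$ strictly between these two numbers; for $y\in Y$ put $X_y\equiv\{x\in X:\ f(x,y)\le\alpha\}$. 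Each $X_y$ is nonempty (the minimum $\min_{x\in X}f(x,y)$ is attained and is $<\alpha$), closed (lower semicontinuity of $f(\cdot,y)$), and convex (quasi-convexity of $f(\cdot,y)$). Since $X$ is compact, the proof is complete once I show that $\{X_y\}_{y\in Y}$ has the finite intersection property: then $\bigcap_{y\in Y}X_y$ contains some $x^{*}$, and $\sup_{y\in Y}f(x^{*},y)\le\alpha<v$ contradicts the definition of $v$.

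I would establish $\bigcap_{i=1}^{N}X_{y_i}\neq\emptyset$ by induction on $N$; the case $N=1$ is the nonemptiness already noted, and the step $N-1\Rightarrow N$ is the standard reduction (as in Komiya's elementary proof of Sion's theorem, running the two-point mechanism below along segments joining $y_N$ to points of $\mathrm{conv}\{y_1,\dots,y_{N-1}\}$ and invoking the inductive hypothesis), so the crux is the case $N=2$, stated as a \emph{two-point lemma}: if $y_1,y_2\in Y$ and $X_y\neq\emptyset$ for \emph{every} $y$ on the segment $[y_1,y_2]\subset Y$, then $X_{y_1}\cap X_{y_2}\neq\emptyset$. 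To prove it I would parametrize $y_t\equiv(1-t)y_1+ty_2$, set $C_t\equiv X_{y_t}$ (nonempty, closed, convex), and record two structural facts. First, $\{(x,t):f(x,y_t)\le\alpha\}$ is closed, so each $C_t$ is compact and $t\mapsto C_t$ has closed graph. Second, for fixed $x$ the quasi-concavity of $f(x,\cdot)$ makes $\{t:f(x,y_t)>\alpha\}$ an interval, so $\{t\in[0,1]:x\in C_t\}$ is the complement of an interval in $[0,1]$. Assume $C_0\cap C_1=\emptyset$. Then for $x\in C_0$ the complement-of-an-interval structure forces $\{t:x\in C_t\}$ to be a single interval $[0,a_x]$ with $a_x<1$ (it cannot contain $1$, since that would put $x$ in $C_1$); consequently $P\equiv\{t:C_t\cap C_0\neq\emptyset\}$ is closed, downward closed, and omits $1$, hence $P=[0,p]$ with $p<1$, attained by some $x\in C_0$ with $\{t:x\in C_t\}=[0,p]$. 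Symmetrically $Q\equiv\{t:C_t\cap C_1\neq\emptyset\}=[q,1]$ with $q>0$, and a short argument (a point of some $C_t$ lying in neither $C_0$ nor $C_1$ would lie in no $C_s$ at all) gives $p\ge q$, so one may also pick $x'\in C_1\cap C_p$ with $\{t:x'\in C_t\}=[b_{x'},1]$, $b_{x'}\le p$.

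Finally, both $x$ and $x'$ lie in the convex set $C_p$, so $z_\mu\equiv(1-\mu)x+\mu x'\in C_s$ whenever $x,x'\in C_s$, i.e.\ for $s\in[b_{x'},p]$; since $\{s:z_\mu\in C_s\}$ is again the complement of an interval and contains the interior interval $[b_{x'},p]$, it must contain $[0,p]$ or $[b_{x'},1]$, whence $z_\mu\in C_0$ or $z_\mu\in C_1$ for \emph{every} $\mu\in[0,1]$. Thus the closed sets $\{\mu:z_\mu\in C_0\}\ni 0$ and $\{\mu:z_\mu\in C_1\}\ni 1$ cover $[0,1]$, so by connectedness of $[0,1]$ they meet, producing a point of $C_0\cap C_1$ --- a contradiction. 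The main obstacle is exactly this two-point lemma: because the ambient spaces are only linear topological spaces with no local convexity assumed, one cannot separate the disjoint closed convex sets $C_0,C_1$ by a continuous linear functional, and the interval/connectedness mechanism above is precisely what replaces a Hahn--Banach argument. The remaining items --- attainment of the extrema, the routine $N-1\Rightarrow N$ induction, and the extended-real bookkeeping in degenerate cases such as $v=+\infty$ (handled by truncating $f$, which preserves quasi-convexity, quasi-concavity, and both semicontinuity properties) --- are straightforward, and the whole argument uses only quasi-convexity/quasi-concavity, the two semicontinuity hypotheses, and compactness of $X$.
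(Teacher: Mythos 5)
The paper gives no proof of this statement: it is quoted as a classical tool with a citation to Sion (1958), so your argument must stand on its own. Its overall architecture is the right one and is essentially Komiya's elementary proof: reduce the reverse inequality to the finite intersection property of the closed convex sublevel sets $X_y$, isolate a two-point lemma, and replace Hahn--Banach separation by connectedness. Several pieces are sound as written: the reduction via compactness of $X$, the observation that every point of every $C_t$ lies in $C_0\cup C_1$ (so $P\cup Q=[0,1]$), and the final segment-in-$X$ step, where lower semicontinuity of $f(\cdot,y_i)$ really does make $\{\mu:z_\mu\in C_i\}$ closed and connectedness of $[0,1]$ finishes.

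The gap is in the closedness claims that produce the common index $p$. The hypotheses give only \emph{separate} semicontinuity, lsc in $x$ and usc in $y$, and this does not make $\{(x,t):f(x,y_t)\le\alpha\}$ jointly closed, so $t\mapsto C_t$ need not have closed graph. Worse, $t\mapsto f(x,y_t)$ is upper, not lower, semicontinuous, so $\{t:x\in C_t\}$ is an initial segment but need not be closed (it can be $[0,a_x)$), and consequently $P\equiv\{t:C_t\cap C_0\ne\emptyset\}$ and $Q\equiv\{t:C_t\cap C_1\ne\emptyset\}$ need not be closed. All you actually know is that $P$ is an initial segment containing $0$, $Q$ is a final segment containing $1$, and $P\cup Q=[0,1]$; two such segments can be disjoint (e.g.\ $[0,1/2)$ and $[1/2,1]$), so you cannot conclude that some $C_s$ meets both $C_0$ and $C_1$, and without the pair $x\in C_0\cap C_p$, $x'\in C_1\cap C_p$ the final connectedness step has nothing to act on. The standard repair is the two-level device: since $C_0\cap C_1=\emptyset$ means $\alpha<\min_{x\in X}\max\{f(x,y_1),f(x,y_2)\}$, fix $\beta$ strictly between these, run the argument with the $\beta$-sublevel sets $C_t^\beta$, and use nonemptiness of the $\alpha$-sublevel sets together with usc of $f(x,\cdot)$ to recover closedness: if $t_n\to t$ and $f(x,y_t)\le\alpha$, then $\limsup_n f(x,y_{t_n})\le\alpha<\beta$, so $x\in C_{t_n}^\beta$ for large $n$ as well as $x\in C_t^\beta$. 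With that modification (and the corresponding care in the $N-1\Rightarrow N$ induction), the proof goes through.
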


The following version of convex Gaussian min-max theorem, proved using Gordon's min-max theorem \cite{gordan1985some,gordon1988milman}, is taken from \cite[Theorem 6.1]{thrampoulidis2018precise} or \cite[Theorem 5.1]{miolane2021distribution}.

\begin{theorem}[Convex Gaussian Min-Max Theorem]\label{thm:CGMT}
	Suppose $D_u \in \R^m, D_v \in \R^n$ are compact sets, and $Q: D_u\times D_v \to \R$ is continuous. Let $G=(G_{ij})_{i \in [m],j\in[n]}$ with $G_{ij}$'s i.i.d. $\mathcal{N}(0,1)$, and $g \sim \mathcal{N}(0,I_m)$, $h \sim \mathcal{N}(0,I_n)$ be independent Gaussian vectors. Define 
	\begin{align*}
	\Phi^{\textrm{p}} (G) &= \min_{u \in D_u}\max_{v \in D_v} \Big( u^\top G v + Q(u,v)\Big), \nonumber\\
	\Phi^{\textrm{a}}(g,h) &= \min_{u \in D_u}\max_{v \in D_v} \Big(\pnorm{v}{} g^\top u + \pnorm{u}{} h^\top v+ Q(u,v)\Big).
	\end{align*}
	Then the following hold.
	\begin{enumerate}
		\item For all $t \in \R$, $
		\Prob\big(\Phi^{\textrm{p}} (G)\leq t\big)\leq 2 \Prob\big(\Phi^{\textrm{a}}(g,h)\leq t\big)$. 
		\item If $(u,v)\mapsto u^\top G v+ Q(u,v)$ satisfies the conditions of Sion's min-max theorem (cf. Theorem \ref{thm:sion_minmax}) the pair $(D_u,D_v)$ a.s. (for instance, $D_u,D_v$ are convex, and $Q$ is convex-concave), then $
		\Prob\big(\Phi^{\textrm{p}} (G)\geq t\big)\leq 2 \Prob\big(\Phi^{\textrm{a}}(g,h)\geq t\big)$. 
	\end{enumerate}
\end{theorem}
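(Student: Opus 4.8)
The final statement to prove is the convex Gaussian min-max theorem (Theorem~\ref{thm:CGMT}), which is a two-sided comparison between the ``primal'' cost $\Phi^{\textrm{p}}(G)$ involving a Gaussian matrix and the ``auxiliary'' cost $\Phi^{\textrm{a}}(g,h)$ involving two Gaussian vectors. Since the excerpt explicitly attributes this result to \cite{gordon1988milman,thrampoulidis2018precise,miolane2021distribution}, the plan is to reconstruct the now-standard proof that deduces it from Gordon's comparison inequality for Gaussian processes, rather than to reprove Gordon's inequality from scratch.

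The plan is as follows. First I would recall Gordon's Gaussian comparison inequality in the following form: given two centered Gaussian processes $\{X_{u,v}\}$ and $\{Y_{u,v}\}$ indexed by $(u,v) \in D_u \times D_v$, if $\E X_{u,v}^2 = \E Y_{u,v}^2$ for all $(u,v)$, $\E X_{u,v} X_{u,v'} \geq \E Y_{u,v}Y_{u,v'}$ for all $u,v,v'$, and $\E X_{u,v}X_{u',v'} \leq \E Y_{u,v}Y_{u',v'}$ whenever $u \neq u'$, then $\Prob(\min_u \max_v X_{u,v} \geq t) \leq \Prob(\min_u \max_v Y_{u,v} \geq t)$ for all $t$ (and the reversed inequality for the complementary event holds up to the factor $2$, which is where the $2$ in the statement comes from when one needs both tails). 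Second, I would take $X_{u,v} = u^\top G v + \pnorm{u}{}\pnorm{v}{}\gamma$ and $Y_{u,v} = \pnorm{v}{} g^\top u + \pnorm{u}{} h^\top v + \pnorm{u}{}\pnorm{v}{}\gamma$ where $\gamma \sim \mathcal{N}(0,1)$ is an auxiliary scalar Gaussian used to equalize the variances on the diagonal; a direct covariance computation shows $\E X_{u,v}^2 = \pnorm{u}{}^2\pnorm{v}{}^2 + \pnorm{u}{}^2\pnorm{v}{}^2\gamma\text{-term} = \E Y_{u,v}^2$ and the two cross-covariance inequalities hold (with equality when $u = u'$), so Gordon's hypotheses are met. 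Third, I would add the deterministic continuous function $Q(u,v)$ to both sides — this does not affect the Gaussian comparison since it only shifts each process by a constant — and then argue that the auxiliary scalar $\gamma$ can be removed: its contribution $\pnorm{u}{}\pnorm{v}{}\gamma$ is $O(1)$ on the compact sets and a standard argument (conditioning on $\gamma$, or noting $\Prob(|\gamma| > \epsilon^{-1})$ is small, then letting the comparison be applied on the event and taking limits) shows the comparison survives; more cleanly, one invokes the version of Gordon's inequality that already allows the ``$\geq$'' in the variance condition on the diagonal rather than strict equality, which is exactly what is needed so that $\E X_{u,v}^2 = \pnorm{u}{}^2\pnorm{v}{}^2 \leq \E Y_{u,v}^2 = \pnorm{u}{}^2\pnorm{v}{}^2 + (\text{extra})$ is allowed. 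This yields part (1). For part (2), under the Sion min-max hypotheses one may interchange $\min_u \max_v$ with $\max_v \min_u$ for the primal problem almost surely; applying part (1) to the function $-Q$ and to $(v,u)$ in place of $(u,v)$ — i.e. running the same comparison for $\max_v \min_u(u^\top G v + Q)$, which by Sion equals $\min_u \max_v(\cdots)$ — produces the reversed tail inequality $\Prob(\Phi^{\textrm{p}}(G) \geq t) \leq 2\Prob(\Phi^{\textrm{a}}(g,h) \geq t)$.

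The main obstacle I anticipate is the careful bookkeeping around the auxiliary Gaussian $\gamma$ and the factor of $2$: one must be precise about which form of Gordon's inequality is being invoked (the ``$\geq$ on the diagonal'' relaxation is what legitimizes dropping $\gamma$ without an extra limiting argument, but some references instead keep $\gamma$ and pass to a limit, and conflating the two causes errors), and about why the factor $2$ appears only in the two-sided statement — it enters because $\Prob(A \geq t) = 1 - \Prob(A < t)$ and the clean Gordon inequality controls only one of the two tails, so to get both directions one pays a union-bound-style factor, or more precisely one uses the symmetrized version of Gordon's lemma. A secondary technical point is verifying that adding the continuous $Q$ and restricting to compact $D_u, D_v$ keeps all suprema and infima attained and measurable, so that the probabilistic statements are well-posed; this is routine given compactness and continuity but should be noted. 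Since this is a known theorem with a published proof, I would ultimately present a short proof that cites \cite[Theorem~6.1]{thrampoulidis2018precise} or \cite[Theorem~5.1]{miolane2021distribution} for the precise argument, reproducing only the covariance computation that drives the comparison.
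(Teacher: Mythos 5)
The paper does not prove Theorem~\ref{thm:CGMT} at all: it is stated in the appendix as a technical tool and explicitly attributed to \cite[Theorem 6.1]{thrampoulidis2018precise} and \cite[Theorem 5.1]{miolane2021distribution}, so there is no in-paper proof against which to compare your argument. Your reconstruction follows the standard route from those references---Gordon's Gaussian comparison inequality, variance balancing via an auxiliary scalar, and Sion's min-max theorem for part (2)---so the plan is the intended one, and citing the references for the precise argument (as you say you would ultimately do) matches what the paper itself does.

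Two points in your sketch should nonetheless be corrected. First, you add the balancing term $\pnorm{u}{}\pnorm{v}{}\gamma$ to \emph{both} processes; as written the diagonal variances are then $2\pnorm{u}{}^2\pnorm{v}{}^2$ for $X$ and $3\pnorm{u}{}^2\pnorm{v}{}^2$ for $Y$ (if the two $\gamma$'s are independent), or the addition accomplishes nothing (if they are the same variable), so the claimed identity $\E X_{u,v}^2 = \E Y_{u,v}^2$ fails. The correct device adds $\pnorm{u}{}\pnorm{v}{}\gamma$ only to the primal process $u^\top G v$: this lifts its variance to $2\pnorm{u}{}^2\pnorm{v}{}^2$, matching $\E\big(\pnorm{v}{}g^\top u + \pnorm{u}{}h^\top v\big)^2$, and the cross-covariance difference equals $(u^\top u' - \pnorm{u}{}\pnorm{u'}{})(v^\top v' - \pnorm{v}{}\pnorm{v'}{})\geq 0$, vanishing when $u=u'$, which is exactly the sign pattern Gordon's lemma requires. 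Second, the factor $2$ does not come from a ``union bound over tails'' or a symmetrized Gordon lemma: it comes from restricting to the event $\{\gamma \leq 0\}$ (probability $1/2$, independent of $G$), on which $\pnorm{u}{}\pnorm{v}{}\gamma \leq 0$ and hence $\min_u\max_v\big(u^\top Gv + \pnorm{u}{}\pnorm{v}{}\gamma + Q(u,v)\big) \leq \Phi^{\textrm{p}}(G)$; applying Gordon's comparison to the shifted process and dividing by $\Prob(\gamma\leq 0)=1/2$ gives the $2$. Your fallback observation---that a version of Gordon's lemma allowing $\E X_{u,v}^2 \leq \E Y_{u,v}^2$ on the diagonal avoids $\gamma$ altogether---is a valid shortcut, but you should commit to one version of the argument rather than leaving both half-specified.
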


\section{Auxiliary results}

\begin{lemma}\label{lem:smooth_approx_l1}
Let $f:\R \to \R_{\geq 0}$ be absolutely continuous with $\esssup\,\abs{f'}\leq L$. For $\rho>0$, let $f_\rho(x)\equiv \E f(x+\rho Z)$ where $Z\sim \mathcal{N}(0,1)$. Then the following hold.
\begin{enumerate}
	\item $\pnorm{f_\rho-f}{\infty}\leq L\rho$. 
	\item $\pnorm{f_\rho^{(\ell)}}{\infty}\leq 2L\rho^{-\ell+1}$ for $\ell =1,2,3$.
\end{enumerate}
\end{lemma}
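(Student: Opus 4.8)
\textbf{Proof proposal for Lemma \ref{lem:smooth_approx_l1}.}

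The plan is to exploit the fact that $f_\rho$ is the convolution of $f$ with the Gaussian density $\varphi_\rho(z) = \rho^{-1}\varphi(z/\rho)$, where $\varphi$ is the standard normal density, so that derivatives can be moved onto the smooth Gaussian kernel by integration by parts. For part (1), I would write $f_\rho(x) - f(x) = \E\big[f(x+\rho Z) - f(x)\big]$ and use the absolute continuity of $f$ together with $\esssup|f'|\le L$ to bound $|f(x+\rho Z) - f(x)| \le L\rho|Z|$ pointwise; taking expectations gives $|f_\rho(x) - f(x)| \le L\rho\,\E|Z| = L\rho\sqrt{2/\pi} \le L\rho$. This is immediate and requires no real work.

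For part (2), the key step is to represent each derivative of $f_\rho$ as an integral against a derivative of the Gaussian kernel, but arranged so that exactly one derivative falls on $f$ (using $|f'|\le L$) and the remaining $\ell-1$ derivatives fall on $\varphi_\rho$. Concretely, I would start from $f_\rho(x) = \int f(x-y)\varphi_\rho(y)\,\d y = \int f(y)\varphi_\rho(x-y)\,\d y$. Differentiating once under the integral sign and integrating by parts once transfers one derivative onto $f$: $f_\rho'(x) = \int f'(y)\varphi_\rho(x-y)\,\d y$, which is valid a.e.\ by absolute continuity of $f$; hence $|f_\rho'(x)| \le L\int \varphi_\rho = L \le 2L$. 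For $\ell = 2, 3$, I would differentiate the representation $f_\rho'(x) = \int f'(x-y)\varphi_\rho(y)\,\d y = \int f'(y)\varphi_\rho(x-y)\,\d y$ a further $\ell-1$ times, placing all extra derivatives on $\varphi_\rho$: $f_\rho^{(\ell)}(x) = \int f'(y)\,\varphi_\rho^{(\ell-1)}(x-y)\,\d y$, so that $\|f_\rho^{(\ell)}\|_\infty \le L\,\|\varphi_\rho^{(\ell-1)}\|_{L^1}$. It then remains to bound the $L^1$ norms of the derivatives of the Gaussian density: by scaling, $\|\varphi_\rho^{(k)}\|_{L^1} = \rho^{-k}\|\varphi^{(k)}\|_{L^1}$, and $\|\varphi^{(k)}\|_{L^1} = \E|H_k(Z)|$ up to sign, where $H_k$ is the (probabilists') Hermite polynomial, since $\varphi^{(k)}(z) = (-1)^k H_k(z)\varphi(z)$. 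One checks $\|\varphi'\|_{L^1} = \E|Z| = \sqrt{2/\pi} \le 2$ and $\|\varphi''\|_{L^1} = \E|Z^2-1| \le 2$ (in fact $= 2\sqrt{2/\pi}\,e^{-1/2} < 2$), giving the claimed bounds $\|f_\rho^{(\ell)}\|_\infty \le 2L\rho^{-\ell+1}$ for $\ell = 1,2,3$.

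The only mild obstacle is justifying differentiation under the integral sign and the single integration by parts when $f$ is merely absolutely continuous rather than $C^1$: here I would note that absolute continuity gives $f(b) - f(a) = \int_a^b f'$ with $f' \in L^\infty$, the Gaussian kernel and all its derivatives are Schwartz, and the boundary terms in the integration by parts vanish because $f$ grows at most linearly (from $|f'|\le L$ and $f\ge 0$) while $\varphi_\rho^{(k)}$ decays super-polynomially; dominated convergence then legitimizes moving the derivatives inside. No constant-chasing beyond the two elementary Gaussian moment bounds above is needed, and the crude constant $2$ is more than enough.
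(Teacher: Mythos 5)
Your proof is correct and takes essentially the same approach as the paper's: both arguments place exactly one derivative on $f$ (via absolute continuity) and the remaining $\ell-1$ derivatives on the Gaussian kernel, and then bound $\|f_\rho^{(\ell)}\|_\infty$ by $L$ times an $L^1$ norm of a Gaussian-kernel derivative, which reduces to $\E\lvert Z\rvert$ and $\E\lvert Z^2-1\rvert$. The only difference is cosmetic packaging---you phrase the bound via Young's inequality and Hermite polynomials, while the paper writes the same quantities directly as $\rho^{-1}\E[Z f'(x+\rho Z)]$ and $\rho^{-2}\E[(Z^2-1)f'(x+\rho Z)]$ using $\varphi'(x)=-x\varphi(x)$ and $\varphi''(x)=(x^2-1)\varphi(x)$.
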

\begin{proof}
Some easy facts: $\E\abs{Z}\leq 1$ and $\E\abs{Z^2-1}\leq \var^{1/2}(Z^2)=2^{1/2}$.	
	
\noindent (1). As $f(\cdot)$ is $L$-Lipschitz, we have for any $x \in \R$, $\abs{f_\rho(x)-f(x)}\leq \E \abs{f(x+\rho Z)-f(x)}\leq \rho\cdot (L\E\abs{Z})$.

\noindent (2). First by absolute continuity of $f$, $f_\rho'(x) = \E f'(x+\rho Z)$, so $\pnorm{f_\rho'}{\infty}\leq L$. Now we give a different representation of $f_\rho'$. With $\varphi(\cdot)$ denoting the d.f. for $\mathcal{N}(0,1)$, we may write $
f_\rho(x)= \int f(x+\rho z) \varphi(z)\,\d{z}=\rho^{-1} \int f(y) \varphi\big((y-x)/\rho\big)\,\d{y}$, 
so using the identity $\varphi'(x)=-x \varphi(x)$, 
\begin{align*}
f_\rho'(x) &= -\frac{1}{\rho^2} \int f(y) \varphi'\bigg(\frac{y-x}{\rho}\bigg)\,\d{y}  = -\frac{1}{\rho} \int f(x+\rho z) \varphi'(z)\,\d{z}= \frac{1}{\rho}\E Z f(x+\rho Z).
\end{align*}
Using the absolute continuity of $f$ again, we have $f_\rho''(x)= \rho^{-1} \E Z f'(x+\rho Z)$. This implies that $\pnorm{f_\rho''}{\infty}\leq (L\E \abs{Z})/\rho$. For the third derivative, we proceed similarly by representing $f_\rho''$ as 
\begin{align*}
f_\rho''(x)& = \frac{1}{\rho^2} \int f(x+\rho z) \varphi''(z)\,\d{z} = \frac{1}{\rho^2} \E (Z^2-1) f(x+\rho Z).
\end{align*}
The last equality follows as $\varphi''(x)=(x^2-1)\varphi(x)$. This implies $
f_\rho^{(3)}(x) = \rho^{-2} \E(Z^2-1)f'(x+\rho Z)$, and therefore $\pnorm{f_\rho^{(3)}}{\infty}\leq (L \E\abs{Z^2-1})/\rho^2$. 
\end{proof}

\begin{lemma}[Sparse eigenvalues]\label{lem:sparse_eigenvalue}
	Suppose that the entries of $A_0 \in \R^{m\times n}$ are independent, mean-zero, variance $\sigma^2$ and uniformly sub-Gaussian. Let $\hat{\Sigma} = A_0^\top A_0/m$ be the sample covariance, and $
	\phi_{+}(k) \equiv \sup_{v\in\R^n: \pnorm{v}{0}\leq k}{v^\top\hat{\Sigma}v}/{\pnorm{v}{}^2}$,  $\phi_{-}(k) \equiv \inf_{v\in\R^n: \pnorm{v}{0}\leq k}{v^\top\hat{\Sigma}v}/{\pnorm{v}{}^2}$. 
	Then for $c \in (0,1/2)$, 
	\begin{align*}
	\bigg[\frac{\phi_{\pm}(k)}{\sigma^2}- \bigg((1 \pm c) \pm K\cdot\frac{k\log(en/k)}{m}\bigg)_+\bigg]_\pm=0
	\end{align*}
	holds simultaneously for $k\in[n]$ with probability at least $1 - \exp(-c'm)$. 
	Here $c'>0$ only depends on $c$, and $K>0$ is universal. Consequently, if $\tau \leq n/m\leq 1/\tau$ for some $\tau\in(0,1)$, then $\phi_{-}(k)\geq \sigma^2/2$ if $k \leq c_0 m$ for some $c_0 = c_0(K,\tau)$.
\end{lemma}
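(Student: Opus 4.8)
The plan is the classical $\epsilon$-net plus union-bound argument for sparse singular values of a matrix with independent sub-Gaussian entries, executed as follows. First, I would fix a support $S\subseteq[n]$ with $|S|=k$ and work with $A_{0,S}\in\R^{m\times k}$, the columns of $A_0$ indexed by $S$, whose rows are i.i.d.\ mean-zero sub-Gaussian vectors in $\R^k$ with covariance $\sigma^2 I_k$ and uniformly bounded $\psi_2$-norm. For a fixed unit $x\in\R^k$ the variables $\langle a_{i,S},x\rangle^2-\sigma^2$ are i.i.d.\ centered sub-exponential with universally controlled parameters, so Bernstein's inequality gives $\mathbb P\big(\big|m^{-1}\sum_i(\langle a_{i,S},x\rangle^2-\sigma^2)\big|>\sigma^2 t\big)\le 2\exp(-c_0 m\min(t,t^2))$ with $c_0$ universal. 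Covering $S^{k-1}$ by a $\tfrac14$-net of size $\le 9^k$ and invoking the standard ``operator norm via a net'' estimate then yields, for a universal $C$,
\[
\mathbb P\Big(\pnorm{m^{-1}A_{0,S}^\top A_{0,S}-\sigma^2 I_k}{\op}>\sigma^2 t\Big)\le 2\cdot 9^k\exp\!\big(-c_0 m\min(t/C,(t/C)^2)\big).
\]

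Next I would union-bound this over the $\binom nk\le(en/k)^k$ supports of size $k$ and over $k\in[n]$. Using $\binom nk 9^k\le\exp(C_1 k\log(en/k))$ for a universal $C_1$ and choosing $t=t_k$ so that $c_0 m\min(t_k/C,(t_k/C)^2)=2C_1 k\log(en/k)+c'm$ (with $c'=c'(c)>0$ to be fixed), the total failure probability collapses to $\sum_{k\ge1}2\exp(-C_1 k\log(en/k)-c'm)\le e^{-c'm}$, since $k\log(en/k)\ge k$ for $k\le n$ makes the sum over $k$ a convergent geometric series. Solving for $t_k$ and bounding $\max(a,b)\le a+b$ gives $t_k\le C'\big(\sqrt{\delta_k}+\delta_k\big)+C'\big(\sqrt{c'}+c'\big)$ with $\delta_k\equiv k\log(en/k)/m$ and $C'$ universal; picking $c'=c'(c)$ small so $C'(\sqrt{c'}+c')\le c/2$, and absorbing $C'\sqrt{\delta}\le c/2+K\delta$ for all $\delta\ge0$ (AM--GM, with $K=K(C',c)$), leaves $t_k\le c+K\delta_k$. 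Since $j\mapsto j\log(en/j)$ is nondecreasing on $[1,n]$, we have $\delta_j\le\delta_k$ for $j\le k$, so on the good event, simultaneously for every $k\in[n]$ and every $v$ with $\pnorm{v}{0}\le k$, $\big|v^\top\hat\Sigma v/\pnorm{v}{}^2-\sigma^2\big|\le\sigma^2(c+K\delta_k)$; this is exactly the displayed identity, the $+$ branch being the upper bound (the outer positive part being harmless) and the $-$ branch recording that $\phi_-(k)/\sigma^2$ exceeds $((1-c)-K\delta_k)_+$.

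Finally, the ``consequently'' assertion is immediate: with $c=\tfrac14$, if $\tau\le n/m\le 1/\tau$ then for $k\le c_0 m$ one has $\delta_k\le c_0\log(e/(c_0\tau))$, which is $\le 1/(4K)$ once $c_0=c_0(K,\tau)$ is small enough, so the $-$ branch gives $\phi_-(k)\ge\sigma^2(1-\tfrac14-K\delta_k)\ge\sigma^2/2$. The one place needing genuine care is the bookkeeping in the union bound: the net cardinality $9^k$ unavoidably forces $t_k\gtrsim\sqrt{k/m}$ for a single support, so a $\sqrt{\delta_k}$ term is intrinsic, and the stated form $c+K\delta_k$ is recovered only after absorbing $\sqrt\delta$ into $c+K\delta$ — so $K$ is ``universal'' only in the sense of not depending on $n,m,\sigma$ once the reference constant $c$ is fixed, which is all the corollary ever uses. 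Everything else is the routine Bernstein/net estimates above.
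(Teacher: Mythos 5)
Your argument is the same net-plus-union-bound proof as the paper's (Bernstein on a fixed direction, a $9^k$ net for $\pnorm{\hat\Sigma_{S,S}-\sigma^2 I_k}{\op}$, union over the $\binom{n}{k}$ supports and over $k$, then the monotonicity of $k\mapsto k\log(en/k)$), so it matches step for step. Your closing caveat is also correct and worth noting: after absorbing the intrinsic $\sqrt{k\log(en/k)/m}$ fluctuation into $c/2 + K\,k\log(en/k)/m$, the constant $K$ does in fact depend on $c$, so ``universal'' in the lemma should be read as ``depending only on the fixed reference $c$'' --- which, as you observe, is all that the corollary $\phi_-(k)\ge\sigma^2/2$ for $k\le c_0 m$ ever uses.
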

\begin{proof}
	We only consider  $A_0$ with independent mean-zero, unit-variance and uniformly sub-Gaussian entries. We will prove that the desired bound for $\phi_{+}(k)$ and $\phi_{-}(k)$ holds for each $k$ with probability $1 - \exp(-C_0 k\log(en/k) - c'm)$ for some universal $C_0>0$ and $c'>0$ depending on $c$ only. Then the claim follows by taking the union bound over $k\in[n]$. 
	
	To this end, note that $
	\sup_{v\in\R^n: \pnorm{v}{0}\leq k}{v^\top\hat{\Sigma}v}/{\pnorm{v}{}^2} \leq \sup_{S\subset[n]: |S| = k}\pnorm{\hat{\Sigma}_{S,S}}{\op}$. 
	For the latter quantity, using the sub-Gaussianity of the distribution of $A_0$ and a standard covering argument, we have for any $S\subset [n]$ with $\abs{S}=k$, and any $t > 0$,
	\begin{align*}
	\Prob\Big(\pnorm{\hat{\Sigma}_{S,S} - I_k}{\op} \geq t\Big) \leq \exp\Big(C_1k - C_2m(t^2 \wedge t)\Big),
	\end{align*}
	where $C_1,C_2>0$ are universal constants. The claim now follows by taking the union bound over $S\subset[n]$ and choosing $t = c' + K\cdot k\log(en/k)/m$ for some small enough $c'$ depending on $c$ and some universal $K$ depending on $C_1,C_2$. A similar argument holds for $\phi_{-}(k)$.
\end{proof}

\begin{lemma}\label{lem:prox_lipschitz}
Let $f$ be a proper, closed convex function defined on $\R$. Then the map $x\mapsto \prox_f(x;\tau)$ is $1$-Lipschitz for any $\tau>0$.
\end{lemma}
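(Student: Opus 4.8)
The plan is to use the standard first-order characterization of the proximal operator via subdifferentials, together with the monotonicity of the subdifferential of a convex function. Recall that for a proper, closed convex $f:\R\to\R$ and $\tau>0$, the point $z=\prox_f(x;\tau)$ is the unique minimizer of $u\mapsto (x-u)^2/(2\tau)+f(u)$, and by optimality it satisfies the inclusion $x-z\in\tau\,\partial f(z)$, i.e. $(x-z)/\tau\in\partial f(z)$.

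First I would fix $x_1,x_2\in\R$ and set $z_i\equiv\prox_f(x_i;\tau)$ for $i=1,2$. From the characterization above we get $(x_i-z_i)/\tau\in\partial f(z_i)$. Then I would invoke monotonicity of $\partial f$: since $f$ is convex, for any $s_i\in\partial f(z_i)$ we have $(s_1-s_2)(z_1-z_2)\ge 0$. Applying this with $s_i=(x_i-z_i)/\tau$ gives
\begin{align*}
\frac{1}{\tau}\big((x_1-z_1)-(x_2-z_2)\big)(z_1-z_2)\ge 0,
\end{align*}
which rearranges to $(x_1-x_2)(z_1-z_2)\ge (z_1-z_2)^2$. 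By Cauchy–Schwarz (or simply $|z_1-z_2|^2\le|x_1-x_2|\,|z_1-z_2|$), this yields $|z_1-z_2|\le|x_1-x_2|$, i.e. $|\prox_f(x_1;\tau)-\prox_f(x_2;\tau)|\le|x_1-x_2|$, which is the desired $1$-Lipschitz bound.

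I do not anticipate a serious obstacle here; the only points that need a word of care are (i) justifying that the minimizer exists and is unique, which follows from strong convexity of $u\mapsto(x-u)^2/(2\tau)+f(u)$ together with properness and closedness of $f$ (so $\prox_f(\cdot;\tau)$ is genuinely single-valued and the notation makes sense), and (ii) citing the subdifferential optimality condition and the monotonicity of $\partial f$, both of which are textbook facts about proper closed convex functions. If one wants to avoid invoking $\partial f$ explicitly, an alternative is to use the firm nonexpansiveness identity: writing $g_i\equiv x_i-z_i$, the pair $(z_i,g_i)$ lies on the graph of the maximal monotone operator $\tau\,\partial f$, and the resolvent identity $z_i=(I+\tau\partial f)^{-1}(x_i)$ combined with monotonicity gives $\langle z_1-z_2,\,x_1-x_2\rangle\ge\|z_1-z_2\|^2$ directly — in one dimension this is exactly the computation above. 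Either route gives the result in a couple of lines.
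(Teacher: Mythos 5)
Your proof is correct and follows exactly the same route as the paper's: first-order optimality gives $(x_i-z_i)/\tau\in\partial f(z_i)$, monotonicity of $\partial f$ yields $\|z_1-z_2\|^2\le\langle x_1-x_2,\,z_1-z_2\rangle$, and Cauchy--Schwarz finishes. The extra remarks about existence/uniqueness and the resolvent viewpoint are fine but not needed beyond what the paper already assumes.
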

\begin{proof}
Take $x_1,x_2\in \R$ and let $z_i\equiv \prox_f(x_i;\tau)$ for $i=1,2$. By the first-order optimality condition, $(x_i-z_i)/\tau \in \partial f(z_i)$ for $i=1,2$. Using the monotonicity of subdifferential, we have $
\iprod{(x_1-z_1)/\tau-(x_2-z_2)/\tau}{z_1-z_2}\geq 0$, which is equivalent to $\pnorm{z_1-z_2}{}^2\leq \iprod{x_1-x_2}{z_1-z_2}$. Use Cauchy-Schwarz to conclude.
\end{proof}

\section*{Acknowledgments}
The authors are indebted to Cun-Hui Zhang for a number of stimulating discussions during various stages of this research.

\bibliographystyle{amsalpha}
\bibliography{mybib}

\end{document}